\newcommand{\myauthor}{Benjamin Antieau and Elden Elmanto}
\newcommand{\mytitle}{A primer for unstable motivic homotopy theory}
\title{\mytitle}
\author{Benjamin Antieau\footnote{Benjamin Antieau was supported
by NSF Grant DMS-1461847} and Elden Elmanto\footnote{Elden Elmanto was supported by
NSF Grant DMS-1508040.}}
\date{}
\definecolor{todo}{rgb}{1,0,0}
\definecolor{conditional}{rgb}{0,1,0}
\definecolor{e-mail}{rgb}{0,.40,.80}
\definecolor{reference}{rgb}{.20,.60,.22}
\definecolor{mrnumber}{rgb}{.80,.40,0}
\definecolor{citation}{rgb}{0,.40,.80}
\let\oldmarginpar\marginpar
\renewcommand\marginpar[1]{\-\oldmarginpar[\raggedleft\footnotesize #1]%
{\raggedright\footnotesize #1}}
\newcommand{\df}[1]{{\bf #1}}
\newcommand{\Fscr}{\mathcal{F}}
\newcommand{\Kscr}{\mathcal{K}}
\newcommand{\Lscr}{\mathcal{L}}
\newcommand{\Oscr}{\mathcal{O}}
\newcommand{\B}{\mathrm{B}}
\newcommand{\D}{\mathrm{D}}
\newcommand{\E}{\mathrm{E}}
\newcommand{\G}{\mathrm{G}}
\renewcommand{\H}{\mathrm{H}}
\newcommand{\I}{\mathrm{I}}
\newcommand{\K}{\mathrm{K}}
\renewcommand{\L}{\mathrm{L}}
\newcommand{\M}{\mathrm{M}}
\newcommand{\N}{\mathrm{N}}
\newcommand{\U}{\mathrm{U}}
\newcommand{\Lbf}{\mathbf{L}}
\newcommand{\Pbf}{\mathbf{P}}
\newcommand{\Rbf}{\mathbf{R}}
\renewcommand{\AA}{\mathds{A}}
\newcommand{\CC}{\mathds{C}}
\newcommand{\GG}{\mathds{G}}
\newcommand{\NN}{\mathds{N}}
\newcommand{\PP}{\mathds{P}}
\newcommand{\QQ}{\mathds{Q}}
\newcommand{\RR}{\mathds{R}}
\newcommand{\ZZ}{\mathds{Z}}
\newcommand{\gp}{\mathrm{gp}}
\newcommand{\op}{\mathrm{op}}
\newcommand{\topo}{\mathrm{top}}
\newcommand{\proj}{\mathrm{proj}}
\newcommand{\inj}{\mathrm{inj}}
\DeclareMathOperator{\id}{id}
\newcommand{\Ch}{\mathrm{Ch}}
\newcommand{\Bl}{\mathrm{Bl}}
\newcommand{\eq}{\mathrm{eq}}
\newcommand{\Th}{\mathrm{Th}}
\DeclareMathOperator{\Sing}{Sing}
\newcommand{\Sets}{\mathrm{Sets}}
\newcommand{\Ho}{\mathrm{Ho}}
\newcommand{\sPre}{\mathrm{sPre}}
\newcommand{\sSets}{\mathrm{sSets}}
\newcommand{\Spc}{\mathrm{Spc}}
\DeclareMathOperator{\Ext}{Ext}
\DeclareMathOperator{\Pic}{Pic}
\newcommand{\CH}{\mathrm{CH}}
\newcommand{\Fun}{\mathrm{Fun}}
\DeclareMathOperator{\Hom}{Hom}
\newcommand{\map}{\mathrm{map}}
\DeclareMathOperator{\PGL}{PGL}
\DeclareMathOperator{\SL}{SL}
\DeclareMathOperator{\GL}{GL}
\DeclareMathOperator{\Sp}{Sp}
\newcommand{\SU}{\mathrm{SU}}
\newcommand{\SO}{\mathrm{SO}}
\newcommand{\Gm}{\mathds{G}_{m}}
\newcommand{\Vect}{\mathrm{Vect}}
\DeclareMathOperator{\BSL}{BSL}
\DeclareMathOperator{\ESL}{ESL}
\newcommand{\BU}{\mathrm{BU}}
\newcommand{\BGL}{{\mathrm{BGL}}}
\newcommand{\Gr}{\mathrm{Gr}}
\newcommand{\ShBun}{\mathbf{Bun}}
\newcommand{\Tors}{\mathrm{Tors}}
\DeclareMathOperator*{\holim}{holim}
\DeclareMathOperator*{\colim}{colim}
\DeclareMathOperator*{\hocolim}{hocolim}
\newcommand{\et}{\mathrm{\acute{e}t}}
\newcommand{\Nis}{\mathrm{Nis}}
\DeclareMathOperator{\Spec}{Spec}
\newcommand{\Sm}{\mathrm{Sm}}
\newcommand{\we}{\simeq}
\newcommand{\iso}{\cong}
\theoremstyle{plain}
\newtheorem{theorem}{Theorem}[section]
\newtheorem*{theorem*}{Theorem}
\newtheorem{lemma}[theorem]{Lemma}
\newtheorem{proposition}[theorem]{Proposition}
\newtheorem{conjecture}[theorem]{Conjecture}
\newtheorem{corollary}[theorem]{Corollary}
\newtheoremstyle{named}{}{}{\itshape}{}{\bfseries}{.}{.5em}{#1 \thmnote{#3}}
\theoremstyle{named}
\theoremstyle{definition}
\newtheorem{definition}[theorem]{Definition}
\newtheorem{example}[theorem]{Example}
\newtheorem{warning}[theorem]{Warning}
\newtheorem{recipe}[theorem]{Recipe}
\newtheorem{exercise}[theorem]{Exercise}
\newtheorem{notation}[theorem]{Notation}
\newtheorem{remark}[theorem]{Remark}
\begin{document}

\maketitle

\begin{abstract}
    \noindent
    In this expository article, we give the foundations, basic facts, and first
    examples of unstable motivic homotopy theory with a view towards the approach of
    Asok-Fasel to the classification of vector bundles on smooth complex affine varieties.
    Our focus is on making these techniques more accessible to algebraic
    geometers.

    \paragraph{Key Words.}
    Vector bundles, projective modules, motivic homotopy theory, Postnikov
    systems, algebraic $K$-theory.

    \paragraph{Mathematics Subject Classification 2010.}
    Primary:
    \href{http://www.ams.org/mathscinet/msc/msc2010.html?t=13Cxx&btn=Current}{13C10},
    \href{http://www.ams.org/mathscinet/msc/msc2010.html?t=14Fxx&btn=Current}{14F42},
    \href{http://www.ams.org/mathscinet/msc/msc2010.html?t=19Dxx&btn=Current}{19D06}.
    Secondary:
    \href{http://www.ams.org/mathscinet/msc/msc2010.html?t=55Rxx&btn=Current}{55R50},
    \href{http://www.ams.org/mathscinet/msc/msc2010.html?t=55Sxx&btn=Current}{55S45}.
\end{abstract}

\tableofcontents

\section{Introduction}

This primer is intended to serve as an introduction to the basic facts about
Morel and Voevodsky's motivic, or $\AA^1$, homotopy theory~\cite{morel-voevodsky}, \cite{voevodsky-icm}, with a
focus on the unstable part of the theory. It was
written following a week-long summer school session on this topic
led by Antieau at the University of Utah in July 2015. The choice
of topics reflects what we think might be useful for algebraic geometers interested in
learning the subject.

In our view, the starting point of the development of unstable motivic homotopy theory is the
resolution of Serre's conjecture by Quillen~\cite{quillen-serre}
and Suslin~\cite{suslin-serre}. Serre
asked in~\cite{serre-fac} whether every finitely generated projective module over
$k[x_1,\ldots,x_n]$ is free when $k$ is a field. Put another way, the question is whether
$$\Vect_r(\Spec k)\rightarrow\Vect_r(\AA^n_k)$$ is a bijection for all $n\geq 1$, where
$\Vect_r(X)$ denotes the set of isomorphism classes of rank $r$ vector bundles
on $X$. Quillen and Suslin showed that this is true
and in fact proved the analogous statement when $k$ is replaced by a Dedekind
domain. This suggested the following conjecture.

\begin{conjecture}[Bass-Quillen]\label{conj:bass-quillen}
    Let $X$ be a regular noetherian affine scheme of finite Krull dimension. Then, the pullback map
    $\Vect_r(X)\rightarrow\Vect_r(X\times\AA^n)$ is a bijection for all $r\geq
    1$ and all $n\geq 1$.
\end{conjecture}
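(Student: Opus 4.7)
The plan is to reduce the conjecture to the Quillen--Suslin theorem via a sequence of standard algebraic reductions, the same strategy that resolves the equal-characteristic case.

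First, injectivity is immediate and requires no hypotheses on $X$: the zero section $X \hookrightarrow X \times \AA^n$ is a retraction of the projection $\pi \colon X \times \AA^n \to X$, so $\pi^*$ is split injective on isomorphism classes. It therefore suffices to prove surjectivity, i.e. that every vector bundle on $X \times \AA^n$ is extended from $X$. Since $X \times \AA^n$ is again regular, noetherian, affine, and of finite Krull dimension whenever $X$ is, a straightforward induction on $n$ reduces the problem to the case $n = 1$.

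Writing $X = \Spec R$, the question becomes: is every finitely generated projective $R[t]$-module extended from $R$? I would then invoke Quillen's patching theorem, which asserts that $P$ is extended if and only if $P_{\mathfrak{m}}$ is extended over $R_{\mathfrak{m}}[t]$ for every maximal ideal $\mathfrak{m} \subset R$. This localizes the problem to the case where $R$ is a regular local ring. Horrocks' theorem then further reduces matters to showing that $P[t^{-1}]$ is free over $R[t,t^{-1}]$. To close the argument in the geometric equal-characteristic case, I would appeal to Lindel's theorem: it produces an étale-local presentation of $R$ over a localized polynomial ring $k[x_1,\ldots,x_d]$, at which point Quillen--Suslin applies. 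For a general regular $R$ containing a field, Popescu's desingularization expresses $R$ as a filtered colimit of smooth $k$-algebras, and since projective modules are finitely presented, one descends to the geometric situation.

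The main obstacle is the mixed-characteristic case, which is precisely where the conjecture remains open in full generality. When $R$ is an unramified regular local ring with residue characteristic $p > 0$, there is no adequate analogue of Lindel's étale-geometric presentation, and the reduction from Horrocks' theorem to freeness of $P[t^{-1}]$ is not established. A complete proof would require either a new mixed-characteristic geometric presentation lemma or a genuinely different input, plausibly from the motivic techniques developed later in this primer, where $\AA^1$-invariance is built into the homotopy category and vector bundles are represented by $\mathrm{BGL}_r$.
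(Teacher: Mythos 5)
This statement is a \emph{conjecture}, and the paper does not prove it; it only records it and then surveys the known partial results (Lindel for rings essentially of finite type over a field, Popescu via N\'eron--Popescu desingularization for unramified regular local rings, and the Asok--Hoyois--Wendt patching of these). You correctly recognize this, and your sketch of the reduction chain --- split injectivity from the zero section, induction to $n=1$, Quillen patching to localize, Horrocks, and then Lindel's \'etale presentation plus Quillen--Suslin in the geometric case, with Popescu to pass from smooth algebras to general equal-characteristic regular rings --- is exactly the standard strategy that the paper's surrounding discussion alludes to. Your diagnosis of the obstruction also matches: there is no adequate mixed-characteristic analogue of Lindel's presentation lemma, which is why the conjecture remains open.

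Two small remarks. First, the Horrocks step is usually phrased as: $P$ over $R[t]$ (with $R$ local) is free once $P_f$ is free for a \emph{monic} $f\in R[t]$; reducing to ``$P[t^{-1}]$ free over $R[t,t^{-1}]$'' is the affine Horrocks/Murthy variant and is close in spirit, but in the actual argument one must produce the monic. Second, the Popescu/N\'eron desingularization input covers more than you state: it also yields the \emph{unramified mixed-characteristic} case (spectra of unramified regular local rings), which is the case the paper attributes to Popescu via \cite{swan-popescu}; the genuinely open territory is the ramified mixed-characteristic case.
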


The Bass-Quillen conjecture has been proved in many cases, but not yet in full
generality. Lindel~\cite{lindel} prove the conjecture when $X$ is essentially
of finite type over a field, and Popescu proved it when $X$ is the spectrum of
an unramified regular local ring (see~\cite{swan-popescu}*{Theorem~2.2}).
Piecing these results together one can, for example, allow $X$ to be the
spectrum of a ring with the property that all its localizations at maximal ideals
are smooth over a Dedekind ring with a perfect residue field
see~\cite{asok-hoyois-wendt}*{Theorem 5.2.1}. For a
survey of other results in this direction, see~\cite{lam-serre2}*{Section VIII.6}.

If $X$ is a reasonable topological space, such as a manifold, simplicial
complex, or CW complex, then there are also bijections
$\Vect_r^{\topo}(X)\rightarrow\Vect_r^{\topo}(X\times I^1)$, where $I^1$ is the
unit interval and $\Vect_r^{\topo}$ denotes the set of isomorphism classes of
rank $r$ topological complex vector bundles on $X$. Thus, the Quillen-Suslin
theorem and the Bass-Quillen conjecture suggest that there might be a
homotopy theory for schemes in which the affine line $\AA^1$ plays the role of
the contractible unit interval.

Additional evidence for this hypothesis is provided by the fact that many important cohomology
theories for smooth schemes are $\AA^1$-invariant.
For example, the pullback maps in Chow groups
$$\CH^*(X)\rightarrow\CH^*(X\times_k\AA^1),$$ Grothendieck groups
$$\K_0(X)\rightarrow\K_0(X\times_k\AA^1),$$ and \'etale cohomology groups
$$\H^*_{\et}(X,\mu_\ell)\rightarrow\H^*_{\et}(X\times_k\AA^1,\mu_\ell)$$ are isomorphisms when $X$ is smooth
over a field $k$ and $\ell$ is invertible in $k$.

Now, we should note immediately, that the functor
$\Vect_r:\Sm_k^{\op}\rightarrow\mathrm{Sets}$ is \emph{not} itself
$\AA^1$-invariant. Indeed, there are vector bundles on $\PP^1\times_k\AA^1$
that are not pulled back from $\PP^1$. The reader can construct a vector bundle
on the product such that the restriction to $\PP^1\times\{1\}$ is a non-trivial
extension $E$ of $\Oscr(1)$ by $\Oscr(-1)$
while the restriction to $\PP^1\times\{0\}$ is $\Oscr(-1)\oplus\Oscr(1)$.
Surprisingly, for affine schemes, this proves not to be a problem: forcing
$\Vect_r$ to be $\AA^1$-invariant produces an object which still has the
correct values on smooth affine schemes.

The construction of (unstable) motivic homotopy theory over a quasi-compact and
quasi-separated scheme $S$ takes three steps. The first stage
is a homotopical version of the process of passing from a category of schemes to
the topos of presheaves on the category. Specifically, one
enlarges the class of spaces from $\Sm_S$, the category of smooth schemes over
$S$, to the category of presheaves of simplicial sets $\sPre(\Sm_S)$ on $\Sm_S$. An object $X$ of
$\sPre(\Sm_S)$ is a functor $X:\Sm_S^{\op}\rightarrow\sSets$, where $\sSets$ is
the category of simplicial sets, one model for the homotopy theory of
CW complexes. Presheaves of sets give examples of simplicial presheaves by
viewing a set as a discrete space. There is a Yoneda embedding
$\Sm_S\rightarrow\sPre(\Sm_S)$ as usual. In the next stage, one imposes a
descent condition, namely focusing on those presheaves that satisfy the
appropriate homotopical version of Nisnevich
descent. We note that one can construct motivic homotopy theory with other
topologies, as we will do later in Section~\ref{sec:classifying}. The choice of
Nisnevich topology is motivated by the fact that it is the coarsest topology
where we can prove the purity theorem (Section~\ref{sec:purity}) and the finest
where we can prove representability of $K$-theory (Section~\ref{sub:rerpresentability}).
The result is a homotopy theory enlarging the category
of smooth schemes over $S$ but which does not carry any information about the
special role $\AA^1$ is to play. In the third and final stage, the projection
maps $X\times_S\AA^1\rightarrow X$ are formally inverted.

In practice, care must be taken in each stage; the technical framework we
use in this paper is that of model categories, although one could equally use
$\infty$-categories instead, as has been done recently by Robalo~\cite{robalo}. Model categories, Quillen functors (the
homotopical version of adjoint pairs of functors), homotopy limits and
colimits, and Bousfield's theory of localization are all explained in the lead
up to the construction of the motivic homotopy category.

When $S$ is regular and noetherian, algebraic $K$-theory turns out to be
representable in $\Spc_S^{\AA^1}$, as are
many of its variants. A pleasant surprise however is that despite the fact that
$\Vect_r$ is not $\AA^1$-invariant on all of $\Sm_S$, its $\AA^1$-localization
still has the correct values on smooth affine schemes over $k$. This is a
crucial result of Morel~\cite{morel}*{Chapter 8}, which was simplified in the Zariski topology by Schlichting \cite{schlichting-euler}, 
and simplified and generalized by Asok, Hoyois, and
Wendt~\cite{asok-hoyois-wendt}. This fact is at the heart of applications of
motivic homotopy theory to the classification of vector bundles on smooth
affine complex varieties by Asok and Fasel.

\medskip
We describe now the contents of the paper. As mentioned above,
the document below reflects topics the authors decided should belong in a first
introduction to $\AA^1$-homotopy theory, especially for people coming from
algebraic geometry. Other surveys in the field which focus on different aspects
of the theory include  \cite{dro-book}, \cite{levine-milan},
\cite{levine-vietnam}, \cite{morel-icm}; a textbook reference for the ideas
covered in this survey is \cite{morel}. Voevodsky's ICM address pays special
attention to the topological motivation for the theory in~\cite{voevodsky-icm}.

Some of these topics were the focus of
Antieau's summer school course at the AMS Summer Institute in
Algebraic Geometry at the University of Utah in July 2015. This includes the
material in Section~\ref{sec:topvect} on topological vector bundles. This
section is meant to explain the power of the Postnikov tower approach to
classification problems and entice the reader to dream of the possibilities
were this possible in algebraic geometry.

The construction of the motivic homotopy category is
given in Section~\ref{sec:construction} after an extensive introduction to
model categories, simplicial presheaves, and the Nisnevich topology.
Other topics are meant to fill gaps in the literature,
while simultaneously illustrating the techniques common to the field.
Section~\ref{sec:basic} establishes the basic properties of
motivic homotopy theory over $S$. It is meant to be a
kind of cookbook and contains many examples, exercises, and computations.
In Section~\ref{sec:classifying}
we define and give examples of classifying spaces $\B G$ for linear algebraic
groups $G$, and perform some calculations of their homotopy sheaves. The
answers will involve algebraic $K$-theory which is discussed in
Section~\ref{sec:K}. Following \cite{morel-voevodsky}, with some modifications,
we give a self-contained proof that algebraic $K$-theory is representable in
the $\AA^1$-homotopy category, and we identify its representing object as the
$\AA^1$-homotopy type of a classifying space $\BGL_\infty$. In
Section~\ref{sec:purity}, we prove the critical purity theorem which is the
source of Gysin sequences. A brief vista at
the end of the paper, in Section~\ref{sec:asok-fasel}, illustrates how all of
this comes together to classify vector bundles on smooth affine schemes.
Finally, in Section~\ref{sec:exercises}, we gather some miscellaneous additional
exercises.

Many things are not in this paper. We view the biggest omission as the
exclusion of a presentation of the first non-zero homotopy sheaves of punctured
affine spaces. Morel proved that
$$\pi_n^{\AA^1}(\AA^{n+1}-\{0\})\iso\mathbf{K}_{n+1}^{\mathrm{MW}},$$
the $(n+1)$st unramified Milnor-Witt $K$-theory sheaf where $n \geq 1$. A proof may be found
in~\cite{morel}*{Chapter 6}.

Other topics we would include granted unlimited time include stable motivic
homotopy theory, and in particular
the motivic spectral sequence, the stable connectivity theorem of Morel,
the theory of algebraic cobordism due to Levine-Morel~\cite{levine-morel},
motivic cohomology and the work of Voevodsky and Rost on the
Bloch-Kato conjecture, and
the work~\cite{dugger-isaksen} of Dugger and Isaksen on the motivic Adams spectral sequence.

\paragraph{Acknowledgements.} These notes were commissioned by the organizers
of the Graduate Student Bootcamp for the 2015 Algebraic Geometry Summer
Research Institute. Each mentor at the bootcamp gave an hour-long lecture and
then worked with a small group of graduate students and postdocs over the course of the
week to understand their topic in greater detail.
We would like to the thank the organizers, \.{I}zzet Co\c{s}kun, Tommaso de
Fernex, Angela Gibney, and Max Lieblich, for creating a wonderful atmosphere in
which to do this.

It was a pleasure to have the following students
and postdocs in Antieau's group: John Calabrese (Rice), Chang-Yeon Cho (Berkeley),
Ed Dewey (Wisconsin), Elden Elmanto (Northwestern), M\'arton Hablicsek
(Penn), Patrick McFaddin (Georgia), Emad Nasrollahpoursamami (Caltech), 
Yehonatan Sella (UCLA), Emra Sert\"oz (Berlin), Arne Smeets (Imperial),
Arnav Tripathi (Stanford), and Fei Xie (UCLA).

We would also like to thank John Calabrese, Gabriela Guzman, Marc Hoyois, Kirsten
Wickelgren, and Benedict Williams for comments and corrections on earlier drafts of this paper.

Aravind Asok deserves special thanks for several useful conversations
about material to include in the paper.

Finally, we thank the anonymous referee for their careful reading of the paper;
they caught several mistakes, both trivial and non-trivial, that are corrected
in this version. They also supplied many, many additional references.

\section{Classification of topological vector bundles}\label{sec:topvect}

We introduce the language of Postnikov towers and illustrate their use through
several examples involving the classification of topological vector bundles.
The point is to tempt the reader to imagine the power these tools would possess in
algebraic geometry if they existed.
General references for the material here include Hatcher and
Husemoller's books~\cite{hatcher,husemoller}.

\subsection{Postnikov towers and Eilenberg-MacLane spaces}

Let $S^i$ denote the $i$-sphere, embedded in $\RR^{i+1}$ as the unit sphere,
and let $s=(1,0,\ldots,0)$ be the basepoint.
Recall that if $(X,x)$ is a pointed space, then
$$\pi_i(X,x)=[(S^i,s),(X,x)]_*,$$ the set of homotopy classes of pointed maps
from the $i$-sphere to $X$. The set of path-components $\pi_0(X,x)$ is simply a
set pointed by the component containing $x$. The fundamental group is
$\pi_1(X,x)$, a not-necessarily-abelian group. The groups $\pi_i(X,x)$ are
abelian for $i\geq 2$. For a path-connected space, $\pi_i(X,x)$ does not depend
on $x$, so we will often omit $x$ from our notation and write $\pi_i(X)$ or
$\pi_iX$.

\begin{definition}
    A map of spaces $f:X\rightarrow Y$ is an \df{$n$-equivalence} if
    $\pi_0(f):\pi_0(X)\rightarrow\pi_0(Y)$ is a bijection and if for each
    choice of a basepoint $x\in X$, the induced map
    $$\pi_i(f):\pi_i(X,x)\rightarrow\pi_i(Y,f(x))$$ is an isomorphism for each $i<n$ and
    a surjection for $i=n$. The map $f$ is a \df{weak homotopy equivalence} if
    it is an $\infty$-equivalence.
\end{definition}

Typically we are interested in working with spaces up to weak homotopy
equivalence. The correct notion of a fibration in this setting is a Serre
fibration. Let $D^n$ denote the $n$-disk and $I^1$ the unit interval. A map $p:E\rightarrow B$ is a \df{Serre fibration} (or simply a
\df{fibration} as we will not use any other notion of fibration for maps of
topological spaces) if for every diagram
\begin{equation*}
    \xymatrix{
        D^n\times\{0\}\ar[r]\ar[d]^i   &   E\ar[d]^p\\
        D^n\times I^1\ar[r]\ar@{.>}[ur]  &   B
    }
\end{equation*}
of solid arrows, there exists a dotted lift making both triangles commute. In
other words, $p$ has the \df{right lifting property} with respect to the maps
$D^n\times\{0\}\rightarrow D^n\times I^1$. This property is equivalent to
having the right lifting property with respect to all maps $A\times I^1\cup
X\times\{0\}\rightarrow X$ for all CW pairs $(X,A)$.

There is a functorial way of replacing an arbitrary map $f:X\rightarrow Y$
by a Serre fibration. Let $P_f$ be the space consisting of pairs $(x,\omega)$
where $x\in X$ and $\omega:I^1\rightarrow Y$ such that $\omega(0)=f(x)$. There
is a natural inclusion $X\rightarrow P_f$ sending $x$ to $(x,c_x)$, where $c_x$
is the constant path at $f(x)$, and there is a natural map $P_f\rightarrow Y$
sending $(x,\omega)$ to $\omega(1)$.

\begin{exercise}
    Show that the map $X\rightarrow P_f$ is a homotopy equivalence and that
    $P_f\rightarrow Y$ is a fibration.
\end{exercise}

Given a fibration $p:E\rightarrow B$ and a basepoint $e\in E$, the subspace
$F=p^{-1}(p(e))$ is the \df{fiber} of $p$ at $p(e)$. The point $e$ is inside
$F$. The crucial fact about Serre fibrations is that the sequence
$$(F,e)\rightarrow(E,e)\rightarrow (B,p(e))$$ gives rise to a long exact
sequence
$$\cdots\rightarrow\pi_n(F,e)\rightarrow\pi_n(E,e)\rightarrow\pi_n(B,p(e))\rightarrow\pi_{n-1}(F,e)\rightarrow\cdots
\rightarrow\pi_0(F,e)\rightarrow\pi_0(E,e)\rightarrow\pi_0(B,p(e))$$
of homotopy groups. Some explanation of `exactness' is required in low-degrees
as they are only groups or pointed sets. We refer
to~\cite{bousfield-kan}*{Section IX.4}.

\begin{definition}
    The \df{homotopy fiber} $F_f(y)$ of a map $f:X\rightarrow Y$ over a point $y\in Y$ is the
    fiber of $P_f\rightarrow Y$ over $y$. A sequence $(F,x)\rightarrow
    (X,x)\xrightarrow{f}(Y,y)$ of pointed spaces is a \df{homotopy fiber sequence} if
    \begin{enumerate}
        \item   $Y$ is path-connected,
        \item   $f(F)=y$, and
        \item   the natural map $F\rightarrow F_f(y)$ is a weak homotopy equivalence.
    \end{enumerate}
\end{definition}

The homotopy fiber sequences are those which behave just as
well as fiber sequences from the point of view of their homotopy groups.

\begin{exercise}
    Given a space $X$ and a point $x\in X$, the based loop space $\Omega_x X$
    is the homotopy fiber of $x\rightarrow X$. When $X$ is path-connected or
    the basepoint is implicit, we will write $\Omega X$ for $\Omega_xX$.
\end{exercise}

\begin{theorem}
    Let $X$ be a path-connected space, so that $\pi_0(X,x)=*$. There exists a
    commutative diagram of pointed spaces
    \begin{equation*}
        \xymatrix{
            &\vdots\ar[d]\\
            &X[i]\ar[d]^{p_i}\\
            &X[i-1]\ar[d]\\
            &\vdots\ar[d]\\
            &X[2]\ar[d]^{p_2}\\
            &X[1]\ar[d]\\
            X\ar[r]\ar[ur]\ar[uur]\ar@/^/[uuuur]\ar@/^2pc/[uuuuur]&\ast
        }
    \end{equation*}
    such that
    \begin{enumerate}
        \item   $$\pi_j(X[i])\iso\begin{cases}
                    \pi_jX  &   j\leq i,\\
                    0       &   j>i;
                \end{cases}$$
        \item   $X\rightarrow X[i]$ is an $(i+1)$-equivalence;
        \item   each map $X[i+1] \rightarrow X[i]$ is a Serre fibration;
        \item   the natural map $X\rightarrow\lim_i X[i]$ is a weak homotopy
            equivalence.
    \end{enumerate}
    The space $X[i]$ is the \df{$i$th Postnikov section of $X$}, and the
    diagram is called the \df{Postnikov tower} of $X$.
\end{theorem}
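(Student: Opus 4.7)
The plan is a two-stage construction: first, for each $i \geq 1$, build a space $Y_i$ with a map $X \to Y_i$ satisfying (1) and (2); second, turn these into a tower of Serre fibrations and verify (4).

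For the first stage, I would fix $i$ and build $Y_i$ as a relative CW complex over $X$. Pick generators of $\pi_{i+1}(X)$ and attach $(i+2)$-cells along representatives to kill $\pi_{i+1}$; then pick generators of $\pi_{i+2}$ of the resulting space and attach $(i+3)$-cells; iterate over all higher dimensions. Because only cells of dimension $\geq i+2$ are attached, cellular approximation shows that $\pi_j(X) \to \pi_j(Y_i)$ is an isomorphism for $j \leq i$, while the construction forces $\pi_j(Y_i) = 0$ for $j > i$. This gives (1) and (2).

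For the tower maps, note that both $Y_i$ and $Y_{i+1}$ are built from $X$ by attaching cells, and the cells of $Y_{i+1}$ have dimension $\geq i+3$. Since $\pi_k(Y_i) = 0$ for all $k > i$, the inclusion $X \hookrightarrow Y_i$ extends cell by cell to a map $Y_{i+1} \to Y_i$ by obstruction theory (attaching maps $S^{k} \to Y_i$ with $k \geq i+2$ bound discs). To convert these into Serre fibrations while preserving composites, I would proceed inductively: set $X[1] := Y_1$, and, having built $X[i]$ with a weak equivalence $Y_i \xrightarrow{\sim} X[i]$ and Serre fibrations $X[i] \to X[i-1] \to \cdots \to X[1]$, factor the composite $Y_{i+1} \to Y_i \xrightarrow{\sim} X[i]$ through the path space $P$ defined in the text, producing $Y_{i+1} \xrightarrow{\sim} X[i+1] \twoheadrightarrow X[i]$ with the second map a Serre fibration. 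Properties (1) and (2) transport along the weak equivalence to $X[i+1]$, and by construction the triangles involving $X$ commute up to homotopy (which can be rigidified using the standard mapping cylinder trick).

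Finally, for (4), because the homotopy groups of the tower stabilize in each fixed degree (namely $\pi_j(X[i]) = \pi_j(X)$ for all $i \geq j$), the Milnor $\lim^1$-sequence for a tower of Serre fibrations collapses to give $\pi_j(\lim_i X[i]) \cong \lim_i \pi_j(X[i]) \cong \pi_j(X)$, showing that $X \to \lim_i X[i]$ is a weak homotopy equivalence. The main obstacle is the bookkeeping needed to produce a tower of Serre fibrations compatibly with the maps from $X$; the inductive path-space factorization above avoids the non-functoriality issues that would arise from naively replacing every map at once, at the cost of producing the tower only after these inductive choices. If one wanted the theorem to be stated functorially in $X$, one would instead invoke a functorial Reedy-style fibrant replacement on the diagram indexed by $\mathbb{N}^{\mathrm{op}}$.
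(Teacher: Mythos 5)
Your argument is correct and fleshes out precisely the cell-attaching construction that the paper delegates to Hatcher's Chapter~4, so it is the same approach. The one minor stylistic difference is that Hatcher arranges for $X[i+1]$ to be a subcomplex of $X[i]$ (one builds $X[i]$ by attaching further cells to $X[i+1]$ rather than to $X$ from scratch), which makes the tower maps inclusions and dispenses with the obstruction-theoretic step you use to produce the maps $Y_{i+1}\to Y_i$; your route works equally well.
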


\begin{proof}
    See Hatcher~\cite{hatcher}*{Chapter 4}. The basic idea is that one builds
    $X[i]$ from $X$ by first attaching cells to $X$ to kill $\pi_{i+1}$. Then,
    attaching cells to the result to kill $\pi_{i+2}$, and so on.
\end{proof}

\begin{definition}
    Let $i>0$ and let $G$ be a group, abelian if $i>1$. A \df{$K(G,i)$-space} is a
    connected space $Y$ such that $\pi_i(Y)\iso G$ and $\pi_j(Y)=0$ for $j\neq i$.
    As a class, these are referred to as \df{Eilenberg-MacLane spaces}.
\end{definition}

\begin{exercise}
    The homotopy fiber of $X[i]\rightarrow X[i-1]$ is a $K(\pi_iX,i)$-space.
\end{exercise}


Suppose that $Y$ is another space,
and we want to construct a map $Y\rightarrow X$. We can hope to start with a
map $Y\rightarrow X[1]$, lift it to a map $Y\rightarrow X[2]$, and on up the
Postnikov tower. Using the fact that $X$ is the limit of the tower, we would
have constructed a map $Y\rightarrow X$.

What we need is a way of knowing when a map $Y\rightarrow X[i]$ lifts to a map
$Y\rightarrow X[i+1]$. We need an \df{obstruction theory} for such lifts. Before
getting into the details in the topological setting, we consider an example
from algebra. Let $$0\rightarrow E\rightarrow F\rightarrow G\rightarrow 0$$ be
an exact sequence of abelian groups. Let $g:H\rightarrow G$ be a homomorphism.
When can we lift $g$ to a map $f:H\rightarrow F$? The extension is classified
by a class $p\in\Ext^1(G,E)$. This can be viewed as a map $G\rightarrow E[1]$
in the derived category $\D(\ZZ)$. Composing with $f$, we get the pulled back
extension $f^*p\in\Ext^1(H,E)$, viewed either as the composition $H\rightarrow
G\rightarrow E[1]$ in $\D(\ZZ)$, or as an induced extension $$0\rightarrow E\rightarrow
F'\rightarrow H\rightarrow 0.$$ Now, we know that $g$ lifts if and only if the
extension $F'$ splits if and only if $f^*p=0\in\Ext^1(H,E)$.
The theory we explain now is a \df{nonabelian} version of this example.

\begin{definition}
    A homotopy fiber sequence $F\rightarrow X\xrightarrow{p} Y$ is \df{principal}
    if there is a delooping $B$ of $F$ (so that $\Omega B\we F$) and a map
    $k:Y\rightarrow B$ such that $p$ is homotopy equivalent to the homotopy fiber of $k$. We will call
    $k$ the \df{classifying map} of the principal fiber sequence.
\end{definition}

\begin{example}
    The reduced cohomology $\widetilde{\H}^{i+1}(X,A)$ of a pointed space
    $(X,x)$ with coefficients in an abelian group $A$
    is the kernel of the restriction map
    $\H^{i+1}(X,A)\rightarrow\H^{i+1}(\{x\},A)$. If $i+1>0$, then the reduced
    cohomology is isomorphic to the unreduced cohomology.
    Recall that the reduced cohomology of $X$ can be represented as
    $\widetilde{\H}^{i+1}(X,A)=[(X,x),K(A,i+1)]_*$. That is, Eilenberg-MacLane spaces
    represent cohomology classes. Given a cohomology class
    $k\in\widetilde{\H}^{i+1}(X,A)$ viewed as a map $X\rightarrow K(A,i+1)$,
    the homotopy fiber of $k$ is a space $Y$ with
    $Y\rightarrow X$ having homotopy fiber $K(A,i)$.

    Suppose that $k\in\widetilde{\H}^2(X,\ZZ)$. The homotopy fiber sequence one
    gets is $K(\ZZ,1)\rightarrow Y\rightarrow X$. Since $S^1\we\CC^*\we
    K(\ZZ,1)$, we see that $k$ corresponds to a topological complex line bundle (up to
    homotopy) $Y\rightarrow X$, as expected.
\end{example}

\begin{lemma}\label{lem:lifting}
    Let $F \rightarrow X\rightarrow Y$ be a principal fibration classified by
    $k:Y\rightarrow B$ (so that $F\we\Omega B$). Let $Z$ be a CW complex.
    Then, a map $Z\rightarrow Y$ lifts to $Z\rightarrow X$ if and only if the
    composition $Z\rightarrow Y\rightarrow B$ is nullhomotopic.
\end{lemma}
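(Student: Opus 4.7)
The plan is to exploit a concrete model of the homotopy fiber to reduce the statement to an essentially tautological statement about maps out of a pullback. Since $F \to X \to Y$ is principal, we may replace $X$ up to weak homotopy equivalence by the standard homotopy fiber of $k\colon Y \to B$, namely the pullback
\begin{equation*}
    X' = Y \times_B PB,
\end{equation*}
where $PB = \{\omega \colon I^1 \to B \mid \omega(0) = *\}$ is the based path space and the map $PB \to B$ is evaluation at $1$. A point of $X'$ is thus a pair $(y,\omega)$ with $\omega$ a path in $B$ from the basepoint to $k(y)$.

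With this model, the universal property of the pullback says that, for any space $Z$, specifying a map $Z \to X'$ is the same as specifying a map $f\colon Z \to Y$ together with a homotopy $H\colon Z \times I^1 \to B$ from the constant map at $*$ to $k \circ f$, i.e.\ a nullhomotopy of $k \circ f$. This observation immediately settles the reverse direction: given a nullhomotopy $H$ of $k \circ f$, take its adjoint $Z \to PB$ and pair it with $f$ to produce the desired lift $Z \to X' \simeq X$.

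For the forward direction, suppose $f\colon Z \to Y$ admits a lift $\tilde f \colon Z \to X$; transport this to a lift $\tilde f' \colon Z \to X'$, which by the previous paragraph is a pair consisting of a map $Z \to Y$ (homotopic to $f$) and a nullhomotopy of its composition with $k$. Composing this nullhomotopy with a homotopy between the two maps $Z \to Y$ yields a nullhomotopy of $k \circ f$.

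The only point that requires care is that the statement of the lemma is formulated up to homotopy on both sides---the notion of ``lifts'' and of ``nullhomotopic'' are both weak. The hypothesis that $Z$ is a CW complex is what lets us freely pass between the two models $X$ and $X'$: the weak equivalence $X' \to X$ induces a bijection on homotopy classes of maps out of $Z$ by Whitehead's theorem, and similarly homotopic maps $Z \to Y$ have weakly equivalent homotopy fibers of their compositions with $k$. I expect this bookkeeping about homotopies of homotopies to be the only real subtlety; the geometric content is the universal property of the pullback.
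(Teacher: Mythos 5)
Your proof is correct, and it is close in spirit to the paper's but differs in mechanism. The paper replaces $Y \to B$ by a fibration (using the mapping path space $P_k$), takes $X$ to be the honest fiber over the basepoint, and then invokes the homotopy lifting property: a nullhomotopy $Z \times I^1 \to B$ lifts to a homotopy from $f$ to a map landing in the fiber. You instead take the path fibration $PB \to B$, form the pullback $X' = Y \times_B PB$, and read both implications off the universal property of the pullback, which says that a map $Z \to X'$ \emph{is} a pair consisting of a map $Z \to Y$ and a nullhomotopy of its composite with $k$. The two arguments are built around the same object ($P_k$ and $Y \times_B PB$ are the same homotopy fiber up to reversing paths), but yours replaces the covering-homotopy step with a point-set tautology, which makes the reverse implication immediate; the price is the bookkeeping you correctly flag at the end, namely transporting lifts along the weak equivalence $X' \simeq X$ over $Y$ using Whitehead and the CW hypothesis on $Z$, and upgrading a lift up to homotopy to an actual lift when $X \to Y$ is a fibration. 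That bookkeeping is genuine but routine, and the paper is equally terse about it, so your argument stands as a valid alternative.
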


\begin{proof}
    This follows from the definition of a fibration. Indeed, we can assume that
    $Y\rightarrow B$ is a fibration (by replacing the map by a fibration using
    $P_k$ if necessary) and that $X$ is the fiber over the
    basepoint. Applying the right lifting property to the case at hand,
    where $Z\times I^1\rightarrow B$ is a nullhomotopy from the map
    $Z\rightarrow B$ to the map $Z\rightarrow\{b\}\subseteq B$, we see that the
    initial map $Z\rightarrow Y$ is homotopic to a map landing in the actual
    fiber of $Y\rightarrow B$. This fiber is $X$.
\end{proof}

\begin{theorem}\label{thm:principal}
    If $X$ is simply connected ($X$ is path-connected and $\pi_1(X)=0$), then
    the Postnikov tower of $X$ is a tower of principal fibrations. In
    particular, for each $i\geq 1$ there is a ladder of homotopy fiber
    sequences
    \begin{equation*}
        \xymatrix{
            K(\pi_iX,i)\ar[r]   &   X[i]\ar[d]    &\\
            &X[i-1]\ar[r]^{k_{i-1}}&K(\pi_iX,i+1).
        }
    \end{equation*}
    Specifically, $p_i:X[i]\rightarrow X[i-1]$ is the homotopy fiber of $k_{i-1}$
    and $K(\pi_iX,i)\rightarrow X[i]$ is the homotopy fiber of $p_i$. The map
    $k_{i-1}$ is the \df{$(i-1)$st $k$-invariant} of $X$. It represents a
    cohomology class in $\H^{i+1}(X[i-1],\pi_iX)$.
\end{theorem}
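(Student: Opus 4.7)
The plan is to construct each $k$-invariant as a transgression class in the Serre spectral sequence of $p_i$, and then use that transgression images vanish on the total space to identify $X[i]$ with the homotopy fiber of $k_{i-1}$. For $i=1$ the statement is vacuous (since $\pi_1 X = 0$ gives $X[1]\we * = X[0]$ and $K(\pi_1 X, 1)=*$), so I assume $i\geq 2$; then $X[i-1]$ is simply connected, and the long exact sequence of $p_i$ identifies its homotopy fiber with $K(\pi_i X, i)$, giving the left-hand homotopy fiber sequence in the ladder.

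Next I would construct $k_{i-1}$ using the Serre spectral sequence
$$E_2^{p,q}=H^p\bigl(X[i-1];H^q(K(\pi_i X, i);\pi_i X)\bigr)\Rightarrow H^{p+q}(X[i];\pi_i X).$$
Because $K(\pi_i X, i)$ is $(i-1)$-connected, $H^q(K(\pi_i X, i);\pi_i X)=0$ for $0<q<i$, forcing $d_r=0$ on $E_r^{0,i}$ for $2\leq r\leq i$. Hence the fundamental class $\iota\in E_2^{0,i}$ (corresponding to $\id_{\pi_i X}$ under Hurewicz and universal coefficients) transgresses to a well-defined class $k_{i-1}:=d_{i+1}(\iota)\in H^{i+1}(X[i-1];\pi_i X)$, which, since $X[i-1]$ is simply connected, is represented by a pointed map $k_{i-1}\colon X[i-1]\rightarrow K(\pi_i X, i+1)$.

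Now I would form the homotopy pullback
$$\xymatrix{Y\ar[r]\ar[d]&PK(\pi_i X, i+1)\ar[d]\\ X[i-1]\ar[r]^{k_{i-1}}&K(\pi_i X, i+1),}$$
which gives a principal fibration sequence $K(\pi_i X, i)\rightarrow Y\rightarrow X[i-1]$. Transgression classes pull back to zero on the total space (they vanish in $E_\infty^{i+1,0}$), so $p_i^*(k_{i-1})=0$ in $H^{i+1}(X[i];\pi_i X)$, i.e.\ $k_{i-1}\circ p_i$ is nullhomotopic; by Lemma~\ref{lem:lifting}, $p_i$ factors through $Y$ via a map $\phi\colon X[i]\rightarrow Y$ over $X[i-1]$.

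The main obstacle is showing $\phi$ is a weak equivalence. The cleanest route is to invoke the classification of $K(A,i)$-fibrations over a simply connected base $B$ by $[B, K(A,i+1)]$ via transgression: both $p_i\colon X[i]\rightarrow X[i-1]$ and $Y\rightarrow X[i-1]$ transgress the fundamental class of the fiber to $k_{i-1}$, so they are equivalent as fibrations over $X[i-1]$, whence $\phi$ is a weak equivalence. A more direct verification uses the long exact sequences: a short computation shows $\pi_j(Y)=\pi_j X$ for $j\leq i$ and $\pi_j(Y)=0$ for $j>i$ (using that $X[i-1]$ is $(i-1)$-truncated, so $\pi_i(X[i-1])=\pi_{i+1}(X[i-1])=0$), matching the homotopy groups of $X[i]$; tracking the identifications on $\pi_i$ induced by the two boundary maps shows $\phi_\ast$ is an isomorphism on $\pi_i$, and the five-lemma finishes. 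Iterating in $i$ yields the full ladder of principal fibrations in the theorem statement.
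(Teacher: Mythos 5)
The paper does not actually supply a proof of Theorem~\ref{thm:principal}; it is stated as a classical fact (the earlier existence theorem for Postnikov towers is referred to Hatcher, and this refinement is also found there, e.g.\ Hatcher~4.69), so your proposal must be judged on its own terms. Your argument is the standard one and is essentially correct: defining $k_{i-1}$ as the transgression $d_{i+1}(\iota)$ in the Serre spectral sequence of $p_i$, observing that the transgression image dies on the total space so that $k_{i-1}\circ p_i$ is nullhomotopic, and then invoking the classification of $K(A,i)$-fibrations over a simply connected base by transgression in $H^{i+1}(B;A)$ to conclude $p_i$ and the pullback $Y\rightarrow X[i-1]$ of the path-loop fibration are fiber homotopy equivalent. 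Two small points deserve care. First, the classification theorem directly produces a fiber homotopy equivalence $\psi\colon X[i]\simeq Y$ over $X[i-1]$, which is what the theorem asks for; the specific lift $\phi$ you built from a chosen nullhomotopy of $k_{i-1}\circ p_i$ need not coincide with $\psi$, so the phrase ``whence $\phi$ is a weak equivalence'' is not quite right — you should simply discard $\phi$ and use $\psi$. Second, the alternative ``direct verification'' via the long exact sequences is glib in the same way: an arbitrary choice of lift $\phi$ induces \emph{some} endomorphism $\alpha\colon\pi_iX\rightarrow\pi_iX$ on fibers, and matching abstract homotopy groups does not make $\alpha$ an isomorphism; one would need to show the nullhomotopy can be chosen so that the induced map on fibers represents $\mathrm{id}_{\pi_i X}$, which is exactly the content the classification theorem packages cleanly. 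So the classification route is the one to keep; the LES route as written has a gap.
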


\begin{corollary}
    Let $X$ be a simply connected space. For each $f:Y\rightarrow X[i-1]$, there
    is a uniquely determined class $f^*k_{i-1}\in\H^{i+1}(Y,\pi_iX)$. The map
    $f$ lifts if and only if $f^*k_{i-1}=0$.
\end{corollary}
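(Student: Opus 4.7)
The plan is to combine Theorem \ref{thm:principal} with Lemma \ref{lem:lifting}, using the fact that cohomology with coefficients in an abelian group $A$ is representable by the Eilenberg-MacLane space $K(A,i+1)$.

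First, I would invoke Theorem \ref{thm:principal}, which produces the classifying map $k_{i-1}: X[i-1] \to K(\pi_iX, i+1)$ exhibiting $p_i: X[i] \to X[i-1]$ as a principal fibration with fiber $K(\pi_iX,i)$. The homotopy class of $k_{i-1}$ is by definition an element of $[X[i-1], K(\pi_iX,i+1)]_* \cong \widetilde{\H}^{i+1}(X[i-1], \pi_iX) = \H^{i+1}(X[i-1], \pi_iX)$ (the last equality using $i+1 > 0$). Given $f: Y \to X[i-1]$, the composite $k_{i-1}\circ f: Y \to K(\pi_iX, i+1)$ defines a well-defined homotopy class, and hence a uniquely determined cohomology class $f^*k_{i-1} \in \H^{i+1}(Y, \pi_iX)$. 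This handles the first assertion.

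For the lifting criterion, I would apply Lemma \ref{lem:lifting} to the principal fibration $X[i] \to X[i-1]$ with classifying map $k_{i-1}$. The lemma says that $f$ lifts to a map $Y \to X[i]$ if and only if the composite $k_{i-1} \circ f : Y \to K(\pi_iX, i+1)$ is nullhomotopic. By the representability of cohomology by Eilenberg-MacLane spaces, being nullhomotopic is exactly the condition that the class $f^*k_{i-1} \in \H^{i+1}(Y,\pi_iX)$ vanishes.

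There is really no hard step here: the corollary is essentially an unpacking of Theorem \ref{thm:principal} (which supplies the principal structure on the Postnikov tower) and Lemma \ref{lem:lifting} (which converts the existence-of-a-lift problem into a nullhomotopy problem). The only point requiring minor care is the implicit assumption that $Y$ be a CW complex, which is needed to apply Lemma \ref{lem:lifting}; I would either state this as a blanket standing hypothesis on spaces or note that one may replace $Y$ by a CW approximation without affecting either the set of homotopy classes of maps into $X[i-1]$ or the cohomology group $\H^{i+1}(Y, \pi_iX)$.
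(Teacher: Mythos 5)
Your argument matches the paper's own proof exactly: invoke Theorem~\ref{thm:principal} for the principal structure, Lemma~\ref{lem:lifting} to translate lifting into a nullhomotopy condition, and representability of cohomology by Eilenberg--MacLane spaces to convert nullhomotopy into vanishing of $f^*k_{i-1}$. Your remark about the CW hypothesis on $Y$ is a reasonable extra precision that the paper leaves implicit.
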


\begin{proof}
    This follows from Theorem~\ref{thm:principal} and Lemma~\ref{lem:lifting},
    together with the fact that Eilenberg-MacLane spaces represent cohomology
    classes.
\end{proof}

A more complicated, equivariant version of the theory applies in the
non-principal case (so that $X$ is in particular not simply connected), but we ignore that for now as it is unnecessary in the
topological applications we have in mind below. It is explained briefly in
Section~\ref{sec:asok-fasel}.

\subsection{Representability of topological vector bundles}

\begin{definition}
    Let $G$ be a topological group. Recall that a $G$-\df{torsor} on a space
    $X$ is a space $p:Y\rightarrow X$ over $X$ together with a (left) group action $a:G\times
    Y\rightarrow Y$ such that
    \begin{enumerate}
        \item   $p(a(g,y))=p(y)$ (the action preserves fibers), and
        \item   the natural map $G\times Y\rightarrow Y\times_X Y$ given by
            $(g,y)\mapsto (a(g,y),y)$ is an isomorphism.
    \end{enumerate}
    Torsors for $G$ are also called \df{principal $G$-bundles}.
\end{definition}

\begin{example}
    The \df{trivial} $G$-torsor on $X$ is $G\times X\rightarrow X$, with the
    projection map as the structure map.
\end{example}

Given $G$-torsors $p:Y\rightarrow X$ and $p':Y'\rightarrow X'$, a morphism of
$G$-torsors $(f,g):(Y,p,X)\rightarrow (Y',p',X')$ is a map $f:Y\rightarrow Y'$ of
$G$-spaces (i.e., compatible with the $G$-action) together with a map
$g:X\rightarrow X'$ such that $g(p(y))=p'(f(y))$. Given a map $g:X\rightarrow
X'$ and a $G$-torsor $p':Y'\rightarrow X'$, there is a uniquely determined
$G$-torsor structure on $Y=X\times_{X'}Y'$. The projection map $f:Y\rightarrow
X$ makes $(f,g)$ into a morphism of $G$-torsors. We will write $g^*Y'$ for the pull back bundle.

\begin{definition}
    A $G$-torsor $Y\rightarrow X$ is \df{locally trivial} if there is an open
    cover $\{g_i:U_i\rightarrow X\}_{i\in I}$ of $X$ such that $g_i^*Y$ is isomorphic as a
    $G$-torsor to $G\times U_i$ for each $i$. The subcategory of $G$-torsors on
    a fixed base $X$ is naturally a groupoid. We write
    $\ShBun_G(X)$ for the full subcategory of locally trivial $G$-torsors on $X$. The set of
    isomorphism classes of $\ShBun_G(X)$ will be denoted $\mathrm{Tors}_G(X)$.
\end{definition}

\begin{example}
    Let $L\rightarrow X$ be a complex line bundle. The fibers are in particular
    $1$-dimensional complex vector spaces. Let $Y=L-s(X)$, where
    $S:X\rightarrow L$ is the $0$-section. There is a natural action of the
    topological (abelian) group $\CC^*$ on $Y$ given simply by scalar
    multiplication in the fibers. In this case, $Y$ becomes a principal
    $\CC^*$-bundle on $X$.
\end{example}

\begin{theorem} [Steenrod]
    Let $G$ be a topological group. Then, there is a connected space $\B G$ with a
    $G$-torsor $\gamma_{G}$ such that the natural pullback map
    $[X,\B G]\rightarrow\mathrm{Tors}_G(X)$ sending $f:X\rightarrow\B G$ to
    $f^*\gamma_{G}$ is an isomorphism for all paracompact Hausdorff spaces $X$.
    Moreover, $\Omega\B G\we G$.
\end{theorem}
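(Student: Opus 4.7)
The plan is to construct $\B G$ explicitly as the base of a universal principal $G$-bundle and then verify both the classification statement and the loop-space identification.

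First, I would construct a contractible space $\E G$ on which $G$ acts freely via Milnor's infinite join construction:
$$\E G = G * G * G * \cdots,$$
where a point is a formal convex combination $\sum t_i g_i$ with $t_i\in[0,1]$, all but finitely many zero, summing to $1$. A standard explicit contracting homotopy (sliding all weights onto the next coordinate) shows $\E G$ is contractible, and the diagonal $G$-action is free with local sections over $\B G := \E G/G$, so that $\E G \to \B G$ is a locally trivial principal $G$-bundle; call this torsor $\gamma_G$.

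Next, I would define the comparison map $[X,\B G]\to\Tors_G(X)$ by $f\mapsto f^*\gamma_G$. Well-definedness on homotopy classes reduces to the standard fact that over a paracompact Hausdorff base the pullbacks of a principal $G$-bundle along homotopic maps are isomorphic, proved via a covering-homotopy argument with partitions of unity on $X\times I^1$.

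The heart of the proof is bijectivity. Given any principal $G$-bundle $P\to X$, form the associated bundle $P\times_G\E G\to X$, whose fibers are homeomorphic to the contractible space $\E G$. The main technical hurdle, and the step I expect to cost the most, is the classical fact that over a paracompact Hausdorff base any bundle with contractible fibers admits a global section. One argues by induction over a locally finite trivializing cover, using a partition of unity to glue local sections and invoking contractibility of the fiber at each step; this is precisely where paracompactness is indispensable. A section of $P\times_G\E G\to X$ is the same data as a $G$-equivariant map $P\to\E G$, which descends to $f\colon X\to\B G$ with $f^*\gamma_G\cong P$, yielding surjectivity. Injectivity follows by running the same section argument on $X\times I^1$, which is again paracompact Hausdorff: an isomorphism $f_0^*\gamma_G\cong f_1^*\gamma_G$ yields a $G$-bundle on $X\times I^1$ interpolating the two, and a section of the associated $\E G$-bundle delivers the desired homotopy $f_0\simeq f_1$.

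Finally, $\Omega\B G\simeq G$ is immediate from the principal fibration $G\to\E G\to\B G$: since $\E G$ is contractible, the long exact sequence of homotopy groups gives $\pi_n(G)\cong\pi_{n+1}(\B G)$ for all $n\geq 0$, and comparing with the homotopy fiber sequence $\Omega\B G\to\ast\to\B G$ identifies $G$ with $\Omega\B G$ up to weak homotopy equivalence. Equivalently, in the language of Theorem~\ref{thm:principal}, $\B G$ is a delooping of $G$.
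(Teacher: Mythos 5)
Your proof is correct. It fills in, via Milnor's infinite join, exactly the construction that the paper delegates to the citation of Husemoller's Theorem 4.12.2, and your derivation of $\Omega\B G\we G$ from the fiber sequence $G\rightarrow\E G\rightarrow\B G$ with $\E G$ contractible is the same one the paper sketches.

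The one genuine divergence in method is worth flagging. The paper's proof remark offers a \emph{representability} route: one checks that $X\mapsto\mathrm{Tors}_G(X)$ is a homotopy-invariant functor satisfying a Mayer--Vietoris property and invokes Brown-type representability to produce $\B G$ abstractly, which applies in greater generality (and foreshadows the $\B G$ construction in the motivic setting later in the paper). You instead build $\E G$ and $\B G$ by hand. The explicit construction is more elementary and gives a concrete model, at the cost of the technical section-existence step you correctly identify as the hardest point. On that step, be a little careful with the phrase ``any bundle with contractible fibers over a paracompact Hausdorff base admits a section'': the clean classical statement is Dold's theorem for \emph{numerable} fibrations with contractible fibers, and paracompactness is what guarantees numerability. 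In Husemoller's and Milnor's treatment one sidesteps the general section theorem by writing the $G$-equivariant Gauss map $P\rightarrow\E G$ directly, $p\mapsto\sum_i t_i(\pi(p))\,\mathrm{pr}_G(\varphi_i(p))$, from a locally finite trivializing cover $\{U_i\}$ with partition of unity $\{t_i\}$ and local trivializations $\varphi_i$; this is entirely parallel to the Gauss map into Grassmannians described elsewhere in this section and is the cleanest way to make your surjectivity and injectivity steps airtight.
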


\begin{proof}
    See Husemoller~\cite{husemoller}*{Theorem 4.12.2}. The existence of $\B G$
    can be proved in greater generality as the representability of certain
    functors satisfying Mayer-Vietoris and homotopy invariance properties.
    The total space of $\gamma_G$ is a contractible space with a free
    $G$-action. Hence, $G\rightarrow\gamma_G\rightarrow\B G$ is a fiber
    sequence. It follows that $\Omega\B G\we G$.
\end{proof}

\begin{remark}
    Any CW complex is paracompact Hausdorff.
    Any differentiable manifold is paracompact and Hausdorff, as is the
    underlying topological space associated to a separated complex algebraic variety.
\end{remark}

\begin{example}
    If $A$ is an abelian group, then $K(A,n)$ can be given the structure of a
    topological abelian group. In this case, $\B K(A,n)$ is a $K(A,n+1)$-space.
    Indeed, $\Omega\B K(A,n)\we K(A,n)$.
\end{example}

\begin{definition}
    Let $p:Y\rightarrow X$ be a $G$-torsor, and let $F$ be a space with a
    (left) $G$-action. There is then a left $G$-action on $Y\times F$, the
    diagonal action. Let $F_Y$ denote the quotient $(Y\times F)/G$. There is
    a natural map $F_Y\rightarrow Y/G\iso X$. The fibers of this map are all
    isomorphic to $F$. The space $F_Y\rightarrow X$ is called the
    \df{$F$-bundle} associated to $Y$.
\end{definition}

\begin{example}
    Let $p:Y\rightarrow X$ be a locally trivial $\GL_n(\CC)$-torsor. Let $\GL_n(\CC)$ act on
    $\CC^n$ by matrix multiplication. Then, $\CC^n_Y\rightarrow X$ is a vector bundle. In fact, this
    association gives a natural bijection
    $$\mathrm{Tors}_{\GL_n(\CC)}(X)\iso\Vect_n^{\topo}(X).$$
\end{example}

\begin{corollary}
    If $X$ is a paracompact Hausdorff space, then there is a natural bijection
    \begin{equation*}
        [X,\BGL_n(\CC)]\iso\Vect_n^{\topo}(X).
    \end{equation*}
\end{corollary}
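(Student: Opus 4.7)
The plan is to obtain the claimed bijection by composing two previously established bijections: Steenrod's theorem applied to $G = \GL_n(\CC)$, and the bijection between $\GL_n(\CC)$-torsors and rank $n$ complex vector bundles via the associated bundle construction.

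First I would invoke Steenrod's theorem with $G = \GL_n(\CC)$, which is a topological group in its usual Euclidean topology. Since $X$ is paracompact Hausdorff by hypothesis, Steenrod's theorem produces a natural bijection
\begin{equation*}
    [X,\BGL_n(\CC)]\iso\mathrm{Tors}_{\GL_n(\CC)}(X),
\end{equation*}
sending a homotopy class $[f]$ to the pullback $f^*\gamma_{\GL_n(\CC)}$ of the universal $\GL_n(\CC)$-torsor on $\BGL_n(\CC)$.

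Next I would apply the associated bundle construction with the tautological representation of $\GL_n(\CC)$ on $\CC^n$ by matrix multiplication. As recorded in the example immediately preceding the corollary, for any locally trivial $\GL_n(\CC)$-torsor $Y\rightarrow X$ the associated bundle $\CC^n_Y\rightarrow X$ is a rank $n$ complex vector bundle, and this procedure yields a natural bijection
\begin{equation*}
    \mathrm{Tors}_{\GL_n(\CC)}(X)\iso\Vect_n^{\topo}(X).
\end{equation*}
Composing the two bijections gives the desired identification $[X,\BGL_n(\CC)]\iso\Vect_n^{\topo}(X)$.

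There is no real obstacle here; the content of the corollary is entirely absorbed in the two previously cited statements. The only point worth noting is that naturality of the composite bijection with respect to continuous maps $X'\rightarrow X$ follows from the naturality of each piece: pullback of principal bundles is natural in $X$, and the associated bundle construction commutes with pullback because fiber products commute with quotients by the free $G$-action. Thus the composed bijection transforms pullback of homotopy classes into pullback of vector bundles, as required for the statement to be natural.
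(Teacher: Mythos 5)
Your proof is correct and follows exactly the route the paper intends: the corollary is left without a written proof precisely because it is the composition of Steenrod's representability theorem with the immediately preceding example identifying $\mathrm{Tors}_{\GL_n(\CC)}(X)$ with $\Vect_n^{\topo}(X)$ via the associated-bundle construction. Your added remark on naturality of the composite is a sensible (if routine) elaboration of what the paper leaves implicit.
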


In the case of $\GL_n(\CC)$ we can construct a more explicit version of
$\BGL_n(\CC)$ by using Grassmannians. Let $\Gr_n(\CC^{n+k})$ denote the
Grassmannian of $n$-plane bundles in $\CC^{n+k}$, and let
$\Gr_n=\colim_k\Gr_n(\CC^{n+k})$ denote the colimit. Over each
$\Gr_n(\CC^{n+k})$ there is a canonical $\GL_n(\CC)$-bundle given by the
Stiefel manifold $V_n(\CC^{n+k})$, the space of $n$ linearly independent
vectors in $\CC^{n+k}$. The map sending a set of linearly independent vectors
to the subspace they span gives a surjective map
$$V_n(\CC^{n+k})\rightarrow\Gr_n(\CC^{n+k}).$$ There is a natural free action of
$\GL_n(\CC)$ on $V_n(\CC^{n+k})$, and
$V_n(\CC^{n+k})\rightarrow\Gr_n(\CC^{n+k})$ is a locally trivial $\GL_n(\CC)$-torsor with this
action.

\begin{lemma}
    The space $V_n(\CC^{n+k})$ is $2k$-connected for $n\geq 1$.
\end{lemma}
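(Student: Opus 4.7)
The plan is to induct on $n$, using a fibration that projects onto the first vector of an $n$-frame.

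For the base case $n=1$, I identify $V_1(\CC^{1+k}) = \CC^{k+1}\setminus\{0\}$, which deformation retracts onto $S^{2k+1}$. Since a sphere $S^m$ is $(m-1)$-connected, this gives $2k$-connectivity directly.

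For the inductive step, I consider the projection
\[
p:V_n(\CC^{n+k})\rightarrow\CC^{n+k}\setminus\{0\},\qquad (v_1,\ldots,v_n)\mapsto v_1.
\]
I need to check two things: first, that $p$ is a (Serre) fibration; second, that the fiber over a nonzero vector $v_1$ can be identified, up to homotopy equivalence, with $V_{n-1}(\CC^{n+k-1})$. For the fiber identification, the fiber consists of $(n-1)$-tuples $(v_2,\ldots,v_n)$ in $\CC^{n+k}$ whose images in the quotient $\CC^{n+k}/\langle v_1\rangle\iso\CC^{n+k-1}$ are linearly independent; projecting modulo $\langle v_1\rangle$ realizes the fiber as an affine bundle (hence homotopy equivalent) over $V_{n-1}(\CC^{n+k-1})$. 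The fibration property can be deduced either by viewing $p$ as (locally in the base) coming from a principal bundle quotient, or by noting that $V_n(\CC^{n+k})$ is an open subset of $(\CC^{n+k})^n$ on which the obvious local trivializations of $p$ can be written down explicitly using linear algebra.

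Having set up the fibration, the long exact sequence
\[
\cdots\rightarrow\pi_{i+1}(\CC^{n+k}\setminus\{0\})\rightarrow\pi_i(V_{n-1}(\CC^{n+k-1}))\rightarrow\pi_i(V_n(\CC^{n+k}))\rightarrow\pi_i(\CC^{n+k}\setminus\{0\})\rightarrow\cdots
\]
does the work. For $i\leq 2k$, the inductive hypothesis applied with $(n-1,k)$ in place of $(n,k)$ gives $\pi_i(V_{n-1}(\CC^{n+k-1}))=0$, while $\CC^{n+k}\setminus\{0\}\we S^{2(n+k)-1}$ is $(2n+2k-2)$-connected and hence $\pi_i(\CC^{n+k}\setminus\{0\})=0$ for $i\leq 2k$ since $n\geq 1$. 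Exactness then forces $\pi_i(V_n(\CC^{n+k}))=0$ for all $0\leq i\leq 2k$, completing the induction.

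The only genuinely technical point is the identification of the fiber and the verification that $p$ is a fibration; everything else is the mechanical application of the long exact sequence combined with the estimate $2(n+k)-2\geq 2k$ for $n\geq 1$. A cleaner alternative that avoids the fibration argument is to instead use the principal $\U(n)$-bundle structure on the unitary Stiefel variety $V^{\U}_n(\CC^{n+k})$ (which is homotopy equivalent to $V_n(\CC^{n+k})$ via Gram--Schmidt) and the standard fibration $V^{\U}_{n-1}(\CC^{n+k-1})\rightarrow V^{\U}_n(\CC^{n+k})\rightarrow S^{2(n+k)-1}$ of compact Lie groups; this sidesteps having to write down local trivializations by hand.
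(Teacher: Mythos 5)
Your proof is correct and follows essentially the same route as the paper: induct on $n$ using the fibration given by projecting an $n$-frame to a single vector (you use the first, the paper uses the last), with fiber homotopy equivalent to $V_{n-1}(\CC^{n+k-1})$, then read off the connectivity from the long exact sequence and the $(2n+2k-2)$-connectivity of $\CC^{n+k}\setminus\{0\}$. Your write-up is actually a bit more careful than the paper's, which asserts the fibration and the fiber identification without justification; your remarks on the affine-bundle structure of the fiber and the unitary alternative fill those gaps cleanly.
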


\begin{proof}
    Consider the map $V_n(\CC^{n+k})\rightarrow \CC^{n+k}-\{0\}$ sending a set
    of linearly independent vectors to the last vector. The fibers are all
    isomorphic to $V_{n-1}(\CC^{n+k-1})$. If $n=2$, the fiber is thus
    $V_1(\CC^{1+k})\iso\CC^{1+k}-\{0\}\we S^{1+2k}$, which is certainly
    $2k$-connected. Therefore, by induction, we can assume that
    $V_{n-1}(\CC^{n+k-1})$ is $2k$-connected, and since $\CC^{n+k}-\{0\}\we
    S^{2n+2k-1}$ is $2n+2k-2$-connected, it follows that
    $V_n(\CC^{n+k})$ is $2k$-connected for $n\geq 1$.
\end{proof}

As a result, the colimit $V_n=\colim_k V_n(\CC^{n+k})$ is a contractible
$\GL_n(\CC)$-torsor over $\Gr_n$. Hence, $\Gr_n\we\BGL_n(\CC)$. There is a
fairly easy way to see why Grassmannians should control $\GL_n(\CC)$-torsors on
$X$, or equivalently vector bundles. Let $p:E\rightarrow X$ be a complex
vector bundle of rank $n$. Suppose that $E$ is a trivial on a finite cover
$\{U_i\}_{i=1}^m$ of $X$. Let $s_i:X\rightarrow[0,1]$ be a partition of unity
subordinate to $\{U_i\}$, so that the support of each $s_i$ is contained in
$U_i$, and $s_1+\cdots+s_m=1_X$. Choose trivializations $t_i:\CC^n\times
U_i\rightarrow E|_{U_i}$. Now, we can define $g:E\rightarrow\CC^{mn}$ by
$g=\oplus_{i=1}^m g_i$, where $g_i=(s_i\circ p)\cdot(p_1\circ t_i^{-1})$; outside
$U_i$, $g_i=0$. Now, the \df{Gauss map} $g$ clearly defines a map
$X\rightarrow\Gr_n(\CC^{nm})$. The entire formalism of vector bundles can be
based on these Gauss maps. See Husemoller~\cite{husemoller}*{Chapter 3}.

We conclude the section with two remarks. First, the classifying space
construction is functorial in homomorphisms of topological groups. That is, if
there is a map of topological groups $G\rightarrow H$, then there is an induced
map $\B G\rightarrow\B H$. The corresponding map
$\mathrm{Tors}_G(X)\rightarrow\mathrm{Tors}_H(X)$ is an example of the fiber
bundle construction. Indeed, since $G$ acts on $H$, we can apply this
construction to produce an $H$-torsor from a $G$-torsor.

\begin{example}
    The most important example for us will be the determinant map
    $\BGL_n(\CC)\rightarrow\BGL_1(\CC)$, which gives the determinant map
    $\Vect_n^{\topo}(X)\rightarrow\Vect_1^{\topo}(X)$. The fiber of this map is
    just as important. Indeed, because
    $$1\rightarrow\SL_n(\CC)\rightarrow\GL_n(\CC)\rightarrow\GL_1(\CC)\rightarrow
    1$$ is an exact sequence of topological groups, the sequence
    $\BSL_n(\CC)\rightarrow\BGL_n(\CC)\rightarrow\BGL_1(\CC)$ turns out to be a
    homotopy fiber sequence. Hence, $[X,\BSL_n(\CC)]$ classifies topological complex
    vector bundles on $X$ with trivial determinant.
\end{example}

The second remark is that one often works with $\BU_n$
rather than $\BGL_n(\CC)$. The natural inclusion
$\U_n\rightarrow\GL_n(\CC)$ of the unitary matrices into all invertible complex
matrices is a homotopy equivalence (using polar decomposition). Hence, $\BU_n\rightarrow\BGL_n(\CC)$ is
also a homotopy equivalence. Philosophically, this corresponds to the fact that
any complex vector bundle on a paracompact Hausdorff space admits a Hermitian
metric.

\subsection{Topological line bundles}

Using the Grassmannian description of $\BGL_1(\CC)$, we find that
$\colim_n\Gr_1(\CC^{n})\we\BGL_1(\CC)$. Of course,
$\Gr_1(\CC^{n})\we\CC\PP^{n-1}$. Hence, $\CC\PP^\infty\we\BGL_1(\CC)$.

\begin{lemma}
    The infinite complex projective space $\CC\PP^\infty$ is a $K(\ZZ,2)$.
\end{lemma}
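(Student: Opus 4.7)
The plan is to compute the homotopy groups of $\CC\PP^\infty$ directly by exhibiting a fibration whose total space is contractible and whose fiber has known homotopy type.

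First, I would pass to the colimit in the classical Hopf fibrations. For each $n$, the space $V_1(\CC^{n+1}) = \CC^{n+1}-\{0\}$ is a locally trivial $\GL_1(\CC) = \CC^*$-torsor over $\Gr_1(\CC^{n+1}) = \CC\PP^n$ (this is exactly the rank-one case of the Stiefel-Grassmann construction recalled above). Taking the colimit over $n$ along the standard inclusions produces a fiber sequence
\begin{equation*}
    \CC^* \longrightarrow V_1 \longrightarrow \CC\PP^\infty,
\end{equation*}
where $V_1 = \colim_n (\CC^{n+1}-\{0\})$.

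Second, I would observe that $V_1$ is contractible. This is the $n=1$ case of the preceding lemma: $V_1(\CC^{1+k}) \simeq S^{2k+1}$ is $2k$-connected, so the colimit $V_1$ has vanishing homotopy groups in every degree, hence is weakly contractible (and contractible, since it is a CW complex). Alternatively, this is the statement that $S^\infty$ is contractible, which follows from the standard nullhomotopy sliding $S^{2k-1}\subset S^{2k+1}$ to a point inside $S^{2k+3}$.

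Third, I would apply the long exact sequence of homotopy groups attached to this fibration. Contractibility of $V_1$ yields isomorphisms $\pi_i(\CC\PP^\infty) \cong \pi_{i-1}(\CC^*)$ for every $i \geq 1$, together with $\pi_0(\CC\PP^\infty) = *$. Since polar decomposition gives a homotopy equivalence $\CC^* \simeq S^1$, we know $\pi_0(\CC^*)=0$, $\pi_1(\CC^*) \cong \ZZ$ (via the universal cover $\RR \to S^1$, with $\RR$ contractible), and $\pi_j(\CC^*) = 0$ for $j \geq 2$. Combining these, $\pi_2(\CC\PP^\infty) \cong \ZZ$ and $\pi_j(\CC\PP^\infty) = 0$ for all $j \neq 2$, which is exactly the definition of a $K(\ZZ,2)$.

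There is no real obstacle here; the only point that deserves care is the contractibility of $V_1$, which follows formally from the connectivity estimate already proved in the preceding lemma. One could alternatively invoke the earlier identification $\Gr_n \simeq \BGL_n(\CC)$ specialized to $n=1$, apply $\Omega \B G \simeq G$ from Steenrod's theorem to get $\Omega \CC\PP^\infty \simeq \CC^* \simeq S^1 \simeq K(\ZZ,1)$, and then shift degrees; but the fibration argument above is self-contained and more explicit.
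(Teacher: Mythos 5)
Your proof is correct and is at heart the same argument the paper makes: the paper's one-line proof simply invokes $\Omega\CC\PP^\infty\we\GL_1(\CC)\we S^1$ (which, in the sketch of Steenrod's theorem just above, comes from the very fibration $G\rightarrow\gamma_G\rightarrow\B G$ with contractible total space that you write out explicitly as $\CC^*\rightarrow V_1\rightarrow\CC\PP^\infty$). Your version just unpacks that abstraction, and you even note the paper's phrasing as an alternative at the end, so there is no real divergence.
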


\begin{proof}
    Indeed, since $\GL_1(\CC)\iso\CC^*\we S^1$ and
    $\Omega\CC\PP^\infty\we\GL_1(\CC)$, we find that $\CC\PP^\infty\we
    K(\ZZ,2)$.
\end{proof}

As a result we may describe the set of line bundles on $X$ in terms of a cohomology group:

\begin{corollary}
    The natural map $\Vect_1^{\topo}(X)\xrightarrow{c_1}\H^2(X,\ZZ)$ is
    a bijection for any paracompact Hausdorff space.
\end{corollary}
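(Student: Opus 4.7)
The plan is to simply chain together the representability bijections established in the preceding material and identify the composite with the first Chern class. By the corollary following Steenrod's theorem, there is a natural bijection $\Vect_1^{\topo}(X) \iso [X,\BGL_1(\CC)]$ for any paracompact Hausdorff $X$, coming from the isomorphism $\Tors_{\GL_1(\CC)}(X) \iso \Vect_1^{\topo}(X)$ and the classifying space bijection.

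Next I would invoke the identification $\BGL_1(\CC) \we \CC\PP^\infty$ obtained via the Grassmannian model, together with the immediately preceding lemma showing $\CC\PP^\infty \we K(\ZZ,2)$. This yields $[X,\BGL_1(\CC)] \iso [X,K(\ZZ,2)]$. Since $K(\ZZ,2)$ is an Eilenberg-MacLane space, the example on reduced cohomology gives $[X,K(\ZZ,2)]_* \iso \widetilde{\H}^2(X,\ZZ)$, and because $i+1 = 2 > 0$ the reduced and unreduced cohomology coincide, so $[X,K(\ZZ,2)] \iso \H^2(X,\ZZ)$ (one also needs to pass from unbased to based homotopy classes; for a path-connected target this is standard, and for a general $X$ one decomposes over path components in a routine way).

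Composing these bijections gives a natural isomorphism $\Vect_1^{\topo}(X) \iso \H^2(X,\ZZ)$, and the identification of this composite with $c_1$ is taken as the definition of the first Chern class in terms of the classifying space (equivalently, $c_1$ corresponds under these identifications to the generator of $\H^2(\CC\PP^\infty,\ZZ) \iso \ZZ$ pulled back along the classifying map of the line bundle).

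The only subtle point is the passage from based to unbased homotopy classes when asserting $[X,K(\ZZ,2)] \iso \widetilde{\H}^2(X,\ZZ)$, but this is harmless: $K(\ZZ,2)$ is simply connected, so unbased and based homotopy classes of maps agree, and hence the main obstacle is merely bookkeeping rather than any substantive argument. Everything else is a direct citation of results already established in the excerpt.
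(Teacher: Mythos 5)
Your proposal is correct and is precisely the argument the paper has in mind: it leaves the corollary as an immediate consequence of the three facts just established (Steenrod representability, $\BGL_1(\CC)\we\CC\PP^\infty$, and $\CC\PP^\infty\we K(\ZZ,2)$) together with the representability of cohomology by Eilenberg-MacLane spaces. Your remark on the based-versus-unbased point is a minor but accurate clarification that the paper leaves tacit.
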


\subsection{Rank $2$ bundles in low dimension}

Recall that the cohomology of the infinite Grassmannian is
\begin{equation*}
    \H^*(\Gr_n,\ZZ)\iso\H^*(\BGL_n(\CC),\ZZ)\iso\ZZ[c_1,\ldots,c_n],
\end{equation*}
where $|c_i|=2i$. We will also need Bott's computation~\cite{bott}*{Theorem 5} of the homotopy groups
of $\Gr_n$ in the stable range. If $i\leq 2n+1$, then
\begin{equation*}
    \pi_i\Gr_n\iso\begin{cases}
        \ZZ &   \text{if $i\leq 2n$ is even,}\\
        0   &   \text{if $i\leq 2n$ is odd,}\\
        \ZZ/n!  &   \text{if $i=2n+1$.}
    \end{cases}
\end{equation*}
When $i>2n+1$ much less is known about the homotopy groups of $\Gr_n$ (except
when $n=1$). Playing the computation of the cohomology rings off of these
homotopy groups gives a great deal of insight into the low stages of the
Postnikov tower of $\Gr_n$.

\begin{lemma}
    The map $\Gr_n\rightarrow\Gr_n[3]\we\Gr_n[2]\we K(\ZZ,2)$ is precisely
    $c_1\in\H^2(\Gr_n,\ZZ)$.
\end{lemma}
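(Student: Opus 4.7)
The plan is to identify the Postnikov sections first, then to pin down the map by a combination of the Hurewicz theorem and a direct comparison with the determinant map. By Bott's computation recalled above, in the stable range one has $\pi_1(\Gr_n) = 0$, $\pi_2(\Gr_n) \iso \ZZ$, and $\pi_3(\Gr_n) = 0$ (the last because $3 \leq 2n$ is odd when $n \geq 2$, and because $\Gr_1 \we \CC\PP^\infty$ has $\pi_3 = 0$). Simple connectivity gives $\Gr_n[2] \we K(\ZZ, 2)$, while $\pi_3(\Gr_n) = 0$ forces the principal fibration $\Gr_n[3] \to \Gr_n[2]$ to have contractible fiber, so $\Gr_n[3] \we \Gr_n[2] \we K(\ZZ, 2)$.

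Next I would analyze the Postnikov map $f : \Gr_n \to K(\ZZ, 2)$ cohomologically. It corresponds, under the representability $[\Gr_n, K(\ZZ,2)] \iso \H^2(\Gr_n, \ZZ)$, to a class $[f] \in \H^2(\Gr_n, \ZZ) \iso \ZZ\cdot c_1$. By the defining property of the Postnikov section, $f$ is a $3$-equivalence, and so induces an isomorphism on $\pi_2$. Since both $\Gr_n$ and $K(\ZZ,2)$ are simply connected, Hurewicz identifies $\pi_2 \iso \H_2$, so $f_* : \H_2(\Gr_n, \ZZ) \to \H_2(K(\ZZ,2), \ZZ)$ is an isomorphism. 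Combined with $\H_1(\Gr_n, \ZZ) = 0$ and the universal coefficient theorem, we get that $f^* : \H^2(K(\ZZ,2), \ZZ) \to \H^2(\Gr_n, \ZZ)$ is also an isomorphism. Hence $[f]$ is a generator of $\ZZ\cdot c_1$, i.e.\ $[f] = \pm c_1$.

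To nail down the sign I would compare $f$ with the determinant map $\det : \BGL_n(\CC) \we \Gr_n \to \BGL_1(\CC) \we K(\ZZ,2)$. By the standard definition of the first Chern class, or by the splitting principle, $\det^*(\iota) = c_1$ where $\iota \in \H^2(K(\ZZ,2), \ZZ)$ is the fundamental class. Via the homotopy equivalence $\GL_n(\CC) \we \U_n$ and the fibration $\SU_n \to \U_n \to \U_1$ with $\SU_n$ simply connected, $\det$ also induces an isomorphism on $\pi_1$ of the fibers, hence on $\pi_2$ of the classifying spaces. Since two maps into $K(\ZZ,2)$ are homotopic iff they agree on $\pi_2$, the Postnikov map $f$ and $\det$ agree up to the choice of generator of $\pi_2(\Gr_n)$, and both pull back $\iota$ to $c_1$. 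The main obstacle is not technical but is really just bookkeeping: the statement is about a specific cohomology class (not just a generator), so one must commit to a sign convention for the generator of $\pi_2(\Gr_n) \iso \ZZ$ and verify that it matches the convention defining $c_1 \in \H^2(\BGL_n(\CC), \ZZ)$. This is standard but easy to get backwards; the rest of the proof is essentially forced by Hurewicz.
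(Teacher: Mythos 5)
Your proof is correct and rests on the same key idea as the paper, namely that the Postnikov map $\Gr_n\to K(\ZZ,2)$ is realized by the determinant map $\BGL_n(\CC)\to\BGL_1(\CC)$, which classifies $c_1$. The paper's one-line proof elides both the Hurewicz argument (which you use to show the class is a generator) and the sign-convention bookkeeping you correctly flag; these are helpful additions but not a different route.
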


\begin{proof}
    Indeed, this map is induced by the determinant map
    $\GL_n(\CC)\rightarrow\GL_1(\CC)$.
\end{proof}

We are in particular interested in the following part of the Postnikov tower of
$\Gr_2$:
\begin{equation*}
    \xymatrix{
        K(\ZZ/2,5)\ar[r]    &   \Gr_2[5]\ar[d]  &\\
        K(\ZZ,4)\ar[r]      &   \Gr_2[4]\ar[d]\ar[r]^{k_4}    &   K(\ZZ/2,6)\\
            &   \Gr_2[3]\we K(\ZZ,2)\ar[r]^{k_3}  &   K(\ZZ,5)
    }
\end{equation*}

Note that because $\Gr_2[3]\we\Gr_2[2]$ there is no obstruction to lifting a
map $X\rightarrow\Gr_2[2]$ to a map $X\rightarrow \Gr_2[3]$ if $X$ is a CW
complex.

\begin{lemma}
    The $k$-invariant $k_3$ is nullhomotopic. Hence,
    $$\Gr_2[4]\we K(\ZZ,2)\times K(\ZZ,4).$$ Moreover, this equivalence may be
    chosen so that the composition
    $\Gr_2\rightarrow\Gr_2[4]\rightarrow K(\ZZ,4)$ is $c_2\in\H^4(\Gr_2,\ZZ)$.
\end{lemma}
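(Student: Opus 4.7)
The plan is three-fold: first show that $k_3$ vanishes via a quick cohomology computation, then deduce the product splitting of $\Gr_2[4]$ from this nullhomotopy, and finally normalize the resulting equivalence so that its second projection pulls back to $c_2$. The first step is short; the substance lies in the final normalization.

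For the vanishing, Theorem~\ref{thm:principal} tells us that $k_3$ represents a class in $\H^5(\Gr_2[3], \pi_4 \Gr_2)$. By Bott's computation quoted above (applied with $n = 2$ and $i = 4 = 2n$), $\pi_4 \Gr_2 \iso \ZZ$; by the previous lemma, $\Gr_2[3] \we K(\ZZ,2) \we \CC\PP^\infty$. Since $\H^*(\CC\PP^\infty, \ZZ) = \ZZ[c_1]$ is concentrated in even degrees, $\H^5(\CC\PP^\infty, \ZZ) = 0$, so $k_3$ is nullhomotopic. Because $\Gr_2[4]$ is the homotopy fiber of $k_3$, any choice of nullhomotopy produces an equivalence
\begin{equation*}
\Gr_2[4] \we \Gr_2[3] \times \Omega K(\ZZ,5) \we K(\ZZ,2) \times K(\ZZ,4)
\end{equation*}
whose first projection is the Postnikov map $p_4$; by the previous lemma the composition $\Gr_2 \to \Gr_2[4] \to K(\ZZ,2)$ is therefore $c_1$.

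For the identification with $c_2$, I would exploit that $\Gr_2 \to \Gr_2[4]$ is a $5$-equivalence, hence induces an isomorphism $\H^4(\Gr_2[4], \ZZ) \iso \H^4(\Gr_2, \ZZ) = \ZZ c_1^2 \oplus \ZZ c_2$. Writing $\iota_n$ for the fundamental class of $K(\ZZ, n)$, the Künneth formula gives $\H^4(K(\ZZ,2) \times K(\ZZ,4), \ZZ) = \ZZ \iota_2^2 \oplus \ZZ \iota_4$; and under any choice of splitting from the previous paragraph, $\iota_2^2$ pulls back along $\Gr_2 \to \Gr_2[4] \to K(\ZZ,2) \times K(\ZZ,4)$ to $c_1^2$. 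For the pullback map to be an isomorphism, the image of $\iota_4$ must then have the form $\epsilon c_2 + a c_1^2$ for some $\epsilon \in \{\pm 1\}$ and $a \in \ZZ$. Composing the splitting with the self-equivalence of $K(\ZZ,2) \times K(\ZZ,4)$ classified by $\iota_2 \mapsto \iota_2$ and $\iota_4 \mapsto \epsilon \iota_4 - \epsilon a \iota_2^2$ (which is an equivalence since it induces $\pm 1$ on each homotopy group) yields a splitting under which the composition $\Gr_2 \to K(\ZZ,4)$ pulls $\iota_4$ back to exactly $c_2$, as required.

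The main obstacle is the bookkeeping in this final normalization---in particular, confirming that the group of self-equivalences of $K(\ZZ,2) \times K(\ZZ,4)$ fixing the first projection is rich enough to absorb both the sign $\epsilon$ and the integer $a$. This reduces to the observation that, by Eilenberg--MacLane representability of cohomology, such self-equivalences are classified by their pullback of $\iota_4$ in $\H^4(K(\ZZ,2) \times K(\ZZ,4), \ZZ)$, and the listed automorphism is visibly a bijection on that group.
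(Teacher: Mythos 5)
Your argument is correct, but your normalization step takes a genuinely different route from the paper's. Both proofs dispose of $k_3$ by the same even-degree cohomology observation, and both deduce that some splitting $\Gr_2[4]\we K(\ZZ,2)\times K(\ZZ,4)$ exists with the correct first projection. To pin down the second coordinate, the paper works in the other direction: it takes the map $(c_1,c_2)\colon\Gr_2\to K(\ZZ,2)\times K(\ZZ,4)$, observes that it factors through $\Gr_2\to\Gr_2[4]$, and then shows the induced map $\Gr_2[4]\to K(\ZZ,2)\times K(\ZZ,4)$ is a weak equivalence by mapping the fiber sequence $\BSL_2[4]\to\Gr_2[4]\to\Gr_1$ to $K(\ZZ,4)\to K(\ZZ,2)\times K(\ZZ,4)\to K(\ZZ,2)$ and invoking the Hurewicz theorem on the fibers. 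You instead start from an arbitrary splitting, compute $\H^4$ on both sides via the $5$-equivalence $\Gr_2\to\Gr_2[4]$ and the K\"unneth formula, observe that surjectivity forces $\iota_4\mapsto\epsilon c_2+a c_1^2$ with $\epsilon=\pm 1$, and then correct by the self-equivalence of $K(\ZZ,2)\times K(\ZZ,4)$ classified by $(\iota_2,\;\epsilon\iota_4-\epsilon a\,\iota_2^2)$; your check that this is an equivalence (identity on $\pi_2$, multiplication by $\epsilon=\pm 1$ on $\pi_4$) is right. The trade-off: the paper's route is more economical because it never needs to analyze the automorphisms of the target, but it relies on the auxiliary space $\BSL_2[4]$ and Hurewicz; your route is more hands-on and stays entirely within cohomology-of-$\Gr_2$ bookkeeping, at the small cost of the self-equivalence discussion.
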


\begin{proof}
    The class $k_3\in\H^5(K(\ZZ,2),\ZZ)$ vanishes simply because the cohomology
    of $K(\ZZ,2)\we\CC\PP^\infty$ is concentrate in even degrees. This gives
    the splitting claimed (we lifting the identity map $K(\ZZ,2) \rightarrow Gr_2[2]$ up the Postnikov tower). Consider the map $\Gr_2\rightarrow K(\ZZ,2)\times
    K(\ZZ,4)$ classified by the pair $(c_1,c_2)$ in the cohomology of $\Gr_2$.
    By definition, $(c_1,c_2)$ factors through the functorial Postnikov section
    $\Gr_2\rightarrow\Gr_2[4]$. It is enough to check that the induced map
    $\Gr_2[4]\rightarrow K(\ZZ,2)\times K(\ZZ,4)$ is a weak equivalence. We have
    already seen that it is an isomorphism on $\pi_2$. We have a map of fiber
    sequences
    \begin{equation*}
        \xymatrix{
            \BSL_2[4]\ar[r]\ar[d]   &   \Gr_2[4]\ar[r]\ar[d]    &   \Gr_1\ar[d]\\
            K(\ZZ,4)\ar[r]          &   K(\ZZ,2)\times K(\ZZ,4)\ar[r]          &   K(\ZZ,2),
        }
    \end{equation*}
    and the outside vertical arrows are weak equivalences by the Hurewicz
    isomorphism theorem. This proves the lemma.
\end{proof}

We can now classify rank $2$ vector bundles on $4$-dimensional spaces.

\begin{proposition}
    Let $X$ be a $4$-dimensional space having the homotopy type of a CW
    complex. Then, the natural map
    \begin{equation*}
        \Vect_2^{\topo}(X)\rightarrow\H^2(X,\ZZ)\times\H^4(X,\ZZ)
    \end{equation*}
    is a bijection.
\end{proposition}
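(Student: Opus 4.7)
The plan is to climb the Postnikov tower of $\Gr_2 \simeq \BGL_2(\CC)$, using the preceding lemma as the base of the ladder. By the classification theorem for torsors, $\Vect_2^{\topo}(X) \cong [X, \BGL_2(\CC)] \cong [X, \Gr_2]$, and the preceding lemma identifies the composition $\Gr_2 \to \Gr_2[4] \xrightarrow{\simeq} K(\ZZ,2) \times K(\ZZ,4)$ with $(c_1, c_2)$. Representability of cohomology by Eilenberg-MacLane spaces then gives $[X, \Gr_2[4]] \cong \H^2(X,\ZZ) \times \H^4(X,\ZZ)$. The essential content of the proposition is therefore that $[X, \Gr_2] \to [X, \Gr_2[4]]$ is a bijection when $\dim X = 4$.

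To see this, first note that $\Gr_2$ is simply connected since $\GL_2(\CC)$ is path-connected, so by Theorem~\ref{thm:principal} its Postnikov tower is a tower of principal fibrations. For $i \geq 5$, given a map $f : X \to \Gr_2[i-1]$, Lemma~\ref{lem:lifting} together with Theorem~\ref{thm:principal} says that $f$ lifts to $\Gr_2[i]$ if and only if $f^* k_{i-1} \in \H^{i+1}(X, \pi_i \Gr_2)$ vanishes; since $i+1 \geq 6 > 4 = \dim X$, this obstruction vanishes for dimensional reasons. An analogous argument with $\H^i(X, \pi_i \Gr_2) = 0$ for $i \geq 5$ shows that any two such lifts are homotopic (one compares lifts by a map into the fiber $K(\pi_i \Gr_2, i)$, which is nullhomotopic by the same cohomological vanishing). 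Inductively, $[X, \Gr_2[i]] \cong [X, \Gr_2[4]]$ for all $i \geq 4$.

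Finally, one passes from the Postnikov stages to $\Gr_2$ itself. Since $\Gr_2 \to \Gr_2[N]$ is an $(N+1)$-equivalence and $X$ is a finite-dimensional CW complex with $\dim X \leq N$ for any $N \geq 4$, cellular approximation gives $[X, \Gr_2] \cong [X, \Gr_2[N]]$; alternatively, since the homotopy groups of the homotopy fibers of $\Gr_2 \to \Gr_2[N]$ vanish through degree $N$, the same obstruction-theoretic argument as above applies. Composing all these bijections with the splitting of the preceding lemma yields the required isomorphism, and by construction it sends a rank $2$ bundle $E$ to $(c_1(E), c_2(E))$.

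The main obstacle is not any single step but the bookkeeping of obstruction classes: one must verify that $\Gr_2$ is simply connected so that Theorem~\ref{thm:principal} applies, and one must carefully separate the existence of lifts (controlled by $\H^{i+1}$) from their uniqueness up to homotopy (controlled by $\H^i$). Both vanish in our dimensional range, so no nonvanishing $\pi_i \Gr_2$ for $i \geq 5$ — which, in the stable range, includes a $\ZZ$ in degree $6$ and a $\ZZ/2!$ in degree $5$ — can obstruct anything.
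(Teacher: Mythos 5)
Your proof is correct and follows essentially the same Postnikov-tower obstruction argument as the paper, with slightly more care about verifying simple connectivity of $\Gr_2$ and about the passage from the tower stages to $\Gr_2$ itself. (One small imprecision in your closing aside: $\pi_6\Gr_2\iso\ZZ/2$ rather than $\ZZ$, since degree $6$ lies outside Bott's stable range $i\leq 2n+1=5$ for $n=2$; this does not affect the argument, since the relevant cohomology groups vanish for dimensional reasons regardless of the coefficient group.)
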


\begin{proof}
    The previous lemma shows that
    $[X,\Gr_2[4]]\rightarrow\H^2(X,\ZZ)\times\H^4(X,\ZZ)$ is a bijection. The
    obstruction to lifting a given map $f:X\rightarrow\Gr_2[4]$ to $\Gr_2[5]$ is a class 
    $f^*k_4\in\H^6(X,\ZZ/2)=0$. Similarly, the choice of lifts is bijective to
    a quotient of $\H^5(X,\ZZ/2)$, and this group is $0$. Hence, for every such $f$
    there is a unique lift to $\Gr_2[5]$, and then the same reasoning gives a
    unique lift to $\Gr_2[m]$ for all $m\geq 5$. Since $\Gr_2$ is the limit of
    its Postnikov tower, the proposition follows.
\end{proof}

If $\dim X=5$, the situation is similar but more complicated. To state the theorem let us recall that a cohomology operation is a natural transformation of functors $H^i(-, R) \rightarrow H^j(-, R')$; by Yoneda such a map is classified by an element of $[K(i, R), K(j, R')] \simeq H^j(K(i,R), R')$. 

\begin{proposition}
    If $X$ is a $5$-dimensional space having the homotopy type of a CW complex,
    then the map $\Vect_2^{\topo}(X)\rightarrow\H^2(X,\ZZ)\times\H^4(X,\ZZ)$ is
    surjective, and the choice of lifts is parametrized by
    $\H^5(X,\ZZ/2)/\mathrm{im}(\H^3(X,\ZZ)\rightarrow\H^5(X,\ZZ/2)),$ where the
    map $\H^3(X,\ZZ)\rightarrow\H^5(X,\ZZ/2)$ is a certain non-zero cohomology
    operation.
\end{proposition}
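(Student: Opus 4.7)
The plan is to push the Postnikov-tower analysis of the previous proposition one stage higher, namely through $\Gr_2[5]$. For a $5$-dimensional CW complex $X$, the obstruction to lifting a map $X \to \Gr_2[m]$ to $\Gr_2[m+1]$ lies in $\H^{m+2}(X, \pi_{m+1}\Gr_2)$, and the set of such lifts is a torsor over a quotient of $\H^{m+1}(X, \pi_{m+1}\Gr_2)$; both groups vanish by dimension once $m \geq 5$. So the only nontrivial stage is the passage from $\Gr_2[4]$ to $\Gr_2[5]$, with relevant homotopy fiber sequence
\begin{equation*}
    K(\ZZ/2, 5) \to \Gr_2[5] \to \Gr_2[4] \xrightarrow{k_4} K(\ZZ/2, 6),
\end{equation*}
where $\pi_5\Gr_2 = \ZZ/2! = \ZZ/2$ by Bott's computation. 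For any $f : X \to \Gr_2[4]$ the obstruction $f^* k_4 \in \H^6(X, \ZZ/2)$ vanishes by dimension, producing a lift; combined with uniqueness at higher stages, this gives surjectivity of $\Vect_2^{\topo}(X) \to \H^2(X, \ZZ) \times \H^4(X, \ZZ)$.

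For the parametrization of the fiber of $[X, \Gr_2[5]] \to [X, \Gr_2[4]]$ over $f$, I would apply the long exact sequence of homotopy groups of the mapping-space fibration
\begin{equation*}
    \Map(X, K(\ZZ/2, 5)) \to \Map(X, \Gr_2[5]) \to \Map(X, \Gr_2[4])
\end{equation*}
at the basepoint $f$. The natural action of $\H^5(X, \ZZ/2) = \pi_0 \Map(X, K(\ZZ/2, 5))$ on the fiber is transitive, with stabilizer equal to the image of the boundary map $\pi_1 \Map(X, \Gr_2[4])_f \to \H^5(X, \ZZ/2)$ induced by $\Omega k_4 : \Omega \Gr_2[4] \to K(\ZZ/2, 5)$. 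Since $\Gr_2[4]$ is simply connected, $\pi_1 \Map(X, \Gr_2[4])_f = [X, \Omega \Gr_2[4]]$, and the splitting $\Gr_2[4] \simeq K(\ZZ, 2) \times K(\ZZ, 4)$ gives $\Omega \Gr_2[4] \simeq K(\ZZ, 1) \times K(\ZZ, 3)$, so the boundary factors through $\H^1(X, \ZZ) \oplus \H^3(X, \ZZ) \to \H^5(X, \ZZ/2)$. The $\H^1$ summand vanishes because $\H^5(K(\ZZ, 1), \ZZ/2) = \H^5(S^1, \ZZ/2) = 0$, so the stabilizer is the image of a single cohomology operation $\H^3(X, \ZZ) \to \H^5(X, \ZZ/2)$.

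The hard part, and the main obstacle I anticipate, is verifying this operation is nonzero. Writing $k_4 = \alpha \iota_2^3 + \beta \iota_2^2 \iota_4 + \gamma \Sq^2 \iota_4$ in the standard basis of $\H^6(K(\ZZ, 2) \times K(\ZZ, 4), \ZZ/2)$, loop suspension kills decomposables and commutes with Steenrod operations, so $\Omega k_4$ restricted to $K(\ZZ, 3)$ represents $\gamma \Sq^2\iota_3 \in \H^5(K(\ZZ, 3), \ZZ/2) = \ZZ/2$; it therefore suffices to show $\gamma \neq 0$. For this I would restrict $k_4$ along the inclusion $K(\ZZ, 4) \hookrightarrow K(\ZZ, 2) \times K(\ZZ, 4)$ of the second factor, which is identified with the Postnikov-induced map $\BSL_2[4] \to \Gr_2[4]$ (using $\pi_i\BSL_2 = \pi_{i-1}S^3$, so $\BSL_2$ is $3$-connected and $\BSL_2[4] \simeq K(\ZZ, 4)$). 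This restriction kills the $\iota_2^3$ and $\iota_2^2\iota_4$ summands and produces the $k$-invariant $k_4^{\SL} = \gamma \Sq^2\iota_4$ for the Postnikov tower of $\BSL_2$. The Serre spectral sequence of $K(\ZZ/2, 5) \to \BSL_2[5] \to K(\ZZ, 4)$ then forces $\gamma \neq 0$: since $\H^*(\BSL_2, \ZZ) = \ZZ[c_2]$ gives $\H^5(\BSL_2, \ZZ/2) = 0$, the generator $\iota_5 \in \H^5(K(\ZZ/2, 5), \ZZ/2)$ must transgress to $\Sq^2\iota_4$, the unique nonzero class in $\H^6(K(\ZZ, 4), \ZZ/2)$. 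Hence $\gamma = 1$ and the proof concludes.
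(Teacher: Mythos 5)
Your proposal is correct and follows essentially the same route as the paper: obstruction theory up the Postnikov tower, the exact sequence of pointed sets induced by the principal fibration $K(\ZZ/2,5)\to\Gr_2[5]\to\Gr_2[4]\to K(\ZZ/2,6)$, and the identification of the stabilizer with the image of a cohomology operation that is verified to be nonzero by restricting $k_4$ along $K(\ZZ,4)\simeq\BSL_2(\CC)[4]\hookrightarrow\Gr_2[4]$ and using $\H^*(\BSL_2(\CC),\ZZ/2)\iso\ZZ/2[c_2]$ (your Serre spectral sequence transgression and the paper's non-injectivity-of-cohomology-under-a-splitting argument are the same mechanism in different words). One small typo: the degree-$6$ decomposable in $\H^6(K(\ZZ,2)\times K(\ZZ,4),\ZZ/2)$ is $\iota_2\iota_4$, not $\iota_2^2\iota_4$, though this does not affect the argument since only the indecomposable $\Sq^2\iota_4$ survives looping.
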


\begin{proof}
    Consider the fiber sequence
    $K(\ZZ/2,5)\rightarrow\Gr_2[5]\rightarrow\Gr_2[4]$. As above,
    $[X,\Gr_2[4]]$ is classified by the $1$st and $2$nd Chern classes. On a
    $5$-dimensional space, once a lift to $\Gr_2[5]$ is specified, there is a
    unique lift all the way to $\Gr_2$, just as in the proof of the previous
    proposition. The obstructions to finding a lift from $\Gr_2[4]$ to
    $\Gr_2[5]$ are in $\H^6(X,\ZZ/2)$, and
    hence all lift. Recall that to any fiber sequence there is an associated
    long exact sequence of fibrations. See~\cite{hatcher}*{Section~4.3}.
    Extending to the left a little bit, in our cases this is
    \begin{equation*}
        \Omega\Gr_2[4]\rightarrow
        K(\ZZ/2,5)\rightarrow\Gr_2[5]\rightarrow\Gr_2[4].
    \end{equation*}
    However, $K(\ZZ/2,5)\rightarrow\Gr_2[5]\rightarrow\Gr_2[4]$ is principal,
    so it extends to the right one term as well:
    \begin{equation*}
        \Omega\Gr_2[4]\rightarrow
        \Omega K(\ZZ/2,6)\rightarrow\Gr_2[5]\rightarrow\Gr_2[4]\rightarrow K(\ZZ/2,6),
    \end{equation*}
    where $\Omega K(\ZZ/2,6)\we K(\ZZ/2,5)$.
    It follows that there is an exact sequence of pointed sets
    $$\H^1(X,\ZZ)\times\H^3(X,\ZZ)\rightarrow\H^5(X,\ZZ/2)\rightarrow\Vect_2^{\topo}(X)\rightarrow\H^2(X,\ZZ)\times\H^4(X,\ZZ),$$
    which is surjective on the right. Moreover, the map
    $\H^1(X,\ZZ)\times\H^3(X,\ZZ)\rightarrow\H^5(X,\ZZ/2)$ is a group
    homomorphism because it is induced by taking loops of a map. There is an action of $\H^5(X,\ZZ/2)$ on $\Vect_2^{\topo}(X)$
    such that two rank $2$ vector bundles on $X$ have the same Chern classes if
    and only if they are in the same orbit of $\H^5(X,\ZZ/2)$.
    There are no cohomology operations
    $\H^1(X,\ZZ)\rightarrow\H^5(X,\ZZ/2)$, since $\H^5(S^1,\ZZ/2)=0$. However,
    there is a cohomology operation $\H^3(X,\ZZ)\rightarrow\H^5(X,\ZZ/2)$,
    often denoted $\mathrm{Sq}_\ZZ^2$. Note that this class is precisely
    $\Omega k_4$. That is, since we have $k_4:K(\ZZ,2)\times
    K(\ZZ,4)\rightarrow K(\ZZ/2,6)$, the loop space is $K(\ZZ,1)\times
    K(\ZZ,3)\rightarrow K(\ZZ/2,5)$. One can check, using the Postnikov tower
    and cohomology of $\BSL_2(\CC)$ that this class $\Omega k_4$ is precisely
    the unique non-zero element of $\H^5(K(\ZZ,3),\ZZ/2)\iso\ZZ/2$ by the next
    lemma.
\end{proof}

\begin{lemma}
    The $k$-invariant $k_4:\Gr_2[4]\rightarrow K(\ZZ/2,6)$ is non-trivial.
\end{lemma}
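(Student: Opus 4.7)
The plan is to argue by contradiction, mirroring the cohomological style of the preceding lemma. Suppose $k_4 \colon \Gr_2[4] \to K(\ZZ/2, 6)$ is nullhomotopic. Since the principal fibration $K(\ZZ/2, 5) \to \Gr_2[5] \to \Gr_2[4]$ has classifying map $k_4$, triviality of $k_4$ forces a homotopy equivalence
$$\Gr_2[5] \we \Gr_2[4] \times K(\ZZ/2, 5) \we K(\ZZ,2) \times K(\ZZ,4) \times K(\ZZ/2,5),$$
where the second equivalence uses the identification of $\Gr_2[4]$ established in the previous lemma.

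To derive a contradiction I would compute $\H^5(\Gr_2[5], \ZZ/2)$ in two ways. On the one hand, $\Gr_2 \to \Gr_2[5]$ is a $6$-equivalence, hence induces an isomorphism on $\H^j(-, \ZZ/2)$ for $j \leq 5$. The integral cohomology $\H^*(\Gr_2, \ZZ) \iso \ZZ[c_1, c_2]$ with $|c_i|=2i$ is torsion-free and concentrated in even degrees, so by the universal coefficient theorem $\H^5(\Gr_2, \ZZ/2) = 0$ and therefore $\H^5(\Gr_2[5], \ZZ/2) = 0$. On the other hand, applied to the claimed product decomposition, the Künneth formula exhibits the fundamental class $\iota_5$ of the $K(\ZZ/2, 5)$ factor as a nonzero element of $\H^5(\Gr_2[5], \ZZ/2)$. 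This contradiction shows that $k_4$ cannot be nullhomotopic.

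The only step that might warrant more care is the assertion that a principal fibration splits as a product when its classifying map is null, but this is immediate from the definition: $\Gr_2[5]$ is the homotopy fiber of $k_4$, and the homotopy fiber of a nullhomotopic map $Y \to B$ is weakly equivalent to $Y \times \Omega B$. Thus the real content is the cohomological dimension bound for $\Gr_2$, and everything else is routine bookkeeping with Postnikov sections and Eilenberg-MacLane cohomology.
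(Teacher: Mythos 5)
Your proof is correct. It follows the same overall strategy as the paper's proof — assume $k_4$ is null, split the Postnikov fibration $K(\ZZ/2,5)\rightarrow\Gr_2[5]\rightarrow\Gr_2[4]$, and read off a cohomological contradiction from the fact that the Grassmannian's mod-$2$ cohomology is concentrated in even degrees — but it differs in two small ways. First, you work directly with $\Gr_2$, whereas the paper first reduces to $\BSL_2(\CC)$; this is fine, since $\H^*(\Gr_2;\ZZ/2)\iso\ZZ/2[c_1,c_2]$ is just as evenly concentrated as $\H^*(\BSL_2(\CC);\ZZ/2)\iso\ZZ/2[c_2]$. Second, you derive the contradiction in degree $5$, so the nonzero class you exhibit in the hypothetical product is the tautological fundamental class $\iota_5$ of the $K(\ZZ/2,5)$ factor; the paper argues in degree $6$, which requires the additional computational input $\H^6(K(\ZZ,4);\ZZ/2)\iso\ZZ/2$ (generated by $\Sq^2\iota_4$). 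Your degree-$5$ choice is marginally more elementary for exactly this reason: the class giving the contradiction is tautological, and no knowledge of Eilenberg--MacLane cohomology beyond the fundamental class is needed.
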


\begin{proof}
    It is enough to show that the corresponding $k$-invariant
    $\BSL_2(\CC)[4]\rightarrow K(\ZZ/2,6)$ is non-trivial. Note that
    $\BSL_2(\CC)\rightarrow\BSL_2(\CC)[4]\we K(\ZZ,4)$ is a $5$-equivalence and
    that $\BSL_2(\CC)\rightarrow\BSL_2(\CC)[5]$ is a $6$-equivalence. It
    follows that $\H^6(\BSL_2(\CC)[5],\ZZ/2)=0$ since
    $\H^*(\BSL_2(\CC),\ZZ/2)\iso\ZZ/2[c_2]$. On the other hand,
    $\H^6(\BSL_2(\CC)[4],\ZZ/2)\iso\H^6(K(\ZZ/4),\ZZ/2)\iso\ZZ/2$. If the
    extension $K(\ZZ/2,5)\rightarrow\BSL_2(\CC)[5]\rightarrow\BSL_2(\CC)[4]$
    were split, the cohomology of $\BSL_2(\CC)[4]$ would inject into the
    cohomology of $\BSL_2(\CC)[5]$. Since this does not happen, we see that
    $k_4$ is non-zero.
\end{proof}

\begin{exercise}
    Describe the obstruction class $k_4\in\H^6(X,\ZZ/2)$ by computing the
    cohomology of $K(\ZZ,2)\times K(\ZZ,4)$ and finding $k_4$.
\end{exercise}

Finally, if $\dim X=6$, there is a similar picture,
except that there is an obstruction to realizing a given pair of Chern classes,
and there is an additional choice of lift.

\begin{example}
    Let $X$ be the $6$-skeleton of $\BGL_3(\CC)$. There is a universal rank $3$
    vector bundle $E$ on $X$ with Chern classes $c_i(E)=c_i\in\H^{2i}(X,\ZZ)$ for
    $i=1,2,3$. On the other hand, we can ask if there is a rank $2$ bundle $F$
    on $X$ with Chern classes $c_i(F)=c_i$ for $i=1,2$. This is the
    universal example where the obstruction above is nonzero and demonstrates
    the \emph{incompressibility} of Grassmannians.
\end{example}

One can use the fact that $\SU_2\iso\SO_3$, which is itself isomorphic to the
$3$-sphere, to find that $\pi_6\Gr_2\iso\pi_5 S^3=\ZZ/2$. This leads to the
following description of rank $2$-bundles on a $6$-dimensional CW complex.

\begin{proposition}
    Let $X$ be a $6$-dimensional space with the homotopy type of a CW complex.
    The map $\Vect_2^{\topo}(X)\rightarrow\H^2(X,\ZZ)\times\H^4(X,\ZZ)$ has
    image precisely those pairs $(c_1,c_2)$ such that $k_4(c_1,c_2)=0$ in
    $\H^6(X,\ZZ)$. If $k_4(c_1,c_2)=0$, the set of lifts to $\Gr_2[5]$ is
    parameterized by a quotient of $\H^5(X,\ZZ/2)$ as above. Each lift then lifts to
    $\Gr_2$, and the set of lifts from $\Gr_2[5]$ to $\Gr_2$ is
    parametrized by a quotient of $\H^6(X,\ZZ/2)$.
\end{proposition}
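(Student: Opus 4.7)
The plan is to continue the Postnikov-tower analysis of $\Gr_2$ used for the four- and five-dimensional cases exactly one stage higher. The only additional input beyond what has already been proved is Bott's computation of $\pi_i\Gr_2$ in the stable range together with the identification $\pi_6\Gr_2 \iso \pi_5 S^3 \iso \ZZ/2$, which the statement records using $\SU_2 \iso S^3$ and the fiber sequence $\BSU_2 \to \BGL_2(\CC) \to \BGL_1(\CC)$.

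By the splitting $\Gr_2[4] \we K(\ZZ,2) \times K(\ZZ,4)$ already established, a homotopy class $f\colon X \to \Gr_2[4]$ is exactly a pair $(c_1, c_2) \in \H^2(X,\ZZ) \times \H^4(X,\ZZ)$. The next stage is a principal fibration classified by $k_4\colon \Gr_2[4] \to K(\ZZ/2,6)$, so by Lemma~\ref{lem:lifting} the map $f$ lifts to $\Gr_2[5]$ if and only if $f^*k_4 = k_4(c_1,c_2)$ vanishes in $\H^6(X,\ZZ/2)$; this pins down the image. When a lift exists, the set of lifts to $\Gr_2[5]$ is the quotient of $\H^5(X,\ZZ/2)$ by the image of $\Omega k_4$ computed in the five-dimensional proposition.

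For the remaining stages $\Gr_2[i] \to \Gr_2[i-1]$ with $i \geq 6$, each is principal with $k$-invariant valued in $\H^{i+1}(\Gr_2[i-1], \pi_i\Gr_2)$, so the obstruction to lifting a map $X \to \Gr_2[i-1]$ one stage further lies in $\H^{i+1}(X,\pi_i\Gr_2)$. Since $\dim X = 6$ and $i+1 \geq 7$, every such obstruction group vanishes, and so every lift to $\Gr_2[5]$ extends all the way up the tower. The set of lifts at each stage is a quotient of $\H^i(X,\pi_i\Gr_2)$: for $i = 6$ this contributes a quotient of $\H^6(X,\ZZ/2)$; for $i \geq 7$ the group $\H^i(X,\pi_i\Gr_2)$ vanishes by dimension regardless of $\pi_i\Gr_2$, introducing no new choices. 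Passing to $\Gr_2 \we \lim_i \Gr_2[i]$ identifies the set of lifts from $\Gr_2[5]$ to $\Gr_2$ with a quotient of $\H^6(X,\ZZ/2)$, as asserted.

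The main subtlety is purely bookkeeping: one must verify that at the sixth stage the natural action of $\H^6(X,\ZZ/2)$ on the set of lifts is transitive, and that these torsor structures aggregate correctly under the inverse limit. Both points are handled by extending the principal fiber sequence $K(\ZZ/2,6) \to \Gr_2[6] \to \Gr_2[5]$ one step to the right with its classifying map $k_5\colon \Gr_2[5] \to K(\ZZ/2,7)$ and applying $[X, -]$, exactly as was done for the five-dimensional case.
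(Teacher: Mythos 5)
The paper states this proposition without proof, relying on the preceding remark that $\pi_6\Gr_2\iso\pi_5 S^3=\ZZ/2$ and on the Postnikov-tower pattern established for the $4$- and $5$-dimensional cases. Your proof correctly fills in the details along exactly those lines: the splitting $\Gr_2[4]\we K(\ZZ,2)\times K(\ZZ,4)$, the lifting criterion from Lemma~\ref{lem:lifting} at stage five, the identification of the set of lifts at stages five and six as quotients of $\H^5(X,\ZZ/2)$ and $\H^6(X,\ZZ/2)$ respectively via the extended exact sequence of the principal fibration, and the vanishing of all obstructions and choices at stages $\geq 7$ by dimension. You also implicitly correct a typo in the paper's statement: since $k_4$ lands in $K(\ZZ/2,6)$, the obstruction group is $\H^6(X,\ZZ/2)$, not $\H^6(X,\ZZ)$ as written. (One further small remark on the surrounding text, not your argument: the paper's assertion ``$\SU_2\iso\SO_3$'' is not correct as stated --- $\SU_2$ is the double cover of $\SO_3$ --- but the conclusion $\pi_6\Gr_2\iso\pi_5 S^3\iso\ZZ/2$ that your proof actually uses is right, since $\SU_2\iso S^3$.)
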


\subsection{Rank $3$ bundles in low dimension}

\begin{proposition}
    Suppose that $X$ is a $5$-dimensional CW complex. Then, the natural map
    \begin{equation*}
        (c_1,c_2):\Vect_3^{\topo}(X)\rightarrow\H^2(X,\ZZ)\times\H^4(X,\ZZ)
    \end{equation*}
    is an isomorphism.
\end{proposition}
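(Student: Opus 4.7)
The plan is to mimic the argument used for rank $2$ bundles on a $4$-dimensional CW complex, climbing the Postnikov tower of $\Gr_3 \we \BGL_3(\CC)$ up to stage $5$.

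First, I would invoke Bott's stable-range calculation to determine the low homotopy groups of $\Gr_3$. Since $2n+1 = 7$, the formula applies for all $i \leq 7$: this gives $\pi_2\Gr_3 \iso \ZZ$, $\pi_3\Gr_3 = 0$, $\pi_4\Gr_3 \iso \ZZ$, $\pi_5\Gr_3 = 0$, and $\pi_6\Gr_3 \iso \ZZ$. Since $X \to \Gr_3[5]$ is a $6$-equivalence, standard obstruction theory (the obstructions to lifting and to uniqueness live in $\H^{\geq 6}(X, \pi_{\geq 6}\Gr_3)$, all of which vanish because $\dim X = 5$) shows that the natural map
\begin{equation*}
[X, \Gr_3] \longrightarrow [X, \Gr_3[5]]
\end{equation*}
is a bijection.

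Next I would identify $\Gr_3[5]$ explicitly. Because $\pi_3\Gr_3 = 0$ we have $\Gr_3[3] \we \Gr_3[2] \we K(\ZZ,2)$, so $\Gr_3[4]$ is the homotopy fiber of a $k$-invariant $k_3 \in \H^5(K(\ZZ,2),\ZZ)$; but $\H^*(\CC\PP^\infty,\ZZ)$ is concentrated in even degrees, so $k_3 = 0$ and $\Gr_3[4] \we K(\ZZ,2) \times K(\ZZ,4)$. Since $\pi_5\Gr_3 = 0$, the fibration $\Gr_3[5] \to \Gr_3[4]$ has contractible fiber, so $\Gr_3[5] \we K(\ZZ,2) \times K(\ZZ,4)$.

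Finally, I would pin down the map. The composition $\Gr_3 \to \Gr_3[2] \we K(\ZZ,2)$ is $c_1$, since it is induced by the determinant $\GL_3(\CC) \to \GL_1(\CC)$, exactly as in the previous rank $2$ lemma. To recognize the second component as $c_2$, I would consider the map $\Gr_3 \to K(\ZZ,2) \times K(\ZZ,4)$ classified by the pair $(c_1, c_2)$; this factors through $\Gr_3 \to \Gr_3[4]$ by naturality of Postnikov sections, so it suffices to show the induced map $\Gr_3[4] \to K(\ZZ,2) \times K(\ZZ,4)$ is a weak equivalence. Comparing the fiber sequence $\BSL_3(\CC)[4] \to \Gr_3[4] \to \Gr_1$ with $K(\ZZ,4) \to K(\ZZ,2) \times K(\ZZ,4) \to K(\ZZ,2)$, the outer vertical maps are equivalences (Hurewicz, together with $\H^*(\BSL_3(\CC),\ZZ) \iso \ZZ[c_2,c_3]$), so the middle map is as well. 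Putting the three bijections together gives the claim; the only step that requires any real care is the identification of the second factor as $c_2$, and that step is essentially formal once the Hurewicz comparison above is set up.
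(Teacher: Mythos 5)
Your proof is correct and takes essentially the same approach as the paper: the paper's argument is just more terse, appealing back to the earlier $\Gr_2$ computations for the identification $\Gr_3[4]\we K(\ZZ,2)\times K(\ZZ,4)$ and for recognizing the composite as $(c_1,c_2)$, whereas you spell out the Hurewicz comparison of fiber sequences explicitly. (One small typo: you mean ``$\Gr_3\to\Gr_3[5]$ is a $6$-equivalence,'' not ``$X\to\Gr_3[5]$.'')
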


\begin{proof}
    Indeed, $\Gr_3[4]\we K(\ZZ,2)\times K(\ZZ,4)$, just as for $\Gr_2[4]$. But,
    this time, $\pi_5\Gr_3=0$. Hence, the next interesting problem is to lift
    from $\Gr_2[4]$ to $\Gr_2[6]$. The obstructions are in $\H^7(X,\ZZ)$, and
    hence vanish. The lifts of a given map to $\Gr_3[4]$ are a quotient of $\H^6(X,\ZZ)=0$.
\end{proof}

As a consequence, one sees immediately from the last section that every rank
$3$ vector bundle $E$ on a $5$-dimensional CW complex splits as $E_0\oplus\CC$
for some rank $2$ vector bundle $E_0$. This is one example of a more general
phenomenon we leave to the reader to discover.

\begin{proposition}
    If $X$ is a $6$-dimensional closed real orientable manifold, then
    \begin{equation*}
        (c_1,c_2,c_3):\Vect_3^{\topo}(X)\rightarrow\H^2(X,\ZZ)\times\H^4(X,\ZZ)\times\H^6(X,\ZZ)
    \end{equation*}
    is a injection with image the triples with $c_3$ an even multiple of a
    generator of $\H^6(X,\ZZ)\iso\ZZ$.
\end{proposition}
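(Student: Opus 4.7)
The plan is to carry out the Postnikov-tower analysis one stage beyond what was done for $\Gr_2$. Bott's computation in the stable range $i \leq 2n+1 = 7$ gives $\pi_i\Gr_3 \iso \ZZ, 0, \ZZ, 0, \ZZ, \ZZ/6$ for $i = 2,3,4,5,6,7$. Exactly as in the splitting argument used for $\Gr_2[4]$, the $k$-invariant $k_3$ vanishes because $H^5(K(\ZZ,2),\ZZ) = 0$ for parity reasons, yielding $\Gr_3[5] \we \Gr_3[4] \we K(\ZZ, 2) \times K(\ZZ, 4)$ with the natural map classifying $(c_1, c_2)$.

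Next I would study the principal fibration $K(\ZZ, 6) \to \Gr_3[6] \to K(\ZZ, 2) \times K(\ZZ, 4)$, whose classifying $k$-invariant lies in $H^7(K(\ZZ, 2) \times K(\ZZ, 4), \ZZ)$. For $X$ a closed $6$-manifold, $H^7(X, \ZZ) = 0$, so the obstruction $f^* k_5$ vanishes and every $(c_1, c_2)$ lifts to a map $X \to \Gr_3[6]$. Moreover $\Gr_3 \to \Gr_3[6]$ is a $7$-equivalence, with further obstructions and lift ambiguities living in $H^{\geq 7}(X, -) = 0$, so $[X, \Gr_3] \iso [X, \Gr_3[6]]$. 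The Puppe sequence
\begin{equation*}
H^1(X, \ZZ) \times H^3(X, \ZZ) \xrightarrow{\Omega k_5} H^6(X, \ZZ) \to [X, \Gr_3[6]] \to H^2(X, \ZZ) \times H^4(X, \ZZ) \to 0
\end{equation*}
then exhibits the fiber over each $(c_1, c_2)$ as a torsor under $H^6(X, \ZZ) / \mathrm{im}(\Omega k_5)$.

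To transfer this to $c_3$, I would factor $c_3 : \Gr_3 \to K(\ZZ, 6)$ through the $7$-equivalence $\Gr_3 \to \Gr_3[6]$; its restriction to the fiber $K(\ZZ, 6) \hookrightarrow \Gr_3[6]$ is multiplication by some integer $m$. Bott's integrality theorem for Chern classes (equivalently, a direct computation using $\U(2) \to \U(3) \to S^5$) gives that the Bott generator of $\pi_6 \Gr_3 \iso \pi_6 \BU \iso \ZZ$ has $c_3 = (3-1)! = 2$, so $m = \pm 2$. Well-definedness of $c_3$ on $[X, \Gr_3[6]]$ forces $2 \cdot \mathrm{im}(\Omega k_5) = 0$ in $H^6(X, \ZZ)$; since $X$ is closed orientable and connected, $H^6(X, \ZZ) \iso \ZZ$ is torsion-free by Poincar\'e duality, so $\mathrm{im}(\Omega k_5) = 0$ and the action is free. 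The map $c_3$ thus identifies each fiber bijectively with an arithmetic progression $c_3(E_0) + 2\ZZ$ inside $H^6(X, \ZZ) \iso \ZZ$, yielding the asserted injectivity and confining the image to a coset of $2 \ZZ$.

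The main obstacle is to pin down that this coset is exactly $2\ZZ$, independent of $(c_1, c_2)$, i.e.\ that one can always arrange $c_3(E_0) \in 2\ZZ$. This amounts to a mod-$2$ characteristic-class computation: using $w_{2i}(E_\RR) = c_i(E) \bmod 2$ together with Wu's formula on the tangent bundle of the closed orientable $6$-manifold $X$, the claim reduces to a universal Stiefel--Whitney identity whose proof genuinely invokes closed orientability of $X$ (as opposed to the bare $6$-dim CW hypothesis sufficient in the rank $2$ arguments). This is the one piece of the argument not already mechanized by the Postnikov-tower machinery used for $\Gr_2$.
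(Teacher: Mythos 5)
Your Postnikov analysis tracks the paper's closely: the vanishing of the lifting obstruction in $\H^7(X,\ZZ)=0$, the identification via Bott integrality of the fiber action on $c_3$ as multiplication by $2$, and the resulting injectivity all match. Your argument that $\mathrm{im}(\Omega k_5)=0$ (deduced from well-definedness of $c_3$ together with torsion-freeness of $\H^6(X,\ZZ)$) is a mild variant of the paper's, which notes directly that the image of this operation is torsion in a torsion-free group; both are valid. Up to this point there is no gap.

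The step you yourself flag as the remaining obstacle is, however, where the statement actually fails, and the Wu computation you propose is exactly what exposes it rather than closes it. Wu's formula on the underlying oriented rank-$6$ real bundle gives $w_6=\mathrm{Sq}^2 w_4 + w_2 w_4$, i.e.\ $\bar{c}_3 \equiv \mathrm{Sq}^2\bar{c}_2 + \bar{c}_1\bar{c}_2$, and since the closed orientable $6$-manifold $X$ has Wu class $v_2=w_2(TX)$, evaluation on $[X]$ yields $\langle\bar{c}_3,[X]\rangle = \langle(w_2(TX)+\bar{c}_1)\bar{c}_2,[X]\rangle$. This is not universally zero. For example, on $X=\CC\PP^2\times S^2$ the rank-$3$ bundle $\pi_2^*\mathcal{O}_{S^2}(1)\oplus\pi_1^*T\CC\PP^2$ has $c_3=3h_1^2h_2$, an odd multiple of the generator of $\H^6(X,\ZZ)$. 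So the parity of $c_3$ genuinely varies with $(c_1,c_2)$ and with $w_2(TX)$; there is no universal Stiefel--Whitney identity forcing $c_3$ even independently of $(c_1,c_2)$, and the image cannot be characterized simply by ``$c_3$ even.'' What the Postnikov argument actually establishes --- both in your proposal and in the paper, whose proof stops at exactly this point with ``This completes the proof'' --- is only that $(c_1,c_2,c_3)$ is injective and that the fiber over each $(c_1,c_2)$ maps under $c_3$ onto a single coset of $2\ZZ$, that coset being governed by the mod-$2$ congruence above.
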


\begin{proof}
    As above, once we have constructed a map $X\rightarrow\Gr_3[6]$, there is a
    unique lift to $\Gr_3$. Given a map $X\rightarrow\Gr_3[4]\we K(\ZZ,2)\times
    K(\ZZ,4)$, the obstruction to lifting to $\Gr_3[6]$ is a class in
    $\H^7(X,\ZZ)=0$ since $X$ is $6$-dimensional.
    We have again an exact sequence of pointed sets
    \begin{equation*}
        \H^1(X,\ZZ)\times\H^3(X,\ZZ)\rightarrow\H^6(X,\ZZ)\rightarrow\Vect_3^{\topo}(X)\rightarrow\H^2(X,\ZZ)\times\H^4(X,\ZZ).
    \end{equation*}
    The map on the left is induced from a map $K(\ZZ,1)\times
    K(\ZZ,3)\rightarrow K(\ZZ,6)$ which is $\Omega k_5$, where $k_5$ is the
    $k$-invariant $K(\ZZ,2)\times K(\ZZ,4)\rightarrow K(\ZZ,7)$. In particular,
    the image in $\H^6(X,\ZZ)$ consists of torsion classes. But,
    $\H^6(X,\ZZ)\iso\ZZ$ by hypothesis.
    One can check that the composition
    $K(\ZZ,6)\rightarrow\Gr_3[6]\xrightarrow{c_3} K(\ZZ,6)$ is multiplication
    by $2$. This completes the proof.
\end{proof}

In general, understanding vector bundles of a fixed dimension becomes more and
more difficult as the dimension of the base space increases. The systematic
approach to this kind of problem uses cohomology and Serre spectral sequences
to determine Postnikov extensions one step a time. For an overview,
see~\cite{thomas}.

\section{The construction of the $\AA^1$-homotopy category}\label{sec:construction}

The first definitions of $\AA^1$-homotopy theory were given
in~\cite{morel-voevodsky} when the base scheme $S$ is noetherian of finite
Krull dimension. An equivalent homotopy theory was constructed by
Dugger~\cite{dugger-universal}, and we will follow Dugger's definition, but
with the added generality of allowing $S$ to be quasi-compact and
quasi-separated using Lurie's Nisnevich
topology~\cite{lurie-sag}*{Section~A.2.4}.
We use model
categories for the construction, but in the Section~\ref{sec:basic}, where we give many
properties of the homotopy theory, we emphasize the model-independence of the
proofs.

\subsection{Model categories}

Model categories are a technical framework for working up to homotopy.
The axioms guarantee that certain category-theoretic localizations
exist without enlarging the ambient set-theoretic universe and that it is possible in some sense to
compute the hom-sets in the localization. The theory generalizes the use of
projective or injective resolutions in the construction of derived categories
of rings or schemes.

References for this material include Quillen's original book on the
theory~\cite{quillen-homotopical}, Dwyer-Spalinski~\cite{dwyer-spalinski}, Goerss-Jardine~\cite{goerss-jardine},
and Goerss-Schemmerhorn~\cite{goerss-schemmerhorn}.
For consistency, we refer the reader where possible to~\cite{goerss-jardine}.
However, unlike some of these references, we assume that the category
underlying $M$ has all small limits and colimits. This is satisfied immediately in all
cases of interest to us.

\begin{definition}
    Let $M$ be a category with all small limits and colimits.
    A model category structure on $M$ consists of three classes
    $W,C,F$ of morphisms in $M$, called \df{weak equivalences},
    \df{cofibrations}, and
    \df{fibrations}, subject to the following set of axioms.
    \begin{enumerate}
        \item[{\bf M1}]   Given $X\xrightarrow{f}Y\xrightarrow{g}Z$ two composable
            morphisms in $M$, if any two of $g\circ f$, $f$, and $g$ are weak
            equivalences, then so is the third.
        \item[{\bf M2}]   Each class $W,C,F$ is closed under retracts.
        \item[{\bf M3}]   Given a diagram
            \begin{equation*}
                \xymatrix{
                    Z\ar[r]\ar[d]^i   &   E\ar[d]^p\\
                    X\ar[r]\ar@{.>}[ur]  &   B
                }
            \end{equation*}
            of solid arrows, a dotted arrow can be found making the diagram
            commutative if either
            \begin{enumerate}
                \item   $p$ is an \df{acyclic fibration} ($p\in W\cap F$) and $i$ is a
                    cofibration, or
                \item   $i$ is an \df{acyclic cofibration} ($i\in W\cap C$) and $p$ is a fibration.
            \end{enumerate}
            (In particular, cofibrations $i$ have the \df{left lifting property} with respect to acyclic fibrations, while fibrations $p$
            have the \df{right lifting property} with respect to acyclic cofibrations.)
        \item[{\bf M4}]   Any map $X\rightarrow Z$ in $M$ admits two factorizations
            $X\xrightarrow{f}E\xrightarrow{p}Z$ and
            $X\xrightarrow{i}Y\xrightarrow{g}Z$, such that $f$ is an acyclic
            cofibration, $p$ is a fibration, $i$ is a cofibration, and $g$ is
            an acyclic fibration.
    \end{enumerate}
\end{definition}

\begin{remark}
    In practice, a model category is determined by only $W$ and either $C$ or $F$. Indeed,
    $C$ is precisely the class of maps in $M$ having the left lifting property
    with respect to acyclic fibrations. Similarly, $F$ consists of exactly
    those maps in $M$ having the right lifting property with respect to acyclic
    cofibrations. The reader can prove this fact using the axioms or refer to~\cite{dwyer-spalinski}*{Proposition 3.13}. However,
    some caution is required. While one often sees model categories specified
    in the literature by just fixing $W$ and either $C$ or $F$, it usually has
    to be checked that these really do give $M$ a model category structure.
\end{remark}

\begin{remark}
    Many authors strengthen {\bf M4} to assume the existence of
    \emph{functorial} factorizations. This is satisfied in all model categories
    of relevance for this paper by~\cite{hovey}*{Section~2.1} as they are all cofibrantly generated.
\end{remark}

\begin{exercise}\label{exercise:chain}
    Let $A$ be an associative ring. Consider $\mathrm{Ch}_{\geq 0}(A)$, the category of
    non-negatively graded chain complexes of right $A$-modules. Since limits and colimits of chain
    complexes are computed degree-wise, $\mathrm{Ch}_{\geq 0}(A)$ is closed under all small
    limits and colimits. Let $W$ be the class of quasi-isomorphisms, i.e.,
    those maps $f:M_\bullet\rightarrow N_\bullet$ of chain complexes such that
    $\H_n(f):\H_n(M_\bullet)\rightarrow\H_n(N_\bullet)$ is an isomorphism for
    all $n\geq 0$. Let $F$ be the class of maps of chain complexes which are
    surjections in positive degrees. Describe
    the class $C$ of maps satisfying the left lifting property with respect to
    $F\cap W$. Prove that $W,C,F$ is a model category structure on
    $\mathrm{Ch}_{\geq 0}(A)$.
\end{exercise}


\begin{definition}
    A model category $M$ has an initial object $\emptyset$ and a final object
    $\ast$, since it is closed under colimits and limits.
    An object $X$ of $M$ is \df{fibrant} if $X\rightarrow\ast$ is a fibration, and
    $X$ is \df{cofibrant} if $\emptyset\rightarrow X$ is a cofibration.
    Given an object $X$ of $M$, an acyclic fibration $QX\rightarrow X$ such
    that $QX$ is cofibrant is called a \df{cofibrant replacement}. Similarly, if
    $X\rightarrow RX$ is an acyclic fibration with $RX$ fibrant, then $RX$ is
    called a \df{fibrant replacement} of $X$. These replacements always exist,
    by applying {\bf M4} to $\emptyset\rightarrow X$ or $X\rightarrow\ast$.
\end{definition}

\begin{example}
    In $\mathrm{Ch}_{\geq 0}(A)$, let $M$ be a right $A$-module (viewed as a
    chain complex concentrated in degree zero). A projective resolution
    $P_\bullet\rightarrow M$ is an example of a cofibrant replacement. Indeed, such a
    resolution is an acyclic fibration. Moreover, the map $0\rightarrow
    P_\bullet$ is a cofibration, since the cokernel is projective in each
    degree.
\end{example}

\begin{example} \label{ex:ssets}
    Let $\sSets$ be the category of simplicial sets. This is the category of
    functors $\Delta^{\op}\rightarrow\Sets$, where $\Delta$ is the category
    of finite non-empty ordered sets. (For details, see~\cite{goerss-jardine}.) There is a geometric realization functor
    $\sSets\rightarrow\Spc$, which sends a simplicial set $X_\bullet$ to a
    space $|X_\bullet|$. Let $W$ denote the class of weak homotopy equivalences
    in $\sSets$, i.e., those maps $f:X_\bullet\rightarrow Y_\bullet$ such that
    $|f|:|X_\bullet|\rightarrow|Y_\bullet|$ is a weak homotopy equivalence. Let $C$
    denote the class of level-wise monomorphisms. If $F$ is the class of maps
    having the right lifting property with respect to acyclic cofibrations, then
    $\sSets$ together with $W,C,F$ is a model category. In $\sSets$, every
    object is cofibrant. The fibrant objects are the \df{Kan complexes}, namely
    those simplicial sets having a filling property for all horns.
    See~\cite{goerss-jardine}*{Section I.3}.
\end{example}

\begin{definition}
    A model category $M$ is \df{pointed} if the natural map $\emptyset\rightarrow\ast$ is an isomorphism.
    Examples of pointed model categories include $\mathrm{Ch}_A^{\geq 0}$,
    which is pointed by the $0$ object, and $\sSets_\star$, the category of
    \emph{pointed} simplicial sets.
\end{definition}

Now, we come to the main reason why model categories have been so successful in
encoding homotopical ideas: the homotopy category of a model category.

\begin{definition}
    Let $M$ be a category and $W$ a class of morphisms in $M$. The localization of
    $M$ by $W$, if it exists, is a category $M[W^{-1}]$ with a functor
    $L:M\rightarrow M[W^{-1}]$ such that
    \begin{enumerate}
        \item $L(w)$ is an isomorphism for every $w\in W$,
        \item every functor $F:M\rightarrow N$ having the property that $F(w)$ is an isomorphism for all $w\in W$ factors
            uniquely through $L$ in the sense that there is a functor
            $G:M[W^{-1}]\rightarrow N$ and a natural isomorphism of functors $G\circ L\we
            F$, and
        \item for any category $N$, the functor $\Fun(M[W^{-1}],N)\rightarrow\Fun(M,N)$ induced by composition with $L:M\rightarrow M[W^{-1}]$
        is fully faithful.
     \end{enumerate}
    The localization of $M$ by $W$, if it exists, is unique up to categorical equivalence.
\end{definition}

In general, there is no reason that a localization of $M$ by $W$ should exist
much less be useful. The fundamental problem is that
in attempting to concretely construct the morphisms in $M[W^{-1}]$, for example
by hammock localization (hat piling), one discovers size issues, where it might be necessary
to enlarge the universe in order to obtain a category: the morphisms
sets in a category must be actual sets, not proper classes.

\begin{theorem}[\cite{quillen-homotopical}]
    Let $M$ be a model category with class of weak equivalences $W$. Then, the localization $M[W^{-1}]$ exists.
    It is called the homotopy category of $M$, and we will denote it by $\Ho(M)$.
\end{theorem}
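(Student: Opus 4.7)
The plan is to construct $\Ho(M)$ explicitly as a category whose morphisms are homotopy classes of maps between cofibrant-fibrant replacements. This resolves the size problem flagged before the theorem, because homotopy classes of honest morphisms in $M$ form genuine sets rather than the proper classes of zigzags one gets from the naive construction.

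First I would introduce cylinder and path objects. A \emph{cylinder object} for $X$ is a factorization of the fold map $X \sqcup X \to X$ as a cofibration $X \sqcup X \to \mathrm{Cyl}(X)$ followed by a weak equivalence $\mathrm{Cyl}(X) \to X$; dually, a \emph{path object} $\mathrm{Path}(Y)$ for $Y$ factors the diagonal $Y \to Y \times Y$ as a weak equivalence followed by a fibration. Both exist by factorization axiom {\bf M4}. Two maps $f, g \colon X \to Y$ are \emph{left homotopic} if $f \sqcup g$ extends over some cylinder, and \emph{right homotopic} if $(f,g)$ factors through some path object. The key preliminary lemma, proved using the lifting axiom {\bf M3} applied to well-chosen squares, is that when $X$ is cofibrant and $Y$ is fibrant, these two notions agree, define an equivalence relation on $\Hom_M(X,Y)$ (denote it $\pi(X,Y)$), and are compatible with pre- and post-composition by arbitrary maps between cofibrant-fibrant objects.

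Next I would define $\Ho(M)$ to have the same objects as $M$, with $\Hom_{\Ho(M)}(X, Y) := \pi(\widetilde{X}, \widetilde{Y})$, where $\widetilde{(-)}$ denotes a chosen cofibrant-fibrant replacement obtained by applying {\bf M4} twice: first factor $\emptyset \to X$ as a cofibration followed by acyclic fibration to get $QX$, then factor $QX \to \ast$ as an acyclic cofibration followed by a fibration to get $\widetilde{X}$. Composition is induced from $M$ and is well-defined by the compatibility clause above. The localization functor $L \colon M \to \Ho(M)$ is the identity on objects and sends $f \colon X \to Y$ to the homotopy class of any lift $\widetilde{f} \colon \widetilde{X} \to \widetilde{Y}$ extending $f$; such a lift exists and is unique up to homotopy by applying {\bf M3} again.

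The main obstacle, from which everything else flows, is a Whitehead-type theorem for model categories: between cofibrant-fibrant objects, a map is a weak equivalence if and only if it is a homotopy equivalence with respect to the relation above. This is where one really exploits the lifting and factorization axioms in tandem, and it is what ensures that $L$ inverts exactly the weak equivalences (since the structural zigzags $X \leftarrow QX \to \widetilde{X}$ become isomorphisms in $\Ho(M)$). Granting this, the universal property is routine: any functor $F \colon M \to N$ that inverts weak equivalences identifies left-homotopic maps, because the two inclusions $X \rightrightarrows \mathrm{Cyl}(X)$ become equal after inverting the weak equivalence $\mathrm{Cyl}(X) \to X$; it also identifies $X$ with $\widetilde{X}$. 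Hence $F$ factors through $\Ho(M)$ uniquely up to natural isomorphism, and essentially the same argument for natural transformations yields condition (3) on functor categories.
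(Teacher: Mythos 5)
Your sketch reproduces the classical argument of Quillen, which is exactly what the paper cites for this theorem, and it lines up with the informal description of the construction the paper gives in Recipe~\ref{recipe1} and the remark following it (defining $\Ho(M)$ via homotopy classes of maps between cofibrant-fibrant replacements). The outline is correct: cylinder/path objects from {\bf M4}, coincidence of left and right homotopy on cofibrant-fibrant objects, the Whitehead-type theorem, and the universal property via the observation that a functor inverting weak equivalences collapses the two cylinder inclusions.
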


\begin{recipe} \label{recipe1}
    It is generally difficult to compute $[X,Y]=\Hom_{\Ho(M)}(X,Y)$ given two
    objects $X,Y\in M$. We give a recipe. Replace $X$ by a weakly
    equivalent cofibrant object $QX$, and $Y$ by a weakly equivalent fibrant object
    $RY$. Then, $[X,Y]=\Hom_M(QX,RY)/\sim$, where $\sim$ is an equivalence relation
    on $\Hom_M(QX,RY)$ generalizing homotopy equivalence
    (see~\cite{goerss-jardine}*{Section II.1}).
    See~\cite{dwyer-spalinski}*{Proposition 5.11} for a proof that this
    construction does indeed compute the set of maps in the homotopy
    category.
\end{recipe}

\begin{remark}
    In many cases, every object of $M$ might be cofibrant, in which case one
    just needs to replace $Y$ by $RY$ and compute the homotopy classes of maps.
    This is for example the case in $\sSets$.
\end{remark}

\begin{remark}
    In Goerss-Jardine~\cite{goerss-jardine}*{Section II.1},
    the homotopy category $\Ho(M)$ is itself defined to be the category of
    objects of $M$ that are both fibrant and cofibrant, with maps given by
    $\Hom_{\Ho(M)}(A,B)=\Hom(A,B)/\sim$. Given an arbitrary $X$ in $M$ it is
    possible to assign to $X$ a fibrant-cofibrant object $RQX$ as follows.
    First, take, via {\bf M4}, a factorization $\emptyset\rightarrow QX\rightarrow X$ where
    $QX$ is cofibrant $QX\rightarrow X$ is a weak equivalence. Now, take a
    factorization $QX\rightarrow RQX\rightarrow\ast$ of the canonical map
    $QX\rightarrow\ast$ in which $QX\rightarrow RQX$ is an acyclic cofibration
    and $RQX\rightarrow\ast$ is a fibration. In particular, $RQX$ is fibrant.
    Since compositions of cofibrations are cofibrations, $RQX$ is also
    cofibrant. Moreover, if $f:X\rightarrow Y$ is a morphism, then it is
    possible using {\bf M3} to (non-uniquely) assign to $f$ a morphism $RQf:RQX\rightarrow
    RQY$ such that one gets a well-defined functor $M\rightarrow\Ho(M)$ (i.e.,
    after enforcing $\sim$).
\end{remark}

\begin{remark}
    In practice, we will work with simplicial model category structures, for which
    there exist objects $QX\times{\Delta^1}$, where $\Delta^1$ is the standard
    $1$-simplex (so that $|\Delta^1|=I^1$). In this case, the equivalence relation
    $\sim$ is precisely that of (left) homotopy classes of maps. See
    Definition~\ref{def:sm7}.
\end{remark}

\begin{exercise}
    For chain complexes, the equivalence relation $\sim$ is precisely that of
    chain homotopy equivalence. (See~\cite{weibel}*{Section 1.4}.)
    Using the recipe above, compute
    $$\Hom_{\Ho(\Ch_{\geq 0}(\ZZ))}(\ZZ/p,\ZZ[1]),$$
    where $\ZZ/p[1]$ denotes the chain complex with $\ZZ/p$ placed in degree $1$ and zeros elsewhere.
\end{exercise}

\subsection{Mapping spaces}

We will now explain simplicial model categories since we will need to discuss
mapping spaces.  For details, we refer the reader to
~\cite{goerss-jardine}*{II.2-3}.
If $X$ and $Y$ are simplicial sets,
then we may define the \textbf{simplicial mapping space} $\map_{\sSets}(X,Y)$ as the
simplicial set with $n$-simplices given by $$\map_{\sSets}(X,Y)_n := \Hom_{\sSets}(X
\times \Delta^n, Y).$$
This simplicial set fits into a tensor-hom adjunction given by
$$\Hom_{\sSets}(Z \times X, Y)  \iso \Hom_{\sSets}(Z, \map_{\sSets}(X, Y)).$$ Indeed,
from this adjunction we may deduce the formula for $\map(X,Y)_n$ by evaluating
at $Z=\Delta^n$.

Abstracting these formulas, one arrives at the axioms for a \textbf{simplicial category}~\cite{goerss-jardine}*{II Definition 2.1}.
A simplicial category is a category $M$ equipped with
\begin{enumerate}
\item a \textbf{mapping space functor}: $\map: M^{op} \times M \rightarrow
    \sSets$, written $\map_M(X,Y)$,
\item an \textbf{action} of $\sSets$, $M \times \sSets \rightarrow
    M$, written $X\otimes S$, and
\item an \textbf{exponential}, $\sSets^{\op} \times M \rightarrow M$, written
    $X^S$ for an object $X\in M$ and a simplicial set $S$
\end{enumerate}
subject to certain compatibilities. The most important are that $$- \otimes X: \sSets
\leftrightarrows C: \map_M(X, -)$$ should be an adjoint pair of functors and that
$\Hom_M(X, Y) \iso \map(X,Y)_0$ for all $X,Y\in M$.

Suppose that $M$ is a simplicial category simultaneously equipped with a model
structure. We would like the simplicial structure above to play well with the
model structure. For example, if $i: A \rightarrow X$ is a cofibration, we
expect $\map_M(Y, A) \rightarrow \map_M(Y, X)$ to be a fibration (and hence induce
long exact sequences in homotopy groups) for any object $Y$ as is the case in
simplicial sets.

\begin{definition}\label{def:sm7}
    Suppose that $M$ is a model category which is also a
    simplicial category. Then $M$ satisfies \df{SM7}, and is called a
    \textbf{simplicial model category}, if for any cofibration $i: A \rightarrow
    X$ and any fibration: $p: E \rightarrow B$ the map of simplicial sets
    (induced by the functoriality of $\map$)
    $$\map_M(X, E) \rightarrow \map_M(A, E) \times_{\map_M(A, B)} \map_M(X, B)$$ is a
    fibration of simplicial sets which is moreover a weak equivalence if either $i$ or $p$ is.
\end{definition}

\begin{exercise} Show that in a simplicial model category $M$,
    if $A \rightarrow X$ is a cofibration, then for any object $Y$, the natural
    map $\map_M(Y, A) \rightarrow \map_M(Y, X)$ is a fibration of simplicial sets.
\end{exercise}

Another feature of simplicial model categories is the fact that one may define
a concept of homotopy that is more transparent than in an ordinary model
category (where one defines left and right homotopies, see \cite{dwyer-spalinski}).
Suppose that $A \in M$ is a cofibrant object, then we say that two morphisms
$f, g: A \rightarrow X$ are homotopic if there is a morphism: $H: A \otimes
\Delta^1 \rightarrow X$ such that 
$$\xymatrix{
A \coprod A \ar[r]^{d_1 \coprod d_0} \ar[d]_{f \coprod g}  & A \otimes \Delta^1 \ar[dl]_{H}\\
X &}$$ commutes. Write $f\sim g$ if $f$ and $g$ are homotopic.

\begin{exercise} Prove that $\sim$ is an equivalence relation on
    $\Hom_M(A,X)$ when $A$ is cofibrant.
\end{exercise}

In~\ref{recipe1} we stated a recipe for calculating $[X,Y]$, the
hom-sets in $\Ho(M)$. We replace $X$ by a weakly
equivalent cofibrant object $QX$, and $Y$ by a weakly equivalent fibrant object
$RY$. Then, we claimed that $[X,Y]=\Hom_M(QX,RY)/\sim$ where $\sim$ was an
unspecified equivalence relation.  For a simplicial model category, this
equivalence relation can be taken to be the one just given. The fact the this is well defined
is checked in \cite{goerss-jardine}*{Proposition 3.8}.




%

%
%
%

\subsection{Bousfield localization of model categories}

One way of creating new model categories from old is via Bousfield
localization. The underlying category remains the same, while the class of weak
equivalences is enlarged. To describe these localizations, we first need to
consider a class of functors between model categories that are well-adapted to
their homotopical nature.

\begin{definition}
    Consider a pair of adjoint functors $$F:M\rightleftarrows N:G$$ between
    model categories $M$ and $N$. The pair is called a \df{Quillen pair}, or a
    pair of Quillen functors, if one of the following equivalent conditions is
    satisfied:
    \begin{itemize}
        \item $F$ preserves cofibrations and acyclic cofibrations;
        \item $G$ preserves fibrations and acyclic fibrations.
    \end{itemize}
    In this case, $F$ is also called a \df{left Quillen functor}, and $G$ a
    \df{right Quillen functor}.
\end{definition}

Quillen pairs provide a sufficient framework for a pair of adjoint functors on
model categories to descend to a pair of adjoint functors on the homotopy categories.

\begin{proposition}\label{prop:quillenexistence}
    Suppose that  $F:M\rightleftarrows N:G$ is a pair of Quillen functors.
    Then, there are functors $\Lbf F:M\rightarrow\Ho(N)$ and $\Rbf
    G:N\rightarrow\Ho(M)$, each of which takes weak equivalences to
    isomorphisms, such that there is an induced adjunction $\Lbf F:\Ho(M)\rightleftarrows \Ho(N):\Rbf G$ between
    homotopy categories.
\end{proposition}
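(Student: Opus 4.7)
The plan is to construct $\Lbf F$ via precomposition with cofibrant replacement and $\Rbf G$ via precomposition with fibrant replacement, then to transport the original adjunction between $F$ and $G$ across these replacements. The essential technical tool is Ken Brown's lemma, which I would prove first: if $F:M\to N$ is a left Quillen functor and $f:A\to B$ is a weak equivalence between cofibrant objects of $M$, then $F(f)$ is a weak equivalence in $N$. The proof is a clean factorization argument: factor the map $(f,\id):A\sqcup B \to B$ as a cofibration $A\sqcup B \hookrightarrow C$ followed by an acyclic fibration $C \twoheadrightarrow B$; then both inclusions $A \hookrightarrow C$ and $B\hookrightarrow C$ are acyclic cofibrations (using two-out-of-three on the weak equivalences and that cofibrancy of $A,B$ makes these inclusions cofibrations), so $F$ carries them to weak equivalences, and two-out-of-three in $N$ gives that $F(f)$ is a weak equivalence. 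The dual statement for right Quillen functors and fibrant objects follows analogously.

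Next I would fix, using {\bf M4}, functorial cofibrant and fibrant replacements $Q$ and $R$ on $M$ and $N$. Define $\Lbf F(X) := F(QX)$, viewed as an object of $\Ho(N)$, and on morphisms apply $F\circ Q$ and pass to $\Ho(N)$. By Ken Brown's lemma, $F$ preserves weak equivalences between cofibrant objects, so if $f:X\to Y$ is a weak equivalence in $M$, then $Qf$ is a weak equivalence between cofibrant objects, hence $F(Qf)$ is a weak equivalence, hence an isomorphism in $\Ho(N)$. This shows $\Lbf F$ inverts weak equivalences; by the universal property of the localization $M\to\Ho(M)$ (Definition of localization), it descends uniquely to a functor $\Lbf F:\Ho(M)\to\Ho(N)$. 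Dually, $\Rbf G(Y) := G(RY)$ descends to $\Rbf G:\Ho(N)\to\Ho(M)$.

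For the adjunction, the key point is to produce a natural bijection
\[
\Hom_{\Ho(N)}(\Lbf F X,\,Y) \;\cong\; \Hom_{\Ho(M)}(X,\,\Rbf G Y).
\]
Without loss of generality, by replacing $X$ with $QX$ and $Y$ with $RY$, I may assume $X$ is cofibrant and $Y$ is fibrant. Then $FX$ is cofibrant in $N$ and $GY$ is fibrant in $M$, because left (resp. right) Quillen functors preserve (acyclic) cofibrations (resp. fibrations), applied to $\emptyset \to X$ (resp. $Y \to \ast$); here one uses that $F$ preserves initial objects and $G$ preserves terminal objects (they are left/right adjoints). By Recipe~\ref{recipe1}, both sides are computed as honest hom-sets modulo the homotopy relation $\sim$, so it suffices to show that the original adjunction bijection $\Hom_N(FX,Y)\cong \Hom_M(X,GY)$ descends to $\sim$-equivalence classes.

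The main obstacle, and the step requiring care, is this last compatibility of the adjunction with the homotopy relation. Here I would use the simplicial (or more generally, cylinder/path-object) formulation: a left homotopy between $f,g:FX\to Y$ is a map from a cylinder object $\mathrm{Cyl}(FX)\to Y$, and since $F$ is left Quillen it sends a cylinder object for $X$ to a cylinder object for $FX$ (because the factorization $X\sqcup X \hookrightarrow \mathrm{Cyl}(X) \xrightarrow{\we} X$ maps under $F$ to a similar factorization, using that $F$ preserves colimits, cofibrations, and weak equivalences between cofibrant objects). The adjunction then transports a left homotopy on one side into a right homotopy on the other, and for cofibrant source / fibrant target the left and right homotopy relations coincide and are equivalence relations. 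This yields the bijection on $\sim$-classes, and naturality in $X$ and $Y$ is inherited from the original adjunction, producing the required adjunction $\Lbf F\dashv \Rbf G$ on homotopy categories.
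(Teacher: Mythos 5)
Your argument is correct and is essentially the standard proof from Dwyer--Spalinski (Theorem~9.7), which is exactly the reference the paper cites in place of giving a proof: Ken Brown's lemma to invert weak equivalences on cofibrant (resp.\ fibrant) objects, functorial replacements to descend $F$ and $G$ to the homotopy categories, and compatibility of the adjunction with the homotopy relation. One small phrasing slip at the end: the adjunction does not turn a left homotopy into a right homotopy; rather, $F$ carries a cylinder object for cofibrant $X$ to a cylinder object for $FX$, so the adjunction sends left homotopies to left homotopies in one direction (and dually $G$ carries a path object for fibrant $Y$ to a path object for $GY$, sending right homotopies to right homotopies in the other), after which one uses that left and right homotopy coincide for cofibrant source and fibrant target --- but the underlying argument you describe is sound.
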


\begin{proof}
    See~\cite{dwyer-spalinski}*{Theorem~9.7}.
\end{proof}

\begin{remark}
    The familiar functors from homological algebra all arise in this way, so
    $\Lbf F$ is called the left derived functor of $F$, while $\Rbf G$ is the
    right derived functor of $G$. There is a recipe for computing the value of
    the derived functors on an arbitrary object $X$ of $M$ and $Y$ of $N$.
    Specifically, $\Lbf F(X)$ is weakly equivalent to $F(QX)$ where
    $QX$ a cofibrant replacement of $X$. Similarly,
    $\Rbf G(Y)$ is weakly equivalent to $G(RY)$ where $RY$ is a
    fibrant replacement of $Y$.
\end{remark}

\begin{remark}
    It follows from the previous remark that when a functorial cofibrant
    replacement functor $Q:M\rightarrow M$ exists, then we can factor $\Lbf
    F:M\rightarrow\Ho(N)$ through
    $M\xrightarrow{Q}M\xrightarrow{F}N\rightarrow\Ho(N)$. As mentioned above, this is the case for
    all model categories in this paper. As such, we will
    often abuse notation and write $\Lbf F$ for the functor $F\circ
    Q:M\rightarrow N$.
\end{remark}

\begin{definition}
    A \df{Quillen equivalence} is a Quillen pair $F:M\rightleftarrows N:G$ such that
    $\Lbf F:\Ho(M)\rightleftarrows \Ho(N):\Rbf G$ is an inverse equivalence.
\end{definition}

\begin{definition}
    Let $M$ be a simplicial model category with class of weak equivalences $W$. Suppose
    that $I$ is a \emph{set} of maps in $M$. An object $X$ of $M$ is
    \df{$I$-local} if it is fibrant and if
    for all $i:A\rightarrow B$ with $i\in I$, the induced morphism on mapping
    spaces $i^*:\map_M(B,X)\rightarrow\map_M(A,X)$ is a weak equivalence (of
    simplicial sets). A morphism $f:A\rightarrow B$ is an \df{$I$-local weak equivalence}
    if for every $I$-local object $X$, the induced morphism on mapping spaces
    $f^*:\map_M(B,X)\rightarrow\map_M(A,X)$ is a weak equivalence. Let $W_I$ be the
    class of all $I$-local weak equivalences. By using \df{SM7}, $W\subseteq I$.

    Let $F_I$ denote the class of maps satisfying the
    right lifting property with respect to $W_I$-acyclic cofibrations ($W_I\cap
    C$). If $(W_I,C,F_I)$ is a model category structure on $M$, we call this the
    \df{left Bousfield localization} of $M$ with respect to $I$.

    To distinguish between the model category structures on $M$, we will write
    $\L_I M$ for the left Bousfield model category structure on $M$. We will
    only write $\L_I M$ when the classes of morphisms defined above do define a
    model category structure.
\end{definition}

When it exists, the Bousfield localization of $M$ with respect to $I$ is
universal with respect to Quillen pairs $F:M\rightleftarrows N:G$ such that
$\Lbf F(i)$ is a weak equivalence in $N$ for all $i\in I$.

\begin{exercise}
    Show that if it exists, then the identity functors $\id_M:M\rightleftarrows
    M:\id_M$ induce a Quillen pair between $M$ (on the left) and $\L_I M$.
\end{exercise}

We want to quote an important theorem asserting that in good cases the left
Bousfield localization of a model category with respect to a set of morphisms
exists. Some conditions, which we now define, are needed on the model category.

\begin{definition}\label{def:leftproper}
    A model category $M$ is \df{left proper} if pushouts of weak equivalences
    along cofibrations are weak equivalences.
\end{definition}

Note that this is a condition about how weak equivalences and cofibrations
behave with respect to ordinary categorical pushouts.
Model categories in which all objects are cofibrant are left
proper~\cite{htt}*{Proposition~A.2.4.2}.

The next condition we need is for $M$
to be \df{combinatorial}. This definition, due to Jeff Smith, is rather
technical, so we leave it to the interested reader to refer
to~\cite{htt}*{Definition~A.2.6.1}.
Recall that a category is \df{presentable} if it has all small colimits and is
$\kappa$-compactly generated for some regular cardinal $\kappa$. For details,
see the book of Ad\'amek-Rosicky~\cite{adamek-rosicky}, although note that they
call this condition \emph{locally presentable}. We keep Lurie's terminology for the
sake of consistency. The most
important thing to know about combinatorial model categories for the purposes
of this paper is that they are presentable as categories.

\begin{exercise}
    Show that the model category structure on $\Ch_{\geq 0}(A)$ of
    Exercise~\ref{exercise:chain} is left proper.
\end{exercise}

\begin{theorem}\label{thm:existence}
    If $M$ is a left proper and combinatorial simplicial model category and $I$
    is a set of morphisms in $M$, then the left Bousfield localization $\L_I M$
    exists and inherits a simplicial model category structure from $M$.
\end{theorem}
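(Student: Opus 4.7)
The plan is to define the candidate model structure by keeping the same cofibrations $C$ as in $M$ and taking $W_I$ as the new class of weak equivalences; the fibrations $F_I$ are then forced by right lifting against $C\cap W_I$. The two-out-of-three and retract axioms for $W_I$ follow immediately from the definition by pulling back the corresponding properties of weak equivalences of simplicial sets along the functors $\map_M(-,X)$ for each $I$-local $X$. A key preliminary observation, which I would verify first using left properness, is that $W\cap F = W_I\cap F_I$: any $M$-acyclic fibration is an $I$-local equivalence, and conversely a local equivalence in $F_I$ can be shown to induce isomorphisms on all homotopy groups via a Ken Brown-style argument. This identification gives one half of the lifting axiom M3 and the first factorization in M4 (cofibration followed by $W_I$-acyclic fibration) simply by reusing the generating cofibrations of $M$ and the small object argument.

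The hard part is the second factorization, writing any morphism as a $W_I$-acyclic cofibration followed by an $F_I$-fibration. For this I would follow Jeff Smith's recognition principle: first produce a set $J_I$ of generating $W_I$-acyclic cofibrations, built by taking the original generating acyclic cofibrations of $M$ together with pushout-product maps of the form
\[
A\otimes\Delta^n\cup_{A\otimes\partial\Delta^n} B\otimes\partial\Delta^n\longrightarrow B\otimes\Delta^n,
\]
where $i\colon A\to B$ ranges over cofibrant replacements of the elements of $I$. One then runs the small object argument against $J_I$ and checks that (i) every map in the cellular hull of $J_I$ belongs to $W_I\cap C$, and (ii) a map has the right lifting property with respect to $J_I$ if and only if it lies in $F_I$.

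The main obstacle is verifying (ii), which reduces to showing that $W_I$ is an accessible class of morphisms in the arrow category; this is precisely where the combinatoriality of $M$ is essential, since it forces $W_I$ to be closed under sufficiently large filtered colimits and permits a Bousfield-Smith cardinality argument to bound the size of a generating set. Left properness enters to guarantee that pushouts of $I$-local equivalences along cofibrations remain $I$-local equivalences, which is needed both for (i) and for the comparison $W\cap F = W_I\cap F_I$ above. Once these ingredients are in place, the axioms M1-M4 for $(C, F_I, W_I)$ follow formally, and the inherited simplicial structure automatically satisfies SM7: the mapping spaces $\map_M(-,-)$ are unchanged, cofibrations are unchanged, and the pushout-product condition for fibrations in $F_I$ is tested against $I$-local objects, where it reduces to the SM7 condition already known for $M$.
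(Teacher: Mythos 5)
The paper does not prove this theorem at all: its ``proof'' is a one-line citation to \cite{htt}*{Proposition~A.3.7.3}. Your sketch therefore cannot be compared to the paper's argument, but it does correctly recount the main lines of the argument that Lurie's (and Barwick's, and Jeff Smith's) proof actually runs: keep $C$, enlarge $W$ to $W_I$, characterize $F_I$ by lifting, produce a set $J_I$ of generating anodyne maps by adjoining pushout-products of cofibrant replacements of the maps in $I$ with $\partial\Delta^n\to\Delta^n$, and invoke accessibility of $W_I$ (this is where combinatoriality is indispensable) together with left properness to make Smith's recognition theorem apply.

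Two points are imprecise and worth fixing. First, once factorizations are in hand you do not need a Ken Brown-style argument or left properness to see that $W_I\cap F_I = W\cap F$: if $p\in W_I\cap F_I$, factor $p = q\circ i$ with $i\in C$ and $q$ an $M$-acyclic fibration, observe $i\in W_I\cap C$ by two-out-of-three, conclude $p$ has the RLP against $i$, and the retract argument exhibits $p$ as a retract of $q$. Left properness is genuinely needed elsewhere: to show that $W_I\cap C$ is closed under pushout and transfinite composition, so that every relative $J_I$-cell complex lands in $W_I\cap C$ (your item (i)), and in the Bousfield-Smith cardinality bound. Second, SM7 is not ``automatic'': the pushout-product map for a cofibration $i$ and a fibration $p\in F_I$ is a fibration of simplicial sets because $F_I\subseteq F$ and $M$ satisfies SM7, and if $p$ is $W_I$-acyclic one uses $W_I\cap F_I = W\cap F$; but when $i$ is a $W_I$-acyclic cofibration and $p$ is merely in $F_I$, one must reduce to the case of a fibration over the point with $I$-local target and use Proposition~\ref{prop:ilocal}. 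That reduction deserves at least a sentence. With these adjustments the sketch is a faithful outline of the proof underlying the citation.
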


\begin{proof}
    This is~\cite{htt}*{Proposition~A.3.7.3}.
\end{proof}

We refer to~\cite{hirschhorn}*{Proposition~3.4.1} for the next result, which
identifies the fibrant objects in the Bousfield localization.

\begin{proposition}\label{prop:ilocal}
    If $M$ is a left proper simplicial model category and $I$ is a set of maps such
    that $\L_I M$ exists as a model category, then the fibrant objects of $\L_I
    M$ are precisely the $I$-local objects of $M$.
\end{proposition}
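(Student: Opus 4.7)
The plan is to prove the two implications separately, using as the fundamental lever the containment $W \subseteq W_I$ (which appears already in the definition of $\L_I M$) together with the fact that the cofibrations of $M$ and $\L_I M$ coincide.

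First I would observe that every $\L_I M$-fibrant object $X$ is automatically $M$-fibrant. Because the cofibrations in both structures coincide and $W \subseteq W_I$, every acyclic cofibration in $M$ is also an acyclic cofibration in $\L_I M$. By the lifting characterization of fibrations, every fibration of $\L_I M$ is then a fibration of $M$, so in particular every $\L_I M$-fibrant object is $M$-fibrant. This already supplies one of the two clauses in the definition of ``$I$-local'' for the backward direction.

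Second, for $I$-local $\Rightarrow$ $\L_I M$-fibrant, let $X$ be $I$-local and let $j : A \rightarrow B$ be an acyclic cofibration in $\L_I M$, i.e.\ a cofibration in $M$ that is also an $I$-local weak equivalence. Since $X$ is $M$-fibrant, SM7 in $M$ forces $j^* : \map_M(B,X) \rightarrow \map_M(A,X)$ to be a fibration of simplicial sets; since $X$ is $I$-local and $j \in W_I$, this $j^*$ is moreover a weak equivalence by definition of $I$-local weak equivalence. Thus $j^*$ is an acyclic fibration of simplicial sets, hence surjective on $0$-simplices, which gives the required extension of any $A \rightarrow X$ to $B \rightarrow X$. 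So $X \rightarrow \ast$ has the right lifting property against every $\L_I M$-acyclic cofibration.

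Third, for $\L_I M$-fibrant $\Rightarrow$ $I$-local, assume $X$ is fibrant in $\L_I M$ (hence $M$-fibrant by the first step) and fix $i : A \rightarrow B$ in $I$. By unwinding the two definitions, $i$ is itself an $I$-local weak equivalence. Factor $i = p \circ j$ in $M$ with $j : A \rightarrow C$ a cofibration and $p : C \rightarrow B$ an acyclic fibration. Then $p \in W \subseteq W_I$ and $i \in W_I$, so two-out-of-three yields $j \in W_I$, making $j$ an acyclic cofibration in $\L_I M$. Invoking SM7 inside the simplicial model category $\L_I M$ (which inherits its simplicial structure from $M$ by Theorem~\ref{thm:existence}) together with the $M$-fibrancy of $X$ shows that $j^* : \map_M(C,X) \rightarrow \map_M(A,X)$ is an acyclic fibration of simplicial sets, while SM7 in $M$ gives that $p^* : \map_M(B,X) \rightarrow \map_M(C,X)$ is also an acyclic fibration. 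Therefore $i^* = j^* \circ p^*$ is a weak equivalence, so $X$ is $I$-local.

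The main obstacle I anticipate is the appeal to SM7 in $\L_I M$ in the last step: the proposition only assumes $\L_I M$ exists as a model category, so one must separately invoke Theorem~\ref{thm:existence} (or its analogue in Hirschhorn) to know that $\L_I M$ is a simplicial model category with the same mapping-space functor as $M$. A secondary pitfall is the apparent circularity in asserting that each $i \in I$ is an $I$-local weak equivalence; this is harmless once one notes it follows purely from juxtaposing the two definitions, without reference to any specific $I$-local object.
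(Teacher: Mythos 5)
The paper does not give its own proof of this statement; it refers to Hirschhorn's Proposition~3.4.1, so there is no argument in the text to compare against, and your proof stands on its own. The overall strategy --- show fibrant-in-$\L_I M$ implies fibrant-in-$M$, then establish each implication using SM7 and mapping spaces --- is the right one, and your first two steps are correct.

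The third step has a slip and, as you recognize, a gap. The slip: ``SM7 in $M$ gives that $p^*$ is also an acyclic fibration'' is wrong, because $p:C\rightarrow B$ is an acyclic \emph{fibration} rather than a cofibration, so SM7 says nothing about $p^*$. All you actually need is that $p^*$ is a weak equivalence, which follows by Ken Brown's lemma applied to $\map_M(-,X)$ provided $B$ and $C$ are cofibrant --- a cofibrancy hypothesis on the domains and codomains of the maps in $I$ that the paper's definitions are also silently presupposing. The gap you flag is real: SM7 in $\L_I M$ is not granted by the hypotheses, since the proposition only posits that $\L_I M$ exists as a model category, and Theorem~\ref{thm:existence}, your proposed fix, additionally assumes $M$ combinatorial. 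The gap can be closed without combinatoriality, but by a verification rather than a citation: one checks the pushout-product axiom for $\L_I M$. For an $\L_I M$-acyclic cofibration $f:A\rightarrow C$ and a cofibration $g$ of simplicial sets, the pushout-product is a cofibration in $M$; to see it lies in $W_I$, note that for each $I$-local $Z$, the tensor-hom adjunction identifies the induced map on $\map_M(-,Z)$ with the pullback-hom of $g$ against $f^*:\map_M(C,Z)\rightarrow\map_M(A,Z)$, and $f^*$ is an acyclic Kan fibration (a fibration by SM7 in $M$ since $f$ is a cofibration and $Z$ is $M$-fibrant; a weak equivalence since $f\in W_I$ and $Z$ is $I$-local), so the pullback-hom is again an acyclic Kan fibration, placing the pushout-product in $W_I$. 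This secures the pushout-product axiom for $\L_I M$, after which your appeal to SM7 there goes through.
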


\begin{exercise}
    Consider the model category structure given in Exercise~\ref{exercise:chain} on
    $\Ch_{\geq 0}(\ZZ)$. It is not hard to show that this is a simplicial
    model category using the Dold-Kan correspondence
    (see~\cite{goerss-jardine}).
    Let $I$ be the set of all morphisms between chain complexes of finitely
    generated abelian groups inducing isomorphisms on
    \emph{rational} homology groups.
    Then, $\L_I\Ch_{\geq 0}(\ZZ)$ is Quillen
    equivalent to $\Ch_{\geq 0}(\QQ)$ with the model category structure of
    Exercise~\ref{exercise:chain}. Show that every rational homology
    equivalence is an isomorphism in $\Ho(\L_I\Ch_{\geq 0}(\ZZ))$.
\end{exercise}

\begin{exercise}
    Construct a category of $\QQ$-local spaces, by letting $I$ be a set of
    maps $f:X\rightarrow Y$ of simplicial sets such that $\H_*(f,\QQ)$ is an
    isomorphism.
\end{exercise}

\subsection{Simplicial presheaves with descent}

Let $C$ be an essentially small category. Let $\sPre(C)$ denote
the category of functors $X:C^{\op}\rightarrow\sSets$. This is the category of
\df{simplicial presheaves} on $C$, and there is a Yoneda functor
$h:C\rightarrow\sPre(C)$. Bousfield and Kan~\cite{bousfield-kan} defined a model category
structure on $\sPre(C)$, the \df{projective model category structure}, which has a
special universal property highlighted by Dugger~\cite{dugger-universal}: it is
the initial model category into which $C$ embeds. Consider the following classes of morphisms in
$\sPre(C)$:
\begin{itemize}
    \item   objectwise weak equivalences: those maps $w:X\rightarrow Y$ such that $w(V):X(V)\rightarrow
        Y(V)$ is a weak equivalence of simplicial sets for all objects $V$ of $C$,
    \item   objectwise fibrations, and
    \item   projective cofibrations, those maps having the left lifting property with
        respect to acyclic objectwise fibrations.
\end{itemize}

\begin{proposition}
    The category of simplicial presheaves with the weak equivalences,
    fibrations, and cofibrations as above is a left proper combinatorial
    simplicial model category.
\end{proposition}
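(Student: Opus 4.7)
The plan is to verify the axioms by transferring everything, objectwise, from the Kan--Quillen model structure on $\sSets$ described in Example~\ref{ex:ssets}, and then to use the small object argument to produce the non-trivial factorizations and lifts. Axioms M1 and M2 are essentially formal: two-out-of-three for objectwise weak equivalences is immediate, and the three classes are closed under retracts since the classes of weak equivalences and fibrations in $\sSets$ are, while the projective cofibrations are defined by a left lifting property and so automatically closed under retracts.

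The heart of the proof is the factorization axiom M4, which I would establish using Quillen's small object argument applied to the sets of morphisms
\begin{equation*}
    I = \{h_V\otimes\partial\Delta^n\to h_V\otimes\Delta^n : V\in C,\; n\geq 0\},\qquad
    J = \{h_V\otimes\Lambda^n_k\to h_V\otimes\Delta^n : V\in C,\; 0\leq k\leq n,\; n\geq 1\}.
\end{equation*}
The generators are small: colimits in $\sPre(C)$ are computed objectwise, each $h_V\otimes\Delta^n$ is objectwise a finite simplicial set, and representable presheaves together with finite simplicial sets are small. Then by Yoneda, a map $p\colon E\to B$ has the right lifting property with respect to $I$ (resp.\ $J$) if and only if each $p(V)\colon E(V)\to B(V)$ has the right lifting property with respect to $\partial\Delta^n\to\Delta^n$ (resp.\ $\Lambda^n_k\to\Delta^n$) in $\sSets$; by the Kan--Quillen theorem this amounts to being an objectwise acyclic fibration (resp.\ an objectwise fibration). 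The small object argument then produces the required $(C,W\cap F)$ and $(C\cap W,F)$ factorizations, and as a by-product M3 is verified: $p$ is a fibration iff it has RLP against $J$, and the RLP conditions give the lifting in the two cases of M3.

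For left properness, pushouts are computed objectwise, $\sSets$ is left proper (every simplicial set is cofibrant), and projective cofibrations are in particular objectwise monomorphisms after evaluation (since $I$ consists of objectwise monomorphisms and this class is closed under pushouts and transfinite compositions), so the objectwise pushout of an objectwise weak equivalence along a projective cofibration is an objectwise weak equivalence. For the combinatorial condition, $\sPre(C)$ is presentable because $C$ is essentially small (see Ad\'amek--Rosicky), the sets $I$ and $J$ are small, and the above analysis exhibits them as generating (acyclic) cofibrations in the sense of~\cite{htt}*{Definition~A.2.6.1}.

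Finally, the simplicial enrichment is defined objectwise: for $X,Y\in\sPre(C)$ and a simplicial set $K$, put $(X\otimes K)(V)=X(V)\times K$, $(X^K)(V)=\map_{\sSets}(K,X(V))$, and $\map_{\sPre(C)}(X,Y)_n=\Hom_{\sPre(C)}(X\otimes\Delta^n,Y)$. Because fibrations and weak equivalences are defined objectwise, SM7 reduces via evaluation at each $V\in C$ to SM7 for $\sSets$; the only subtle point is checking that the pushout--product of a projective cofibration with a monomorphism of simplicial sets is again a projective cofibration, which is a standard manipulation of lifting problems using the adjunctions above and the corresponding fact in $\sSets$. The main step that requires care is verifying the smallness hypotheses and the Yoneda identification of objects with the right lifting properties, since everything else is a formal consequence of the Kan--Quillen structure applied one object at a time.
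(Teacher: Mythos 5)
Your proof is correct, but it takes a genuinely different route than the paper, which simply refers the reader to Lurie's \emph{Higher Topos Theory} (Proposition~A.2.8.2 for the model structure and Remark~A.2.8.4 for left properness) rather than proving the result. Your direct verification via the small object argument with generators $I=\{h_V\otimes\partial\Delta^n\to h_V\otimes\Delta^n\}$ and $J=\{h_V\otimes\Lambda^n_k\to h_V\otimes\Delta^n\}$ is the standard self-contained construction of the projective model structure; it has the virtue of exhibiting the generating sets explicitly, which is exactly what makes combinatoriality transparent, and it correctly identifies the Yoneda-adjunction step ($\Hom(h_V\otimes K,X)\iso\Hom_{\sSets}(K,X(V))$) as the crux of identifying the classes $F$ and $W\cap F$. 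One small caution: the statement that SM7 ``reduces via evaluation at each $V$'' is a little too quick, since $\map_{\sPre(C)}(X,Y)$ is \emph{not} computed objectwise; the honest reduction goes by checking the pullback-corner condition against the generating (acyclic) cofibrations, where $\map_{\sPre(C)}(h_V\otimes K,Y)\iso\map_{\sSets}(K,Y(V))$ really does collapse to SM7 in $\sSets$, and then extending to all cofibrations by the usual saturation/retract argument. You do flag this as the subtle point and indicate the right adjunction, so this is an imprecision rather than a gap. The trade-off between the two approaches is the usual one: the paper's citation is terse and keeps the exposition moving, whereas your argument is longer but genuinely demonstrates where each hypothesis (essential smallness of $C$, objectwise colimits, smallness of generators) is used.
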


\begin{proof}
    The reader can find a proof in~\cite{htt}*{Proposition~A.2.8.2}.
    See~\cite{htt}*{Remark~A.2.8.4} for left properness.
\end{proof}

Suppose now that $C$ has a Grothendieck topology $\tau$. Let $U_\bullet$ be an object
of $\sPre(C)$ (so that each $U_n$ is a presheaf of \emph{sets} on $C$),
and suppose that there is a map $U\rightarrow V$, where $V$ is a
representable object.
We call $U\rightarrow V$ a \df{hypercover} if each $U_n$ is a coproduct of
representables, the induced map $U_0\rightarrow V$ is a $\tau$-cover, and
each $U^{\Delta^n}\rightarrow U^{\partial\Delta^n}$ is a
$\tau$-cover in degree $0$. For details about hypercovers,
see~\cite{artin-mazur}*{Section 8}. Except for the definition of the $\tau$-local
category below, we will not need hypercovers in the rest of the paper. The
reader may safely just imagine these to be \v{C}ech complexes.

The standard example of a hypercover is the \df{\v{C}ech} complex
$\check{U}\rightarrow V$ associated to a $\tau$-cover $U\rightarrow V$.
So, $\check{U}_n=U\times_V\cdots\times_V U$, the product of $U$ with
itself $n+1$ times over $V$. Roughly speaking, a hypercover looks just like a
\v{C}ech complex, except that
one is allowed to refine the \v{C}ech simplicial object by iteratively taking
covers of the fiber products.

\begin{theorem}
    The Bousfield localization of $\sPre(C)$ with respect to the class of
    hypercovers
    \begin{equation*}
        \check{U}_\bullet\rightarrow V
    \end{equation*}
    exists. We will denote this model category throughout the paper by
    $\L_\tau\sPre(C)$.
\end{theorem}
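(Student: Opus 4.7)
The plan is to appeal directly to Theorem~\ref{thm:existence}. The previous proposition already asserts that $\sPre(C)$, equipped with the projective model structure, is left proper, combinatorial, and simplicial, so three of the four hypotheses of Theorem~\ref{thm:existence} are in place without further work. All that remains is the subtle point hiding in the statement of the theorem: \emph{the class to be inverted must be a set}, and hypercovers a priori form a proper class, since each $U_n$ is permitted to be a coproduct of representables indexed by an arbitrary set.

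To dispose of this set-theoretic obstruction, I would fix a regular cardinal $\kappa$ larger than both the cardinality of a skeleton of $C$ and the cardinality witnessing the $\kappa$-combinatoriality of $\sPre(C)$. Call a hypercover $U_\bullet \to V$ \emph{$\kappa$-bounded} if every indexing set for the coproduct decompositions of the $U_n$ has cardinality less than $\kappa$. Up to isomorphism there is only a set of $\kappa$-bounded hypercovers, since the data of $V \in C$, the indexing sets, and the assignment of a representable to each index all range over sets.

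The substantive claim is then that the $\kappa$-bounded hypercovers and the class of all hypercovers cut out the same local objects, or equivalently the same local weak equivalences. One direction is immediate. For the other, given an arbitrary hypercover $U_\bullet \to V$, the standard refinement argument of Dugger--Hollander--Isaksen allows one to extract, level by level, a $\kappa$-bounded hypercover $U'_\bullet \to V$ together with a map $U'_\bullet \to U_\bullet$ over $V$ such that $U'_\bullet \to V$ is again a hypercover; because both $U'_\bullet \to V$ and $U_\bullet \to V$ become equivalences after pairing with a local object, locality with respect to the set of $\kappa$-bounded hypercovers forces locality with respect to all hypercovers. Once this cofinality is established, Theorem~\ref{thm:existence} applied to the set of $\kappa$-bounded hypercovers produces $\L_\tau \sPre(C)$, which automatically inherits a left proper simplicial combinatorial model structure.

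The main obstacle, and essentially the only real content of the proof, is the cofinality step: verifying that every hypercover admits a $\kappa$-bounded refinement and that this refinement is sufficient to detect locality. The rest is bookkeeping on top of Theorem~\ref{thm:existence}.
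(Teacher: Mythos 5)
Your proposal is correct and follows the same basic strategy as the paper --- invoke Theorem~\ref{thm:existence} --- but it handles a real set-theoretic subtlety that the paper's one-line proof glosses over. The paper's entire argument is the remark that ``there is up to isomorphism only a set of $\tau$-hypercovers since $C$ is small,'' which, as you correctly observe, is not literally true: since each $U_n$ is permitted to be a coproduct of representables over an unbounded index set, there is in general a proper class of isomorphism classes of hypercovers even when $C$ is small. Your repair is the standard one: fix a regular cardinal $\kappa$ large enough relative to $C$ and the presentability rank of $\sPre(C)$, note that there is only a set of $\kappa$-bounded hypercovers up to isomorphism, and then argue cofinality --- that every hypercover admits a $\kappa$-bounded refinement over $V$ which detects the same local objects. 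This is precisely the argument made precise in Dugger--Hollander--Isaksen, where the level-wise refinement machinery and the cardinality bound on ``basal'' hypercovers appear. You are right that the cofinality step is the only real mathematical content beyond the appeal to Theorem~\ref{thm:existence}; it would be worth citing \cite{dugger-hollander-isaksen} explicitly for it, since that is where the refinement is carried out in full. In short, your proof is the rigorous version of what the paper asserts without justification, and it reaches the same conclusion.
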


\begin{proof}
    By Theorem~\ref{thm:existence}, we only have to remark that there is up to isomorphism only a set of
    $\tau$-hypercovers since $C$ is small.
\end{proof}

\begin{remark}\label{rem:fibrant}
    We will refer to $\tau$-local objects and $\tau$-local weak equivalences
    for the $I$-local notions when $I$ is the class of morphisms in the
    theorem. In the $\tau$-local model category $\L_\tau\sPre(C)$, an object $V$ of $C$
    (viewed as the functor it represents) is equivalent to the \v{C}ech complex
    of any $\tau$-covering. Since $\sPre(C)$ with its projective model category
    structure is left proper, the fibrant objects of $\L_\tau\sPre(C)$ are
    precisely the $\tau$-local objects by Proposition~\ref{prop:ilocal}.
    Hence, the fibrant objects of $\L_\tau\sPre(C)$ are precisely the
    presheaves of Kan complexes $X$ such that
    \begin{equation}\label{eq:descent}
        X(V)\rightarrow\holim_\Delta X(U)
    \end{equation}
    is a weak equivalence for every $\tau$-hypercover
    $U\rightarrow A$. In other words, the fibrant objects are the
    \df{homotopy sheaves of spaces}.
\end{remark}

There is another, older definition of the homotopy theory of $\tau$-homotopy
sheaves due to Joyal and Jardine. It is useful to know
that it is Quillen equivalent to the one given above.

\begin{definition} \label{defin:hoshv}
    Let $X$ be an object of $\sPre(C)$, $V$ an object of $C$, and $x\in X(V)$ a
    basepoint. We can define a
    presheaf of sets (or groups or abelian groups) $\pi_n(X,x)$ on $C_{/V}$,
    the category of objects in $C$ over $V$, by letting
    $$\pi_n(X,x)(U)=\pi_n(X(U),f^*(x))$$ for $g:U\rightarrow V$ an object of
    $C_{/V}$. Let
    $\pi_n^{\tau}(X,x)$ be the sheafification of $\pi_n(X,x)$ in the $\tau$-topology
    restricted to $C_{/V}$. These are the \df{$\tau$-homotopy sheaves} of $X$.
\end{definition}

Let $W_\tau$ denote the class of maps $s:X\rightarrow Y$ in $\sPre(C)$ such that
$s_*:\pi_n^{\tau}(X,x)\rightarrow \pi_n^{\tau}(Y,s(x))$ is an isomorphism for all
$V$ and all basepoints $x\in X(V)$. Jardine proved that together with $W_\tau$, the class of
objectwise cofibrations determines a model category structure $\sPre_J(C)$ on $\sPre(C)$.

\begin{theorem}[Dugger-Hollander-Isaksen~\cite{dugger-hollander-isaksen}]\label{thm:jardine}
    The identity functor $\L_\tau\sPre(C)\rightarrow\sPre_J(C)$ is a Quillen
    equivalence.
\end{theorem}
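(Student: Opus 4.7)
The plan is to reduce the statement to showing that the two model structures have the same weak equivalences, from which the Quillen equivalence will follow formally.

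First, I would observe that the cofibrations of $\L_\tau\sPre(C)$ (namely the projective cofibrations, since left Bousfield localization does not change cofibrations) are all objectwise monomorphisms, hence are cofibrations in $\sPre_J(C)$. So the identity is cofibration-preserving in the direction $\L_\tau\sPre(C) \to \sPre_J(C)$. Once I also know that it preserves weak equivalences, I automatically get a left Quillen functor whose right adjoint is again the identity. If both model structures have the same class of weak equivalences, then their homotopy categories are identified under the obvious functors and the Quillen pair is a Quillen equivalence. So the real content is to show $W_{\tau\text{-loc}} = W_\tau$, where $W_{\tau\text{-loc}}$ denotes the weak equivalences of $\L_\tau\sPre(C)$ and $W_\tau$ the Jardine weak equivalences of Definition~\ref{defin:hoshv}.

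For the inclusion $W_{\tau\text{-loc}} \subseteq W_\tau$, I would note that $W_\tau$ obviously contains the objectwise weak equivalences, is closed under $2$-out-of-$3$ and retracts, and further contains every hypercover map $\check{U}_\bullet \to V$. This last point is a local-to-global statement for simplicial presheaves: working stalk-locally (or, in the absence of enough points, after passing to a Boolean localization as in Jardine's setup), a hypercover restricts to a trivial Kan fibration of simplicial sets, and so induces isomorphisms on all sheafified homotopy groups. By the universal property of the Bousfield localization, this forces every $\tau$-local weak equivalence to lie in $W_\tau$.

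For the reverse inclusion $W_\tau \subseteq W_{\tau\text{-loc}}$, the crucial input is a characterization of Jardine-fibrant objects: a presheaf $X$ is fibrant in $\sPre_J(C)$ if and only if each $X(V)$ is a Kan complex and $X(V) \to \holim_\Delta X(U_\bullet)$ is a weak equivalence for every $\tau$-hypercover $U_\bullet \to V$. Granting this, any Jardine-fibrant object is also $\L_\tau$-fibrant by Remark~\ref{rem:fibrant}, and hence, given a $W_\tau$-equivalence $f : A \to B$, the induced map $\map(B, Z) \to \map(A, Z)$ is a weak equivalence for every $\L_\tau$-fibrant $Z$ (using that in the Jardine structure, mapping out of a weak equivalence between cofibrant objects into a fibrant object yields a weak equivalence of simplicial sets). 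Therefore $f$ is a $\tau$-local weak equivalence.

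The hard part will be establishing the descent characterization of Jardine-fibrant objects in the third step: this is the technical heart of Dugger-Hollander-Isaksen and requires comparing the small-object-argument fibrant replacement in $\sPre_J(C)$ (which proceeds by iterated \emph{simplicial} constructions like Godement-style resolutions) with the homotopy limit over hypercovers. One argument builds an explicit fibrant replacement by transfinitely composing the natural map $X \to \holim_\Delta X(U_\bullet)$ over all hypercovers and controlling the resulting tower by a cardinality/accessibility argument. The remaining steps — closure of $W_\tau$ under the relevant operations, and the formal deduction that identity-on-underlying-category with the same weak equivalences is a Quillen equivalence — are routine once this descent characterization is in hand.
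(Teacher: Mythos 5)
The paper offers no proof here; it simply cites Dugger--Hollander--Isaksen, so there is no in-text argument to compare against. Your outline does track the structure of their paper (reduce to showing the two model structures share weak equivalences, with the hyperdescent characterization as the technical core), and the first half --- hypercovers are $W_\tau$-equivalences stalkwise, so the universal property of Bousfield localization forces $W_{\tau\text{-loc}} \subseteq W_\tau$, with a Ken Brown's lemma step to pass from acyclic cofibrations to all local weak equivalences between cofibrant objects --- is sound once spelled out.

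The gap is in the reverse inclusion, and it is specifically in the ``descent characterization'' you assert. You claim that $X$ is fibrant in $\sPre_J(C)$ iff each $X(V)$ is a Kan complex and $X$ satisfies hyperdescent. That is not a characterization of the Jardine-fibrant objects; it is exactly the characterization of the fibrant objects of $\L_\tau\sPre(C)$ given in Remark~\ref{rem:fibrant} via Proposition~\ref{prop:ilocal}. The Jardine model structure has \emph{all} monomorphisms as cofibrations, hence strictly fewer fibrations than the projective-local structure, and its fibrant objects must be \emph{injectively} fibrant --- objectwise Kan is far weaker. (What DHI actually prove is that $\sPre_J(C)$ is the left Bousfield localization of the injective/Heller structure at the hypercovers, and consequently that an objectwise-fibrant presheaf satisfies hyperdescent iff its Jardine-fibrant replacement map is an objectwise weak equivalence.) This misidentification then infects the deduction: you want to show that a $W_\tau$-equivalence $f:A\to B$ induces an equivalence $\map(B,Z)\to\map(A,Z)$ for every $\L_\tau$-fibrant $Z$, and you invoke the SM7-type fact ``map out of a Jardine weak equivalence between cofibrant objects into a fibrant object.'' But your $Z$ is only $\L_\tau$-fibrant, not Jardine-fibrant, so that fact does not apply directly. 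The repair is exactly where the DHI theorem is needed: take a Jardine-fibrant replacement $Z\to Z'$; since $Z$ already satisfies hyperdescent, DHI's theorem implies $Z\to Z'$ is an \emph{objectwise} weak equivalence, hence $\map(-,Z)\simeq\map(-,Z')$ on projectively cofibrant inputs, and now the Jardine-structure argument applies to $Z'$. Without correcting the characterization, this step does not go through as written.
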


\begin{example}
    A $\tau$-sheaf of sets on $C$, when viewed as a presheaf of simplicial
    sets, is in particular fibrant. It follows that when $\tau$ is subcanonical (i.e., every
    representable presheaf is in fact a sheaf) the Yoneda embedding
    $C\rightarrow\sPre(C)$ factors through the category of fibrant objects for
    the $\tau$-local model category on $\sPre(C)$. Thus, there is a fully
    faithful Yoneda
    embedding $C\rightarrow\Ho(\L_\tau\sPre(C))$.
\end{example}

\subsection{The Nisnevich topology}\label{sub:nis}

In this section $S$ denotes a quasi-compact and quasi-separated scheme. We denote by
$\Sm_S$ the category of finitely presented smooth schemes over $S$. Recall
that while all smooth schemes $U$ over $S$ are locally of finite presentation
by definition, saying that $U\rightarrow S$ is finitely presented means in
addition to local finite presentation that the morphism is quasi-compact and
quasi-separated. Note that $\Sm_S$ is an
essentially small category because smooth implies locally of finite
presentation and because $S$ is quasi-compact and quasi-separated.

\begin{definition}[Lurie~\cite{lurie-sag}*{Section~A.2.4}]
    The Nisnevich topology on $\Sm_S$ is the topology generated by those finite
    families of \'etale morphisms $\{p_i:U_i\rightarrow X\}_{i\in I}$ such that
    there is a finite sequence $\emptyset\subseteq Z_n\subseteq
    Z_{n-1}\subseteq \cdots\subseteq Z_1\subseteq Z_0=X$ of finitely presented
    closed subschemes of $X$ such that $$\coprod_{i\in
    I}p_i^{-1}\left(Z_m-Z_{m+1}\right)\rightarrow Z_m-Z_{m+1}$$ admits a section for
    $0\leq m\leq n-1$.
\end{definition}

\begin{remark}
    The referee pointed out that Hoyois has proved in a preprint~\cite{hoyois-allagree} that this definition is
    equivalent (for $S$ quasi-compact and quasi-separated) to the original
    definition of Nisnevich~\cite{nisnevich}, which says that an \'etale cover
    $U\rightarrow X$ is Nisnevich if it is surjective on $k$-points for all
    fields $k$.
\end{remark}

\begin{exercise}
    Show that when $S$ is noetherian of finite Krull dimension, then a finite
    family of \'etale morphisms $\{p_i:U_i\rightarrow X\}_{i\in I}$ is a
    Nisnevich cover if and only if for each point $x\in X$ there is
    an index $i\in I$ and a point $y\in U_i$ over $x$ such that the induced map $k(x)\rightarrow
    k(y)$ is an isomorphism. This is the usual definition of a Nisnevich cover,
    as used for example by~\cite{morel-voevodsky}.
\end{exercise}

\begin{example}\label{ex:niscover}
    Let $k$ be a field of characteristic different than $2$ and $a\in k$ a
    non-zero element. We cover $\AA^1$
    by the Zariski open immersion $\AA^1-\{a\}\rightarrow\AA^1$ and the \'etale
    map $\AA^1-\{0\}\rightarrow\AA^1$ given by $x\mapsto x^2$. This \'etale
    cover is Nisnevich if and only if $a$ is a square in $k$.
\end{example}

\begin{exercise}
    Zariski covers are in particular Nisnevich covers. For example, we will use
    later the standard cover of $\PP^1$ by two copies of $\AA^1$.
\end{exercise}

Of particular importance in the Nisnevich topology are the so-called
elementary distinguished squares.

\begin{definition}
    A pullback diagram
    \begin{equation*}
        \xymatrix{
            U\times_X V\ar[r]\ar[d] &   V\ar[d]^p\\
            U\ar[r]^i & X
        }
    \end{equation*}
    of $S$-schemes in $\Sm_S$
    is an \df{elementary distinguished (Nisnevich) square} if $i$ is a Zariski open
    immersion, $p$ is \'etale, and $p^{-1}(X-U)\rightarrow(X-U)$ is an
    isomorphism of schemes where $X-U$ is equipped with the reduced induced
    scheme structure.
\end{definition}

The proof of the following lemma is left as an easy exercise for the reader.

\begin{lemma}
    In the notation above, $\{i:U\rightarrow X,p:V\rightarrow X\}$ is a Nisnevich cover of $X$.
\end{lemma}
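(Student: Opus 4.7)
The plan is to exhibit an explicit filtration $\emptyset = Z_2 \subseteq Z_1 \subseteq Z_0 = X$ of $X$ by finitely presented closed subschemes and verify the two section conditions of the Nisnevich definition by direct inspection. Take $Z_1$ to be any finitely presented closed subscheme structure on $X-U$. Such a structure exists because $i \colon U \hookrightarrow X$ is a quasi-compact open immersion between finitely presented $S$-schemes: Zariski-locally on $X$ the open $U$ is cut out by finitely many non-vanishing sections, and the corresponding finitely generated ideal produces a finitely presented closed subscheme supported on the complement.

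For the open stratum $Z_0 - Z_1 = U$, the map to analyze is $i^{-1}(U) \sqcup p^{-1}(U) \to U$. Since $i$ is precisely the open immersion of $U$, we have $i^{-1}(U) = U$ and the restriction of $i$ is the identity, so the inclusion of the first summand $U \hookrightarrow U \sqcup p^{-1}(U)$ is an evident section.

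For the closed stratum $Z_1 - Z_2 = Z_1$, the map to analyze is $i^{-1}(Z_1) \sqcup p^{-1}(Z_1) \to Z_1$. The first summand is empty since $Z_1$ and $U$ are set-theoretically disjoint, so it suffices to produce a section of $p^{-1}(Z_1) \to Z_1$. Base change of the \'etale morphism $p$ along $Z_1 \hookrightarrow X$ is still \'etale; by the hypothesis of the elementary distinguished square it becomes an isomorphism after passing to the reduced induced structure $(X-U)_{\red}$. Hence it is a surjective radicial \'etale morphism, and any such morphism is an isomorphism, whose inverse furnishes the desired section.

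The only step that requires any genuine thought is this last upgrade from the reduced structure on $X-U$ to the chosen finitely presented structure $Z_1$; everything else is an unpacking of the definitions of $i$, $p$, and of an elementary distinguished square. With both section conditions checked, the filtration $Z_2 \subseteq Z_1 \subseteq Z_0$ witnesses $\{i,p\}$ as a Nisnevich cover in the sense of Lurie's definition.
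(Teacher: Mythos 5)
The paper leaves this lemma as an exercise, so there is no official proof to compare against, but your argument is correct and is the natural one from Lurie's definition: the two-step filtration $\emptyset \subseteq Z_1 \subseteq X$ with $Z_1$ a finitely presented closed subscheme supported on $X-U$ is exactly what is needed, and the two strata are handled by the evident section over $U$ and by showing $p^{-1}(Z_1)\to Z_1$ is an isomorphism. You also correctly isolate the one non-formal point: the square hypothesis only gives an isomorphism over $(X-U)_{\red}$, and promoting this to the (possibly non-reduced) $Z_1$ requires knowing that $p^{-1}(Z_1)\to Z_1$ is étale and that reduction does not change underlying spaces or residue fields, so the map is surjective and radicial, and a surjective radicial étale morphism is an isomorphism. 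Two small things worth being explicit about if you write this up: the existence of a \emph{finitely presented} (not merely finite type) closed subscheme structure on $X-U$ uses that $U\hookrightarrow X$ is a quasi-compact open immersion of qcqs schemes and that a closed immersion is of finite presentation iff its ideal sheaf is of finite type; and the standard reference for ``étale $+$ universally injective $\Rightarrow$ open immersion'' (hence surjective such $\Rightarrow$ isomorphism) is worth citing rather than asserting.
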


\begin{example}
    If $a$ is a square in Example~\ref{ex:niscover}, then we obtain a Nisnevich
    cover which does not come from an elementary distinguished square. However,
    if we remove one of the square roots of $a$ from $\AA^1-\{0\}$, then we do
    obtain an elementary distinguished square.
\end{example}

\begin{exercise}
    Let $p$ be a prime, let $X=\Spec\ZZ_{(p)}$, and let
    $V=\Spec\ZZ_{(p)}[i]\rightarrow X$,
    where $i^2+1=0$. Let $U=\Spec\QQ\rightarrow X$. Then, $\{U,V\}$ is an
    \'etale cover of $\Spec\ZZ_{(p)}$ for all odd $p$. It is
    Nisnevich if and only if in addition $p\equiv 1\mod 4$.
\end{exercise}

\begin{example}
    Let $X=\Spec R$, where $R$ is a discrete valuation ring with field of
    fractions $K$. Suppose that $p:V\rightarrow X$ is an \'etale map where $V$ is
    the spectrum of another discrete valuation ring $S$. Then, the square
    \begin{equation*}
        \xymatrix{
            U\times_X V\ar[r]\ar[d] &   V\ar[d]^p\\
            U\ar[r]^i & X
        }
    \end{equation*}
    with $i:U=\Spec K\rightarrow X$ is an elementary distinguished square if and
    only if the inertial degree of $R\rightarrow S$ is $1$.
\end{example}

\begin{definition}
    The \df{Nisnevich-local model category} $\L_{\Nis}\sPre(\Sm_S)$ will be denoted
    simply by $\mathrm{Spc}_S$, and the fibrant objects of $\Spc_S$ will be
    called \df{spaces}. So, a space is a presheaf of Kan complexes on
    $\Sm_S$ satisfying Nisnevich hyperdescent in the sense that
    the arrows~\eqref{eq:descent} are weak equivalences for Nisnevich hypercovers.
\end{definition}

\begin{warning}
    There are three candidates for the $\AA^1$-homotopy theory over $S$. One is
    the $\AA^1$-localization of the Joyal-Jardine Nisnevich-local model
    structure~\cite{jardine}. The other is that used
    by~\cite{asok-hoyois-wendt}, which imposes descent only for covers.
    Finally, we impose descent for all hypercovers. When $S$ is noetherian of
    finite Krull dimension, all three definitions are Quillen equivalent. In
    all cases, our definition is equivalent to the Joyal-Jardine definition,
    by the main result of~\cite{dugger-hollander-isaksen}.
\end{warning}

\begin{notation}
    If $X$ and $Y$ are presheaves of simplicial sets on $\Sm_S$, we will write
    $[X,Y]_{\Nis}$ for the set of Nisnevich homotopy classes of maps from $X$
    to $Y$, which is the hom-set from $X$ to $Y$ in the homotopy category of
    $\L_{\Nis}\sPre(\Sm_S)$. The pointed version is written
    $[X,Y]_{\Nis,\star}$. When necessary, we will write $[X,Y]_s$ for the
    homotopy classes of maps from $X$ to $Y$ in $\sPre(\Sm_S)$, and similarly
    we write $[X,Y]_{s,\star}$ for the homotopy classes of pointed maps.
\end{notation}

\begin{notation}
    We will write $\L_\Nis$ for the left derived functor of the identity
    functor $\sPre(\Sm_S)\rightarrow\L_{\Nis}\sPre(\Sm_S)$. Thus, $\L_\Nis$ is
    computed by taking a cofibrant replacement functor with respect to the
    Nisnevich-local model category structure on $\sPre(\Sm_S)$.
\end{notation}

\begin{example}
    Given a scheme $X$ essentially of finite presentation over $S$, we abuse notation
    and also view $X$ as the presheaf it represents on $\Sm_S$. So, if $Y$ is a
    finitely presented smooth $S$-scheme, then $X(Y)=\Hom_S(Y,X)$. Since the Nisnevich topology is
    subcanonical, it is an easy exercise to see that $X$ is fibrant. Indeed,
    homotopy limits of discrete spaces are just computed as limits of the
    underlying sets of components. We will discuss homotopy limits and colimits
    further in Section~\ref{sec:basic}.
\end{example}

The next proposition is a key tool for practically verifying Nisnevich fibrancy for
a given presheaf of simplicial sets on $\Sm_S$.

\begin{proposition}\label{prop:niscrit}
    Suppose that $S$ is a noetherian scheme of finite Krull dimension.
    A simplicial presheaf $F$ on $\Sm_S$ is Nisnevich-fibrant if and only if
    for every elementary distinguished square
    \begin{equation*}
        \xymatrix{
            U\times_X V\ar[r]\ar[d] &   V\ar[d]^p\\
            U\ar[r]^i & X
        }
    \end{equation*}
    the natural map
    \begin{equation*}
        F(X)\rightarrow F(V)\times_{F(U\times_X V)}F(U)
    \end{equation*}
    is a weak equivalence of simplicial sets and $F(\emptyset)$ is a final object. 
\end{proposition}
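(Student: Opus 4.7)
The proof will have two directions, and both hinge on the fact that in the Nisnevich topology on $\Sm_S$, covers are built up from elementary distinguished squares together with a filtration by closed subschemes (as in the very definition of the topology in Section~\ref{sub:nis}).

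For the forward implication, suppose $F$ is Nisnevich-fibrant. The empty condition is immediate, since the empty sieve is a covering sieve of $\emptyset$ and hyperdescent forces $F(\emptyset) \simeq \holim_\emptyset = \ast$. Given an elementary distinguished square, the map $U \sqcup V \to X$ is a Nisnevich cover, and I would apply hyperdescent (Remark~\ref{rem:fibrant}) to its \v{C}ech hypercover $\check{C}_\bullet$ to get $F(X) \simeq \holim_{\Delta} F(\check{C}_\bullet)$. The key simplification is that, because $i: U \to X$ is a monomorphism (so $U \times_X U \cong U$) and $p$ restricts to an isomorphism over $X \setminus U$, the \v{C}ech nerve is homotopically equivalent to the two-stage cosimplicial diagram $F(U) \sqcup F(V) \rightrightarrows F(U \times_X V)$. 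This identifies the homotopy limit with the homotopy pullback of $F(U) \to F(U \times_X V) \leftarrow F(V)$; since fibrancy of $F$ in the projective Nisnevich-local structure makes all three simplicial sets Kan, and the relevant map is (or may be replaced by) a Kan fibration using the mapping cylinder, the strict pullback computes the homotopy pullback.

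For the reverse direction, assume $F$ satisfies excision for elementary distinguished squares and $F(\emptyset) \simeq \ast$. I would pick a Nisnevich-fibrant replacement $F \to \widetilde{F}$ in $\sPre(\Sm_S)$. By the forward direction, $\widetilde{F}$ also satisfies excision, and the map $F \to \widetilde{F}$ is a Nisnevich-local weak equivalence (hence an isomorphism on Nisnevich homotopy sheaves by Theorem~\ref{thm:jardine}). Taking the objectwise homotopy fiber $G$ at each basepoint reduces the problem to showing: any pointed simplicial presheaf $G$ satisfying elementary distinguished square excision and whose Nisnevich homotopy sheaves all vanish is objectwise contractible.

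The proof of this reduction is by noetherian induction on $X \in \Sm_S$, driven by the combinatorial structure of Nisnevich covers over a scheme of finite Krull dimension. Given a point $x \in X$ where $G(X)$ might fail to be contractible, one uses the vanishing of stalks to refine $X$ by a Nisnevich cover witnessing the trivialization, then applies the definition's filtration $\emptyset \subseteq Z_n \subseteq \cdots \subseteq Z_0 = X$ to split this cover into a sequence of elementary distinguished squares $(X \setminus Z_m, V_m, X_m)$. Excision, together with the inductive hypothesis on the lower-dimensional strata, propagates contractibility up the filtration. The main obstacle is exactly this passage from excision to full hyperdescent; it is the Brown--Gersten/Morel--Voevodsky recognition principle (see \cite{morel-voevodsky}*{Section~3.1} and \cite{dugger-hollander-isaksen}), and uses in an essential way that the Nisnevich topos on $\Sm_S$ has cohomological dimension bounded by the Krull dimension of $S$, so that \v{C}ech descent suffices for hyperdescent.
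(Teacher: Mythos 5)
Your route is the direct Brown--Gersten one (reduce to showing a presheaf with trivial Nisnevich homotopy sheaves satisfying excision is objectwise contractible, then run a noetherian induction over the filtration appearing in the definition of the topology). This is the approach the paper's proof points to via \cite{brown-gersten} and \cite{dugger-bg}; the paper's primary suggestion is instead Voevodsky's cd-structure machinery \cite{voevodsky-jpaa1,voevodsky-jpaa2}, carried over to simplicial presheaves by \cite{asok-hoyois-wendt}. The cd-structure route packages the combinatorics of elementary distinguished squares once and for all and is cleaner when one wants to change topologies (cdh, etc.), while the direct route is more elementary and is what a reader already versed in Brown--Gersten for the Zariski topology will recognize. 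Since the paper itself only cites, your proposal is comparable in depth, but two things should be tightened.

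In the forward direction, $F(U)\sqcup F(V)$ should be $F(U)\times F(V)$: $F$ turns the coproduct $U\sqcup V$ into a product of simplicial sets. More substantively, the assertion that the full \v{C}ech cosimplicial diagram of $\{U,V\}\to X$ ``is'' the two-stage pullback diagram is precisely the nontrivial step you need to argue. The structural fact doing the work is that, because $p$ is \'etale and separated, the diagonal $V\hookrightarrow V\times_X V$ is open and closed, and because $p$ is an isomorphism over $X\setminus U$, its complement $W\subseteq V\times_X V$ lies entirely over $U$; together with $U\times_X U\cong U$ this lets one collapse the totalization of $[n]\mapsto F(\check{C}_n)$ to $F(U)\times^h_{F(U\times_X V)}F(V)$, but this collapse is an argument (a cofinality or extra-degeneracy argument), not an observation. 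You should also be careful about whether the proposition's strict pullback agrees with the homotopy pullback: $F$ being objectwise Kan does not make $F(V)\to F(U\times_X V)$ a fibration, so one needs right properness of $\sSets$ or an explicit replacement, and ``mapping cylinder'' should be ``mapping path space''.

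In the reverse direction, the passage from squarewise excision to full Nisnevich hyperdescent is the theorem, and your paragraph only names the ingredients (noetherian induction, the filtration, the finite cohomological dimension of the small Nisnevich sites). The bound you want is on $\dim X$ for each $X\in\Sm_S$, not on $\dim S$; this is what allows \v{C}ech descent to imply hyperdescent. Deferring this step to \cite{brown-gersten} and \cite{morel-voevodsky} is reasonable in an expository context, and it matches what the paper does, but if a complete proof were required this is exactly where it would have to go.
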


Presheaves possessing the
property in the proposition are said to satisfy the \df{Brown-Gersten
property}~\cite{brown-gersten} or the \df{excision} property, although Brown
and Gersten studied the Zariski analog.

\begin{proof}
    Let us first indicate the references for this theorem. The proof in
    ~\cite{morel-voevodsky}*{Section 3.1} applies to sheaves of sets (i.e.
    sheaves valued in discrete simplicial sets); to deduce the
    simplicial version, one uses the techniques in~\cite{brown-gersten}.

    One way the reader can get straight to the case of simplicial presheaves is
    via the following argument. The Nisnevich topology is generated by a
    \df{cd-structure}, a collection of squares in $\Sm_S$ stable
    under isomorphism;  the cd-structure corresponding to the Nisnevich topology
    is given by the elementary distinguished squares. This
    observation amounts to \cite{voevodsky-jpaa2}*{Proposition~2.17,
    Remark~2.18}.
    In \cite{voevodsky-jpaa1}*{Section 2}, Voevodsky gives conditions on a
    category $C$ equipped with a cd-structure for when the sheaf condition on a
    presheaf of sets on $C$ coincides with the excision condition with respect
    to the cd-structure.
    For a proof of the corresponding claim for simplicial presheaves (and
    hyperdescent), one can refer
    to \cite{asok-hoyois-wendt}*{Theorem 3.2.5}.

    For another, direct approach see~\cite{dugger-bg}.
\end{proof}
   
    
    


Let $\Sm_S^{\mathrm{Aff}}$ denote the full subcategory category of $\Sm_S$ consisting
of (absolutely) affine schemes. A presheaf $X$ on $\Sm_S^{\mathrm{Aff}}$
satisfies \df{affine Nisnevich excision} if it satisfies excision for the
cd-structure on $\Sm_S^{\mathrm{Aff}}$ consisting of cartesian squares
$$\xymatrix{
    \Spec R'_{f}\ar[r]\ar[d]    &   \Spec R'\ar[d]\\
    \Spec R_f\ar[r] &   \Spec R,
}$$
where $\Spec R'\rightarrow\Spec R$ is \'etale, $f\in R$, and $R/(f)\iso
R'/(f)$. An important result of~\cite{asok-hoyois-wendt} says that the topology
generated by the affine Nisnevich cd-structure is the same as the Nisnevich
topology restricted to $\Sm_S^{\mathrm{Aff}}$.

\subsection{The $\AA^1$-homotopy category}

To define the $\AA^1$-homotopy category, we perform a further left Bousfield
localization of $\L_{\Nis}\sPre(\Sm_S)$. As above, $S$ denotes a quasi-compact
and quasi-separated scheme.

\begin{definition}
    Let $I$ be the class of maps $\AA^1\times_S X\rightarrow X$ in
    $\L_{\Nis}\sPre(\Sm_S)$ as $X$ ranges over all objects of $\Sm_S$. Since
    $\Sm_S$ is essentially small, we can choose a subset $J\subseteq I$
    containing maps $\AA^1\times_S X\rightarrow X$ as $X$ ranges over a
    representative of each isomorphism class of $\Sm_S$.

    The \df{$\AA^1$-homotopy theory} of $S$ is the left Bousfield
    localization $\L_{\AA^1}\L_{\Nis}\sPre(\Sm_S)$ of $\L_{\Nis}\sPre(\Sm_S)$
    with respect to $J$. Its homotopy category will be called the
    \df{$\AA^1$-homotopy category} of $S$. Let
    $\Spc_S^{\AA^1}$ be $\L_{\AA^1}\L_{\Nis}\sPre(\Sm_S)$. Fibrant objects
    of $\Spc_S^{\AA^1}$ will be called \df{$\AA^1$-spaces} or \df{$\AA^1$-local
    spaces}. Note that the simplicial presheaf
    underlying any $\AA^1$-space is in particular a space in the sense that it is fibrant in
    $\Spc_S$. The homotopy category of $\Spc_S^{\AA^1}$ will always be written as
    $\Ho(\Spc_S^{\AA^1})$, and usually functoriality or naturality statements will be made
    with respect to the homotopy category.
\end{definition}

\begin{proposition}
    The Bousfield localization $\Spc_S^{\AA^1}=\L_{\AA^1}\L_{\Nis}\sPre(\Sm_S)$ exists.
\end{proposition}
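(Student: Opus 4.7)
The plan is to invoke Theorem~\ref{thm:existence} directly. To do this, I need to verify that $\L_{\Nis}\sPre(\Sm_S)$ is a left proper, combinatorial, simplicial model category and that $J$ is a set. The latter condition is built into the construction of $J$: the authors explicitly take a subset $J\subseteq I$ containing one morphism $\AA^1\times_S X\rightarrow X$ per isomorphism class in $\Sm_S$, which is possible because $\Sm_S$ is essentially small (since $S$ is quasi-compact and quasi-separated).

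For the three structural properties of $\L_{\Nis}\sPre(\Sm_S)$, I would start from the corresponding properties on $\sPre(\Sm_S)$ equipped with its projective model structure, which were established in the preceding proposition and its cited references: the projective model structure is simplicial, combinatorial, and left proper. Now $\L_{\Nis}\sPre(\Sm_S)$ was itself obtained as a left Bousfield localization of $\sPre(\Sm_S)$ at the set of Nisnevich hypercovers (a set, up to isomorphism, because $\Sm_S$ is essentially small), so Theorem~\ref{thm:existence} already guarantees that $\L_{\Nis}\sPre(\Sm_S)$ is simplicial. It remains to observe that a left Bousfield localization of a left proper combinatorial simplicial model category at a set of morphisms is again left proper and combinatorial; this is standard and follows, for example, from the discussion in~\cite{hirschhorn} and~\cite{htt}*{Proposition~A.3.7.3}.

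With these three properties established, Theorem~\ref{thm:existence} applies to the pair $(\L_{\Nis}\sPre(\Sm_S),J)$ and produces the desired left Bousfield localization $\L_{\AA^1}\L_{\Nis}\sPre(\Sm_S) = \Spc_S^{\AA^1}$, which inherits a simplicial model category structure. There is no genuine obstacle; the proposition is essentially a bookkeeping check that the hypotheses of Theorem~\ref{thm:existence} are satisfied for a second iterated Bousfield localization. The only mild subtlety is confirming that combinatoriality and left properness persist through the Nisnevich localization, but this is routine and covered in the standard references.
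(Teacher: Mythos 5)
Your proof is correct and follows the same route as the paper: verify that $\L_{\Nis}\sPre(\Sm_S)$ is left proper, combinatorial, and simplicial (inherited through the first Bousfield localization) and then apply Theorem~\ref{thm:existence}. The paper states this more tersely, but your spelled-out version of the bookkeeping matches its argument exactly.
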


\begin{proof}
    The simplicial structure, left properness, and combinatoriality are inherited
    by $\Spc_S$ from $\sPre(\Sm_S)$, and hence by Theorem~\ref{thm:existence}
    the Bousfield localization exists.
\end{proof}

\begin{notation}
    If $X$ and $Y$ are presheaves of simplicial sets on $\Sm_S$, we will write
    $[X,Y]_{\AA^1}$ for the set of $\AA^1$-homotopy classes of maps from $X$
    to $Y$, which is the hom-set from $X$ to $Y$ in the homotopy category of
    $\L_{\AA^1}\L_{\Nis}\sPre(\Sm_S)$. The pointed version is written
    $[X,Y]_{\AA^1,\star}$.
\end{notation}

\begin{notation}
    We will write $\L_{\AA^1}$ for the left derived functor of the identity
    functor $\L_{\Nis}\sPre(\Sm_S)\rightarrow\L_{\AA^1}\L_{\Nis}\sPre(\Sm_S)$.
    Thus, $\L_{\AA^1}\L_\Nis$ is
    computed by taking a cofibrant replacement functor with respect to the
    $\AA^1$-local model category structure on $\sPre(\Sm_S)$.
\end{notation}

\begin{remark}
    It is common to call an $\AA^1$-space, an $\AA^1$-local space, and indeed
    the fibrant objects of $\Spc_S^{\AA^1}$ are $\AA^1$-local. In fact, a simplicial presheaf $X$ in $\sPre(\Sm_S)$
    is $\AA^1$-local, i.e., a fibrant object of $\Spc_S^{\AA^1}$, if it
    \begin{enumerate}
        \item   takes values in Kan complexes (so that it is fibrant in
            $\sPre(\Sm_S)$),
        \item   satisfies Nisnevich hyperdescent (so that it is fibrant
            in $\Spc_S$), and
        \item   if $X(U)\rightarrow X(\AA^1\times_S U)$ is a weak equivalence
            of simplicial sets for all $U$ in $\Sm_S$.
    \end{enumerate}
\end{remark}

\begin{exercise} \label{ex:pointed}
    Construct the pointed version $\Spc_{S,\star}^{\AA^1}$ of $\Spc_S^{\AA^1}$, the homotopy theory of
    pointed $\AA^1$-spaces. We will have occasion to use this pointed version as
    well as the Quillen adjunction
    $$\Spc_S^{\AA^1}\rightleftarrows\Spc_{S,\star}^{\AA^1},$$ which sends a
    presheaf of spaces $X$ to the pointed presheaf of spaces $X_+$ obtained by adding a disjoint
    basepoint.
\end{exercise}

\begin{definition}
    The weak equivalences in $\Spc_S^{\AA^1}$ are called \df{$\AA^1$-weak
    equivalences} or \df{$\AA^1$-local weak equivalences}.
\end{definition}

Here is an expected class of $\AA^1$-weak equivalences.

\begin{definition}
    Let $f,g : X \rightarrow Y$ be maps of simplicial presheaves. We say that
    $f$ and $g$ are $\AA^1$-homotopic if there exists a map $H: F \times \AA^1
    \rightarrow G$ such that $H \circ (\id_F \times i_0) = f$ and $H \circ (\id_F
    \times i_1) = g$. A map $g: F \rightarrow G$ is an $\AA^1$-homotopy
    equivalence if there exists morphisms $h: G \rightarrow F$ and that $h
    \circ g$ and $g \circ h$ are $\AA^1$-homotopic to $\id_F$ and $\id_G$
    respectively.
\end{definition}

\begin{exercise} \label{exer:vb}Show that if $p: E \rightarrow X$ is a vector bundle in $\Sm_S$, then $p$ is an $\AA^1$-homotopy equivalence.
\end{exercise}

\begin{exercise}\label{exer:a1equiv}
    Show that any $\AA^1$-homotopy equivalence $f: F \rightarrow G$ is an
    $\AA^1$-weak equivalence. Note that there are many more $\AA^1$-weak
    equivalences.
\end{exercise}

\section{Basic properties of $\AA^1$-algebraic topology}\label{sec:basic}

This long section is dedicated to outlining the basic facts that form the
substrate of the unstable motivic homotopy theorists'
work. Examples and basic
theorems abound, and we hope that it provides a helpful user's manual.
Most non-model category theoretic results below are due to Morel and
Voevodsky~\cite{morel-voevodsky}.

Throughout this section we fix a quasi-compact and quasi-separated base scheme $S$,
and we study the model category $$\Spc_S^{\AA^1}=\L_{\AA^1}\L_{\Nis}\sPre(\Sm_S).$$

\subsection{Computing homotopy limits and colimits through examples}

An excellent source for the construction of homotopy limits or colimits is the
exposition of Dwyer and Spalinski~\cite{dwyer-spalinski}. We start with an
example from ordinary homotopy theory. Consider the following morphism of
pullback diagrams of topological spaces:
\begin{equation*}
    \left(\star\rightarrow S^1\leftarrow \star\right)\rightarrow\left(\star\rightarrow
    S^1\leftarrow P_\star S^1\right),
\end{equation*}
where $P_\star S^1$ is the path space of $S^1$ consisting of paths beginning at the
basepoint of $S^1$. This diagram is a homotopy equivalence in each spot.
However, the pullback of the first is just a point, while the pullback of the
second is the loop space $\Omega S^1$, which is homotopy equivalent to the
discrete space $\ZZ$. This example illustrates that some care is needed when
forming the homotopically correct notion of pullback.

Similarly, consider the maps of pushout diagrams
\begin{equation*}
    \left(*\leftarrow S^0\rightarrow D^1\right)\rightarrow\left(*\leftarrow
    S^0\rightarrow
    *\right),
\end{equation*}
where $D^1$ is the $1$-disk. Again, this map is a homotopy equivalence in each
place. But, the pushout in the first case is $S^1$, and in the second case it
is just a point. Again, care is required in order to compute the \emph{correct}
pushout.

The key in these examples is that $P_\star S^1\rightarrow S^1$ is a fibration, while
$S^0\rightarrow D^1$ is a cofibration. By uniformly replacing pullback diagrams
with pullback diagrams where the maps are fibrations, and then taking the
pullback, one obtains a \emph{homotopy-invariant} notion of pullback, the
homotopy pullback. Similarly, by replacing pushout diagrams with homotopy
equivalent diagrams in which the morphisms are cofibrations, one obtains
homotopy pushouts.

\begin{definition}
    A \df{homotopy pullback} diagram in a model category $M$ is a pullback diagram
    \begin{equation*}
        \xymatrix{
            c\ar[d]\ar[r]   &   d\ar[d]\\
            e\ar[r]         &   f
        }
    \end{equation*}
    in $M$ where at least one of $e\rightarrow f$ or $d\rightarrow f$ is a
    fibration in $M$. Given a pullback diagram $e\rightarrow f\leftarrow d$,
    the \df{homotopy pullback} is the pullback of either the diagram $e'\rightarrow
    f\leftarrow d$ or $e\rightarrow f\leftarrow d'$ where $e'\rightarrow f$
    (resp $d'\rightarrow f$)  is the fibrant replacement via {\bf M4} of
    $e\rightarrow f$ (resp. $d\rightarrow f$).
\end{definition}

\begin{exercise}
    Show that homotopy pullbacks are independent up to weak equivalence of any choices made.
\end{exercise}

To put this notion on a more precise footing, we make the following
construction.

\begin{proposition}[\cite{htt}*{Proposition A.2.8.2}]
    Let $M$ be a combinatorial model category and $I$ a small
    category. The pointwise weak equivalences and pointwise fibrations
    determine a model category structure on $M^I$ called the \df{projective model
    category structure}, which we will denote by $M^I_{\proj}$. The pointwise weak equivalences and pointwise
    cofibrations determine a model category structure on $M^I$ called the
    \df{injective model category structure}, which we will denote by
    $M^I_{\inj}$.
\end{proposition}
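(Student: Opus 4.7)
The plan is to construct both structures by the recognition theorem for cofibrantly generated model categories, exploiting the fact that $M$, being combinatorial, comes equipped with small sets $I_M$ and $J_M$ of generating cofibrations and generating trivial cofibrations, and that the underlying category of $M$ is presentable.

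For the projective structure $M^I_{\proj}$, the strategy is completely standard. For each object $i \in I$, evaluation $\mathrm{ev}_i : M^I \to M$ has a left adjoint $F_i$ given by the formula $F_i(X)(j) = \coprod_{\Hom_I(i,j)} X$. I would first define
\[
I_{\proj} = \{ F_i(u) : i \in I,\ u \in I_M \}, \qquad J_{\proj} = \{ F_i(v) : i \in I,\ v \in J_M \},
\]
and take cofibrations (resp.\ trivial cofibrations) to be retracts of transfinite compositions of pushouts of maps in $I_{\proj}$ (resp.\ $J_{\proj}$). By adjunction, the right lifting property against $I_{\proj}$ (resp.\ $J_{\proj}$) is precisely pointwise trivial fibration (resp.\ pointwise fibration), so the class of (trivial) fibrations is the desired one. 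Smallness of the sources of $I_{\proj}$ and $J_{\proj}$ follows from presentability of $M$ (left adjoints preserve small objects), so the small object argument supplies the factorizations $\mathbf{M4}$. The only nontrivial verification is that every $J_{\proj}$-cell is a pointwise weak equivalence, which follows because $J_M$-cell complexes in $M$ are trivial cofibrations and pushouts and transfinite compositions are computed pointwise in $M^I$. Axioms $\mathbf{M1}$--$\mathbf{M3}$ are then routine.

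The injective structure is the substantially harder half, and this is where combinatoriality is essential. The definition fixes the weak equivalences and cofibrations; the fibrations are forced by the right lifting property against trivial injective cofibrations. The obstacle is that, unlike in the projective case, there is no obvious small generating set for the class of pointwise cofibrations. My plan is to invoke Jeff Smith's recognition theorem for combinatorial model categories (as formulated in \cite{htt}*{Proposition A.2.6.8}): given a presentable category, a class $W$ of morphisms, and a set $I_{\inj}$ of morphisms satisfying (i) $W$ is an accessible subcategory of the arrow category closed under the two-out-of-three property and retracts, (ii) $I_{\inj}$-injectives lie in $W$, and (iii) $W \cap \mathrm{cof}(I_{\inj})$ is closed under pushouts and transfinite composition, one obtains a combinatorial model structure whose cofibrations are $\mathrm{cof}(I_{\inj})$. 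I would take $W$ to be pointwise weak equivalences (accessible because $M$ is combinatorial) and produce $I_{\inj}$ by a cardinality bound argument: if $\kappa$ is a regular cardinal such that $M$ is $\kappa$-presentable and all domains and codomains of $I_M$ are $\kappa$-compact, then a set of representatives for pointwise cofibrations between $\kappa$-presentable objects of $M^I$ generates all pointwise cofibrations.

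The main obstacle is the verification of Smith's hypothesis (iii) for the injective structure: one must show that a transfinite composition of pushouts of maps which are simultaneously pointwise cofibrations and pointwise weak equivalences remains a pointwise weak equivalence. This reduces, pointwise, to the corresponding closure property of trivial cofibrations in $M$, which holds because $M$ is a model category and pushouts/colimits in $M^I$ are computed pointwise. Hypothesis (ii) requires identifying $I_{\inj}$-injectives with pointwise trivial fibrations; this follows from the bounded cofibration lemma, ensuring that every pointwise cofibration is a retract of a transfinite composition of pushouts of maps in $I_{\inj}$. Once Smith's theorem applies, the resulting model structure has the prescribed cofibrations and weak equivalences, and the axioms $\mathbf{M1}$--$\mathbf{M4}$ follow formally. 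Since a full proof requires a substantial detour through the theory of combinatorial model categories, I would simply cite \cite{htt}*{Proposition A.2.8.2} for the injective case after sketching the projective case in detail.
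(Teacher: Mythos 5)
Your proposal is correct and matches the standard treatment; the paper itself offers no proof and simply delegates to \cite{htt}*{Proposition~A.2.8.2}, which is precisely the argument you sketch. You correctly identify the asymmetry between the two cases: the projective structure is transferred along the adjunctions $(F_i, \mathrm{ev}_i)$ via the small object argument, with the generating sets obtained by applying $F_i$ to the generators of $M$, while the injective structure genuinely requires combinatoriality because there is no readily-supplied generating set for pointwise cofibrations. The key technical step you flag — extracting a small generating set for injective cofibrations via a cardinality bound and accessibility, then feeding it into Smith's recognition theorem — is exactly the content of Lurie's proof. One small point to keep in mind if you flesh this out: the verification that pointwise weak equivalences form an accessible subcategory of the arrow category is not entirely formal and relies on the fact that $M$ being combinatorial gives an accessible localizer; but you have the right references and the right structure.
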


We have already seen the projective model category structure in our discussion
of presheaves of spaces on a small category. These two model categories on
$M^I$ can be used to compute homotopy limits and homotopy colimits.

\begin{lemma}
    The functor $\Delta:M\rightarrow M^I$ taking $m\in M$ to the constant
    functor $I\rightarrow M$ on $m$ admits both a left and a right adjoint.
\end{lemma}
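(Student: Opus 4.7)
The plan is to recognize this as a purely categorical statement: the left adjoint of $\Delta$ should be the colimit functor $\colim_I : M^I \to M$, and the right adjoint should be the limit functor $\lim_I : M^I \to M$. The model category structure plays no role here; what matters is only that the underlying category $M$ has all small limits and colimits, which is part of our standing assumption on $M$ (see the definition of model category at the beginning of Section~3.1). Consequently $\colim_I F$ and $\lim_I F$ are defined for every diagram $F : I \to M$, giving honest functors $M^I \to M$.

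First I would unwind the defining universal properties. A morphism $\Delta(m) \to F$ in $M^I$ is, by definition of $M^I$, a family of morphisms $m \to F(i)$ in $M$ compatible with every arrow in $I$ — that is, precisely a cone over $F$ with vertex $m$. By the universal property of the limit, such cones are in natural bijection with morphisms $m \to \lim_I F$ in $M$. This yields the natural isomorphism
\[
\Hom_{M^I}(\Delta(m), F) \;\cong\; \Hom_M(m, \lim_I F),
\]
which is exactly the adjunction $\Delta \dashv \lim_I$. The dual argument, replacing cones by cocones, produces the adjunction $\colim_I \dashv \Delta$.

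The two remaining obligations are (i) verifying naturality of these bijections in both variables and (ii) checking that $\colim_I$ and $\lim_I$ really assemble into functors $M^I \to M$. Both are immediate from the universal properties: a morphism $F \to G$ in $M^I$ induces, by composition of cocones, a morphism $\colim_I F \to \colim_I G$, and the resulting map is compatible with the bijection above because the bijection is defined by composing with the universal cocone. The analogous statement holds for $\lim_I$.

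I do not anticipate any real obstacle; this is a textbook categorical fact, and the only substantive input is the existence of small limits and colimits in $M$. The statement is included here because it sets up the next step in the exposition, where the adjoints $\colim_I$ and $\lim_I$ will be shown to form Quillen pairs with respect to the projective and injective model structures $M^I_{\proj}$ and $M^I_{\inj}$, so that their derived functors give $\hocolim_I$ and $\holim_I$.
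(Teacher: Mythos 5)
Your proof is correct, but it takes a genuinely different route from the paper. You directly exhibit the adjoints as $\lim_I$ and $\colim_I$, which exist because the paper's standing convention requires $M$ to have all small limits and colimits, and you verify the adjunction by unwinding the universal property of (co)limits: a map $\Delta(m)\to F$ is a cone, and cones with vertex $m$ correspond to maps $m\to\lim_I F$. The paper instead invokes the adjoint functor theorem for locally presentable categories (\cite{adamek-rosicky}*{Theorem~1.66}), using that $M$ is presentable (since it is combinatorial, by the hypothesis of the preceding proposition); it checks that $\Delta$ preserves small limits by an explicit equalizer computation and notes that preserving small colimits implies accessibility, then concludes the existence of both adjoints abstractly without naming them as $\lim_I$ and $\colim_I$. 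Your argument is more elementary, is fully explicit about what the adjoints are (which the paper goes on to use anyway, defining $\lim_I$ and $\colim_I$ as those adjoints in the very next definition), and works under the weaker hypothesis of mere (co)completeness rather than presentability. The paper's argument, while heavier, showcases the adjoint-functor-theorem pattern that recurs in the locally presentable setting and sidesteps the need to check naturality by hand. Both are valid; yours is the more standard textbook derivation of the fact.
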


\begin{proof}
    Note that the category $M$ is presentable by the definition of a
    combinatorial model
    category. This means that $M$ has all small colimits and is
    $\lambda$-compactly generated for some regular cardinal $\lambda$.
    By the adjoint functor
    theorem~\cite{adamek-rosicky}*{Theorem~1.66}\footnote{This gives the
    criterion for the existence of a left adjoint for a functor between
    locally presentable categories; it is somewhat easier to prove that a
    functor between locally presentable categories which preserves small
    colimits is a left adjoint. There is a good discussion of these issues on
    the \texttt{nLab}.}, it suffices to prove that $\Delta$
    is accessible, preserves limits, and preserves small colimits. However,
    accessibility of $\Delta$ simply means that it commutes with
    $\kappa$-filtered colimits for some regular cardinal $\kappa$. Since we
    will show that
    it commutes with all small colimits, accessibility is an
    immediate consequence.

    To prove that $\Delta$ commutes with small limits, let $y\iso\lim_k y_k$ be
    a limit in $M$. Consider an object $x:I\rightarrow M$ of $M^I$. Then,
    \begin{align*}
        \hom_{M^I}(x,\Delta(y)) &\iso\eq\left(\prod_{i\in
        I}\hom_M(x(i),y(i))\rightrightarrows\prod_{f\in\mathrm{Ar}(I)}\hom_M(x(i),y(j))\right)\\
        &\iso\eq\left(\prod_{i\in
        I}\lim_k\hom_M(x(i),y_k(j))
        \rightrightarrows\prod_{f\in\mathrm{Ar}(I)}\lim_k\hom_M(x(i),y_k(j))\right)\\
        &\iso\lim_k\eq\left(\prod_{i\in
        I}\hom_M(x(i),y_k(j))\rightrightarrows\prod_{f\in\mathrm{Ar}(I)}\hom_M(x(i),y_k(j))\right)\\
        &\iso\lim_k\hom_{M^I}(x,\Delta(y_k)),
    \end{align*}
    using the fact that small limits commute with small limits and hence in
    particular equalizers and small products. It follows that
    $\Delta(y)\iso\lim_k\Delta(y_k)$, as desired.
    The proof that $\Delta$ preserves small colimits is left as an exercise.
\end{proof}

\begin{exercise}
    Show that $\Delta$ preserves small colimits.
\end{exercise}

\begin{definition}
    We will call the right adjoint to $\Delta$ the limit functor $\lim_I$, while the left
    adjoint is the colimit functor $\colim_I$.
\end{definition}

\begin{lemma}
    The pairs of adjoint functors $$\Delta:M\rightleftarrows M^I_{\inj}:\lim_I$$ and
    $$\colim_I:M^I_{\proj}\rightleftarrows M:\Delta$$ are Quillen pairs.
\end{lemma}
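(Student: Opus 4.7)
The plan is to verify each adjunction by checking the relevant preservation property on one side, and in both cases the check is essentially definitional given the way the projective and injective model structures were set up.

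Recall that a Quillen pair may be verified by showing that the \emph{left} adjoint preserves cofibrations and acyclic cofibrations, or equivalently that the \emph{right} adjoint preserves fibrations and acyclic fibrations. For the first adjunction $\Delta:M\rightleftarrows M^I_{\inj}:\lim_I$, I would check the condition on the left adjoint $\Delta$. By the definition of $M^I_{\inj}$, a morphism in $M^I$ is a cofibration (resp.\ acyclic cofibration) precisely when it is a pointwise cofibration (resp.\ acyclic cofibration) in $M$. Given any $f:x\to y$ in $M$, the morphism $\Delta(f):\Delta(x)\to\Delta(y)$ is simply the constant natural transformation whose value at every $i\in I$ is $f$. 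Hence, if $f$ is a cofibration (resp.\ acyclic cofibration) in $M$, then $\Delta(f)$ is pointwise such, and so a cofibration (resp.\ acyclic cofibration) in $M^I_{\inj}$. This gives the first Quillen pair.

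For the second adjunction $\colim_I:M^I_{\proj}\rightleftarrows M:\Delta$, I would instead check the condition on the \emph{right} adjoint $\Delta$, which is completely parallel. By the definition of the projective model structure, fibrations (resp.\ acyclic fibrations) in $M^I_{\proj}$ are the pointwise fibrations (resp.\ acyclic fibrations). If $f:x\to y$ is a fibration (resp.\ acyclic fibration) in $M$, then $\Delta(f)$ is pointwise equal to $f$, hence a pointwise fibration (resp.\ acyclic fibration), hence a fibration (resp.\ acyclic fibration) in $M^I_{\proj}$. This gives the second Quillen pair.

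There is no real obstacle here: the lemma is essentially a tautology given the choices made in setting up the injective and projective structures. The only thing worth remarking on is that the statement is \emph{asymmetric} — one must pair $\Delta$ as left adjoint with the injective structure (so that $\Delta$ preserves the pointwise (acyclic) cofibrations) and as right adjoint with the projective structure (so that $\Delta$ preserves the pointwise (acyclic) fibrations). The opposite pairings would require one to know that $\Delta$ preserves projective cofibrations or injective fibrations, which is genuinely non-trivial and indeed fails in general. The existence of the adjoints $\lim_I$ and $\colim_I$ has already been established by the preceding lemma, so no further work is required to invoke them.
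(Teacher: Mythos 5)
Your argument is correct and is exactly the one the paper gives, just written out in more detail: the paper's one-line proof observes that $\Delta$ preserves pointwise weak equivalences, pointwise cofibrations, and pointwise fibrations, which is precisely the content you unpack. Your closing remark about the asymmetry of the pairing is a sensible addendum but not a departure from the paper's reasoning.
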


\begin{proof}
    Note that $\Delta$ preserves pointwise weak equivalences, pointwise
    fibrations, and pointwise cofibrations.
\end{proof}

\begin{definition}
    We will write $\holim_I$ for $\Rbf\lim_I$ and $\hocolim_I$ for
    $\Lbf\colim_I$, and call theese the homotopy limit and homotopy colimit
    functors.
\end{definition}

\begin{exercise}
    Let $I$ be the small category $\bullet\leftarrow\bullet\rightarrow\bullet$,
    which classifies pushouts. To compute the homotopy pushout $x\leftarrow
    y\rightarrow z$ in $M$, we must take an cofibrant replacement $x'\leftarrow
    y'\rightarrow z'$ in $M^I_{\proj}$, and then we can compute the categorical pushout of the new
    diagram. Describe the cofibrant objects of $M^I$.
    Show that the homotopy pushout can be computed as the pushout of
    $x'\leftarrow y'\rightarrow z'$ where $x'$ and $y'$ are cofibrant and
    $y'\rightarrow z'$ is a cofibration. Show however that such diagrams are
    not in general cofibrant in $M^I_{\proj}$.
\end{exercise}

\begin{proposition}\label{prop:adjointholim}
    Right derived functors of right Quillen functors commute with homotopy
    limits and left derived functors of left Quillen functors commute with homotopy colimits.
\end{proposition}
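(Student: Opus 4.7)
The plan is to deduce the two statements from the general fact that derived functors of composable Quillen adjunctions compose up to canonical natural isomorphism, applied to the diagram-category versions of the given adjunction. I will handle the homotopy colimit statement; the homotopy limit statement is formally dual, interchanging the projective and injective model structures throughout.

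First I would set up the induced Quillen pairs on diagram categories. Given a left Quillen functor $F : M \to N$ with right adjoint $G$, pointwise post-composition gives adjoint functors $F^I : M^I \rightleftarrows N^I : G^I$ for any small category $I$. Because weak equivalences, fibrations, and cofibrations in the projective model structure are detected pointwise (for cofibrations, via the right lifting characterization against pointwise acyclic fibrations, which $F^I$ preserves since $F$ does), the pair $(F^I, G^I)$ is a Quillen adjunction between $M^I_{\proj}$ and $N^I_{\proj}$. The analogous statement for the injective structures follows dually and will be needed for the homotopy limit claim.

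Next I would combine this with the Quillen adjunctions $\colim_I : M^I_{\proj} \rightleftarrows M : \Delta$ and $\colim_I : N^I_{\proj} \rightleftarrows N : \Delta$ already established in the paper. The key observation is the strict commutation of left adjoints
\begin{equation*}
F \circ \colim_I \;=\; \colim_I \circ\, F^I,
\end{equation*}
which is immediate from the universal property of the colimit (and corresponds dually to $\Delta \circ G = G^I \circ \Delta$). We therefore have two ways to compose Quillen left adjoints $M^I_{\proj} \to N$ producing the same underlying functor.

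Now I invoke the general result that for composable Quillen left adjoints $L_1 : A \to B$ and $L_2 : B \to C$, the canonical comparison $\Lbf(L_2 \circ L_1) \we \Lbf L_2 \circ \Lbf L_1$ is a natural isomorphism of functors $\Ho(A) \to \Ho(C)$ (this is standard; see for instance~\cite{hovey}*{Theorem~1.3.7}, and uses only that $L_1$ preserves cofibrant objects so that $L_2$ is applied to cofibrant input). Applying this to both sides of the identity $F \circ \colim_I = \colim_I \circ F^I$ yields
\begin{equation*}
\Lbf F \circ \hocolim_I \;\we\; \Lbf(F \circ \colim_I) \;=\; \Lbf(\colim_I \circ F^I) \;\we\; \hocolim_I \circ \Lbf F^I,
\end{equation*}
which is the asserted compatibility, once one notes that $\Lbf F^I$ agrees pointwise with $\Lbf F$ (because a projective-cofibrant replacement is in particular pointwise cofibrant, and $F$ preserves weak equivalences between cofibrant objects by Ken Brown's lemma). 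The homotopy-limit statement is proved identically, using the injective model structure on $M^I$ and $N^I$ and the strict commutation $G \circ \lim_I = \lim_I \circ G^I$.

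The main subtlety is the compatibility between the pointwise derived functor and the derived diagram functor: one must check that taking a projective-cofibrant replacement and then applying $F$ pointwise gives the same answer in $\Ho(N^I_{\proj})$ as applying $\Lbf F$ in each slot. This is where Ken Brown's lemma does the real work.
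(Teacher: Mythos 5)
Your proof is correct and takes essentially the same approach as the paper: both promote the Quillen pair to the diagram categories, note that the relevant square of underived Quillen left adjoints commutes strictly (up to canonical isomorphism), and then derive. The only cosmetic difference is that the paper's square runs through $\Delta$ on the injective structures and passes to right adjoints at the end, whereas yours runs directly through $\colim_I$ on the projective structures; your closing remark about pointwise agreement of $\Lbf F^I$ with $\Lbf F$ via Ken Brown's lemma makes explicit a compatibility the paper leaves tacit.
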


\begin{proof}
    We prove the result for right Quillen functors and homotopy limits.
    Suppose that we have a Quillen adjunction: $$F: M \leftrightarrows N: G.$$
    It is easy to check that this induces a Quillen adjunction $F^I: M^I_{\inj}
    \leftrightarrows N^I_{\inj}: G^I.$ Indeed, it is enough to check that $F^I$
    preserves cofibrations and acyclic cofibrations, but these are defined
    pointwise in the injective model category structure, so the fact that $F$
    is a left Quillen functor implies that $F^I$ is as well.
    Consider the following diagram    $$\xymatrix{
        M^I_{\inj}\ar[r]^{F^I} &  N^I_{\inj} \\
        M   \ar[u]_{\Delta}\ar[r]^F & N \ar[u]_{\Delta}}$$ 
    of left Quillen functors. This diagram commutes on the level of underlying
    categories; picking appropriate fibrant replacements to compute the right
    adjoints, the right derived versions of the functors commute which induces
    a commutative diagram
    $$\xymatrix{
        \Ho(M^I_{\inj})\ar[r]^{\Lbf F^I} & \Ho(N^I_{\inj}) \\
        \Ho(M)   \ar[u]_{\Lbf\Delta}\ar[r]^{\Lbf F} &\Ho(N)  \ar[u]_{\Lbf\Delta}}$$ 
    of left adjoints on the level of homotopy categories.
    This means means that the diagram 
      $$\xymatrix{
          \Ho(M^I_{\inj}) \ar[d]_{\Rbf \holim_I} &  \Ho(N^I_{\inj})\ar[l]_{\Rbf G^I} \ar[d]^{\Rbf\holim_I} \\
          \Ho(M)  & \Ho(N)\ar[l]_{\Rbf G} }$$ 
    of right adjoints commutes.
\end{proof}

We are now in a position to give examples.

\begin{exercise}
    One should be careful when trying to commute homotopy limits or colimits
    using the above proposition --- the functors must be derived. Construct an
    example using a morphism of commutative rings $R \rightarrow S$,
    the functor $\otimes_R S: \mathrm{Ch}_R^{\geq 0} \rightarrow
    \mathrm{Ch}_S^{\geq 0}$, and the mapping cone of an $R$-module $M
    \rightarrow N$ thought of as chain complexes concentrated in a single
    degree to show that preservation of homotopy colimits fail if $\otimes_RS$
    is not derived. Hint: see the example of mapping cones worked out in
    Example~\ref{ex:cone}.
\end{exercise}

\begin{example}\label{ex:cone}
    Let $A$ be an associative ring, and consider $\Ch_A^{\geq 0}$, the category
    of non-negatively graded chain complexes equipped with the projective model category structure.
    Let $M$ be an $A$-module viewed as a chain complex concentrated in degree
    $0$, and let $N_{\bullet}$ be a chain complex. The actual pushout of a map
    $M\rightarrow N_\bullet$ along $M\rightarrow 0$ is just the cokernel of the
    map of complexes. If $N_\bullet=0$, this cokernel is zero. However, by the
    recipe above, we should replace $0$ with a quasi-isomorphic fibrant model
    $P_\bullet$ with a map $M\rightarrow P_\bullet$ that is a cofibration. A
    functorial choice turns out to be the cone on the identity of $M$. This is
    the complex $M\xrightarrow{\id_M}M$ with $M$ placed in degrees $1$ and $0$.
    This time, when we take the cokernel, we get the complex $M[1]$. This
    confirms what everyone wants: that $M\rightarrow 0\rightarrow M[1]$ should
    be a distinguished triangle in the derived category of $A$, which is what is
    needed to to have long exact sequences in homology.
\end{example}

Let us now turn to examples in $\AA^1$-homotopy theory. The following
proposition gives a way of constructing many examples of homotopy pushouts in
the category $\Spc_S$ and is a consequence of the characterization of fibrant
objects in $\Spc_S$.

\begin{proposition}\label{prop:edspushout}
    If $S$ is a noetherian scheme of finite Krull dimension, then
    an elementary distinguished (Nisnevich) square
    \begin{equation*}
        \xymatrix{
            U\times_X V\ar[r]\ar[d] &   V\ar[d]^p\\
            U\ar[r]^i & X,
        }
    \end{equation*}
    in $\Sm_S$
    thought of as a diagram of simplicial presheaves is a homotopy pushout in $\Spc_S$.    
\end{proposition}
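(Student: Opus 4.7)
The plan is to realize the elementary distinguished square as a homotopy pushout by testing against fibrant objects. Form the homotopy pushout $P$ in $\Spc_S$ of the span $V \leftarrow U \times_X V \to U$ regarded as a diagram of representable presheaves. Since representables are cofibrant in $\sPre(\Sm_S)_{\proj}$, one may compute $P$ as the ordinary pushout after factoring $U \times_X V \to V$ as a cofibration followed by an acyclic fibration. The open immersion $i$ and the \'etale map $p$ together induce a canonical comparison map $c : P \to X$; the whole task is to show $c$ is a Nisnevich-local weak equivalence.

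First I would invoke the standard testing criterion: by \textbf{SM7} and the universal property of Bousfield localization, $c$ is a Nisnevich-local weak equivalence if and only if for every Nisnevich-fibrant $F$, the map $\map(X, F) \to \map(P, F)$ is a weak equivalence of simplicial sets. Second, since $P$ is a homotopy pushout and $\map(-, F)$ is a right Quillen functor (its left adjoint $- \otimes F$ sends cofibrations to cofibrations), Proposition~\ref{prop:adjointholim} identifies $\map(P, F)$ with the homotopy pullback of $\map(V, F) \to \map(U \times_X V, F) \leftarrow \map(U, F)$. Third, since the Nisnevich topology is subcanonical and each of $X, V, U, U \times_X V$ is representable and cofibrant, Yoneda identifies $\map(Y, F)$ with $F(Y)$ for fibrant $F$. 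Putting these together reduces the problem to showing that
\[
F(X) \longrightarrow F(V) \times^h_{F(U \times_X V)} F(U)
\]
is a weak equivalence of simplicial sets for every Nisnevich-fibrant $F$.

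The main obstacle is comparing this homotopy pullback to the strict pullback that appears in Proposition~\ref{prop:niscrit}. I would handle this by observing that the excision condition of Proposition~\ref{prop:niscrit} is equivalent to its homotopy-invariant formulation once we note (following Remark~\ref{rem:fibrant}) that Nisnevich hyperdescent is itself phrased via homotopy limits; concretely, one may functorially replace the cospan of Kan complexes $F(V) \to F(U \times_X V) \leftarrow F(U)$ by one in which (say) $F(V) \to F(U \times_X V)$ is a Kan fibration, and use that the strict and homotopy pullbacks then coincide. Granted this, the displayed map above is an equivalence by Proposition~\ref{prop:niscrit}, and the proof is complete.
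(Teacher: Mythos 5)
Your argument follows the same strategy as the paper: test the square against an arbitrary Nisnevich-fibrant $F$, use the simplicial structure (SM7) to convert the homotopy pushout of representables into a homotopy pullback of mapping spaces, identify $\map(Y,F)$ with $F(Y)$ for representable $Y$, and conclude via the Brown--Gersten excision criterion of Proposition~\ref{prop:niscrit}. Two small points deserve a flag. First, the left adjoint to $\map(-,F)\colon M^{\op}\to\sSets$ for fibrant $F$ is the exponential $K\mapsto F^K$, not $-\otimes F$, though this is only a bookkeeping slip and does not affect the argument. Second, and more substantively, your resolution of the strict-versus-homotopy pullback issue does not quite close the loop as written: functorially replacing $F(V)\to F(U\times_X V)$ by a fibration $F(V)'\to F(U\times_X V)$ identifies the \emph{homotopy} pullback with the strict pullback $F(V)'\times_{F(U\times_X V)}F(U)$ of the \emph{new} cospan, whereas Proposition~\ref{prop:niscrit} as literally printed asserts an equivalence with the strict pullback $F(V)\times_{F(U\times_X V)}F(U)$ of the \emph{original} one, and nothing in the replacement procedure relates these two. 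The intended (and correct) reading --- the one the paper's own proof uses when it says ``gives rise to a homotopy pullback square,'' and the one that appears in the cited references of Voevodsky and Asok--Hoyois--Wendt --- is that the Brown--Gersten condition is a homotopy-pullback condition from the outset, so no replacement is needed and the argument terminates immediately.
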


\begin{proof}
    Since the Nisnevich topology is subcanonical (it is coarser than the
    \'{e}tale topology which is subcanonical) we may regard these squares as
    diagrams in $\Spc_S$ via the Yoneda embedding (or, rather, its simplicial
    analogue --- we think of schemes as sheaves of discrete simplicial sets).
    Let $X$ be a space, i.e., a fibrant object of $\Spc_S$. Proposition~\ref{prop:niscrit} tells
    us that applying $X$ to an elementary distinguished square gives rise to a
    homotopy pullback square. This verifies the universal property for a
    homotopy pushout.
\end{proof}

One problem with the category of schemes, as mentioned above, is that it lacks
general colimits, even finite colimits. In particular, general quotient spaces
do not exist in $\Sm_S$.

\begin{definition}
    For the purposes of this paper,
    the \df{quotient} $X/Y$ of a map $X\rightarrow Y$ of schemes in $\Sm_S$ is
    always defined to be the homotopy cofiber of the map in $\Spc_S^{\AA^1}$. Recall
    that the homotopy cofiber is the homotopy pushout of $\star\leftarrow
    X\rightarrow Y$. Note that since localization is a left adjoint, this
    definition agrees up to homotopy with the $\AA^1$-localization of the
    homotopy cofiber computed in $\Spc_S$ by
    Proposition~\ref{prop:adjointholim}.
\end{definition}

\begin{example}
    Proposition~\ref{prop:edspushout} implies that in the situation of an elementary distinguished
    square, the natural map $$\frac{V}{U\times_XV}\rightarrow\frac{X}{U}$$ is an
    $\AA^1$-local weak equivalence. To see this, we see that
    Proposition~\ref{prop:edspushout} gives a Nisnevich local weak equivalence of
    the cofibers of the top and bottom horizontal arrows; since $\L_{\AA^1}$ is
    a left adjoint, we see that it preserves cofibers and thus gives rise to
    the desired $\AA^1$-local weak equivalence.
\end{example}

\begin{example} \label{exmp:susp}
    A particularly important example of a quotient or homotopy cofiber is the
    \df{suspension} of a pointed object $X$ in $\Spc_{S,\star}^{\AA^1}$. This is simply the homotopy
    cofiber of $X\rightarrow\star$, or in other
    words, the homotopy pushout of the diagram
    \begin{equation*}
        \xymatrix{
            X\ar[r]\ar[d]   &   \ast\\
            \ast            &
        }
    \end{equation*}
    which we denote by $\Sigma X$. See Section~\ref{sub:spheres} for one use of the
    construction.
\end{example}

\subsection{$\AA^1$-homotopy fiber sequences and long exact sequences in homotopy sheaves}

\begin{definition}
    Let $X\rightarrow Y$ be a map of pointed objects in a model category. The
    \df{homotopy fiber} $F$ is the homotopy pullback of $\star \rightarrow
    Y\leftarrow X$. In general, if $F\rightarrow X\rightarrow Y$ is a sequence
    of spaces and if $F$ is weak equivalent to the homotopy fiber of
    $X\rightarrow Y$, then
    we call $F\rightarrow X\rightarrow Y$ a homotopy fiber sequence.
\end{definition}

Recall that in ordinary algebraic topology, given a homotopy fiber sequence
$$F\rightarrow X\rightarrow Y$$ of pointed spaces, there is a long exact
sequence
$$\cdots\pi_{n+1}Y\rightarrow\pi_nF\rightarrow\pi_nX\rightarrow\pi_nY\rightarrow\pi_{n-1}F\rightarrow\cdots$$
of homotopy groups, where we omit the basepoint for simplicity.  Exactness
should be carefully interpreted for $n=0,1$, when these are only pointed sets
or not-necessarily-abelian groups. For details, consult Bousfield and
Kan~\cite{bousfield-kan}*{Section IX.4.1}.

\begin{definition}
    The \df{Nisnevich homotopy sheaf} $\pi_n^{\Nis}(X)$ of a pointed object $X$ of
    $\Spc_S$ is the Nisnevich sheafification of the presheaf
    $$U\mapsto[S^n\wedge U_+,X]_{\Nis,\star}.$$
\end{definition}

\begin{definition}
    The \df{$\AA^1$-homotopy sheaf} $\pi_n^{\AA^1}(X)$ of a pointed object $X$ of $\Spc_S^{\AA^1}$
    is the Nisnevich sheafification of the presheaf
    $$U\mapsto[S^n\wedge U_+,X]_{\AA^1,\star}.$$
\end{definition}

\begin{exercise}
    Show that if $X$ is weakly equivalent to
    $\L_{\AA^1}\L_{\Nis}X$, where $X$ is a pointed simplicial presheaf,
    then the natural map $\pi_n^{\Nis}(X)\rightarrow\pi_n^{\AA^1}(X)$ is an
    isomorphism of Nisnevich sheaves.
\end{exercise}

The following result is a good illustration of the theory we have developed so
far.

\begin{proposition}
    Let $F\rightarrow X\rightarrow Y$ be a homotopy fiber sequence in
    $\Spc_S^{\AA^1}$. Then, there is a natural long exact sequence
    $$\cdots\rightarrow\pi_{n+1}^{\AA^1}Y\rightarrow\pi_n^{\AA^1}F\rightarrow\pi_n^{\AA^1}X\rightarrow\pi_n^{\AA^1}Y\rightarrow\cdots$$
    of Nisnevich sheaves.
\end{proposition}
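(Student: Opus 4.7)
The plan is to reduce to the classical long exact sequence for a homotopy fiber sequence of pointed simplicial sets, one $U$ at a time, and then to sheafify in the Nisnevich topology.

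First, I would replace the given sequence by a nicer model. Using \textbf{M4} in $\Spc_S^{\AA^1}$, factor $X \to Y$ as a cofibration followed by an $\AA^1$-local fibration; composing with $\AA^1$-local fibrant replacements, I can assume without loss of generality that $X$ and $Y$ are $\AA^1$-local (hence in particular Nisnevich-fibrant and objectwise Kan) and that $X \to Y$ is an $\AA^1$-local fibration between $\AA^1$-local objects. By a standard fact about left Bousfield localization (fibrations with $\L_I$-fibrant codomain are precisely the underlying fibrations of $M$), $X \to Y$ is in particular a Nisnevich-local fibration between Nisnevich-local objects, and then a projective fibration. Therefore $F$ is weakly equivalent to the ordinary pullback $X \times_Y *$, and for every $U \in \Sm_S$ the map $X(U) \to Y(U)$ is a Kan fibration of pointed simplicial sets with fiber $F(U)$.

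Next I would apply the classical theory. For each $U$, $F(U) \to X(U) \to Y(U)$ is a homotopy fiber sequence of pointed Kan complexes, and hence yields a long exact sequence
\begin{equation*}
\cdots \to \pi_{n+1}(Y(U)) \to \pi_n(F(U)) \to \pi_n(X(U)) \to \pi_n(Y(U)) \to \cdots
\end{equation*}
of homotopy groups (with the usual caveats about exactness at $n=0,1$ as pointed sets and not-necessarily-abelian groups, for which I would cite~\cite{bousfield-kan}*{Section~IX.4.1}). This construction is natural in $U \in \Sm_S$, so it assembles into a long exact sequence of presheaves of pointed sets/groups/abelian groups on $\Sm_S$.

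Finally I would Nisnevich-sheafify. Sheafification is exact for abelian sheaves and preserves the appropriate notion of exactness in the pointed/nonabelian range, so the sequence of associated Nisnevich sheaves remains exact. To conclude, I must identify the sheaf associated to $U \mapsto \pi_n(X(U))$ with $\pi_n^{\AA^1}(X)$, and similarly for $F$ and $Y$. Since representables are cofibrant in the projective structure and hence in $\Spc_{S,\star}^{\AA^1}$, and since $X$ is $\AA^1$-fibrant by our reduction, Recipe~\ref{recipe1} and the simplicial enrichment give $[S^n \wedge U_+, X]_{\AA^1,\star} \iso \pi_n\,\map_{\Spc_{S,\star}^{\AA^1}}(U_+, X) \iso \pi_n(X(U))$; sheafifying both sides recovers $\pi_n^{\AA^1}(X)$ by definition, and likewise for $F$ and $Y$.

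The main technical obstacle is the interplay between fibrant replacement and the identification of $\AA^1$-homotopy sheaves: one must know that the presheaf $U \mapsto \pi_n(X(U))$, computed after $\AA^1$-fibrant replacement, has Nisnevich sheafification equal to $\pi_n^{\AA^1}(X)$ as originally defined on the unlocalized $X$; this is the content of the exercise immediately preceding the proposition. A secondary subtlety is the handling of exactness at $n = 0, 1$, which requires that Nisnevich sheafification on the category of pointed sets and of (possibly non-abelian) groups preserves the appropriate notion of exactness of the Puppe sequence.
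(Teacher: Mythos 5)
Your proof is correct and takes essentially the same route as the paper's: the paper phrases the first step by noting the forgetful functor $\Spc_S^{\AA^1}\rightarrow\sPre(\Sm_S)$ is a Quillen right adjoint and hence preserves homotopy fiber sequences, whereas you unwind this explicitly via the standard fact that $\AA^1$-local fibrations between $\AA^1$-local objects are objectwise Kan fibrations, but both arguments reduce to objectwise long exact sequences followed by exactness of Nisnevich sheafification.
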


\begin{proof}
    The forgetful functor $\Spc_S^{\AA^1}\rightarrow\sPre(\Sm_S)$ is a right
    adjoint, and hence it preserves homotopy fiber sequences. It follows from
    the fact that fibrations are defined as object-wise fibrations that
    there is a natural long exact sequence of homotopy presheaves. Since
    sheafification, and in particular Nisnevich sheafification, is
    exact~\cite{stacks-project}*{Tag 03CN}, the claim follows.
\end{proof}

\begin{remark}
    We caution the reader that although the functor: $\L_{\AA^1}\L_{\Nis}:
    \sPre(\Sm_S) \rightarrow \Spc^{\AA^1}_S$ preserves homotopy colimits, it is
    not clear that resulting homotopy colimit diagram in $\Spc^{\AA^1}_S$
    possess any exactness properties. To be more explicit, let $i:
    \Spc^{\AA^1}_S \rightarrow \sPre(\Sm_S)$ be the forgetful functor. Suppose
    that we have a homotopy cofiber sequence: $X \rightarrow Y \rightarrow Z$
    in $\sPre(\Sm_S)$, then the it is not clear that $i\L_{\AA^1}\L_{\Nis}(Z)$
    is equivalent to the cofiber of $i\L_{\AA^1}\L_{\Nis}(X) \rightarrow
    i\L_{\AA^1}\L_{\Nis}(Y)$ since we are composing a Quillen left adjoint with
    a Quillen right adjoint. Consequently, long exact sequences which arise out
    of cofiber sequences (such as mapping into Eilenberg-MacLane spaces which
    produces the long exact sequences in ordinary cohomology) in $\sPre(\Sm_S)$ will not apply to this situation.
\end{remark}

\subsection{The $\mathrm{Sing}^{\AA^1}$-construction}

While the process of Nisnevich localization, which produces objects of $\Spc_S$, is
familiar from ordinary sheaf theory, the localization
$\L_{\AA^1}:\Spc_S\rightarrow\Spc_S^{\AA^1}$ is more difficult to grasp concretely. This
section describes one model for the localization functor $\L_{\AA^1}$.

Consider the cosimplicial scheme
$\Delta^{\bullet}$ where $$\Delta^n = \Spec\,k[x_0, ..., x_n] / (x_0 + ...
+ x_1 =1)$$ with the face and degeneracy maps familiar from the standard
topological simplex.
The scheme $\Delta^n$ is a closed subscheme of $\AA^{n+1}$ isomorphic to
$\AA^n$, the $i$th coface map $\partial_j: \Delta^n \rightarrow \Delta^{n+1}$
is defined by setting $x_j =0$, and the $i$th codegeneracy $\sigma_i: \Delta^n
\rightarrow \Delta^{n-1}$ is given by summing the $i$th and $i+1$st coordinates.

\begin{definition}
    Let $X$ be a simplicial presheaf. We define the simplicial presheaf
    $\Sing^{\AA^1}X:=|X(-\times\Delta^\bullet)|$.
    This gives the \df{singular construction} functor
    $$\Sing^{\AA^1}:\sPre(\Sm_S)\rightarrow\sPre(\Sm_S).$$ We will also write
    $\Sing^{\AA^1}$ for the restriction of the singular construction to $\Spc_S\subseteq\sPre(\Sm_S)$.
\end{definition}

\begin{remark}
    Since geometric realizations do not commute in general with homotopy limits,
    there is no reason to expect $\Sing^{\AA^1}$ to preserve the property of
    being Nisnevich-local. At heart this is the reason for both the subtlety
    and the depth of motivic homotopy theory.
\end{remark}

From the above remark it is thus useful to introduce a new terminology: we say
that a simplicial presheaf $X$ is \df{$\AA^1$-invariant} if
$X(U)\rightarrow X(U\times\AA^1)$ is a weak homotopy equivalence of
simplicial sets for every $U$ in $\Sm_S$.

\begin{theorem}\label{thm:Sing}
    Let $S$ be a base separated noetherian scheme and $X$ a simplicial
    presheaf. Then,
\begin{enumerate}
    \item $\Sing^{\AA^1}X$ is $\AA^1$-invariant, and
    \item the natural map $g: X \rightarrow \Sing^{\AA^1}X$ induces a
        weak equivalence $\map(\Sing^{\AA^1}X,Y) \rightarrow \map(X, Y)$ for any
        $\AA^1$-invariant simplicial presheaf $Y$.
\end{enumerate}
\end{theorem}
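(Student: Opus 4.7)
For part (1), my strategy is to exhibit the projection $p: U \times \AA^1 \to U$ and zero section $i_0: U \to U \times \AA^1$ as inducing mutually inverse simplicial homotopy equivalences on $\Sing^{\AA^1}X$. Since $p \circ i_0 = \id_U$, one composition is already the identity on the nose, and the task reduces to producing a simplicial homotopy from $(i_0 \circ p)^*$ to the identity on $\Sing^{\AA^1}X(U \times \AA^1)$. The key input is the scheme map $\mu: \AA^1 \times \AA^1 \to \AA^1$, $(x, t) \mapsto xt$, which is an $\AA^1$-homotopy between the zero map and the identity on $\AA^1$; the standard isomorphism $\Delta^1 \iso \AA^1$ coming from the definition of $\Delta^\bullet$ lets me convert this $\AA^1$-homotopy into a simplicial one. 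Concretely, at level $n$ I would define the simplicial homotopy by sending each pair $(\sigma, \alpha)$, where $\sigma \in X(U \times \AA^1 \times \Delta^n)_n$ and $\alpha : [n] \to [1]$ is an $n$-simplex of $\Delta^1$ corresponding to a scheme morphism $\tilde\alpha : \Delta^n \to \AA^1$, to the pullback of $\sigma$ along $(u, x, y) \mapsto (u, \tilde\alpha(y) x, y)$.

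For part (2), I would realize $\Sing^{\AA^1}X$ as the geometric realization $|X^{(\bullet)}|$ of the simplicial object $n \mapsto X^{(n)} := X(- \times \Delta^n)$ in $\sPre(\Sm_S)$, yielding the natural identification
\[\map(\Sing^{\AA^1}X, Y) \iso \mathrm{Tot}\bigl(\map(X^{(\bullet)}, Y)\bigr),\]
under which $g^*$ corresponds to projection to cosimplicial degree zero. An auxiliary step is to show that $g_Y: Y \to \Sing^{\AA^1}Y$ is an objectwise weak equivalence when $Y$ is $\AA^1$-invariant; this follows from $\Delta^n \iso \AA^n$, iterated $\AA^1$-invariance of $Y$ (giving $Y(U) \we Y(U \times \Delta^n)$ for all $n$), and the standard diagonal-versus-realization result for bisimplicial sets. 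Using this, I would exhibit an extra-degeneracy-type contracting structure on the cosimplicial simplicial set $\map(X^{(\bullet)}, Y)$ built from the $\AA^1$-contraction available in $Y$, forcing the totalization to collapse by deformation retract onto its degree-zero level $\map(X, Y)$.

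The main obstacle in (1) is the routine but tedious bookkeeping of verifying that the simplicial homotopy defined above is natural with respect to all face and degeneracy maps in both directions of the underlying bisimplicial structure on $\Sing^{\AA^1}X(U \times \AA^1)$. The main obstacle in (2) is rigorously establishing the contracting homotopy on the cosimplicial object: totalization computes the correct homotopy limit only under Reedy fibrancy (which may also force an assumption that $Y$ is objectwise Kan), so one must either verify Reedy fibrancy of $\map(X^{(\bullet)}, Y)$ directly or instead build an explicit simplicial deformation retraction from $\map(\Sing^{\AA^1}X, Y)$ onto $\map(X, Y)$ manufactured from the $\AA^1$-homotopies available in $Y$, exactly in parallel with the construction used in (1).
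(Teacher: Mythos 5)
For part~(1) your construction is correct and is in substance the paper's own: the paper builds a simplicial homotopy between $\partial_0^*$ and $\partial_1^*$ on $\Sing^{\AA^1}X(U\times\AA^1)$ from the prism maps $\theta_i:\Delta^{n+1}\to\Delta^n\times\AA^1$ and then appeals to the exercise whose hint is precisely the multiplication map $\mu$, whereas you write down the resulting simplicial homotopy directly via $\mu$ and the identification $\Delta^1\cong\AA^1$. The naturality bookkeeping you worry about goes through because $\tilde\alpha\circ\Delta^\phi=\widetilde{\alpha\circ\phi}$ for every $\phi:[m]\to[n]$, both being the affine map determined by the same values on vertices. So no issue there.

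For part~(2) there is a genuine gap. Your identification $\map(\Sing^{\AA^1}X,Y)\cong\mathrm{Tot}\bigl(\map(X^{(\bullet)},Y)\bigr)$ is correct, and the auxiliary observation that $g_Y:Y\to\Sing^{\AA^1}Y$ is an objectwise weak equivalence for $\AA^1$-invariant $Y$ is true (diagonal of a levelwise equivalence of bisimplicial sets), but it never gets plugged into the rest of the argument, and the proposed ``extra-degeneracy-type contracting structure'' on the cosimplicial simplicial set $\map(X^{(\bullet)},Y)$ is asserted rather than constructed. Observe that the simplicial object $X^{(\bullet)}$ itself can have no extra degeneracy (that would force $\Sing^{\AA^1}X\simeq X$ for \emph{every} $X$, which is false), so any contraction must materialize only after applying $\map(-,Y)$ and must use the $\AA^1$-invariance of $Y$ in an essential way; your sketch supplies no such mechanism. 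The paper instead reduces to showing that $\map(\map(\Delta^n,X),Y)\to\map(X,Y)$ is a weak equivalence for each $n$, uses $\map(\Delta^n,X)\cong\map(\Delta^1,\map(\Delta^{n-1},X))$ to reduce to $n=1$, and then exhibits $X\to\map(\Delta^1,X)$ as an explicit $\AA^1$-homotopy equivalence (again via $\mu$); applying $\map(-,Y)$ to an $\AA^1$-homotopy equivalence gives a simplicial homotopy equivalence exactly because $Y$ is $\AA^1$-invariant, and that is where the hypothesis on $Y$ enters. This $\AA^1$-homotopy equivalence $X\to\map(\Delta^1,X)$ is the concrete input missing from your proposal. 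You are, however, right to flag the subtlety that a levelwise equivalence of cosimplicial objects does not by itself force an equivalence on $\mathrm{Tot}$ without some fibrancy control; the paper's write-up also elides this point.
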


\begin{proof} 
    For $i= 0, ..., n$ we have maps $\theta_i: \AA^{n+1} \simeq \Delta^{n+1}
    \rightarrow \AA^n \simeq \Delta^n \times_S \AA^1$ corresponding to a
    ``simplicial decomposition" of $\Delta^n \times_S \AA^1$ made up of
    $\Delta^{n+1}$'s (see, for example, \cite{mazza-weibel-voevodsky}*{Figure
    2.1}). For an arbitrary $S$-scheme $U$, the $\theta_i$ maps induce a morphism of cosimplicial schemes
    $$
    \xymatrix{
    \cdots &  \Delta^2 \times_S U
    \ar@<1.5ex>[l]
    \ar@<0.5ex>[l]
    \ar@<-0.5ex>[l]
    \ar@<-1.5ex>[l]
    \ar[dr]_{\theta_i}
    &
    \Delta^1 \times_S U
    \ar@<0ex>[l]
    \ar@<-1ex>[l]
    \ar@<1ex>[l] \ar[dr]_{\theta_i}
    &
    U\ar@<0.5ex>[l]\ar@<-0.5ex>[l]\\
    \cdots &  \Delta^2 \times_S \AA^1 \times_S U
    \ar@<1.5ex>[l]
    \ar@<0.5ex>[l]
    \ar@<-0.5ex>[l]
    \ar@<-1.5ex>[l]
    &
    \Delta^1 \times_S \AA^1 \times_S U\ar@<0ex>[l]
    \ar@<-1ex>[l]
    \ar@<1ex>[l]
    &
    \AA^1 \times_S U\ar@<0.5ex>[l]\ar@<-0.5ex>[l]
    }
    $$
    such that, upon applying a
    simplicial presheaf $X$, we get a simplicial homotopy
    \cite{weibel}*{Section 8.3.11} between the maps $\partial_0^*, \partial_1^*: \Sing^{\AA^1}X(U
    \times \AA^1) \rightarrow \Sing^{\AA^1}X(U)$ induced by the $0$ and
    $1$-section respectively. Hence, as shown in the exercise below,
    $\Sing^{\AA^1}X$ is $\AA^1$-invariant.

    Observe that the functor $U \mapsto X(U
    \times_S \Delta^n)$ is the same as the functor $U \mapsto \map(U\times_S\Delta^n,
    X)$. We have a natural map $X \simeq \map(\Delta^0, X) \rightarrow
    \map(\Delta^n,X)$ for each $n$, so we think of the map $X
    \rightarrow \Sing^{\AA^1}X$ as the canonical map from the zero simplices.

    To check the second claim, it is enough to prove that for all $n \geq 0$,
    we have a weak equivalence $$\mathrm{map}(X, Y) \rightarrow
    \mathrm{map}(\map(\Delta^n, X), Y)$$ whenever $Y$ is $\AA^1$-invariant. Furthermore, $\map(\Delta^n,
    X) \simeq \map(\Delta^1, \map(\Delta^{n-1}, X))$ so by induction we just need to
    prove the claim for $n=1$. To do so, we claim that the map $f: X
    \rightarrow \map(\Delta^1, X)$ induced by the projection $\AA^1 \rightarrow
    S$ is an $\AA^1$-homotopy equivalence, from which we conclude the desired claim
    from Exercise~\ref{exer:a1equiv}.

    There is a map $g: \map(\Delta^1, X) \rightarrow X$ induced by the zero
    section, from which we automatically have $f \circ g = \id$. We then have to
    construct an $\AA^1$-homotopy between $g \circ f$ and $\id_{\map(\Delta^1,
    X)}$ so we look for a map $ H: \map(\Delta^1, X) \times \AA^1
    \rightarrow  \map(\Delta^1, X)$. By adjunction this is the same data as a
    map $ H: \map(\Delta^1, X) \rightarrow \map(\Delta^1 \times \Delta^1,
    X)$. To construct this map we use the multiplication map $\AA^1 \times
    \AA^1 \rightarrow \AA^1$, $(x, y) \mapsto xy$ from which it is easy to see that
    $H^*(\id \times \partial_0)^* =\id$ and $H^*(\id \times \partial_1) = g \circ f$.
\end{proof}

\begin{exercise}
    If $X$ is a simplicial presheaf, then $X$ is $\AA^1$-invariant if
    and only if for any $U \in \Sm_S$ the morphisms $\partial_0^*, \partial_1^*: X(U \times_S \AA^1)
    \rightarrow X(U)$ induced by the $0$ and $1$-sections are homotopic. Hint:
    use again the multiplication map $\AA^1 \times \AA^1 \rightarrow \AA^1, (x,
    y) \mapsto xy$ as a homotopy.
    See~\cite{mazza-weibel-voevodsky}*{Lemma~2.16}.
\end{exercise}

We conclude from the above results that $\Sing^{\AA^1}X$ is
$\AA^1$-invariant and, furthermore, $X$ and $\Sing^{\AA^1}X$ are $\AA^1$-weak equivalent which means,
more explicitly, that they become weakly equivalent in $\Spc^{\AA^1}_S$ after
applying $\L_{\AA^1}\L_{\Nis}$.

\begin{theorem}\label{thm:explicit}
    The functor $\L_{\AA^1}\L_{\Nis}: \sPre(\Sm_S)
    \rightarrow\Spc_S^{\AA^1}$ is equivalent to the countable iteration
    $(\L_{\Nis}\Sing^{\AA^1})^{\circ\NN}$.
\end{theorem}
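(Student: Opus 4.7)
The plan is to set $T = \L_{\Nis}\Sing^{\AA^1}$ and $T^\infty X = \colim_n T^n X$, and then to show (i) that the structure map $X \to T^\infty X$ is an $\AA^1$-local weak equivalence and (ii) that $T^\infty X$ is fibrant in $\Spc_S^{\AA^1}$. These two properties together will identify $T^\infty X$ with $\L_{\AA^1}\L_{\Nis} X$ up to $\AA^1$-weak equivalence.

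For (i), I would first check that each transition $T^n X \to T^{n+1} X$ is an $\AA^1$-equivalence. The map $Y \to \Sing^{\AA^1} Y$ is an $\AA^1$-equivalence because every $\AA^1$-local target is $\AA^1$-invariant, so Theorem~\ref{thm:Sing}(2) supplies the requisite equivalence on mapping spaces into $\AA^1$-local objects. The map $Y \to \L_{\Nis} Y$ is a Nisnevich-local equivalence, hence a fortiori $\AA^1$-local. Using that $\Spc_S^{\AA^1}$ is combinatorial and simplicial, I would arrange each transition as a trivial cofibration by functorial factorization, so that the transfinite composition $X \to T^\infty X$ remains an $\AA^1$-local weak equivalence.

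For (ii), I need $T^\infty X$ to be sectionwise Kan, Nisnevich-local, and $\AA^1$-invariant. Sectionwise Kan-ness is automatic because filtered colimits of Kan complexes are Kan. For $\AA^1$-invariance, I expand the sequence into alternating stages
\begin{equation*}
X \to \Sing^{\AA^1} X \to \L_{\Nis}\Sing^{\AA^1} X \to \Sing^{\AA^1}\L_{\Nis}\Sing^{\AA^1} X \to \cdots
\end{equation*}
and pass to the cofinal subsequence of $\Sing^{\AA^1}$-terms. Each such term is $\AA^1$-invariant by Theorem~\ref{thm:Sing}(1), and $\AA^1$-invariance is a sectionwise condition that is preserved by filtered colimits of simplicial sets. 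For Nisnevich locality, I would use Proposition~\ref{prop:niscrit} to reduce (when $S$ is noetherian of finite Krull dimension) to Brown--Gersten excision on elementary distinguished squares together with $F(\emptyset) \we \ast$; the dual cofinal subsequence of $\L_{\Nis}$-terms satisfies excision, and excision involves only finite homotopy pullbacks, which commute with sequential colimits in $\sSets$. Hence $T^\infty X$ itself satisfies excision.

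The main obstacle is verifying Nisnevich locality of the iteration in the fully general quasi-compact quasi-separated setting, where Proposition~\ref{prop:niscrit} does not directly apply and one must instead check hyperdescent against arbitrary Nisnevich hypercovers. Ensuring that sequential colimits remain compatible with the relevant homotopy limits indexed over hypercovers requires the bounded cohomological dimension of the Nisnevich topology on $S$ (in the spirit of Lurie's treatment in~\cite{lurie-sag}*{Section~A.2.4}). Once this compatibility is in hand, the remainder of the argument is essentially cofinality bookkeeping and the general principle that the two properties cut out by the localization---Nisnevich descent and $\AA^1$-invariance---each survive filtered colimits of simplicial presheaves.
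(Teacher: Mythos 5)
Your proposal is correct and follows essentially the same route as the paper: both arguments establish that $T^\infty X$ is $\AA^1$-local (Nisnevich local via Proposition~\ref{prop:niscrit} and the commutation of finite homotopy limits with filtered homotopy colimits, $\AA^1$-invariant via the cofinal subsequence of $\Sing^{\AA^1}$-terms and Theorem~\ref{thm:Sing}(1)) and that $X \to T^\infty X$ is an $\AA^1$-weak equivalence (via Theorem~\ref{thm:Sing}(2) and the harmlessness of $\L_{\Nis}$). Your closing remark is accurate and worth noting: the paper's verification of Nisnevich locality also leans on Proposition~\ref{prop:niscrit}, so it too implicitly specializes to the noetherian finite Krull dimension case despite the ambient standing hypothesis being merely quasi-compact and quasi-separated.
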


\begin{proof}
    Let $\Phi=\L_{\Nis}\Sing^{\AA^1}$, so that the theorem claims that
    $\Phi^{\circ\NN}\we\L_{\Nis}$.
    We first argue that $(\L_{\Nis}\Sing^{\AA^1})^{\circ\NN}X$ is fibrant
    in $\Spc_S^{\AA^1}$ for any $X$ in $\sPre(\Sm_S)$. We must simply check
    that it is Nisnevich and $\AA^1$-local. To check that it is Nisnevich
    local, write
    $$\Phi^{\circ\NN}(X)\we\hocolim_{n\rightarrow\infty}(\L_{\Nis}\Sing^{\AA^1})^{\circ
    n}(X),$$ a filtered homotopy colimit of Nisnevich local presheaves of spaces.
    It hence suffices to show that the forgetful functor
    $\Spc_S\rightarrow\sPre(\Sm_S)$ preserves filtered homotopy colimits.
    However, since the sheaf condition is checked on the finite homotopy limits induced
    from the elementary distinguished squares by Proposition~\ref{prop:niscrit}, and since filtered homotopy
    limits commute with finite homotopy limits, the result is immediate.
    At this point we must be honest and point out that the main reference we
    know for the commutativity of finite homotopy limits and filtered homotopy
    colimits, namely~\cite{htt}*{Proposition~5.3.3.3},
    is for $\infty$-categories rather than model categories. 
    However, since homotopy limits and colimits in combinatorial simplicial
    model categories (such as all model categories in this paper) agree with
    the corresponding $\infty$-categorical limits and colimits
    by~\cite{htt}*{Section~4.2.4}, this should be no cause for concern.

    To check that $(\L_{\Nis}\Sing^{\AA^1})^{\circ\NN}X$ is $\AA^1$-local, note
    that we can write
    $$\Phi^{\circ\NN}X\we\hocolim_{n\rightarrow\infty}(\Sing^{\AA^1}\L_{\Nis})^{\circ
    n}\left(\Sing^{\AA^1}X\right),$$
    a filtered homotopy colimit of $\AA^1$-invariant presheaves by
    Theorem~\ref{thm:Sing}. But, filtered homotopy
    colimits of $\AA^1$-invariant presheaves are
    $\AA^1$-invariant. Since $\Phi^{\circ\NN}(X)$ is Nisnevich local and
    $\AA^1$-invariant, it is $\AA^1$-local.

    Thus, we have seen that $\Phi^{\circ\NN}$
    does indeed take values in the fibrant objects of $\Spc_S^{\AA^1}$.
    Finally, we claim that it suffices to show that $\Phi\we\L_{\Nis}\Sing^{\AA^1}$
    preserves $\AA^1$-local weak equivalences. Indeed, if this is the case,
    then so does $\Phi^{\circ\NN}$, which will show that
    $$\Phi^{\circ\NN}(X)\we\Phi^{\circ\NN}(\L_{\AA^1}\L_{\Nis}X)\we\L_{\AA^1}\L_{\Nis}X,$$
    since it is clear that $X\we\Phi(X)$ when $X$ is $\AA^1$-local.
    For the remainder of the proof, write
    $\map(-,-)$ for the mapping spaces in $\sPre(\Sm_S)$. We want to show that
    $$\map(\Phi(X),Y)\we\map(X,Y)$$ for all $\AA^1$-local
    objects $Y$ of $\sPre(\Sm_S)$. But,
    \begin{align*}
        \map(\Phi(X),Y)&\we\map_{\Spc_S}(\L_{\Nis}\Sing^{\AA^1}X,Y)\\
        &\we\map(\Sing^{\AA^1}X,Y)
    \end{align*}
    since $Y$ is in particular Nisnevich local. As the singular
    construction functor $\Sing^{\AA^1}$ is a homotopy colimit, it commutes with
    homotopy colimits. Since $X\we\hocolim_{U\rightarrow X}U$, where the
    colimit is over maps from smooth $S$-schemes $U$, it follows that it is
    enough to show that $$\map(\Sing^{\AA^1}U,X)\we\map(U,Y)$$
    for $U$ a smooth $S$-scheme and $Y$ an $\AA^1$-local presheaf. To prove
    this, it is enough in turn to show that
    $$\map(U(-\times\AA^n),Y)\we\map(U,Y),$$ where $U(-\times\AA^n)$ is the
    presheaf of spaces $V\mapsto U(V\times\AA^n)$. Note that because there is
    an $S$-point of $\AA^n$, the representable presheaf $U$ is a retract of
    $U(-\times\AA^n)$, so it suffices to show that
    $$\pi_0\map(U(-\times\AA^n),Y)\iso\pi_0\map(U,Y),$$
    or even just that the map
    $$\pi_0\map(U,Y)\rightarrow\pi_0\map(U(-\times\AA^n),Y)$$ induced by an
    $S$-point of $\AA^n$ is a surjection. Now,
    $U(-\times\AA^n)\we\hocolim_{V\times\AA^n\rightarrow U}V\times\AA^n$, so
    $$\pi_0\map(U(-\times\AA^n),Y)\iso\pi_0\lim_{V\times\AA^n\rightarrow
    U}\map(V\times\AA^n,Y)\iso\pi_0
    \lim_{V\times\AA^n\rightarrow U}\map(V,Y),$$
    the last weak equivalence owing to the fact that $Y$ is $\AA^1$-local. This
    limit can be computed as $\lim_{V\times\AA^n\rightarrow
    U}\pi_0\map(V,Y)$ since $\pi_0$ commutes with all colimits (being left
    adjoint to the inclusion of discrete spaces in all spaces). Picking an $S$-point of $\AA^n$ gives a
    compatible family $$\lim_{V\rightarrow
    U}\pi_0\map(V,Y)\iso\pi_0\map(U,Y),$$ giving a
    section of the natural map
    $\pi_0\map(U,Y)\rightarrow\lim_{V\times\AA^n\rightarrow
    U}\pi_0\map(V,Y)$.
\end{proof}

From this description, we get a number of non-formal consequences.

\begin{corollary}
    The $\AA^1$-localization functor commutes with finite products. 
\end{corollary}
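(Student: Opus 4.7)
The plan is to combine the explicit description of $\L_{\AA^1}\L_{\Nis}$ from Theorem~\ref{thm:explicit} with the fact that each of the three basic building blocks of that formula preserves finite products up to weak equivalence.  By Theorem~\ref{thm:explicit} we have
\begin{equation*}
    \L_{\AA^1}\L_{\Nis}(Z)\we(\L_{\Nis}\Sing^{\AA^1})^{\circ\NN}(Z)\we\hocolim_n(\L_{\Nis}\Sing^{\AA^1})^{\circ n}(Z),
\end{equation*}
so it suffices to show that (i) $\Sing^{\AA^1}$ preserves finite products of simplicial presheaves, (ii) $\L_{\Nis}$ preserves finite products, and (iii) the filtered homotopy colimit over $\NN$ preserves finite products of diagrams of Nisnevich-local presheaves. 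A straightforward induction on the number of iterations then gives $(\L_{\Nis}\Sing^{\AA^1})^{\circ n}(X\times Y)\we(\L_{\Nis}\Sing^{\AA^1})^{\circ n}(X)\times(\L_{\Nis}\Sing^{\AA^1})^{\circ n}(Y)$, and passing to the filtered homotopy colimit finishes the proof.

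For (i) I would just unwind the definition: $\Sing^{\AA^1}(X\times Y)(U)$ is the geometric realization of the bisimplicial set $(X\times Y)(U\times\Delta^\bullet)=X(U\times\Delta^\bullet)\times Y(U\times\Delta^\bullet)$, and both levelwise evaluation and geometric realization of simplicial sets preserve finite products (the latter by the classical theorem that $|K\times L|\iso|K|\times|L|$ for simplicial sets, valid in compactly generated spaces and, a fortiori, as a levelwise weak equivalence of simplicial sets when computed as a diagonal). Similarly, (ii) follows from the fact that Nisnevich sheafification, in either the discrete or simplicial incarnation, is a left-exact left adjoint, so it preserves finite products; equivalently, $\L_{\Nis}$ is a left exact Bousfield localization since its local objects are characterized by a finite-limit condition (the excision square of Proposition~\ref{prop:niscrit}).

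For (iii), filtered homotopy colimits commute with finite homotopy limits in any combinatorial simplicial model category, as already invoked in the proof of Theorem~\ref{thm:explicit} via~\cite{htt}*{Proposition~5.3.3.3}; in particular they commute with the finite product. Since the values of $(\L_{\Nis}\Sing^{\AA^1})^{\circ n}(Z)$ appearing in our filtered system are (Nisnevich-local) presheaves of Kan complexes where the relevant products are also homotopy products, this suffices.

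The only subtle point, and hence the step that I expect will require the most care, is (ii): namely, that the Nisnevich localization functor commutes with finite products up to \emph{weak equivalence} (not merely as an abstract left adjoint on ordinary sheaves). This is really a left-exactness statement for the Bousfield localization $\sPre(\Sm_S)\rightarrow\Spc_S$, and it should be deduced from Proposition~\ref{prop:niscrit} together with the observation that a finite product of Nisnevich-local simplicial presheaves, formed objectwise, is again Nisnevich-local, because the excision condition is preserved under finite products of spaces. Once this is pinned down, the remaining steps assemble as sketched.
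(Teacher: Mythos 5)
Your proposal follows the same overall strategy as the paper: it combines the explicit countable-iteration model of $\L_{\AA^1}\L_{\Nis}$ from Theorem~\ref{thm:explicit} with the observation that $\Sing^{\AA^1}$, $\L_{\Nis}$, and the filtered homotopy colimit over $\NN$ each preserve finite products. Your step (i) fills in the ``direct check'' that the paper only alludes to, and your step (iii) is spelled out more explicitly than in the paper, which is a welcome addition since the filtered-colimit step is implicit there.

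The one place the sketch is underpowered is step (ii). Observing that an objectwise finite product of Nisnevich-local presheaves again satisfies excision tells you that the target $\L_{\Nis}X\times\L_{\Nis}Y$ is a local object, but it does not by itself show that the canonical map $X\times Y\to\L_{\Nis}X\times\L_{\Nis}Y$ is a Nisnevich-local weak equivalence, which is what commuting with finite products actually requires. For that one also needs to know that $-\times Y$ preserves Nisnevich-local weak equivalences (equivalently, that the localization is left exact in the appropriate sense), and this does not follow formally from Proposition~\ref{prop:niscrit}. The paper closes this gap by citing that $\L_{\Nis}$ is the left adjoint of a geometric morphism of $\infty$-topoi and hence left exact (or, alternatively, by citing Morel--Voevodsky, Theorem~1.66). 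You should either invoke one of those results directly or verify the compatibility of products with Nisnevich-local weak equivalences; the ``finite products of local objects are local'' observation alone is not enough.
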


\begin{proof}
    Both $\Sing^{\AA^1}$ (being a sifted colimit) and $\L_{\Nis}$ have this
    property. For $\L_{\Nis}$ the fact is clear because it is the left adjoint
    of a geometric morphism of $\infty$-topoi and hence left exact
    (see~\cite{htt}), while for $\Sing^{\AA^1}$ we
    refer to~\cite{adamek-rosicky-vitale}. Alternatively, it is easy to check
    directly that the singular construction commutes with finite products and
    it is shown in~\cite{morel-voevodsky}*{Theorem~1.66} that Nisnevich
    localization commutes with finite products. (Note that since finite
    products and finite homotopy products agree, it is easy to transfer the
    Morel-Voevodsky proof along the Quillen equivalences necessary to bring it
    over to our model for $\Spc_S^{\AA^1}$.)
\end{proof}

The corollary is important in proving that certain functors which are symmetric
monoidal on the level of presheaves, remain symmetric monoidal after
$\AA^1$-localization.

\begin{definition}
    If $X \in \sPre(\Sm_S)$, then $X$ is $\AA^1$-\df{connected} if the
    canonical map $X\rightarrow S$ induces an isomorphism of sheaves
    $\pi_0^{\AA^1}X \rightarrow \pi_0^{\AA^1}S = \star$. We say
    that $X$ is \df{naively}-$\AA^1$-\df{connected} if the canonical map
    $\Sing^{\AA^1}X \rightarrow S$ induces an isomorphism
    $\pi_0^{\Nis}\Sing^{\AA^1}X \rightarrow \pi_0^{\Nis}S = \star$.
\end{definition}

\begin{corollary}[Unstable $\AA^1$-connectivity theorem]\label{thm:unstabconn}
    Suppose that $X$ is a simplicial presheaf on $\Sm_S$. The canonical
    morphism $X \rightarrow \L_{\AA^1}\L_{\Nis}X$ induces an
    epimorphism $\pi_0^{\Nis}X \rightarrow
    \pi_0^{\Nis}\L_{\AA^1}\L_{\Nis}X = \pi_0^{\AA^1}X$. Hence, if
    $\pi_0^{\Nis}X  = \star$, then $X$ is $\AA^1$-connected.
\end{corollary}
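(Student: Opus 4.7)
The plan is to reduce the statement to a concrete surjectivity claim at each stage of the explicit model for $\L_{\AA^1}\L_{\Nis}$ provided by Theorem~\ref{thm:explicit}, then pass to the colimit. Write $\Phi = \L_{\Nis}\Sing^{\AA^1}$, so that by Theorem~\ref{thm:explicit} we have a weak equivalence
\begin{equation*}
    \L_{\AA^1}\L_{\Nis}X \we \hocolim_{n} \Phi^{\circ n}X.
\end{equation*}
The argument will combine three ingredients: (a) the map $X \to \Sing^{\AA^1}X$ is objectwise surjective on $\pi_0$; (b) the Nisnevich localization map does not change the sheaf $\pi_0^{\Nis}$; (c) $\pi_0^{\Nis}$ commutes with filtered homotopy colimits.

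For (a), inspecting the definition $\Sing^{\AA^1}X(U) = |X(U \times \Delta^\bullet)|$ shows that every $0$-simplex of $\Sing^{\AA^1}X(U)$ comes from a $0$-simplex of $X(U) = X(U \times \Delta^0)$, so the presheaf map $\pi_0 X \to \pi_0 \Sing^{\AA^1}X$ is surjective; sheafification preserves epimorphisms, so $\pi_0^{\Nis} X \twoheadrightarrow \pi_0^{\Nis}\Sing^{\AA^1}X$. For (b), the sheafification of $\pi_0$ is by construction invariant under any Nisnevich-local weak equivalence, and in particular $\pi_0^{\Nis}Y \we \pi_0^{\Nis}\L_{\Nis}Y$. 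Combining these shows that each structure map $\Phi^{\circ n}X \to \Phi^{\circ(n+1)}X$ induces a surjection on $\pi_0^{\Nis}$, and therefore by induction the map $\pi_0^{\Nis}X \to \pi_0^{\Nis}\Phi^{\circ n}X$ is surjective for every $n$.

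For (c), recall that a filtered colimit of Nisnevich sheaves (computed as the sheafification of the presheaf colimit) is again a Nisnevich sheaf because Nisnevich descent only involves finite homotopy limits (via the elementary distinguished squares of Proposition~\ref{prop:niscrit}), and filtered colimits commute with finite limits. Since moreover $\pi_0$ commutes with filtered homotopy colimits of simplicial sets, and sheafification commutes with all colimits, we obtain
\begin{equation*}
    \pi_0^{\Nis}\bigl(\hocolim_n \Phi^{\circ n}X\bigr) \iso \colim_n \pi_0^{\Nis}\Phi^{\circ n}X.
\end{equation*}
A filtered colimit of surjections out of a fixed object is still a surjection out of that object, so the canonical map $\pi_0^{\Nis}X \to \pi_0^{\AA^1}X$ is surjective, which is the first assertion; the second ($X$ is $\AA^1$-connected whenever $\pi_0^{\Nis}X = \star$) follows immediately.

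The step I expect to require the most care is (c), specifically verifying that Nisnevich sheafification commutes with filtered colimits in our setting; this is where one must invoke the characterization of fibrancy via elementary distinguished squares and the commutation of finite homotopy limits with filtered homotopy colimits (as was already used in the proof of Theorem~\ref{thm:explicit}). Everything else is a direct inspection of the singular construction and standard formal manipulations with sheafification.
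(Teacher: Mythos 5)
Your proposal is correct and follows essentially the same route as the paper: reduce via Theorem~\ref{thm:explicit} (and the fact that Nisnevich localization and sheafification preserve $\pi_0^{\Nis}$ and epimorphisms) to the single observation that $\pi_0 X(U) \to \pi_0 \Sing^{\AA^1}X(U)$ is surjective objectwise. Where the paper identifies $\pi_0\Sing^{\AA^1}X(U)$ as the coequalizer of $\pi_0 X(U\times\AA^1)\rightrightarrows\pi_0 X(U)$, you instead note that the $0$-simplices of the realization are exactly those of $X(U)$, which gives the same surjection; you also spell out the passage to the filtered colimit of $\Phi^{\circ n}X$ more explicitly than the paper does, but the content is the same.
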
  

\begin{proof}
    By Theorem~\ref{thm:jardine}, it follows that $X\rightarrow\L_{\Nis}X$
    induces isomorphisms on homotopy sheaves
    $\pi_0^{\Nis}X\rightarrow\pi_0^{\Nis}\L_{\Nis}X$. Hence, using the fact
    that sheafification preserves epimorphisms and
    Theorem~\ref{thm:explicit}, it suffices to show that
    $\pi_0X(U)\rightarrow\pi_0\Sing^{\AA^1}X(U)$ is surjective for all
    $X\in\sPre(\Sm_S)$ and all $U\in\Sm_S$.
    To do so, we note that $\pi_0 \Sing^{\AA^1}X(U)$ is calculated as $\pi_0$ of the bisimplicial set
    $X_{\bullet}(U \times \Delta^{\bullet})$. This is in turn calculated as
    the coequalizer of the diagram $$\pi_0 X(U \times_S \AA^1)
    \rightrightarrows \pi_0 X(U),$$ where the maps are
    induced by $\Delta^{\bullet}$ and thus we get the desired surjection.
\end{proof}



Consequently, to determine if a simplicial presheaf is $\AA^1$-connected, it
suffices to calculate its sheaf of ``naive'' $\AA^1$-connected components. We
will use this observation later to prove that $\SL_n$ is $\AA^1$-connected.

\begin{corollary}\label{thm:naiveconn} If $X \in \sPre(\Sm_S)$,
    then the natural morphism $\pi_0^{\Nis}\Sing^{\AA^1}(X) \rightarrow
    \pi_0^{\AA^1}X$ is an epimorphism. Hence if $X$ is naively
    $\AA^1$-connected, then it is $\AA^1$-connected.
\end{corollary}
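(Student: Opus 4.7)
The plan is to apply the Unstable $\AA^1$-connectivity Theorem (Corollary \ref{thm:unstabconn}) not directly to $X$, but to the presheaf $\Sing^{\AA^1}X$. The point is that $\Sing^{\AA^1}X$ is still just a simplicial presheaf, so the previous corollary applies verbatim to it and gives an epimorphism $\pi_0^{\Nis}\Sing^{\AA^1}X \twoheadrightarrow \pi_0^{\AA^1}\Sing^{\AA^1}X$.

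The key remaining ingredient is the identification $\pi_0^{\AA^1}\Sing^{\AA^1}X \iso \pi_0^{\AA^1}X$. This should follow from Theorem \ref{thm:Sing}: the natural map $X \rightarrow \Sing^{\AA^1}X$ induces a weak equivalence $\map(\Sing^{\AA^1}X, Y) \rightarrow \map(X, Y)$ for every $\AA^1$-invariant simplicial presheaf $Y$. This is precisely the universal property saying that $X \rightarrow \Sing^{\AA^1}X$ is an $\AA^1$-local weak equivalence in $\sPre(\Sm_S)$, hence an isomorphism in $\Ho(\Spc_S^{\AA^1})$. In particular, $\AA^1$-homotopy sheaves of $X$ and $\Sing^{\AA^1}X$ agree, giving the desired identification on $\pi_0^{\AA^1}$.

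Putting these together yields the commutative diagram
\begin{equation*}
\xymatrix{
\pi_0^{\Nis}\Sing^{\AA^1}X \ar@{->>}[r] \ar[dr] & \pi_0^{\AA^1}\Sing^{\AA^1}X \ar[d]^{\iso}\\
& \pi_0^{\AA^1}X,
}
\end{equation*}
which shows the diagonal map is an epimorphism. For the second assertion, if $X$ is naively $\AA^1$-connected, then by definition $\pi_0^{\Nis}\Sing^{\AA^1}X = \star$, and surjecting onto $\pi_0^{\AA^1}X$ forces the latter to be trivial as well, so $X$ is $\AA^1$-connected.

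I do not anticipate any real obstacle: the work is all contained in the previous two results, and the argument is simply to feed $\Sing^{\AA^1}X$ into Corollary \ref{thm:unstabconn} and then invoke the universal property from Theorem \ref{thm:Sing} to transport the conclusion back to $X$. The only mild subtlety worth being careful about is ensuring that "$X \rightarrow \Sing^{\AA^1}X$ is an $\AA^1$-local weak equivalence" really does imply equality on $\pi_0^{\AA^1}$, which is immediate since $\pi_0^{\AA^1}$ is defined as a sheafification of maps in $\Ho(\Spc_S^{\AA^1})$.
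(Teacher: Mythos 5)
Your proof is correct and takes essentially the same approach as the paper: apply the unstable $\AA^1$-connectivity theorem to $\Sing^{\AA^1}X$, then use the fact that $X \to \Sing^{\AA^1}X$ is an $\AA^1$-local weak equivalence (Theorem~\ref{thm:Sing}) to identify $\pi_0^{\AA^1}\Sing^{\AA^1}X$ with $\pi_0^{\AA^1}X$.
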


\begin{proof}
    Since the natural map $X \rightarrow \Sing^{\AA^1}X$ is an
    $\AA^1$-local weak equivalence by Theorem~\ref{thm:Sing}, we
    deduce that $\L_{\AA^1}\L_{\Nis}X\we\L_{\AA^1}\L_{\Nis}\Sing^{\AA^1}X$, so we
    may apply Theorem~\ref{thm:unstabconn} to $\Sing^{\AA^1}X$ to get the
    desired conclusion.
\end{proof}

\subsection{The sheaf of $\AA^1$-connected components}

The $0$-th $\AA^1$-homotopy sheaf, or the \df{sheaf of $\AA^1$-connected
components}, admits a simple interpretation: it is the Nisnevich
sheafification of the presheaf $U \mapsto [U_+, X]_{\AA^1} \simeq [U_+,
\L_{\AA^1}\L_{\Nis}X]_s$. With this description, we may perform some calculations whose
results deviate from our intuition from topology.

\begin{definition} Let $X$ be an $S$-scheme. We say that $X$ is
    \df{$\AA^1$-rigid} if $\L_{\AA^1}X \simeq X$ in $\Spc_S$.
    Concretely, this condition amounts to saying that $X(U \times_S \AA^1_S)
    \simeq X(U)$ for any finitely presented smooth $S$-scheme $U$. 
\end{definition} 

\begin{exercise} Let $k$ be a field. Prove that the following $k$-schemes are all $\AA^1$-rigid:
    \begin{enumerate}
    \item $\GG_m$;
    \item smooth projective $k$-curves of positive genus;
    \item abelian varieties.
    \end{enumerate}
    In fact, if $S$ is a reduced scheme of finite Krull dimension,
    show that $\Gm$ is rigid in $\Spc_S^{\AA^1}$.
\end{exercise}

\begin{proposition}\label{prop:rigid}
    Let $X$ be an $\AA^1$-rigid $S$-scheme. Then $\pi_0^{\AA^1}(X) \simeq X$ as
    Nisnevich sheaves, and $\pi_n^{\AA^1}(X) = 0$ for $n >0$.
\end{proposition}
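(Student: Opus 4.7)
The strategy is to show that $X$ is already fibrant in $\Spc_S^{\AA^1}$ and then read off the homotopy sheaves directly using Recipe~\ref{recipe1}. Once $X$ is known to be $\AA^1$-local, the fact that it is a discrete simplicial presheaf forces higher homotopy to vanish and $\pi_0^{\AA^1}$ to agree with the representing sheaf.

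First, I would verify that the representable presheaf $X$, viewed as a simplicial presheaf concentrated in simplicial degree zero, satisfies the three bullet points characterizing $\AA^1$-local objects. Each $X(U)$ is a discrete simplicial set, hence a Kan complex, so $X$ is projectively fibrant. Since the Nisnevich topology is subcanonical, $X$ is a Nisnevich sheaf; for discrete presheaves Proposition~\ref{prop:niscrit} (or the Brown--Gersten/excision characterization) reduces hyperdescent to the ordinary sheaf condition on elementary distinguished squares, so $X\in\Spc_S$. Finally, the defining $\AA^1$-rigidity condition $X(U)\we X(U\times_S\AA^1)$ is precisely $\AA^1$-invariance. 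Hence $X$ is fibrant in $\Spc_S^{\AA^1}$.

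Second, I would compute $\pi_0^{\AA^1}(X)$. By Recipe~\ref{recipe1}, since $X$ is $\AA^1$-fibrant and every representable is cofibrant (or admits a cofibrant replacement of the same weak homotopy type), the set $[U_+, X]_{\AA^1,\star}$ is the quotient of $\Hom_{\sPre}(U,X)$ by simplicial homotopy. But $X$ is discrete, so this relation is trivial, and Yoneda gives $\Hom_{\sPre}(U,X)=X(U)$. Hence the presheaf $U\mapsto[U_+,X]_{\AA^1,\star}$ equals $X$ on the nose, and because $X$ is already a Nisnevich sheaf, sheafification is superfluous: $\pi_0^{\AA^1}(X)\iso X$.

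Third, for $n\geq 1$ and a basepoint $x\in X(U)$, the same recipe identifies $\pi_n^{\AA^1}(X,x)(V)$ with pointed simplicial homotopy classes of pointed maps $S^n\wedge V_+\to X$ evaluated sectionwise. At each section $W$, the simplicial set $S^n\wedge V(W)_+$ is either a point (when $V(W)=\varnothing$) or a wedge of $n$-spheres indexed by $V(W)$; in either case it is simplicially connected for $n\geq 1$. Any pointed map from a connected simplicial set to a \emph{discrete} pointed simplicial set must be constant at the basepoint, so this hom-set is a singleton. Sheafifying gives $\pi_n^{\AA^1}(X)=0$ for $n\geq 1$.

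The only point requiring real care is the first step: one must be sure that hyperdescent, rather than mere \v{C}ech descent, holds for a representable. This is the main technical subtlety, but it follows immediately from Proposition~\ref{prop:niscrit} once one observes that for a discrete target, the homotopy pullback in the Mayer--Vietoris square degenerates to the ordinary set-theoretic pullback, and representables satisfy the latter by subcanonicity of the Nisnevich topology.
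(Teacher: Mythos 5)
Your proposal is correct and follows the paper's own argument in all essentials: both hinge on the observation that an $\AA^1$-rigid representable is already $\AA^1$-local, so that $[U_+,X]_{\AA^1,\star}\iso[U_+,X]_{s,\star}$ and likewise for the $S^n\wedge U_+$ hom-sets, after which the computation reduces to mapping into a discrete simplicial presheaf. You spell out the verification of $\AA^1$-fibrancy (Kan, hyperdescent via Proposition~\ref{prop:niscrit}, $\AA^1$-invariance) in more detail than the paper, which is a harmless expansion rather than a different route.
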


\begin{proof}
    The homotopy set  $U \mapsto [U, X]_{\AA^1} \simeq [U, X]_s =
    \pi_0(\map_{\Spc_S}(U, X))$ is equivalent to the set of $S$-scheme maps
    from $U$ to $X$ as $U$ and $X$ are discrete simplicial sets. Hence this
    presheaf is equivalent to the presheaf represented by $X$ which is already
    a Nisnevich sheaf on $\Sm_S$.
    Now, $[S^n \wedge U_+, X]_s = [S^n, \map_{\Spc_S}(U_+, X)]_{\sSets}$,
    which is trivial since the target is a discrete simplicial set. Since $X$
    is $\AA^1$-rigid we see that $[S^n\wedge U_+,X]_s\iso[S^n \wedge U_+,
    X]_{\AA^1}$,
    and thus the sheafification is also trivial.
\end{proof}

\begin{exercise}
    Let $\Sm^{\AA^1}_S \hookrightarrow \Sm_S$ be the full subcategory spanned
    by $\AA^1$-rigid schemes. Then the natural functor $\Sm_S^{\AA^1} \rightarrow
    \Spc^{\AA^1}_S$ which is the composite of $\L_{\AA^1}: \Spc_S \rightarrow
    \Spc^{\AA^1}_S$ and the Yoneda embedding is fully faithful.
    In other words, two $\AA^1$-rigid schemes are isomorphic as schemes if and
    only if they are $\AA^1$-equivalent.
\end{exercise}

\subsection{The smash product and the loops-suspension adjunction}


Let us begin with some recollection about smash products in simplicial sets.
Let $(X, x), (Y,y)$ be two pointed simplicial sets, then we can form the smash product
$(X, x) \wedge (Y, y)$ which is defined to be the pushout:
$$\xymatrix{
(X, x) \vee (Y, y) \ar[r] \ar[d] &  (X, x) \times (Y, y) \ar[d] \\
\star \ar[r] & (X, x) \wedge (Y, y)}$$
The functor $- \wedge (X, x)$ is then a left Quillen endofunctor on the
category of simplicial sets by the following argument: if $(Z, z)
\rightarrow (Y, y)$ is a cofibration of simplicial sets, then we note that $(X,
x) \wedge (Z, z) \rightarrow  (X, x) \wedge (Y, y)$ is cofibration since
cofibrations are stable under pushouts.
The case of acyclic cofibrations
is left to the reader. The right adjoint to $- \wedge (X, x)$ is given by the
pointed mapping space $\map_{\star}(X, -)$ which is given by
the formula: $$\map_{\star}(X, Y)_n \iso \Hom_{\sSets_\star}(X \wedge
\Delta^n_+, Y).$$

To promote the smash product to the level of simplicial presheaves, we first
take the pointwise smash product, i.e., if $(X, x), (Y,y)$ are objects in
$\sPre(\Sm_S)_{\star}$, then we form the smash product $(X, x) \wedge (Y,y)$ as
the simplicial presheaf: $$U \mapsto (X, x)(U) \wedge (Y,y)(U).$$ An analogous
pointwise formula is used for the pointed mapping space functor.

\begin{proposition} The Quillen adjunction $(X, x) \wedge - :
    \sPre(\Sm_S)_{\star} \rightarrow  \sPre(\Sm_S)_{\star}:\map_{\star}(X, -)$
    descends to a Quillen adjunction: $(X, x) \wedge - :
    \Spc^{\AA^1}(S)_{\star} \rightarrow \Spc^{\AA^1}(S)_{\star}:
    \map_{\star}(X, -)$ and thus there are natural isomorphisms
    categories:
    $$[(X, x) \wedge^{\Lbf} (Z,z), (Y,y)]_{\AA^1} \simeq [(X, x), \Rbf\map_{\star}((X, x), (Y,y))]_{\AA^1}$$
    for $X,Y,Z\in\Spc_S^{\AA^1}$.
\end{proposition}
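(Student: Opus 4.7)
My plan is to invoke the universal property of left Bousfield localization: a left Quillen endofunctor $F$ on $\sPre(\Sm_S)_\star$ descends to a left Quillen endofunctor on $\Spc_{S,\star}^{\AA^1}=\L_{\AA^1}\L_{\Nis}\sPre(\Sm_S)_\star$ provided that its left derived functor sends each morphism in the localizing class to an $\AA^1$-local weak equivalence; equivalently, the right adjoint must preserve $\AA^1$-local fibrant objects. The localizing class here consists of the pointed Nisnevich hypercovers $\check{U}_+\rightarrow V_+$ together with the pointed $\AA^1$-projections $(\AA^1\times_S U)_+\rightarrow U_+$ for $U\in\Sm_S$.

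The key technical input is the Corollary above which asserts that $\L_{\AA^1}$ commutes with finite products. Since $\L_{\AA^1}$ is a left adjoint it also commutes with all homotopy colimits, and since the smash product $(A,a)\wedge(B,b)$ is the homotopy pushout of $\ast\leftarrow A\vee B\rightarrow A\times B$ for cofibrant pointed objects $(A,a)$ and $(B,b)$, one obtains a natural weak equivalence
$$\L_{\AA^1}\bigl((A,a)\wedge(B,b)\bigr)\we\L_{\AA^1}(A,a)\wedge\L_{\AA^1}(B,b).$$
Taking $(B,b)$ to be a cofibrant replacement of $(X,x)$, this identity implies that $(X,x)\wedge-$ takes $\AA^1$-local weak equivalences between cofibrant pointed simplicial presheaves to $\AA^1$-local weak equivalences. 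In particular, smashing a pointed Nisnevich hypercover or a pointed $\AA^1$-projection with $(X,x)$ produces an $\AA^1$-local weak equivalence, which verifies the criterion for descent. Proposition~\ref{prop:quillenexistence} then produces the derived adjunction and the natural isomorphism of hom-sets in $\Ho(\Spc_{S,\star}^{\AA^1})$.

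The main obstacle is making the displayed weak equivalence rigorous. One must first confirm that the pushout diagram defining the smash product is a genuine homotopy pushout in $\sPre(\Sm_S)_\star$, which follows from the cofibrancy hypotheses together with the fact that $A\vee B\rightarrow A\times B$ is a cofibration between cofibrant objects in the projective model structure. One must then check that $\L_{\AA^1}$ applied to this homotopy pushout can be computed by separately localizing the corner objects: the product corner is handled by the Corollary, the wedge corner by the fact that coproducts are homotopy colimits, and the basepoint is trivially preserved. Granted these manipulations, descent of the Quillen adjunction, and hence the conclusion of the proposition, is formal.
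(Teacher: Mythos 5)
Your proof is correct and takes a genuinely different route from the paper. The paper's proof is essentially a citation: it appeals to Schwede--Shipley's general criteria for when a monoidal structure on the underlying category of a model category gives a Quillen adjunction, and then points to Dundas--R\"ondigs--{\O}stv{\ae}r, Section~2.1, where the relevant pushout-product type conditions are verified for the motivic model structures. Your approach is instead an explicit descent argument: you invoke the universal property of left Bousfield localization (a left Quillen endofunctor descends iff its derived functor carries the localizing maps to local weak equivalences) and then verify this criterion by deducing that $\L_{\AA^1}$ commutes with smash products from the paper's own Corollary that $\L_{\AA^1}$ preserves finite products, together with preservation of homotopy pushouts. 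This is a more self-contained and elementary argument, and it has the pedagogical virtue of reusing the finite-product preservation result already established in the text rather than outsourcing to the monoidal model category literature. What the paper's citation-based route buys is generality and brevity: the Schwede--Shipley machinery packages the pushout-product and unit axioms once and for all, and also yields symmetric monoidal structure rather than just the one-variable smash functor.

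Two technical points worth flagging, neither fatal: (i) when you write $\L_{\AA^1}\bigl((A,a)\wedge(B,b)\bigr)\we\L_{\AA^1}(A,a)\wedge\L_{\AA^1}(B,b)$, the right-hand smash is taken in $\sPre(\Sm_S)_\star$ and its output is not itself $\AA^1$-local; what the argument really yields is $\L_{\AA^1}(A\wedge B)\we\L_{\AA^1}\bigl(\L_{\AA^1}A\wedge\L_{\AA^1}B\bigr)$, which is exactly what is needed but deserves to be stated that way to avoid confusion; (ii) the $\AA^1$-local replacement $\L_{\AA^1}(-)$ as constructed in the paper is (essentially) a fibrant replacement and hence need not be cofibrant, so the invocation of left-Quillen-ness of $(X,x)\wedge-$ at the level of $\AA^1$-local objects requires one to additionally pass to cofibrant replacements. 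You acknowledge such manipulations at the end; filling them in is routine, and the overall logic is sound.
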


\begin{proof} The question of whether a monoidal structure defined on the
    underlying category of a model category descends to a Quillen adjunction
    with mapping spaces as its right adjoint is answered in the paper of
    Schwede and Shipley
    \cite{schwede-shipley}. The necessary conditions are checked in
    \cite{dundas-rondigs-ostvaer}*{Section 2.1}.
\end{proof}

Now, recall that the suspension of $(X,x)$ is calculated either as the homotopy cofiber
of the canonical morphism $(X,x) \rightarrow \star$ or, equivalently, as $S^1 \wedge
(X, x)$ (check this!), while the loop space is calculated as the homotopy
pullback of the diagram
$$\xymatrix{
\Omega(X, x) \ar[r] \ar[d]  & \star \ar[d]\\
\star \ar[r] & (X, x)}$$
or, equivalently, as $\map_{\star}(S^1, (Y,y))$. Consequently: 

\begin{corollary}
    For any objects $(X,x), (Y,y) \in\Spc^{\AA^1}(\Sm_S)_{\star}$,there is an
    isomorphism $$[\Lbf\Sigma (X,x), (Y,y)]_{\mathbb{A}^1} \simeq [(X,x), \Rbf
        \Omega (Y,y)]_{\mathbb{A}^1}.$$
\end{corollary}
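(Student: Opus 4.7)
The plan is to deduce this directly from the preceding proposition by taking the fixed pointed object to be the simplicial circle $S^1$, viewed as a constant pointed simplicial presheaf on $\Sm_S$. The corollary is then essentially just a restatement in more familiar notation.

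The first step is to verify the two model-level identifications that the corollary tacitly uses: $\Sigma(X,x) \simeq S^1 \wedge (X,x)$ and $\Omega(Y,y) \simeq \map_\star(S^1,(Y,y))$. For the suspension, one computes the homotopy pushout defining $\Sigma(X,x)$ (see Example~\ref{exmp:susp}) and compares it with the pushout defining $S^1\wedge(X,x)$ via the standard model $S^1 \simeq \Delta^1/\partial\Delta^1$; both can be written as the homotopy cofiber of $(X,x)\to \star$. Dually, $\Omega(Y,y)$ is the homotopy pullback of $\star \to (Y,y)\leftarrow \star$, which under the pointed mapping space adjunction agrees with $\map_\star(S^1,(Y,y))$ applied to a fibrant replacement. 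Both identifications work pointwise and hence hold in $\sPre(\Sm_S)_\star$ before any localization.

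The second step is to invoke the preceding proposition with $(X,x)$ there set to $S^1$. That proposition gives a Quillen adjunction $S^1 \wedge - : \Spc^{\AA^1}_{S,\star} \rightleftarrows \Spc^{\AA^1}_{S,\star} : \map_\star(S^1,-)$ and the resulting derived adjunction
\begin{equation*}
[S^1 \wedge^{\mathbf{L}} (X,x),(Y,y)]_{\AA^1} \;\simeq\; [(X,x), \mathbf{R}\map_\star(S^1,(Y,y))]_{\AA^1}.
\end{equation*}
Combining with the identifications of step one, the left side is $[\mathbf{L}\Sigma(X,x),(Y,y)]_{\AA^1}$ and the right side is $[(X,x), \mathbf{R}\Omega(Y,y)]_{\AA^1}$, which is exactly the claim.

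The only thing to worry about is the compatibility of the derivations: one must check that the derived functor $S^1\wedge^{\mathbf{L}} -$ models $\mathbf{L}\Sigma$ and likewise for the right adjoints. This follows because the identifications of step one are pointwise weak equivalences between cofibrant (respectively fibrant) replacements in the pointed projective model structure and these properties are preserved by the Bousfield localizations to $\Spc_{S,\star}$ and $\Spc^{\AA^1}_{S,\star}$. I expect this bookkeeping to be the only real obstacle; once it is dispatched the corollary follows immediately from the proposition.
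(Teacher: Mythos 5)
Your proof is correct and follows essentially the same route as the paper: specialize the preceding proposition to the simplicial circle $S^1$, then invoke the identifications $\Sigma(-) \simeq S^1 \wedge (-)$ and $\Omega(-) \simeq \map_\star(S^1,-)$ (which the paper records in the paragraph immediately before the corollary, where it leaves them as a ``check this!''). Your step-four discussion of derived-functor bookkeeping is a welcome elaboration of what the paper treats implicitly.
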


\subsection{The bigraded spheres}\label{sub:spheres}

We will now delve into some calculations in $\AA^1$-homotopy theory. More
precisely, these are calculations in the pointed category
$\Spc^{\AA^1}_{S,\star}$. We use the following conventions for base points
of certain schemes which will play a major role in the theory.

\begin{enumerate}
    \item $\AA^1$ is pointed by $1$
    \item $\GG_m$ is pointed by $1$.
    \item $\PP^1$ is pointed by $\infty$. 
    \item $X_+$ denotes $X$ with a disjoint base point for a space $X$. 
\end{enumerate}

In particular, we only write pointed objects as $(X, x)$ when the base points
are not the  ones indicated above. We also note that the forgetful functors
$\Spc_{S,\star} \rightarrow \Spc_S$ and $\Spc^{\AA^1}_{S,\star}
\rightarrow \Spc^{\AA^1}_S$ preserve and detect weak equivalences. Hence when the
context is clear, we will say Nisnevich or $\AA^1$-weak equivalence as opposed to
\emph{pointed} Nisnevich or $\AA^1$-weak equivalence.

\begin{remark}
    In many cases, base points of schemes are negotiable in the sense
    that there is an explicit pointed $\AA^1$-local weak equivalence between $(X,x)$ and
    $(X, y)$ for two base points $x, y$. For example $(\PP^1, \infty)$ is
    $\AA^1$-equivalent in the pointed category to $(\PP^1, x)$ for any other
    point $x \in \PP^1$ via an explicit $\AA^1$-homotopy.
\end{remark}

Of course, if one takes a cofiber of pointed schemes (or even simplicial presheaves), the cofiber is automatically pointed: if $X \rightarrow Y \rightarrow X/Y$ is a cofiber sequence, then $X/Y$ is pointed by the image of $Y$. 

The first calculation one encounters in $\AA^1$-homotopy theory is the
following.

\begin{lemma}\label{lem:gmsuspension}
    In $\Spc^{\AA^1}_{S,\star}$, there are $\AA^1$-weak equivalences $\Sigma (\mathbb{G}_m,1) \simeq
    (\mathbb{P}^1, \infty) \simeq \AA^1/(\AA^1-\{0\})$.
\end{lemma}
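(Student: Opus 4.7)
The plan is to extract both equivalences from a single homotopy pushout presentation of $\PP^1$. Cover $\PP^1$ by its two standard affine charts $U_0$ and $U_\infty$, both isomorphic to $\AA^1$, with intersection $U_0 \cap U_\infty \cong \Gm$ (the transition is $s = t^{-1}$ in the two coordinates). This is a Zariski, and hence Nisnevich, elementary distinguished square, so Proposition~\ref{prop:edspushout} identifies $\PP^1$ with the homotopy pushout of $U_0 \leftarrow \Gm \to U_\infty$ in $\Spc_S$. Since $\L_{\AA^1}$ is a left adjoint (Proposition~\ref{prop:adjointholim}), the same diagram remains a homotopy pushout in $\Spc_S^{\AA^1}$.

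I would point $\Gm$, $U_0$, and $U_\infty$ all by $1$; this makes the diagram pointed, and the induced basepoint on $\PP^1$ is $1 = [1:1]$. By Exercise~\ref{exer:vb}, each $(\AA^1, 1)$ is $\AA^1$-contractible, and one can choose a pointed contraction explicitly (for example $h(x,\lambda) = 1 + \lambda(x-1)$, which fixes $1$). For the equivalence $(\PP^1, 1) \simeq \AA^1/(\AA^1-\{0\})$, replace only $U_\infty$ by $\ast$; this is a pointwise $\AA^1$-equivalence of diagrams, and the new homotopy pushout $U_0 \leftarrow \Gm \to \ast$ is, by definition of quotient, the homotopy cofiber of $\Gm \hookrightarrow U_0 = \AA^1$, i.e.\ $\AA^1/(\AA^1-\{0\})$. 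For the equivalence $\Sigma(\Gm, 1) \simeq (\PP^1, 1)$, replace both $U_0$ and $U_\infty$ by $\ast$; the homotopy pushout of $\ast \leftarrow \Gm \to \ast$ in the pointed category is precisely the reduced suspension $\Sigma(\Gm, 1)$ by Example~\ref{exmp:susp}.

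To finish, the remark preceding the lemma gives a pointed $\AA^1$-equivalence $(\PP^1, 1) \simeq (\PP^1, \infty)$, which composes with the two equivalences above. There is no serious obstacle here; the only care needed is the basepoint bookkeeping, namely checking that the $\AA^1$-contraction of $(\AA^1,1)$ is genuinely pointed and that the final change of basepoint on $\PP^1$ is $\AA^1$-admissible. Both are immediate — the first from the explicit multiplicative homotopy, the second from the cited remark — so the lemma reduces to a direct application of Nisnevich excision and the $\AA^1$-contractibility of affine space.
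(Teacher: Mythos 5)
Your proof is correct and takes essentially the same approach as the paper: both exhibit $\PP^1$ as the homotopy pushout of $\AA^1 \leftarrow \Gm \to \AA^1$ via the standard Zariski (hence Nisnevich) distinguished square, use that $\L_{\AA^1}$ preserves homotopy pushouts, contract one or both copies of $\AA^1$ to obtain the two identifications, and adjust the basepoint from $[1:1]$ to $\infty$. The only addition you make is the explicit pointed contraction $h(x,\lambda)=1+\lambda(x-1)$, which tidily settles the basepoint bookkeeping that the paper leaves implicit.
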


\begin{proof} Consider the distinguished Nisnevich square
    $$ \xymatrix{
    \mathbb{G}_m \ar[r] \ar[d] & \AA^1 \ar[d] \\
    \AA^1 \ar[r] & (\PP^1,1)}$$
     in $\Sm_S$.
    By Proposition~\ref{prop:edspushout}, this can be viewed as a homotopy pushout
    in $\Spc_{S,\star}$ as well. Since the localization functor
    $\Spc_{S,\star} \rightarrow \Spc^{\AA^1}_{S,\star}$ is a Quillen left adjoint, it commutes with
    homotopy colimits, and in particular with homotopy pushouts. Therefore,
    when viewed in $\Spc^{\AA^1}_{S,\star}$ the square above is a homotopy pushout.
    However, since $\AA^1\we\star$ in the $\AA^1$-homotopy theory, it follows
    that $\Sigma\Gm\we(\PP^1,\infty)$ (by
    contracting both copies of $\AA^1$ and noting that $(\PP^1, 1) \we (\PP^1, \infty)$) or $\Sigma\Gm\we\AA^1/(\AA^1-\{0\})$
    (by contracting one of the copies of $\AA^1$).
\end{proof}

The above calculation justifies the idea that in $\AA^1$-homotopy theory there are two kinds of circles: the simplicial
circle $S^1$ and the ``Tate" circle $\mathbb{G}_m$.
The usual convention (which matches up with the grading in motivic cohomology)
is to define $$S^{1,1}= \mathbb{G}_m,$$ and $$S^{1,0} = S^1.$$
Consequently, by the lemma, we have an $\AA^1$-weak equivalence $S^{2,1} \simeq \PP^1$. 

Now, given a pair $a,b$ of non-negative integers satisfying $a\geq b$, we can
define $S^{a,b}=\Gm^{\wedge b}\wedge(S^1)^{\wedge(a-b)}$. In general, there is
no known nice description of these motivic spheres. However, the next two
results give two important classes of exceptions.

\begin{proposition} \label{prop:S2n-1,n}
     In $\Spc^{\AA^1}_{S,\star}$, there are $\AA^1$-weak equivalences $S^{2n-1,n}\we\AA^n-\{0\}$ for $n\geq 1$.
\end{proposition}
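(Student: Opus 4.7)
The plan is to induct on $n$. The base case $n = 1$ holds by definition of the bigraded spheres: $S^{1,1} = \Gm = \AA^1 - \{0\}$.

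For the inductive step, I mimic the argument of Lemma~\ref{lem:gmsuspension}. Cover $\AA^n - \{0\}$ by the two Zariski opens $U = \{x_1 \neq 0\}$ and $V = \{(x_2, \ldots, x_n) \neq 0\}$; together they cover, and their intersection is $U \cap V = \{x_1 \neq 0,\ (x_2, \ldots, x_n) \neq 0\}$. Scheme-theoretically, $U \cong \Gm \times \AA^{n-1}$, $V \cong \AA^1 \times (\AA^{n-1} - \{0\})$, and $U \cap V \cong \Gm \times (\AA^{n-1} - \{0\})$. Since both inclusions into $\AA^n - \{0\}$ are Zariski open immersions covering the punctured space, the cartesian square with vertices $U \cap V, U, V, \AA^n - \{0\}$ is an elementary distinguished Nisnevich square. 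By Proposition~\ref{prop:edspushout} it is a homotopy pushout in $\Spc_S$, and since $\L_{\AA^1}$ is a left Quillen adjoint and so preserves homotopy pushouts, it remains one in $\Spc_S^{\AA^1}$.

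Using that $\L_{\AA^1}$ commutes with finite products and contracts the affine factors $\AA^1$ and $\AA^{n-1}$, in $\Spc_S^{\AA^1}$ this span becomes the span of projections $\Gm \leftarrow \Gm \times (\AA^{n-1}-\{0\}) \to \AA^{n-1}-\{0\}$. Choosing the base point $(1, 1, 0, \ldots, 0) \in U \cap V$ makes the picture pointed compatibly with the standard base points of $\Gm$ and $\AA^{n-1} - \{0\}$. Now the homotopy pushout of the projections $X \leftarrow X \times Y \to Y$ is the pointed join $X * Y$, and for pointed spaces one has $X * Y \simeq \Sigma(X \wedge Y)$; this follows by pasting the pushout $X \vee Y \to X \times Y \to X \wedge Y$ (the definition of the smash as cofiber) with the collapses $X \vee Y \to X$ and $X \vee Y \to Y$ and applying the pasting lemma for homotopy pushouts to recognize the result as the homotopy pushout of $\star \leftarrow X \wedge Y \to \star$. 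Applied with $X = \Gm$ and $Y = \AA^{n-1} - \{0\}$, this yields
$$
\AA^n - \{0\} \simeq \Sigma\bigl(\Gm \wedge (\AA^{n-1} - \{0\})\bigr).
$$

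By the inductive hypothesis, $\AA^{n-1} - \{0\} \simeq S^{2n-3,\, n-1} = \Gm^{\wedge(n-1)} \wedge (S^1)^{\wedge(n-2)}$, so
$$
\AA^n - \{0\} \simeq \Sigma\bigl(\Gm^{\wedge n} \wedge (S^1)^{\wedge(n-2)}\bigr) = \Gm^{\wedge n} \wedge (S^1)^{\wedge(n-1)} = S^{2n-1,\, n},
$$
closing the induction. The main obstacle is the careful verification of the join-equals-suspended-smash formula inside the pointed $\AA^1$-model category, together with the bookkeeping to ensure basepoints are compatible across the open cover; the remaining steps are essentially direct applications of Proposition~\ref{prop:edspushout}, the fact that $\L_{\AA^1}$ preserves finite products and homotopy colimits, and Exercise~\ref{exer:vb}.
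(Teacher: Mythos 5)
Your proof is correct and follows essentially the same route as the paper: same Zariski cover of $\AA^n - \{0\}$ (up to relabeling coordinates), same reduction after contracting affine factors to the span of projections $\Gm \leftarrow \Gm \times (\AA^{n-1}-\{0\}) \to \AA^{n-1}-\{0\}$, and the same conclusion. Your appeal to the join-equals-suspended-smash identity, with the pasting argument you sketch, is precisely what the paper carries out explicitly via the $3 \times 3$ diagram that commutes horizontal and vertical homotopy pushouts.
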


\begin{proof}  The case $n=1$ is Lemma~\ref{lem:gmsuspension}; we need to do
    the $n=2$ case and then perform induction.
    Specifically, the claim for $n=2$ says that $\AA^2 -\{0\} \simeq S^1 \wedge
    (\GG_m^{\wedge 2})$. First, observe that we have a homotopy push-out diagram:
    $$
    \xymatrix{
    \mathbb{G}_m \times \mathbb{G}_m \ar[r] \ar[d] & \GG_m \times \AA^1 \ar[d] \\
    \GG_m \times \AA^1 \ar[r] & \AA^2 - \{0\},}$$
    from which we conclude that $\AA^2 -\{0\}$ is calculated as the homotopy
    push-out of $$\mathbb{G}_m \leftarrow  \mathbb{G}_m \times \mathbb{G}_m
    \rightarrow \mathbb{G}_m.$$
On the other hand we may calculate this homotopy push-out using the diagram
$$\xymatrix{
\star & \star \ar[l] \ar[r] & \star\\
\GG_m \ar[d]\ar[u] & \GG_m \vee \GG_m \ar[l]\ar[u] \ar[d] \ar[r] & \GG_m
\ar[d]\ar[u]\\
\GG_m  & \GG_m \times \GG_m \ar[l]  \ar[r] & \GG_m.}$$
Taking the homotopy push-out across the horizontal rows gives us $\star
\leftarrow \star \rightarrow \AA^2 -\{0\}$; taking this homotopy push-out gives
back $\AA^2 -\{0\}$.
On the other hand, taking the homotopy push-put across the vertical rows give us
$\star \leftarrow \GG_m \wedge \GG_m \rightarrow \star$ which calculates the homotopy push-out $S^1 \wedge (\GG_m \wedge \GG_m)$.

Let us now carry out the induction. We have a distinguished Nisnevich square
$$\xymatrix{
    \AA^{n-1} -\{0\} \times \mathbb{G}_m \ar[r] \ar[d] & \AA^n \times \GG_m \ar[d] \\
    \AA^{n-1} -\{0\} \times \AA^1 \ar[r] & \AA^n -\{0\},}$$
from which we conclude that $\AA^n -\{0\}$ is calculated as the homotopy push-out of 
$$\AA^{n-1} -\{0\} \leftarrow \AA^{n-1} -\{0\} \times \GG_m \rightarrow \GG_m.$$
Hence we can set-up an analogous diagram:
$$\xymatrix{
\star & \star \ar[l] \ar[r] & \star\\
\AA^{n-1} -\{0\}\ar[d]\ar[u] & \AA^{n-1} -\{0\} \vee \GG_m \ar[l] \ar[d]\ar[u]
\ar[r] & \GG_m \ar[d]\ar[u]\\
\AA^{n-1} -\{0\} & \AA^{n-1} -\{0\} \times \GG_m \ar[l]  \ar[r] & \GG_m }$$
to conclude as in the base case that $S^1\wedge((\AA^{n-1} -\{0\})\wedge \GG_m) \simeq \AA^n-\{0\}$.
\end{proof}

\begin{corollary}\label{prop:S2n,n}
    In $\Spc^{\AA^1}_{S,\star}$ 
    there are $\AA^1$-weak equivalences $\AA^n/\AA^n-\{0\} \simeq S^n \wedge
    \mathbb{G}_m\we S^{2n,n}$ for $n\geq 1$.
\end{corollary}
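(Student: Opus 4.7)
The plan is to reduce the statement to Proposition~\ref{prop:S2n-1,n} by exploiting the $\AA^1$-contractibility of $\AA^n$. Recall that the quotient $\AA^n/(\AA^n-\{0\})$ is by definition the homotopy cofiber of the open immersion $\AA^n-\{0\}\hookrightarrow\AA^n$ computed in $\Spc_{S,\star}^{\AA^1}$.

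First, I would observe that the structure morphism $\AA^n\to S$ is an $\AA^1$-weak equivalence. This is an instance of Exercise~\ref{exer:vb} applied to the trivial rank $n$ vector bundle, together with Exercise~\ref{exer:a1equiv} which states that $\AA^1$-homotopy equivalences are $\AA^1$-weak equivalences. Thus $\AA^n\we\star$ in $\Spc_{S,\star}^{\AA^1}$.

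Next, I would use that the homotopy cofiber sequence
\begin{equation*}
\AA^n-\{0\}\rightarrow\AA^n\rightarrow\AA^n/(\AA^n-\{0\})
\end{equation*}
together with the contractibility of the middle term identifies $\AA^n/(\AA^n-\{0\})$ with the suspension of $\AA^n-\{0\}$ in $\Spc_{S,\star}^{\AA^1}$. Concretely, replacing $\AA^n$ by $\star$ in the homotopy pushout defining the cofiber turns it into the homotopy pushout defining $\Sigma(\AA^n-\{0\})$ (cf.\ Example~\ref{exmp:susp}). This yields an $\AA^1$-weak equivalence
\begin{equation*}
\AA^n/(\AA^n-\{0\})\we\Sigma(\AA^n-\{0\}).
\end{equation*}

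Finally, I would invoke Proposition~\ref{prop:S2n-1,n} to get $\AA^n-\{0\}\we S^{2n-1,n}=S^{n-1}\wedge\Gm^{\wedge n}$, and then use that suspension is smashing with $S^1$ to conclude
\begin{equation*}
\Sigma(\AA^n-\{0\})\we S^1\wedge S^{n-1}\wedge\Gm^{\wedge n}\we S^n\wedge\Gm^{\wedge n}=S^{2n,n},
\end{equation*}
which finishes the proof. The only potential subtlety is a mild bookkeeping issue about basepoints — $\AA^n-\{0\}$ inherits no canonical basepoint from the open immersion, but as noted in the remark preceding Lemma~\ref{lem:gmsuspension} such choices are negotiable up to pointed $\AA^1$-weak equivalence, so this is not a real obstacle.
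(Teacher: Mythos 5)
Your argument is correct and is essentially identical to the paper's: both contract $\AA^n$ in the homotopy pushout computing the cofiber, identify $\AA^n/(\AA^n-\{0\})$ with $\Sigma(\AA^n-\{0\})$, and then invoke Proposition~\ref{prop:S2n-1,n}. The extra remarks on basepoints and the explicit citations to the exercises are harmless elaborations, not a different route.
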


\begin{proof} The homotopy cofiber of the inclusion $\AA^n-0 \hookrightarrow \AA^n$ is calculated as the homotopy pushout
$$\xymatrix{
    \AA^n-\{0\} \ar[r] \ar[d] & \AA^n \ar[d] \\
    \star \ar[r] & \AA^n/\AA^n -\{0\}.\\}$$
    In $\Spc_S^{\AA^1}$, this cofiber can be calculated as the homotopy pushout
    $$\xymatrix{
    \AA^n-\{0\} \ar[r] \ar[d] & \star \ar[d] \\
    \star \ar[r] & \AA^n/\AA^n -\{0\}.\\}$$
    Therefore $\AA^n/\AA^n -\{0\} \simeq S^1\wedge(\AA^n-\{0\})\simeq S^1
    \wedge (S^{2n-1, n})$ by Proposition~\ref{prop:S2n-1,n}.
\end{proof}

\begin{remark}
    In~\cite{asok-doran-fasel}, the authors study the question of when the
    motivic sphere $S^{a,b}$ is $\AA^1$-weak equivalent to a smooth scheme.
    Proposition~\ref{prop:S2n-1,n} shows that this is the case for
    $S^{2n-1,n}\we\AA^n-\{0\}$. Asok, Doran, and Fasel prove that it is also
    the case for $S^{2n,n}$, which they show is $\AA^1$-weak equivalent to  
    to the affine quadric with coordinate ring
    $$k[x_1,\ldots,x_n,y_1,\ldots,y_n,z]/\left(\sum_{i}x_iy_i-z(1+z)\right)$$
    when $S=\Spec k$ for a commutative ring $k$.
    They also show that $S^{a,b}$ is \emph{not} $\AA^1$-weak equivalent to a
    smooth affine scheme if $a>2b$. Conjecturally, the only motivic spheres
    $S^{a,b}$ admitting smooth models are those above, when
    $(a,b)=(2n-1,n)$ or $(a,b)=(2n,n)$.
\end{remark}

\begin{remark}
    If we impose only Nisnevich descent rather than Nisnevich hyperdescent,
    the results in this section remain true. This might provide one compelling
    reason to do so. For details, see~\cite{asok-hoyois-wendt}.
\end{remark}

\subsection{Affine and projective bundles}

\begin{proposition}  \label{prop:affinebundle} Let $p:E\rightarrow X$ be a
    Nisnevich-locally trivial affine space bundle. Then, $E\rightarrow X$ is an
    $\AA^1$-weak equivalence.
\end{proposition}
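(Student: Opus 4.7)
The plan is to reduce the general statement to the case of a trivial bundle $\AA^n \times U \to U$ using Nisnevich descent. Since $p$ is Nisnevich-locally trivial, I would first choose a Nisnevich cover $\{U_i \to X\}_{i \in I}$ trivializing $E$, i.e. such that $E \times_X U_i \cong \AA^{n_i} \times U_i$ over $U_i$. Form the Čech nerve $U_\bullet \to X$ with $U_m = \coprod U_{i_0}\times_X \cdots \times_X U_{i_m}$. By Nisnevich (hyper)descent, as recorded in Remark~\ref{rem:fibrant} and Proposition~\ref{prop:niscrit}, the augmentation $U_\bullet \to X$ exhibits $X$ as $\hocolim_{\Delta^{\op}} U_\bullet$ in $\Spc_S$.

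Next, pull back the cover along $p$ to get a simplicial object $E\times_X U_\bullet \to E$. Again by Nisnevich descent (applied to the cover $E\times_X U_0 \to E$, which is Nisnevich because Nisnevich covers are stable under pullback), the augmentation exhibits $E$ as $\hocolim_{\Delta^{\op}} (E \times_X U_\bullet)$ in $\Spc_S$. The key observation is that each level trivializes: $E \times_X U_m \cong E\times_X U_0 \times_{U_0} U_m \cong \AA^{n_{\bullet}} \times_S U_m$ (with the local dimension $n_\bullet$ constant on each connected component), because the trivialization over $U_0$ persists after further pullback.

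Therefore the natural transformation $E\times_X U_\bullet \to U_\bullet$ is levelwise a projection from a trivial affine-space bundle; by Exercise~\ref{exer:vb} each such projection is an $\AA^1$-homotopy equivalence, and by Exercise~\ref{exer:a1equiv} an $\AA^1$-weak equivalence. Since $\hocolim$ is a left Quillen functor on the projective model structure on simplicial diagrams and in particular preserves objectwise weak equivalences between projectively cofibrant diagrams, passing to the colimit yields an $\AA^1$-weak equivalence $E \to X$.

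The main technical point to get right is the descent identification $X \simeq \hocolim_{\Delta^{\op}} U_\bullet$ and its compatibility with pullback along $p$; concretely, one has to verify that the Čech nerve is a hypercover in the sense of the text (so that descent applies in our model $\L_{\Nis}\sPre(\Sm_S)$), and that the natural map of simplicial diagrams can be promoted to one between projectively cofibrant replacements so that the homotopy colimit computes the correct answer. Once those bookkeeping issues are dispatched, the argument reduces cleanly to the trivial affine-bundle case.
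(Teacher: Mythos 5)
Your proof is correct and follows essentially the same strategy as the paper: trivialize over a cover, form the Čech nerve, exhibit both $E$ and $X$ as homotopy colimits over $\Delta^{\op}$, and conclude from the levelwise $\AA^1$-weak equivalence given by the trivial-bundle case. In fact you are slightly more careful than the text (which somewhat carelessly picks a \emph{Zariski} cover despite the Nisnevich hypothesis, and writes the pulled-back simplicial object as $\check{C}(U)_\bullet\times_X\AA^n$ without commenting on how the trivializations are assembled); your version, using the Nisnevich cover provided and checking that the trivialization persists on the simplices of the nerve, is a welcome tightening of the same argument.
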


\begin{proof}  Pick a Zariski cover $\mathcal{U} :=
    \{U_{\alpha}\}$ of $X$ that
    trivializes $E$.  Suppose that $\check{C}(U)_{\bullet}$ is the \v{C}ech nerve of the cover,
    then we have a weak equivalence
    $$\hocolim_{\Delta^{op}}\check{C}(U)_{\bullet}\we X$$ in $\Spc_S$ and
    an $\AA^1$-weak equivalence $$\hocolim_{\Delta^{op}} \check{C}(U)_{\bullet}
    \times_X \AA^n\we E$$ in $\Spc_S^{\AA^1}$.
    But now, we have an levelwise-$\AA^1$-weak equivalence of simplicial
    objects $$\check{C}(U)_\bullet\times_X\AA^n\rightarrow\check{C}(U)_\bullet.$$
    Hence, the homotopy colimits are equivalent by construction.
\end{proof}

Note that the above proposition covers a larger class of morphisms than just
vector bundles $p:E\rightarrow X$. For these, the homotopy inverse of the
projection map is the zero section as per Exercise~\ref{exer:vb}.

We obtain immediate applications of this proposition in the form of certain
presentations of $\AA^n-0$ in terms of a homogeneous space and an affine scheme
in the $\AA^1$-homotopy category. We leave the proofs to the reader.

\begin{corollary}\label{cor:sln/n-1} Let $n \geq 2$, and let $\SL_n \rightarrow \AA^n -\{0\}$ be the
    map defined by taking the last column of a matrix in $\SL_n$. In
    $\Spc_S^{\AA^1}$, this map factors through the cofiber $\SL_n
    \rightarrow\SL_n/\SL_{n-1}$, and the map
    $\SL_n/\SL_{n-1}\rightarrow\AA^n-\{0\}$ is an $\AA^1$-weak equivalence.
\end{corollary}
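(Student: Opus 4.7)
The plan is to realize the induced map $\bar\pi \colon \SL_n/\SL_{n-1} \to \AA^n-\{0\}$ as a Zariski-locally trivial affine bundle, then invoke Proposition~\ref{prop:affinebundle}. First, I produce the factorization: embed $\SL_{n-1}\hookrightarrow\SL_n$ as matrices $A\mapsto\mathrm{diag}(A,1)$, which all fix the standard basis vector $e_n=(0,\ldots,0,1)^T$, so that the composite $\SL_{n-1}\hookrightarrow\SL_n\xrightarrow{\pi}\AA^n-\{0\}$ is constant at $e_n$. Pointing $\SL_n$ and $\SL_{n-1}$ by the identity matrix and $\AA^n-\{0\}$ by $e_n$, this composite collapses to the basepoint, so the universal property of the homotopy cofiber in $\Spc_{S,\star}^{\AA^1}$ yields the factorization $\SL_n\to\SL_n/\SL_{n-1}\xrightarrow{\bar\pi}\AA^n-\{0\}$.

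Second, I analyze the fiber structure. The scheme-theoretic stabilizer of $e_n$ in $\SL_n$ is the smooth subgroup
$$P=\left\{\begin{pmatrix}A & 0 \\ w^T & 1\end{pmatrix} : A\in\SL_{n-1},\ w\in\AA^{n-1}\right\},$$
so the fibers of $\pi$ are exactly the $P$-orbits under right multiplication, identifying $\AA^n-\{0\}$ with the geometric quotient $\SL_n/P$. The right action of $\SL_{n-1}\subset P$ on $P$ is free with quotient $P/\SL_{n-1}\cong\AA^{n-1}$. Consequently, once $\pi$ is realized as a Zariski-locally trivial $P$-torsor, quotienting by $\SL_{n-1}$ will exhibit $\bar\pi$ as a Zariski-locally trivial $\AA^{n-1}$-bundle. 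To establish the local triviality of $\pi$, I construct explicit sections over the standard cover $\{D(v_i)\}_{i=1}^n$ of $\AA^n-\{0\}$: over $D(v_n)$, send $v=(v_1,\ldots,v_n)$ to the matrix with columns $v_n^{-1}e_1, e_2, \ldots, e_{n-1}, v$, whose determinant is $v_n\cdot v_n^{-1}=1$ by cofactor expansion along the last column; symmetric constructions give sections over the other $D(v_i)$.

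Proposition~\ref{prop:affinebundle} then delivers the desired $\AA^1$-weak equivalence. The main technical effort is the matrix construction of the local sections, a routine linear-algebra computation. The subtler conceptual point, implicit throughout, is the identification of the homotopy cofiber $\SL_n/\SL_{n-1}$ in $\Spc_{S,\star}^{\AA^1}$ with the geometric (orbit) quotient of the right $\SL_{n-1}$-action; in view of the Zariski-local product structure established above, this identification is forced by Nisnevich descent (Proposition~\ref{prop:edspushout}), reducing to the trivial case where both quotients visibly agree.
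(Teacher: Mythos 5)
Most of your argument is correct and is the one the authors intend: the paragraph preceding Proposition~\ref{prop:affinebundle} announces a presentation of $\AA^n-\{0\}$ ``in terms of a homogeneous space,'' namely the orbit scheme $\SL_n/\SL_{n-1}$. The identification of the stabilizer $P$, the computation $P/\SL_{n-1}\cong\AA^{n-1}$, the explicit local sections over $D(v_i)$, and the invocation of Proposition~\ref{prop:affinebundle} are all right. The genuine gap is in your final paragraph: the homotopy cofiber of the subgroup inclusion $\SL_{n-1}\to\SL_n$ is \emph{not} $\AA^1$-equivalent to the orbit quotient, and Nisnevich descent does not produce such an identification. Your descent argument breaks down because $\SL_{n-1}$ is the fiber of $q\colon\SL_n\to\SL_n/\SL_{n-1}$ over a \emph{single point} of the base, not a sub-bundle compatible with a cover: over any trivializing chart $q^{-1}(U)\cong U\times\SL_{n-1}$ the subgroup is the slice $\{q(1)\}\times\SL_{n-1}$, and the pushout of $*\leftarrow\SL_{n-1}\hookrightarrow U\times\SL_{n-1}$ is plainly not $U$. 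For a concrete obstruction, take $S=\Spec\CC$ and $n=3$: topological realization is a left Quillen functor, hence preserves both homotopy cofibers and $\AA^1$-equivalences, and it sends the cofiber of $\SL_2\to\SL_3$ to the mapping cone of $\SU(2)\hookrightarrow\SU(3)$, which has $\H_8\cong\ZZ$ generated by the image of the fundamental class of $\SU(3)$, whereas $\CC^3-\{0\}\simeq S^5$ has $\H_8=0$.

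The resolution is that, despite the word ``cofiber'' in the statement, $\SL_n/\SL_{n-1}$ here denotes the homogeneous space (the quotient scheme, which you correctly exhibit as a Zariski-locally trivial $\AA^{n-1}$-bundle over $\AA^n-\{0\}$), and ``the cofiber $\SL_n\to\SL_n/\SL_{n-1}$'' is shorthand for the quotient projection. This is the only reading compatible with Section~\ref{sec:kstablerange}, where $\SL_{n+1}/\SL_n$ appears as the \emph{fiber} of $\BSL_n\to\BSL_{n+1}$. Under that reading the factorization clause is immediate scheme theory: the fibers of $\pi$ are right $P$-orbits, $\SL_{n-1}\subseteq P$, so $\pi$ is invariant under the right $\SL_{n-1}$-action and descends to the orbit scheme. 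Replace both your opening universal-property argument and your closing identification with this invariance observation, and everything in between --- the description of $P$, the sections, the affine-bundle structure --- carries over verbatim.
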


\begin{corollary}\label{prop:quadrics} Let $S$ be the spectrum of a field,
    and give $\AA^{2n}$ the coordinates $x_1, ..., x_n, y_1, ...,y_n$.
    Consider the quadric $Q_{2n-1} = V(x_1y_1 + ... +x_ny_n =1)$. The
    map $Q_{2n-1} \rightarrow \AA^n -\{0\}$ induced by the projection to the
    $x$-coordinates is an $\AA^1$-weak equivalence.
\end{corollary}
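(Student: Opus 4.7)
The plan is to identify the projection $p \colon Q_{2n-1} \to \AA^n - \{0\}$ as a Zariski-locally trivial affine-space bundle with fiber $\AA^{n-1}$, and then invoke Proposition \ref{prop:affinebundle}.

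First I would check that $p$ is well-defined on $\AA^n - \{0\}$: if all $x_i$ vanished at a point of $Q_{2n-1}$, the defining equation $\sum_i x_i y_i = 1$ would read $0 = 1$. Next, I would use the standard Zariski cover $\{U_i\}_{i=1}^n$ of $\AA^n - \{0\}$, where $U_i = \{x_i \neq 0\}$. Over $U_i$, the defining equation $\sum_j x_j y_j = 1$ can be solved uniquely for $y_i$ in terms of $x_1, \ldots, x_n$ and the remaining $y_j$'s, namely $y_i = (1 - \sum_{j \neq i} x_j y_j)/x_i$. This gives an explicit isomorphism
\begin{equation*}
    U_i \times \AA^{n-1} \xrightarrow{\we} p^{-1}(U_i),
\end{equation*}
where the $\AA^{n-1}$ factor has coordinates $(y_j)_{j \neq i}$, and under this isomorphism the map $p$ becomes the projection onto the first factor. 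Hence $p$ is a Zariski-locally trivial $\AA^{n-1}$-bundle, and in particular a Nisnevich-locally trivial affine space bundle in the sense of Proposition \ref{prop:affinebundle}.

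Applying Proposition \ref{prop:affinebundle} immediately yields that $p \colon Q_{2n-1} \to \AA^n - \{0\}$ is an $\AA^1$-weak equivalence, which is the desired conclusion. There is no serious obstacle here: the whole content is the explicit local triviality, and everything else is formal from the results already established.
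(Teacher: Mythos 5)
Your proof is correct and is exactly the argument the paper intends: the text says ``We leave the proofs to the reader'' immediately after Proposition~\ref{prop:affinebundle}, and the intended solution is precisely to exhibit $p$ as a Zariski-locally trivial $\AA^{n-1}$-bundle over the standard cover $\{x_i \neq 0\}$ and then apply that proposition. Your local trivializations $y_i = (1 - \sum_{j\neq i} x_j y_j)/x_i$ are the right ones, and no further justification is needed.
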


Note that some authors might write $Q_{2n}$ for what we have written
$Q_{2n-1}$.

Now we will use the above proposition to deduce results about projective bundles. We will then recover yet another presentation of the spheres $S^{2n,n}$. Furthermore we will also introduce an important construction on vector bundles that we will encounter later.

\begin{definition} If $\nu:E\rightarrow X$ is a vector bundle, then the Thom space
    $\Th(\nu)$ of $E$ (sometimes also written $\Th(E)$) is defined as the cofiber $$E/(E - X),$$ where the
    embedding of $X$ into $E$ is given by the zero section.
\end{definition}

The Thom space construction plays a central role in algebraic topology and
homotopy theory, and is intimately wrapped up in computations of the bordism
ring for manifolds and in the representation of homology classes by manifolds~\cite{thom}. 

\begin{example} Let $S$ be a base scheme, then $\AA^n_S \rightarrow S$
    is a trivial vector bundle over $S$.
    The Thom space of the trivial rank $n$ vector bundle is then by definition
    $\frac{\AA^n}{\AA^n -\{0\}}$. From Proposition~\ref{prop:S2n,n} we conclude that Thom space in this case is given by $S^{2n,n}$.
\end{example}

In topology, one has a weak homotopy equivalence: $\CC\PP^n/\CC\PP^{n-1} \simeq S^{2n}$,
thanks to the standard cell decomposition of projective space.
One of the benefits of having this decomposition is that for
a suitable class of generalized cohomology theories, the complex orientable
theories,
there exists a theory of Chern classes similar to the theory in ordinary
cohomology. We would like a similar story in $\AA^1$-homotopy theory, and this indeed exists.

\begin{exercise}
    Let $E \rightarrow X$ be a trivial rank $n$ vector bundle, then there is an
    $\AA^1$-weak equivalence: $\Th(E) \we \PP^{1^{\wedge n}} \wedge X_+$. Hint:
    use Corollary ~\ref{prop:S2n,n}.
\end{exercise}

\begin{proposition} Suppose that $E \rightarrow X$ is a vector bundle and
    $\PP(E) \rightarrow \PP(E \oplus \mathcal{O})$ is the closed embedding at infinity.
    Then, there is an $\AA^1$-weak equivalence
    $$\frac{\PP(E \oplus \mathcal{O})}{\PP(E)} \rightarrow
    \Th(E).$$ 
\end{proposition}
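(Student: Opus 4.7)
The plan is to factor the desired $\AA^1$-weak equivalence through the intermediate object $\PP(E \oplus \mathcal{O})/(\PP(E \oplus \mathcal{O}) \setminus X)$, where $X$ denotes the zero section of $E$, viewed as a closed subscheme of $\PP(E \oplus \mathcal{O})$ via the standard open embedding $E = \PP(E \oplus \mathcal{O}) \setminus \PP(E) \hookrightarrow \PP(E \oplus \mathcal{O})$. Note that $X$ and $\PP(E)$ are disjoint, so $\PP(E) \subseteq \PP(E \oplus \mathcal{O}) \setminus X$.

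The first step is an application of Nisnevich excision to the diagram
\begin{equation*}
\xymatrix{
E \setminus X \ar[r] \ar[d] & E \ar[d] \\
\PP(E \oplus \mathcal{O}) \setminus X \ar[r] & \PP(E \oplus \mathcal{O}).
}
\end{equation*}
The horizontal arrows are Zariski open immersions whose closed complement is $X$, and the right vertical arrow is the open immersion of the affine chart, whose pullback of $X$ is $X$ itself, mapping isomorphically. Hence this is an elementary distinguished Nisnevich square. By Proposition~\ref{prop:edspushout} it is a homotopy pushout in $\Spc_S$, and since $\L_{\AA^1}$ preserves homotopy colimits it remains one in $\Spc_S^{\AA^1}$. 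Identifying the horizontal cofibers, which always agree in a homotopy pushout, yields the first $\AA^1$-weak equivalence
\begin{equation*}
\Th(E) = E/(E \setminus X) \we \PP(E \oplus \mathcal{O})/(\PP(E \oplus \mathcal{O}) \setminus X).
\end{equation*}

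The second step is to show that the closed embedding $\PP(E) \hookrightarrow \PP(E \oplus \mathcal{O}) \setminus X$ is an $\AA^1$-weak equivalence. Geometrically, projection away from $X$ exhibits $\PP(E \oplus \mathcal{O}) \setminus X$ as a Zariski-locally trivial line bundle over $\PP(E)$ with zero section $\PP(E)$; locally this is the classical fact that $\PP^n \setminus \{[0{:}\cdots{:}0{:}1]\} \to \PP^{n-1}$ is a line bundle. Given this, Proposition~\ref{prop:affinebundle} implies the projection is an $\AA^1$-weak equivalence, and two-of-three applied to the identity of $\PP(E)$ forces the section $\PP(E) \hookrightarrow \PP(E \oplus \mathcal{O}) \setminus X$ to be one as well. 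Taking cofibers of the inclusions into $\PP(E \oplus \mathcal{O})$ then gives
\begin{equation*}
\PP(E \oplus \mathcal{O})/\PP(E) \we \PP(E \oplus \mathcal{O})/(\PP(E \oplus \mathcal{O}) \setminus X),
\end{equation*}
which composed with the inverse of the first equivalence produces the asserted map $\PP(E \oplus \mathcal{O})/\PP(E) \we \Th(E)$.

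The main obstacle I anticipate is pinning down the line bundle structure in Step 2 globally rather than merely Zariski-locally. The cleanest route is via the tautological exact sequence $0 \to \mathcal{O}(-1) \to \pi^*(E \oplus \mathcal{O}) \to Q \to 0$ on $\PP(E \oplus \mathcal{O})$: the induced map $\mathcal{O}(-1) \to \pi^*E$ vanishes exactly along $\PP(E)$ and is an isomorphism exactly along $X$, so on $\PP(E \oplus \mathcal{O}) \setminus X$ its image defines a line subbundle of $\pi^*E$, classifying a morphism to $\PP(E)$. The companion map $\mathcal{O}(-1) \to \pi^*\mathcal{O}$ then cuts out the affine-line fibers of this morphism, exhibiting $\PP(E \oplus \mathcal{O}) \setminus X$ as a line bundle over $\PP(E)$ with zero section $\PP(E)$, as required.
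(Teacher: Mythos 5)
Your argument is correct and follows essentially the same route as the paper: write $\Th(E)$ as the cofiber corner of the elementary distinguished square with legs $E$ and $\PP(E\oplus\Oscr)\setminus X$ inside $\PP(E\oplus\Oscr)$, then show $\PP(E)\hookrightarrow\PP(E\oplus\Oscr)\setminus X$ is the zero section of a line ($\AA^1$-)bundle so that passing to cofibers gives the statement. The one small addition you make — the global construction of the projection $\PP(E\oplus\Oscr)\setminus X\to\PP(E)$ via the tautological subbundle $\Oscr(-1)\hookrightarrow\pi^*(E\oplus\Oscr)$, rather than the paper's appeal to a local computation after reducing to $E$ trivial — is a clean way to see the bundle structure and could profitably be kept.
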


\begin{proof}
    Throughout, $X$ is identified with its zero section for ease of notation.
    Observe that we have a morphism $X \rightarrow E \rightarrow \PP(E \oplus \mathcal{O})$ where the first map is the closed embedding of $X$ via the zero section and the second map is the embedding complementary to the embedding $\PP(E) \rightarrow \PP(E \oplus \mathcal{O})$ at infinity. We also identify $X$ in $\PP(E \oplus \mathcal{O})$ via this embedding.
    Hence, there is an elementary distinguished square
    $$\xymatrix{
    E - X \ar[r] \ar[d] & \PP(E \oplus \mathcal{O}) - X \ar[d]\\
    E \ar[r] & \PP(E \oplus \mathcal{O}),}$$
    which means that we have an weak equivalence of simplicial presheaves:
    $$\Th(E) \simeq \frac{\PP(E \oplus \mathcal{O})}{ \PP(E \oplus \mathcal{O}) - X }.$$
    We have a map $\frac{\PP(E \oplus \mathcal{O})}{\PP(E)}
    \rightarrow \frac{\PP(E \oplus \mathcal{O})}{ \PP(E \oplus
    \mathcal{O}) - X }$ because $\PP(E)$ avoids the embedding of $X$
    described above. This is the map that we want to be an $\AA^1$-weak
    equivalence, so it suffices to prove that we have an $\AA^1$-weak
    equivalence $ \PP(E) \rightarrow  \PP(E \oplus
    \mathcal{O}) - X$. The lemma below shows that the map is indeed the zero
    section of an affine bundle, and so we are done by
    Proposition~\ref{prop:affinebundle}.
\end{proof}

\begin{lemma} Let $X$ be a scheme, $p:E\rightarrow X$
    a vector bundle with $s$ its zero section.
    Consider the open embedding $j: E  \rightarrow \PP(E \oplus \mathcal{O}_X)$
    and its closed complement $i: \PP(E) \rightarrow \PP(E \oplus
    \mathcal{O}_S)$. In this case, there is a morphism $$q: \PP(E \oplus \mathcal{O}_X)
    \setminus j(s(X)) \rightarrow \PP(E)$$
    such that $q \circ i = id$ and $q$ is an $\AA^1$-bundle over $\PP(E)$.
\end{lemma}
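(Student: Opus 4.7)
The plan is to construct $q$ as the linear projection away from the section $\sigma := j \circ s : X \hookrightarrow \PP(E \oplus \mathcal{O}_X)$, using the universal property of $\PP(E)$. Let $\pi : \PP(E \oplus \mathcal{O}_X) \to X$ be the structure map and $\mathcal{O}(-1) \hookrightarrow \pi^*(E \oplus \mathcal{O}_X)$ the tautological line sub-bundle. Composing with the projection $\pi^*(E \oplus \mathcal{O}_X) \to \pi^* E$ onto the first summand yields a morphism of vector bundles $\phi : \mathcal{O}(-1) \to \pi^* E$. At a geometric point $[\ell]$ over $x \in X$, the map $\phi$ is the restriction to $\ell$ of the projection $E_x \oplus \mathcal{O}_x \to E_x$, which vanishes precisely when $\ell = 0 \oplus \mathcal{O}_x$, i.e., at points of $\sigma(X) = j(s(X))$. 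Therefore, on $Y := \PP(E \oplus \mathcal{O}_X) \setminus \sigma(X)$ the map $\phi|_Y$ is fiberwise injective and its image is a line sub-bundle of $\pi^* E|_Y$. This line sub-bundle corresponds, by the universal property, to a morphism $q : Y \to \PP(E)$ over $X$.

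To verify $q \circ i = \id_{\PP(E)}$, I would use that under $i$ the tautological sub-bundle of $\PP(E \oplus \mathcal{O}_X)$ restricts to the tautological sub-bundle of $\PP(E)$, which lives inside $p^*E \oplus 0 \subset p^*(E \oplus \mathcal{O}_X)$ (where $p$ is the structure map of $\PP(E)$). In particular $\phi$ restricted along $i$ is an isomorphism from the tautological line of $\PP(E)$ onto its image in $p^* E$, so the line sub-bundle produced by the construction at a point $[\ell] \in \PP(E)$ is $\ell$ itself.

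Finally, to check that $q$ is an $\AA^1$-bundle, I would reduce to a local computation. Over a Zariski open $U \subset X$ trivializing $E|_U \cong U \times \AA^n$, one has $\PP(E \oplus \mathcal{O}_X)|_U \cong U \times \PP^n$ with $\sigma(U) = U \times \{[0:\cdots:0:1]\}$, and $q|_U$ is identified with the classical linear projection
\[
U \times (\PP^n \setminus \{[0:\cdots:0:1]\}) \longrightarrow U \times \PP^{n-1}, \qquad (u, [x_0:\cdots:x_n]) \longmapsto (u, [x_0:\cdots:x_{n-1}]).
\]
Over each standard chart $U \times D_+(x_i) \cong U \times \AA^{n-1}$ for $0 \leq i \leq n-1$, the preimage is $U \times D_+(x_i) \times \AA^1$ with fiber coordinate $x_n/x_i$, and $q|_U$ is the first projection. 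This exhibits $q$ as a Zariski-locally trivial $\AA^1$-bundle, hence \emph{a fortiori} a Nisnevich-locally trivial $\AA^1$-bundle as needed to invoke Proposition~\ref{prop:affinebundle}. There is no substantive obstacle; the only care needed is in organizing the construction of $q$ globally so that the local trivializations fall out transparently.
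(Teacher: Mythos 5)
Your proof is correct and follows essentially the same strategy as the paper's: construct $q$ via the universal property of $\PP(E)$ by restricting the tautological data to the $E$-summand, observe that this rational map is regular away from $\sigma(X)$, note $q\circ i=\id$ from the construction, and conclude with a local trivialization. The only cosmetic difference is the convention: the paper uses Grothendieck's quotient formulation (maps $T\to\PP(E)$ over $X$ correspond to surjections $h^*E\twoheadrightarrow\mathcal{L}$, and $q$ is built from the restriction $\bar p^*E\to\mathcal{L}_{\mathrm{univ}}$), while you use the dual sub-bundle formulation with $\mathcal{O}(-1)\hookrightarrow\pi^*(E\oplus\mathcal{O}_X)$ projected to $\pi^*E$; these are interchangeable. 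Your local computation with the charts $D_+(x_i)$ is also a bit more explicit than the paper's, which simply identifies $q$ locally with the projection $\PP^{n+1}\setminus\{pt\}\to\PP^n$, but this is a matter of exposition rather than content.
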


\begin{proof}
    Recall that to give a morphism $T \rightarrow \mathbb{P}(E)$ over
    $X$, one must give a morphism $h: T \rightarrow X$ and a surjection $h^*(E)
    \rightarrow \mathcal{L}$ where $\mathcal{L}$ is a line bundle on $T$. Now
    $\bar{p}: \mathbb{P}(E \oplus \mathcal{O}_X) \rightarrow X$ has a universal
    line bundle $\mathcal{L}_{univ}$ and a universal quotient map $\bar{p}^*(E
    \oplus \mathcal{O}_X) \simeq \bar{p}^*(E) \oplus \mathcal{O}_{\mathbb{P}(E
    \oplus \mathcal{O}_X)} \rightarrow \mathcal{L}_{univ}$. Restricting to the
    first factor gives us a map $\bar{p}^*(E) \rightarrow \mathcal{L}_{univ}$
    and hence a rational map $t: \PP(E \oplus \mathcal{O}_X) \dashrightarrow
    \PP(E)$. Over a point of $X$, $t$ is given by projection onto
    $E$-coordinates. Hence, this map is well-defined away from $j(s(X))$, so we
    get a morphism $q: \PP(E \oplus \mathcal{O}_X)
    \setminus j(s(X)) \rightarrow \PP(E)$; by construction $q \circ i =
    \id_{\PP(E)}$.
    To check the last claim, since it is local on the base, one may assume that $E
    \iso \mathcal{O}^{n+1}_X$, so we are looking at $\PP^{n+1}_X \setminus X
    \rightarrow \PP^{n}$. In coordinates $X$ embeds as $[0: ... 0: 1]$, and the map
    is projection onto the first $n$ coordinates, which is an
    $\AA^1$-bundle.
\end{proof}

\begin{corollary} There are $\AA^1$-weak equivalences $\PP^n/\PP^{n-1} \simeq
    S^{2n,n}$ for $n\geq 1$ when $S$ is noetherian of finite Krull dimension.
\end{corollary}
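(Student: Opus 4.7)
The plan is to apply the preceding two results to the trivial rank $n$ bundle on $S$. First, I would take $X = S$ and $E = \mathcal{O}_S^n \to S$ in the previous proposition. Then $\PP(E \oplus \mathcal{O}) \iso \PP^n_S$ and $\PP(E) \iso \PP^{n-1}_S$, with the inclusion $\PP(E) \hookrightarrow \PP(E \oplus \mathcal{O})$ being the standard closed immersion $\PP^{n-1} \hookrightarrow \PP^n$ as the hyperplane at infinity. The previous proposition thus yields an $\AA^1$-weak equivalence
\begin{equation*}
    \PP^n/\PP^{n-1} \we \Th(\mathcal{O}_S^n).
\end{equation*}

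Next, I would identify $\Th(\mathcal{O}_S^n)$ with $S^{2n,n}$. By the definition of the Thom space applied to the trivial bundle, $\Th(\mathcal{O}_S^n) = \AA^n/(\AA^n - \{0\})$, and Corollary~\ref{prop:S2n,n} gives an $\AA^1$-weak equivalence $\AA^n/(\AA^n - \{0\}) \we S^{2n,n}$. Chaining these equivalences produces the desired $\PP^n/\PP^{n-1} \we S^{2n,n}$.

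There is no substantive obstacle here; the work was done in establishing the previous proposition and Corollary~\ref{prop:S2n,n}. The hypothesis that $S$ be noetherian of finite Krull dimension is needed only because the previous proposition, and the identification of the relevant elementary distinguished squares as homotopy pushouts used to compute Thom spaces, ultimately rely on Proposition~\ref{prop:edspushout} and Proposition~\ref{prop:niscrit}, both of which assume this condition on $S$.
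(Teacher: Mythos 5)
Your argument is correct and is precisely the proof the paper leaves implicit: specialize the preceding proposition to $E = \mathcal{O}_S^n$ over $X = S$ to get $\PP^n/\PP^{n-1} \we \Th(\mathcal{O}_S^n)$, then identify $\Th(\mathcal{O}_S^n) = \AA^n/(\AA^n - \{0\}) \we S^{2n,n}$ via the example and corollary already established. Your remark on where the noetherian finite-Krull-dimension hypothesis actually enters is also accurate.
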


\section{Classifying spaces in $\AA^1$-homotopy theory}\label{sec:classifying}

One of the main takeaways from Section~\ref{sec:topvect} is that one can go
very far using homotopical methods to study topological vector bundles on CW complexes. The
key inputs in this technique are the existence of the Postnikov tower and knowledge of the
homotopy groups of the classifying spaces
$\BGL_n$ in low degrees. In this section, we will give a sampler of the techniques involved in
accessing the $\AA^1$-homotopy sheaves of the classifying spaces $\BGL_n$.
In the end will identify a ``stable" range for these homotopy sheaves, which will naturally
lead us to a discussion of algebraic $K$-theory in the next section.


As usual, $S$ is a quasi-compact and quasi-separated unless stated otherwise.

\subsection{Simplicial models for classifying spaces}\label{sub:classifying}

\begin{definition} Let $\tau$ be a topology (typically this will be Zariski,
    Nisnevich or \'{e}tale) on $\Sm_S$, and let $G$ a $\tau$-sheaf of groups. A
    $\tau$-$G$-torsor over $X \in \Sm_S$ is the data of a $\tau$-sheaf of sets $P$ on
    $\Sm_S$, a right action $a: P \times G \rightarrow P$ of $G$ on $P$, and a
    $G$-equivariant morphism $\pi: P \rightarrow X$ (where $X$ has the trivial
    $G$-action) such that
    \begin{enumerate}
        \item the morphism $(\pi, a): P \times G \rightarrow P \times_X P$ is
            an isomorphism, and
        \item there exists a $\tau$-cover $\{U_i\rightarrow X\}_{i\in I}$ of $X$ such
            that $U_i\times_X P\rightarrow U_i$ has a section for all $i\in I$.
    \end{enumerate}
\end{definition}

Let $G$ be a $\tau$-sheaf of groups.
Consider the simplicial presheaf $\E G$ described section-wise in the following
way: $\E G_{n}(U) =:  G(U)^{\times n +1}$ with the usual faces and
degeneracies. We write $\E_{\tau}G$ as a fibrant replacement in the model
category $\L_{\tau}(\sPre(\Sm_S))$.

\begin{proposition}
    There is a weak equivalence $\E_{\tau} G \simeq \star$ in $\L_{\tau}\sPre(\Sm_S).$
\end{proposition}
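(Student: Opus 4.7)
The plan is to show that $\E G \to \star$ is already a weak equivalence in the projective model structure on $\sPre(\Sm_S)$, and hence remains so after Bousfield localization; since $\E_\tau G$ is a fibrant replacement of $\E G$ in $\L_\tau \sPre(\Sm_S)$, this suffices.

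First I would check that $\E G$ is objectwise contractible. For every $U \in \Sm_S$, the simplicial set $\E G(U)$ has $n$-simplices $G(U)^{n+1}$ with the standard ``pair-groupoid'' face and degeneracy operators; that is, $\E G(U)$ is the nerve of the codiscrete (equivalently, pair) groupoid on the set $G(U)$. Since any two objects in a codiscrete groupoid are connected by a unique isomorphism, this groupoid is equivalent to the terminal groupoid, so its nerve is weakly contractible. If one prefers an explicit argument, once a basepoint $e \in G(U)$ is chosen (for example the identity), the maps $h_i : G(U)^{n+1} \to G(U)^{n+2}$ sending $(g_0,\ldots,g_n) \mapsto (e,g_0,\ldots,g_n)$ and appropriate variants assemble into a simplicial homotopy from the identity of $\E G(U)$ to the constant map at $e$.

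Next, since weak equivalences in the projective model structure on $\sPre(\Sm_S)$ are detected sectionwise, the map $\E G \to \star$ is a projective weak equivalence. The identity functor $\sPre(\Sm_S) \to \L_\tau\sPre(\Sm_S)$ is a left Quillen functor (it is part of the Quillen adjunction induced by Bousfield localization), and more directly, left Bousfield localization only enlarges the class of weak equivalences while keeping the same cofibrations, so every projective weak equivalence is in particular a $\tau$-local weak equivalence. Hence $\E G \to \star$ is a weak equivalence in $\L_\tau\sPre(\Sm_S)$.

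Finally, $\E_\tau G$ is by definition a fibrant replacement of $\E G$ in $\L_\tau\sPre(\Sm_S)$, so $\E_\tau G \simeq \E G$ there. Composing with the equivalence $\E G \simeq \star$ gives $\E_\tau G \simeq \star$ in $\L_\tau\sPre(\Sm_S)$. There is no real obstacle here beyond the initial sectionwise contractibility of $\E G(U)$, which is classical; the rest is formal from the definition of Bousfield localization.
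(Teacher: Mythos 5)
Your proof is correct and follows essentially the same route as the paper: both hinge on the sectionwise contractibility of $\E G(U)$ (the paper cites the extra-degeneracy argument of Goerss--Jardine, you phrase it via the codiscrete groupoid, which is the same observation) and then invoke that Bousfield localization preserves projective weak equivalences. Your write-up is a bit more explicit about why fibrant replacement and the localization adjunction justify the final step, but the content is identical.
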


\begin{proof}
    The fact that each $\E G(U)$ is contractible is standard: the diagonal
    morphism: $G(U) \rightarrow G(U) \times G(U)$ produces an extra degeneracy.
    See Goerss and Jardine~\cite{goerss-jardine}*{Lemma~III.5.1}. Thus, $\E G\rightarrow\star$
    is a weak
    equivalence in $\sPre(\Sm_S)$. Since localization ($\tau$-sheafification) preserves weak equivalences,
    it follows that $\E_\tau G$ is contractible.
\end{proof}

There is a right $G$-action on $\E G$ by letting $G$ act on the last
coordinate in each simplicial degree. The level-wise quotient is the simplicial
presheaf we christen $\B G$. We write $\B_{\tau} G$ for a fibrant replacement
in the model category $\L_{\tau}\sPre(\Sm_S)$. We would like to make sense of
$\B_{\tau}G$ as a simplicial presheaf classifying $\tau$-$G$-torsors.

\begin{definition}
    Let $\B\mathrm{Tors}_{\tau}(G)$ be the simplicial presheaf which assigns to $U \in
    \Sm_S$ the nerve of the groupoid of $G$-torsors on $U$ and to a morphism $f: U'
    \rightarrow U$ a map of simplicial presheaves $\B\mathrm{Tors}_{\tau}(G)(U) \rightarrow
    \B\mathrm{Tors}_{\tau}(G)(U')$ induced by pullback.
\end{definition}

\begin{remark} The above definition is valid by the work of Hollander \cite{hollander}*{Section 3}; the functor that assigns to $U$ the nerve of the groupoid of $G$-torsors over $U$ does not have strictly functorial pullbacks and thus one needs to appeal to some rectification procedure.
\end{remark}

The following proposition is well known.

\begin{proposition}
    The simplicial presheaf $\B\mathrm{Tors}_{\tau}(G)$ is $\tau$-local.
\end{proposition}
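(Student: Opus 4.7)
By the characterization of $\tau$-local objects in Remark~\ref{rem:fibrant}, it suffices to verify two conditions: first, that $\B\mathrm{Tors}_\tau(G)(U)$ is a Kan complex for every $U \in \Sm_S$; second, that for every $\tau$-hypercover $U_\bullet \to V$ in $\Sm_S$, the canonical map
\[
    \B\mathrm{Tors}_\tau(G)(V) \longrightarrow \holim_\Delta \B\mathrm{Tors}_\tau(G)(U_\bullet)
\]
is a weak equivalence of simplicial sets. The Kan condition is automatic since the nerve of any groupoid is a Kan complex, so the real content lies in hyperdescent.

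The first step is to reduce $\tau$-hyperdescent to the classical statement that $\tau$-$G$-torsors form a stack on $(\Sm_S)_\tau$. Because $\B\mathrm{Tors}_\tau(G)$ is a presheaf of nerves of $1$-groupoids, it is pointwise $1$-truncated, and for such presheaves hyperdescent is equivalent to the ordinary stack condition: descent along $\tau$-covers together with the appropriate cocycle data. This is a standard fact relating higher descent for truncated presheaves to classical descent for sheaves of groupoids.

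The second and main step is then to verify stackiness of $U \mapsto \mathrm{Tors}_\tau(G)(U)$. This breaks into two familiar claims. First, for any two $\tau$-$G$-torsors $P,P'$ on $V$, the presheaf
$W \mapsto \Iso_G(P|_W, P'|_W)$
is a $\tau$-sheaf: on a simultaneous trivializing cover it becomes identified with $G$, which is a $\tau$-sheaf of groups by hypothesis, so the general case follows by gluing. Second, descent data for $G$-torsors is effective: given a $\tau$-cover $\{U_i \to V\}$, torsors $P_i$ on each $U_i$, and coherent gluing isomorphisms over the overlaps $U_i \times_V U_j$, one constructs a $\tau$-$G$-torsor $P$ on $V$ by $\tau$-sheafifying the evident presheaf quotient; since each $P_i$ is $\tau$-locally trivial, so is the resulting sheaf, placing it back inside $\mathrm{Tors}_\tau(G)(V)$.

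The main obstacle is the effectivity of descent data; the isomorphism-sheaf statement is a formal consequence of $G$ being a $\tau$-sheaf. Once stackiness is in hand, translation into the required equivalence of simplicial sets is formal: on $\pi_0$ one gets a bijection between isomorphism classes of torsors and isomorphism classes of descent data (essential surjectivity combined with fully faithfulness at the level of isomorphism classes); on $\pi_1$ at any chosen basepoint the isomorphism-sheaf condition provides precisely what is needed; and the vanishing of $\pi_n$ for $n \geq 2$ on both sides is automatic since we are dealing with nerves of $1$-groupoids.
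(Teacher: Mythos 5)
Your proof is correct and follows essentially the same route as the paper's: the paper simply asserts that the result follows from local triviality and the fact that $\tau$-$G$-torsors glue, citing Vistoli for the stack property, and you have unpacked exactly that citation — reduction of hyperdescent to Čech descent via $1$-truncatedness, the isomorphism presheaf being a $\tau$-sheaf because $G$ is, and effectivity of descent data. The one point worth flagging is that the passage from Čech descent to hyperdescent for truncated presheaves, which you invoke as "a standard fact," is genuinely needed and not entirely trivial; it is implicit in the paper's citation, so your argument is a faithful expansion rather than a different approach.
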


\begin{proof}
    This follows from the local triviality condition and the fact that we can construct
    $\tau$-$G$-torsors by gluing; see, for example, \cite{vistoli}.
\end{proof}

Let $U \in \Sm_S$, we denote by $\H^1_{\tau}(U, G)$ be the (non-abelian)
cohomology set of $\tau$-$G$-bundles on $U$. More precisely, we set
$$\H^1_{\tau}(U, G) = \pi_0(\B\mathrm{Tors}_{\tau}(G)(U)).$$

\begin{proposition}\label{prop:BG-tors}
    Let $G$ be a $\tau$-sheaf of groups, then there is a natural weak equivalence $$\B_{\tau}G
    \rightarrow \B\Tors_{\tau}(G).$$
    Hence, for all $U \in \Sm_S$, there is a natural isomorphism $\pi_0(\B_{\tau} G(U))
    \iso \H^1_{\tau}(U;G)$ and a natural weak equivalence $\Rbf\Omega \B_{\tau} G(U) \simeq  G(U)$.
\end{proposition}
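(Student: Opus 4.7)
The plan is to construct a natural map $f\colon\B G\to\B\Tors_\tau(G)$ of simplicial presheaves, to verify that $f$ is a $\tau$-local weak equivalence, and then to use that $\B\Tors_\tau(G)$ is already $\tau$-local (the preceding proposition) to identify it with $\B_\tau G$ in $\Ho(\L_\tau\sPre(\Sm_S))$.

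To construct $f$, for each $U\in\Sm_S$ I would view $\B G(U)$ as the nerve of the one-object groupoid $\mathbf{B}G(U)$ with endomorphism group $G(U)$. There is a canonical functor $\mathbf{B}G(U)\to\Tors_\tau(G)(U)$ sending the unique object to the trivial torsor $G\times U\to U$ and a morphism $g\in G(U)$ to the $G$-equivariant automorphism of $G\times U$ given by left multiplication by $g$. Since trivial torsors and left multiplication are preserved under pullback along $U'\to U$, this construction is natural in $U$, and taking nerves produces the desired map $f$.

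Next I would check that $f$ is a $\tau$-local weak equivalence. By Theorem~\ref{thm:jardine} it suffices to show that $f$ induces isomorphisms on the $\tau$-homotopy sheaves of Definition~\ref{defin:hoshv}. Both source and target are sectionwise $1$-truncated (the former from the bar description, the latter because its sections are nerves of ordinary groupoids), so only $\pi_0^\tau$ and $\pi_1^\tau$ need to be examined. Sectionwise, $\B G(U)$ is connected, so $\pi_0^\tau\B G=\ast$; sectionwise $\pi_0(\B\Tors_\tau(G)(U))=\H^1_\tau(U;G)$, whose $\tau$-sheafification is trivial precisely because every $\tau$-$G$-torsor is $\tau$-locally trivial by definition. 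On $\pi_1$ at the canonical basepoint, the source gives the sheaf $G$ by the standard computation for the bar construction, while the target gives the automorphism sheaf of the trivial torsor, which is again $G$; one checks directly that $f$ realizes the identity on $G$ under these identifications.

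With these two points established, the proof concludes as follows. Since $\B\Tors_\tau(G)$ is $\tau$-fibrant and $\B G\to\B_\tau G$ is a $\tau$-local weak equivalence, $f$ factors up to homotopy through $\B_\tau G$, producing a natural weak equivalence $\B_\tau G\we\B\Tors_\tau(G)$ in $\Ho(\L_\tau\sPre(\Sm_S))$. The sectionwise consequences are then immediate: $\pi_0(\B_\tau G(U))\iso\pi_0(\B\Tors_\tau(G)(U))=\H^1_\tau(U;G)$, and $\Rbf\Omega\B_\tau G(U)$ identifies with the loop space, at the trivial torsor, of the nerve of an ordinary groupoid, which is the automorphism group $G(U)$ of the basepoint. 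The main obstacle I expect is the $\pi_0^\tau$ computation for $\B\Tors_\tau(G)$: asserting that the sheafification of $\H^1_\tau(-;G)$ is trivial is really only a repackaging of the local triviality axiom for torsors, but it requires some care with non-abelian cohomology and with the rectification issue flagged after the definition of $\B\Tors_\tau(G)$.
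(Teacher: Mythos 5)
Your proposal is correct and follows essentially the same route as the paper: both construct the map $\B G\to\B\Tors_\tau(G)$ by sending the unique vertex to the trivial torsor, argue that local triviality of $\tau$-torsors forces this to be a $\tau$-local weak equivalence, and use $\tau$-fibrancy of $\B\Tors_\tau(G)$ to promote it to a comparison with $\B_\tau G$. You spell out the $\pi_0^\tau$ and $\pi_1^\tau$ checks a bit more explicitly than the paper's sketch, but the argument is the same one.
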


\begin{proof}
    A proof is given in \cite{morel-voevodsky}*{section 4.1}, we also recommend
    \cite{asok-hoyois-wendt-II}*{Lemma 2.2.2} and the references therein. Let us sketch the
    main ideas. To define a map to $\B\Tors_{\tau}(G)$, we can first define a map $\B G
    \rightarrow B\Tors_{\tau}(G)$ of presheaves and then use the fact that the target is
    $\tau$-local to get a map $\B_{\tau}G \rightarrow B\Tors_{\tau}(G)$. The former map is
    given by sending the unique vertex of $\B G(U)$ to the trivial $G$-torsor over $U$. Since
    $G$-torsors with respect to $\tau$ are $\tau$-locally trivial, we conclude that the map
    must be a $\tau$-local weak equivalence. The fact that $\B_\tau G$ is
    fibrant is by definition, and for $\B_\tau G$ it follows from~\cite{asok-hoyois-wendt-II}*{Lemma
    2.2.2}. The second part of the assertion then follows by definition, and the
    standard fact that loop space of the nerve of a groupoid is homotopy equivalent to
    the automorphism group of a fixed object.
\end{proof}

Many interesting objects in algebraic geometry, such as Azumaya algebras and the associated
$\PGL_n$-torsors, are only \'etale locally trivial. The classifying spaces of these
torsors are indeed objects of $\AA^1$-homotopy theory as we shall explain.
We can consider $\Sm_{S, \et}$, the full subcategory of the big \'{e}tale site over $S$
spanned by smooth $S$-schemes. Completely analogous to the Nisnevich case, one can develop
\'{e}tale-$\AA^1$-homotopy theory by the formula $\Spc^{\AA^1}_{S, \et} =
\L_{\AA^1}\L_{\et}\sPre(\Sm_S)$.

\begin{theorem}
    The morphism of sites: $\pi: \Sm_{S,\et} \rightarrow \Sm_{S, \Nis}$ induced by the
    identity functor induces a Quillen pair $$\pi^*: \Spc^{\AA^1}_{S} \leftrightarrows
    \Spc^{\AA^1}_{S, \et}: \pi_*,$$ and hence an adjunction $$\Lbf \pi^*: \Ho(\Spc^{\AA^1}_{S}) \leftrightarrows
    \Ho(\Spc^{\AA^1}_{S,\et}): \Rbf \pi_*$$ on the level of homotopy categories.
\end{theorem}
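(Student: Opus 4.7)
The plan is to verify the Quillen adjunction directly by unpacking what $\pi^*$ and $\pi_*$ are. Since the underlying site of both model categories is $\Sm_S$ and $\pi$ is induced by the identity functor on $\Sm_S$, both $\pi^*$ and $\pi_*$ are the identity functor on $\sPre(\Sm_S)$; they just view the same simplicial presheaf in two different model structures. So all the real content is in checking which maps are preserved.

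First I would recall that left Bousfield localization leaves the class of cofibrations unchanged. Hence the cofibrations in $\Spc_S^{\AA^1}$ and in $\Spc_{S,\et}^{\AA^1}$ both coincide with the projective cofibrations in $\sPre(\Sm_S)$, so $\pi^*$ preserves cofibrations. This reduces the problem to showing that $\pi^*$ preserves weak equivalences: equivalently, that every Nisnevich-$\AA^1$-local weak equivalence is an \'etale-$\AA^1$-local weak equivalence.

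Here is the key observation. By construction, $\Spc_S^{\AA^1}$ is the localization of $\sPre(\Sm_S)_{\mathrm{proj}}$ at the set $I_{\Nis}\cup J_{\AA^1}$, where $I_{\Nis}$ consists of Nisnevich hypercovers $\check{U}_\bullet\to V$ and $J_{\AA^1}$ consists of the maps $\AA^1\times_SX\to X$ for $X\in\Sm_S$; similarly $\Spc_{S,\et}^{\AA^1}$ is the localization at $I_{\et}\cup J_{\AA^1}$. Because every Nisnevich cover is an \'etale cover, every Nisnevich hypercover is in particular an \'etale hypercover, so $I_{\Nis}\subseteq I_{\et}$ up to isomorphism, and of course $J_{\AA^1}$ is shared. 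By the universal property of left Bousfield localization (Theorem~\ref{thm:existence} together with the characterization of local equivalences), the identity functor from the smaller localization to the larger one sends local equivalences to local equivalences. Concretely, any \'etale-$\AA^1$-local object $Y$ is in particular Nisnevich-$\AA^1$-local, so if $f:X\to X'$ is a Nisnevich-$\AA^1$-local weak equivalence, then the induced map $\map(X',Y)\to\map(X,Y)$ is a weak equivalence of simplicial sets for every \'etale-$\AA^1$-local $Y$; this is exactly what it means for $f$ to be an \'etale-$\AA^1$-local weak equivalence.

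Combining these two points gives that $\pi^*$ preserves cofibrations and acyclic cofibrations, hence is a left Quillen functor, and Proposition~\ref{prop:quillenexistence} then produces the derived adjunction $\Lbf\pi^*\dashv \Rbf\pi_*$ on homotopy categories. The only step that requires any honest (though minor) care is the containment $I_{\Nis}\subseteq I_{\et}$: one must verify that a Nisnevich hypercover, viewed as an object of $\sPre(\Sm_S)$, also satisfies the definition of an \'etale hypercover, which is immediate from the definition of hypercover together with the fact that Nisnevich covers are \'etale covers.
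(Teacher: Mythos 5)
Your proof is correct and takes essentially the same route as the paper: both arguments hinge on the universal property of left Bousfield localization together with the observation that Nisnevich hypercovers are in particular \'etale hypercovers while the $\AA^1$-projections are common to both localizing sets. You simply unpack the universal property by verifying the Quillen conditions directly (cofibrations are unchanged by localization; the inclusion of localizing sets gives preservation of local weak equivalences), whereas the paper appeals to the universal property in one step.
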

 
\begin{proof}
    Since our categories are constructed via Bousfield-localization of $\sPre(\Sm_S)$, the
    universal property tells us that to define a Quillen pair $$\pi^*: \Spc^{\AA^1}_{S}
    \leftrightarrows \Spc^{\AA^1}_{S, \et}: \pi_*$$ it suffices to define a Quillen pair:
    $$\pi^*: \sPre(\Sm_S) \leftrightarrows \Spc^{\AA^1}_{S, \et}: \pi_*$$ such that
    $\pi^*(i)$ is a weak equivalence for $i$ belonging to the class of Nisnevich hypercovers
    and $\AA^1$-weak equivalences. However, the model category $\Spc^{\AA^1}_{S, \et}$ is
    also constructed via Bousfield localization, so we use the Quillen pair from this
    Bousfield localization. But, it is clear that the identity functor
    $\pi^*:\sPre(\Sm_S)\rightarrow\sPre(\Sm_S)$ takes Nisnevich hypercovers to \'etale
    hypercovers and the morphisms $X\times_S\AA^1\rightarrow X$ to $X\times_S\AA^1$. Hence,
    the Quillen pair exists by the universal property of Bousfield localization.
\end{proof}
 
 


\begin{proposition}
    There are natural isomorphisms of Nisnevich sheaves $\pi_0^{\Nis}(\Rbf\pi_*\B_{\et} G) \simeq
    \H^1_{\et}(-; G)$ and $\pi_1^{\Nis}(\Rbf\pi_*\B_{\et} G) \simeq  G$, where the \'{e}tale sheaf of
    groups $G$ is considered as a Nisnevich sheaf.
\end{proposition}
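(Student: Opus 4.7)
The plan is first to compute $\Rbf\pi_*\B_\et G$ explicitly, and then apply Proposition~\ref{prop:BG-tors} to read off its Nisnevich homotopy sheaves. The morphism of sites $\pi$ is induced by the identity functor $\Sm_S \to \Sm_S$, so both $\pi^*$ and $\pi_*$ act as the identity on underlying simplicial presheaves; $\pi_*$ is right Quillen only because every \'etale-local equivalence is Nisnevich-local, hence every \'etale-fibrant object is Nisnevich-fibrant. Since $\B_\et G$ is \'etale-fibrant by construction, it is already Nisnevich-fibrant, and $\Rbf\pi_*\B_\et G$ is thus represented by $\B_\et G$ itself viewed as an object of $\L_\Nis\sPre(\Sm_S)$.

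Next I apply Proposition~\ref{prop:BG-tors} with $\tau = \et$, which gives a natural weak equivalence $\B_\et G \simeq \B\Tors_\et(G)$. Evaluating at $U \in \Sm_S$ and taking homotopy groups yields
\begin{equation*}
    \pi_0(\B_\et G(U)) \iso \H^1_\et(U;G)
    \quad\text{and}\quad
    \pi_1(\B_\et G(U)) \iso G(U),
\end{equation*}
the latter based at the trivial $G$-torsor. Higher homotopy groups vanish, since the nerve of a groupoid has homotopy concentrated in degrees $0$ and $1$ (equivalently, the loop space identification $\Rbf\Omega\B_\et G(U) \simeq G(U)$ of Proposition~\ref{prop:BG-tors} shows only $\pi_0$ and $\pi_1$ can be nonzero).

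To conclude I Nisnevich-sheafify both presheaves. The presheaf $U \mapsto G(U)$ is by hypothesis an \'etale sheaf of groups, hence a fortiori a Nisnevich sheaf, giving $\pi_1^\Nis(\Rbf\pi_*\B_\et G) \iso G$. The presheaf $U \mapsto \H^1_\et(U;G)$ is also already an \'etale sheaf of pointed sets, because \'etale $G$-torsors satisfy \'etale descent: two torsors that become isomorphic after pullback to an \'etale cover are globally isomorphic, and torsors can be built from \'etale descent data. An \'etale sheaf restricts to a Nisnevich sheaf, so $\pi_0^\Nis(\Rbf\pi_*\B_\et G) \iso \H^1_\et(-;G)$ as desired.

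The main subtlety in this plan is the identification $\Rbf\pi_*\B_\et G \simeq \B_\et G$ in whatever model structure one is working: one must either interpret $\Rbf\pi_*$ in the Nisnevich/\'etale setting without $\AA^1$-localization (so that $\B_\et G$ is literally fibrant), or argue that $\B_\et G$ as given can be taken to be additionally $\AA^1$-local. Once this identification is granted, however, the rest is a mechanical application of Proposition~\ref{prop:BG-tors} combined with descent for \'etale torsors.
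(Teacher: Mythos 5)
Your overall architecture differs from the paper's: where the paper computes $[U, \Rbf\pi_*\B_\et G]$ via the derived adjunction $[U, \Rbf\pi_*(-)] \simeq [\Lbf\pi^* U, -]$, you observe directly that $\Rbf\pi_*$ acts as the identity on underlying presheaves and that $\B_\et G$ is already fibrant for the coarser Nisnevich-local structure, then read off the homotopy groups of the sections via Proposition~\ref{prop:BG-tors}. This is a valid and arguably more transparent route; for $\pi_1$ it sidesteps the paper's loop-space-commutation argument entirely.

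There is, however, a genuine error in your sheafification step: the presheaf $U\mapsto\H^1_\et(U;G)$ is \emph{not} an \'etale sheaf --- in fact its \'etale sheafification is the terminal sheaf, precisely because \'etale $G$-torsors are \'etale-locally trivial. \'Etale descent for torsors is a statement about the \emph{groupoid}-valued presheaf $\B\Tors_\et(G)$, not about its $\pi_0$: two non-isomorphic torsors on $U$ routinely become isomorphic after pullback along an \'etale cover (compare any nontrivial torsor with the trivial torsor on a trivializing cover), so $\H^1_\et(U;G)\rightarrow\prod_i\H^1_\et(U_i;G)$ need not be injective. What your computation actually establishes is that $\pi_0^{\Nis}(\Rbf\pi_*\B_\et G)$ --- which is by definition a Nisnevich sheafification --- equals the Nisnevich sheafification of $\H^1_\et(-;G)$; whether the latter presheaf is already a Nisnevich sheaf is a separate and nontrivial question not settled by appealing to \'etale descent. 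The $\pi_1$ argument is fine: the sheaf $G$ itself is genuinely an \'etale, hence Nisnevich, sheaf.
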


\begin{proof}
    By adjunction, $[U, \Rbf \pi_*\B_{\et} G]_{\AA^1} \simeq [\Lbf \pi^*U,
    \B_{\et} G]_{\AA^1}
    \iso\H^1_{\et}(U,G)$. To see the $\pi_1$-statement, we note that $\Rbf \pi_*$ is a
    right Quillen functor and hence commutes with homotopy limits. Since the loop space is
    calculated via a homotopy limit, we have that $\Omega\Rbf\pi_* \B_{\et} G \simeq \Rbf
    \pi_* \Omega \B_{\et} G \simeq \Rbf \pi_* G$, as desired.
\end{proof}

\begin{example}
    Let $\G = \GL_n, \SL_n$ or $\Sp_{2n}$; these are the special groups in the sense of
    Serre. In this case, any \'{e}tale-$\G$-torsor is also a
    Zariski-locally trivial and hence a Zariski-$\G$-torsor (or a Nisnevich-$\G$-torsor).
    One way to say this in our language is to consider the
    Quillen adjunction $$\pi^*: \L_{\Nis}(\sPre(\Sm_S)) \leftrightarrows
    \L_{\et}(\sPre(\Sm_S)): \Rbf \pi_*.$$ Then there is a unit map $\B_{\Nis}G \rightarrow
    \Rbf\pi_*\pi^*\B_{\et}G$, which is an weak equivalence in the cases above.
\end{example}

\subsection{Some calculations with classifying spaces}

We are now interested in the $\AA^1$-homotopy sheaves of classifying spaces.
The first calculation is a direct consequence of the
unstable-$\AA^1$-0-connectivity theorem. We work over an arbitrary Noetherian
base in this section, unless specified otherwise.

\begin{proposition}\label{pi0BG}
    If $G$ is a Nisnevich sheaf of groups, then $\pi_0^{\AA^1}(\B G) = \star$.
\end{proposition}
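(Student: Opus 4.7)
The plan is to reduce to showing that $\pi_0^{\Nis}(\B G) = \star$ and then invoke the unstable $\AA^1$-connectivity theorem (Corollary~\ref{thm:unstabconn}). Recall that by that result, for any simplicial presheaf $X$ the canonical map $X \rightarrow \L_{\AA^1}\L_{\Nis}X$ induces an epimorphism $\pi_0^{\Nis}X \twoheadrightarrow \pi_0^{\AA^1}X$; so if $\pi_0^{\Nis}(\B G)$ is already trivial, then so is $\pi_0^{\AA^1}(\B G)$.

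To compute $\pi_0^{\Nis}(\B G)$, we apply Proposition~\ref{prop:BG-tors}, which identifies a Nisnevich-fibrant model $\B_{\Nis}G$ of $\B G$ with the classifying groupoid $\B\Tors_{\Nis}(G)$ of Nisnevich $G$-torsors. In particular, the presheaf $U\mapsto\pi_0(\B_{\Nis}G(U))$ is naturally isomorphic to $U\mapsto\H^1_{\Nis}(U,G)$, the pointed set of isomorphism classes of Nisnevich $G$-torsors on $U$, pointed by the trivial torsor.

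The key observation is then purely tautological: by the very definition of a Nisnevich $G$-torsor, every element of $\H^1_{\Nis}(U,G)$ is trivialized by pullback along some Nisnevich cover $\{U_i \rightarrow U\}$. Hence the Nisnevich sheafification of the presheaf $U \mapsto \H^1_{\Nis}(U,G)$, pointed by the trivial torsor, is the one-point sheaf. By the definition of $\pi_0^{\Nis}$ as a Nisnevich sheafification (applied to the fibrant model $\B_{\Nis}G$), this gives $\pi_0^{\Nis}(\B G) = \star$, and we conclude via Corollary~\ref{thm:unstabconn}.

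There is really no main obstacle here: once one accepts the definitions and Proposition~\ref{prop:BG-tors}, the content of the statement is that Nisnevich torsors are by definition Nisnevich-locally trivial. The only subtlety to be careful about is not to confuse the pointwise homotopy presheaf (which is not automatically trivial, as $\H^1_{\Nis}(U,G)$ can certainly be nonzero) with its Nisnevich sheafification.
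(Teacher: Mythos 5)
Your proof is correct and follows the same route as the paper: reduce to $\pi_0^{\Nis}(\B G) = \star$ via Corollary~\ref{thm:unstabconn}, identify the $\pi_0$-presheaf with $U\mapsto\H^1_{\Nis}(U,G)$, and observe that Nisnevich $G$-torsors are Nisnevich-locally trivial so the sheafification is a point. You simply spell out the role of Proposition~\ref{prop:BG-tors} a bit more explicitly than the paper does.
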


\begin{proof}
    By Theorem~\ref{thm:unstabconn}, it suffices to prove that
    $\pi_0^{\Nis}(\B G)$ is trivial. Note that this is the sheafification  of
    the functor $U \mapsto\H^1_{\Nis}(U, G)$. The claim follows from the fact
    that we are considering $G$-torsors which are Nisnevich-locally trivial.
\end{proof}

\begin{remark}
    Let $G$ be an \'{e}tale sheaf of groups. If we replace $\B G$ by $\Rbf
    \pi_*\B_{\et}G$, then the above result will \emph{not} hold unless \'etale
    $G$-torsors are also Nisnevich locally trivial. This is not the case for
    example for $\PGL_n$. For more about $\B_{\et}\PGL_n$, see \cite{aravind-fundamentalgrp}*{Corollary 3.16}.
\end{remark}

In order to proceed further, we need a theorem of Asok-Hoyois-Wendt
\cite{asok-hoyois-wendt-II}.

%

\begin{theorem}[\cite{asok-hoyois-wendt-II}]\label{prop:fib-crit}
    If $X\rightarrow Y\rightarrow Z$ is a
    fiber sequence in $\sPre(\Sm_S)$ such that $Z$ satisfies affine
    Nisnevich excision and $\pi_0(Z)$ satisfies affine
    $\AA$-invariance, then $X\rightarrow Y\rightarrow Z$ is an $\AA^1$-fiber sequence.
\end{theorem}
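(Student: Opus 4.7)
The plan is to leverage the explicit formula $\L_{\AA^1}\L_{\Nis} \simeq (\L_{\Nis}\Sing^{\AA^1})^{\circ \NN}$ from Theorem~\ref{thm:explicit}. Being a filtered homotopy colimit of applications of $\L_{\Nis}\Sing^{\AA^1}$, and since filtered homotopy colimits commute with finite homotopy limits, it suffices to verify two things: (i) that each application of $\L_{\Nis}\Sing^{\AA^1}$ carries a fiber sequence meeting the hypotheses to another fiber sequence, and (ii) that the two hypotheses (affine Nisnevich excision on $Z$ and affine $\AA^1$-invariance of $\pi_0 Z$) are preserved under the operation. Granting these, the iterated colimit gives the desired fiber sequence $\L_{\AA^1}\L_{\Nis}X \to \L_{\AA^1}\L_{\Nis}Y \to \L_{\AA^1}\L_{\Nis}Z$.

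First, I would treat $\Sing^{\AA^1}$. Sectionwise, $\Sing^{\AA^1}Z(U)$ is the geometric realization of the simplicial space $[n] \mapsto Z(U\times\Delta^n)$, and similarly for $X$ and $Y$. Since a geometric realization is a homotopy colimit, it need not commute with the formation of homotopy fibers; this is where the $\pi_0$-hypothesis intervenes. The classical Bousfield--Friedlander theorem says that for a levelwise fiber sequence of simplicial spaces whose base satisfies the $\pi_*$-Kan condition, the realization is again a fiber sequence, and a constant $\pi_0$ of the base is a sufficient instance of this. Affine $\AA^1$-invariance of $\pi_0 Z$ says exactly that $\pi_0 Z(U\times\Delta^n) \iso \pi_0 Z(U)$ for $U$ affine (and all $n$), making $[n]\mapsto \pi_0 Z(U\times\Delta^n)$ a constant simplicial set. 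Applying Bousfield--Friedlander affine-sectionwise then produces a fiber sequence of simplicial presheaves on $\Sm_S^{\mathrm{Aff}}$.

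Next, I would extend from affines to all of $\Sm_S$ and apply $\L_{\Nis}$. Here the affine Nisnevich excision on $Z$ is essential: by the Asok--Hoyois--Wendt theorem quoted at the end of Section~\ref{sub:nis}, the affine Nisnevich topology generates the Nisnevich topology on $\Sm_S$, so a simplicial presheaf satisfying affine Nisnevich excision is determined (up to Nisnevich equivalence) by its restriction to affines. Thus the fiber-sequence property, established on affines, upgrades to a Nisnevich-local fiber sequence after $\L_{\Nis}$ is applied. The only further check is that $\L_{\Nis}$ itself preserves the fiber sequence obtained, but this follows because $\L_{\Nis}$ is a left exact localization of the presheaf $\infty$-topos (the Nisnevich $\infty$-topos is hypercomplete here), hence preserves finite homotopy limits.

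Finally, one must verify that the hypotheses persist under $\L_{\Nis}\Sing^{\AA^1}$, so the induction can continue. That $\L_{\Nis}\Sing^{\AA^1}Z$ satisfies affine Nisnevich excision is automatic, since any Nisnevich-local object does. That its $\pi_0$ remains affine $\AA^1$-invariant follows from Corollary~\ref{thm:naiveconn} applied affine-sectionwise together with the fact that $\Sing^{\AA^1}$ built in the $\AA^1$-homotopy already. The main obstacle, to my mind, is the first step: cleanly invoking Bousfield--Friedlander in the presheaf-of-spaces setting and confirming that the hypothesis on $\pi_0(Z)$ suffices for the $\pi_*$-Kan condition of the relevant simplicial object $[n]\mapsto Z(U\times\Delta^n)$ rather than a stronger condition on all higher $\pi_n$; this is where care is needed and where the Asok--Hoyois--Wendt argument does the real work.
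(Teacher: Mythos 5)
The paper does not prove this theorem: it is quoted from Asok--Hoyois--Wendt~\cite{asok-hoyois-wendt-II}, so there is no in-paper proof to compare against. The closest thing is the sketch in Section~\ref{sec:asok-fasel} (for the affine-representability theorem), which invokes the same ingredients you identify: the Bousfield--Friedlander $\pi_*$-Kan criterion for commuting the geometric realization in $\Sing^{\AA^1}$ with homotopy pullbacks, and the equivalence of the Nisnevich and affine-Nisnevich cd-structures. So your toolkit is the right one.

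Two points about your specific route, though. First, your iteration strategy carries an unnecessary burden. Asok--Hoyois--Wendt show, under exactly your hypotheses on $Z$, that $\L_{\Nis}\Sing^{\AA^1}Z$ is \emph{already} $\AA^1$-local on affines, so $\L_{\AA^1}Z\simeq\L_{\Nis}\Sing^{\AA^1}Z$ after a single pass; the transfinite iteration of Theorem~\ref{thm:explicit} stabilizes at the first stage. Once you know this, the fiber-sequence claim follows by the locality of fibrations over local objects (fibrations of Nisnevich-local presheaves with $\AA^1$-local base remain fibrations after $\AA^1$-localization), rather than by pushing a fiber sequence through an infinite tower. Your step (ii), which argues that the hypotheses persist, is not adequately justified as written --- Corollary~\ref{thm:naiveconn} gives an epimorphism $\pi_0^{\Nis}\Sing^{\AA^1}X\to\pi_0^{\AA^1}X$, not affine $\AA^1$-invariance of $\pi_0\L_{\Nis}\Sing^{\AA^1}Z$ --- and in fact persistence is only immediate once one already knows the one-step stabilization, which makes the iteration moot. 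Second, you correctly flag that the hard part is verifying the $\pi_*$-Kan condition for the bisimplicial object $[n]\mapsto Z(U\times\Delta^n)$ from just affine $\AA^1$-invariance of $\pi_0 Z$: that identification is genuinely where~\cite{asok-hoyois-wendt-II} does the real work, and it would need to be supplied for your step (i) to close. So: right ingredients and an honest assessment of the gap, but the iteration detour both complicates the argument and introduces a persistence claim whose justification as given does not go through.
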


\begin{corollary}
    If $\H^1_{\Nis}(-, G)$ is $\AA^1$-invariant, then the sequence
    $G \rightarrow \E G \rightarrow \B G$ is an $\AA^1$-fiber sequence.
\end{corollary}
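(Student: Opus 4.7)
The plan is to verify the two hypotheses of Theorem \ref{prop:fib-crit} for the sequence $G\rightarrow\E G\rightarrow\B G$, taking for $Z$ the Nisnevich-fibrant model $\B_{\Nis}G$ (with $\E G$ replaced by a compatible fibrant model, which we continue to denote $\E G$; this does not affect the fiber by Proposition \ref{prop:BG-tors}).

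First I would recall that $G\rightarrow\E G\rightarrow\B G$ really is a fiber sequence of simplicial presheaves: objectwise on any $U\in\Sm_S$ the map $\E G(U)\rightarrow\B G(U)$ is the standard $W$-construction Kan fibration whose strict fiber is $G(U)$. Passing to Nisnevich-fibrant replacements preserves this fiber up to weak equivalence, since by Proposition \ref{prop:BG-tors} we have $\Rbf\Omega\B_{\Nis}G\simeq G$ as Nisnevich sheaves of groups.

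Second, I need the two hypotheses on the base $\B_{\Nis}G$. Affine Nisnevich excision is immediate: $\B_{\Nis}G$ is Nisnevich-local by construction, hence in particular it sends elementary distinguished squares of affine smooth $S$-schemes to homotopy pullback squares (using that the affine Nisnevich cd-structure refines the Nisnevich cd-structure, per the remark following Proposition \ref{prop:niscrit}). For affine $\AA^1$-invariance of $\pi_0$, Proposition \ref{prop:BG-tors} identifies $\pi_0(\B_{\Nis}G)\iso\H^1_{\Nis}(-,G)$ as Nisnevich sheaves on $\Sm_S$, and this presheaf is $\AA^1$-invariant by the hypothesis of the corollary; restricting to smooth affine $S$-schemes gives affine $\AA^1$-invariance.

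With both conditions in hand, Theorem \ref{prop:fib-crit} applies and shows that $G\rightarrow\E G\rightarrow\B G$ remains a fiber sequence after passing to $\Spc_S^{\AA^1}$, i.e.\ is an $\AA^1$-fiber sequence. The only subtle point is the bookkeeping of fibrant replacements to ensure that the objectwise fiber sequence one starts with is the one to which the Asok–Hoyois–Wendt criterion can be applied; beyond that, the argument is a direct invocation of the criterion together with the identification $\pi_0(\B_{\Nis}G)\iso\H^1_{\Nis}(-,G)$.
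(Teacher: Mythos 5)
Your proof is correct and is precisely the intended unpacking of the theorem, which the paper states as a corollary without writing out the argument. You correctly replace $\B G$ by $\B_{\Nis}G$ so that the base satisfies affine Nisnevich excision (being Nisnevich-local), invoke Proposition~\ref{prop:BG-tors} both to identify $\pi_0(\B_{\Nis}G)(U)\iso\H^1_{\Nis}(U,G)$ sectionwise (so affine $\AA^1$-invariance follows from the hypothesis) and to verify that this replacement still has objectwise homotopy fiber $G$ over the basepoint, and then apply Theorem~\ref{prop:fib-crit}; this is exactly the bookkeeping the paper leaves implicit.
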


From now on to the end of this section, we will need the base scheme to be a
field (although we can do better --- see the discussions in
\cite{asok-hoyois-wendt-II}) in order to utilize
$\AA^1$-invariance of various cohomology sets and apply Theorem~\ref{prop:fib-crit}
above. As a first example, we let $T$ be a split torus over a field $k$.

\begin{proposition}
    Let $T$ be a split torus over a field $k$.
    If $P \rightarrow X$ is a $T$-torsor with a $k$-point $x:\Spec k\rightarrow
    P$, then we have a short exact sequence $$1 \rightarrow \pi_1^{\AA^1}(P, x)
    \rightarrow \pi_1^{\AA^1}(X, x) \rightarrow T.$$
\end{proposition}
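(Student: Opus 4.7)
The plan is to realize the $T$-torsor structure on $P \to X$ as a fiber sequence of simplicial presheaves and then extract the desired exactness from a long exact sequence of $\AA^1$-homotopy sheaves. The choice of a $k$-point $x$ of $P$ trivialises the torsor over $\Spec k$, so after appealing to Proposition~\ref{prop:BG-tors} we obtain a pointed classifying map $X \to \B T$ whose homotopy fibre (over the distinguished vertex) is $P$. This yields a pointed homotopy fibre sequence
$$P \longrightarrow X \longrightarrow \B T$$
in $\sPre(\Sm_S)$, pointed by $x \in P$ and its image in $X$.

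The key step is to promote this to an $\AA^1$-fibre sequence via Theorem~\ref{prop:fib-crit} applied to $\B T$. I would verify its two hypotheses: first, $\B T$ satisfies Nisnevich (and in particular affine Nisnevich) excision because it is a fibrant model of the classifying stack of Nisnevich $T$-torsors (Proposition~\ref{prop:BG-tors}); second, $\pi_0(\B T) = \H^1_{\Nis}(-, T)$ is affine $\AA^1$-invariant because $T$ is a split torus, so that $T \iso \Gm^n$ and $\H^1_{\Nis}(-,T) \iso \Pic^n$, which is $\AA^1$-invariant on smooth affine $k$-schemes by homotopy invariance of the Picard group for regular rings. Hence $P \to X \to \B T$ is an $\AA^1$-fibre sequence.

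The homotopy sheaves of $\B T$ are then easy to compute. Since $\Gm$ is $\AA^1$-rigid, Proposition~\ref{prop:rigid} gives $\pi_0^{\AA^1}(\Gm) \iso \Gm$ and $\pi_j^{\AA^1}(\Gm) = 0$ for $j \geq 1$; as $\AA^1$-localisation commutes with finite products, the same holds for $T = \Gm^n$. The natural equivalence $\Omega \B T \simeq T$ (Proposition~\ref{prop:BG-tors}) together with the long exact sequence applied to the $\AA^1$-fibre sequence $T \to \E T \to \B T$ then gives $\pi_1^{\AA^1}(\B T) \iso T$ and $\pi_2^{\AA^1}(\B T) = 0$.

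Feeding these computations into the long exact sequence of the fibre sequence $P \to X \to \B T$ yields
$$0 = \pi_2^{\AA^1}(\B T) \longrightarrow \pi_1^{\AA^1}(P, x) \longrightarrow \pi_1^{\AA^1}(X, x) \longrightarrow \pi_1^{\AA^1}(\B T) \iso T,$$
which is exactly the claimed exact sequence. The main obstacle is the second step: checking the hypotheses of Theorem~\ref{prop:fib-crit}, which is where both the assumption that $k$ is a field (so that $\Pic$ is $\AA^1$-invariant on smooth affines) and the splitness of $T$ (to reduce $\H^1(-,T)$ to copies of $\Pic$) are used.
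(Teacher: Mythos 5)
Your proof is correct and follows the same line of argument as the paper: verify that $\pi_0(\B T)\iso\Pic(-)^{\oplus n}$ is $\AA^1$-invariant on smooth affine $k$-schemes so Theorem~\ref{prop:fib-crit} promotes $P\to X\to\B T$ to an $\AA^1$-fiber sequence, use $\AA^1$-rigidity of $T$ together with $\Omega\B T\simeq T$ to compute $\pi_1^{\AA^1}(\B T)\iso T$ and $\pi_2^{\AA^1}(\B T)=0$, and extract the exact sequence from the long exact sequence of $\AA^1$-homotopy sheaves. You simply spell out a couple of steps that the paper leaves implicit (the identification of $P$ with the homotopy fiber of the classifying map, and the explicit vanishing of $\pi_2^{\AA^1}(\B T)$).
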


\begin{proof} We need to check that $\pi_0(\B T)$ is $\AA^1$-invariant. Recall that a split torus over a field simply means that it is isomorphic over $k$ to products of $\GG_m$, and so
    $\pi_0(\B T)\iso\Pic(-)^{\oplus n}$ where $n$ is the number of copies of $\GG_m$. Therefore
    it is indeed $\AA^1$-invariant on smooth $k$-schemes. This shows that $T$ is an
    $\AA^1$-rigid scheme over $k$, hence $\pi_0^{\AA^1}(T) \simeq T$ and the
    higher homotopy groups are zero by Proposition~\ref{prop:rigid}, giving us the short exact sequence above.
\end{proof}

\begin{remark} The result is true in greater generality for not-necessarily-split tori with some assumptions on the base field, see \cite{asok-toric} for details.
\end{remark}

\subsection{$\BGL$ and $\BSL$}\label{sec:kstablerange}

In our classification of vector bundles, on affine schemes, we need to
calculate the homotopy sheaves of $\BGL_n$. We use the machinery above to
highlight two features of this calculation. First, just like in topology, we
may reduce the calculation of homotopy sheaves of $\BGL_n$ to that of $\BSL_n$,
save for $\pi_1$. Secondly, the $\AA^1$-homotopy sheaves of $\BSL_n$
stabilize: for each $i$, $\pi_i^{\AA^1}(\BSL_n)$ is independent of the value
of $n$ as $n$ tends to $\infty$.


\begin{proposition}\label{prop:naivea1sln}
    Let $S$ be a regular noetherian affine scheme of finite Krull dimension,
    and suppose that the Bass-Quillen conjecture holds for smooth schemes of finite
    presentation over $S$.
    The space $\SL_n$ in $\Spc_S^{\AA^1}$ is $\AA^1$-connected and $\BSL_n$ is $\AA^1$-1-connected,
    i.e. $\pi_1^{\AA^1}(\BSL_n) = \star$.
\end{proposition}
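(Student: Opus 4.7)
The plan is to first establish that $\SL_n$ is $\AA^1$-connected by reducing to a local-ring computation, and then to deduce $\AA^1$-1-connectedness of $\BSL_n$ from the (to be verified) $\AA^1$-fiber sequence $\SL_n\to\E\SL_n\to\BSL_n$.

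For the first claim, Corollary~\ref{thm:naiveconn} reduces the problem to showing that $\SL_n$ is naively $\AA^1$-connected, i.e., that the Nisnevich sheafification of the presheaf $U\mapsto\pi_0\Sing^{\AA^1}(\SL_n)(U)$ is trivial. This presheaf assigns to $U$ the set of $\AA^1$-homotopy classes of maps $U\rightarrow\SL_n$. Since sheafification can be computed at Nisnevich stalks, and these stalks are henselian local rings, it suffices to show that whenever $R$ is a local ring, every element of $\SL_n(R)$ is $\AA^1$-homotopic to the identity. The key classical input is that over a local ring every element of $\SL_n(R)$ is a product of elementary matrices $E_{ij}(a)$ with $a\in R$, and each such matrix is joined to the identity by the explicit naive $\AA^1$-homotopy $t\mapsto E_{ij}(ta)$.

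For the second claim, the first step is to verify that $\SL_n\to\E\SL_n\to\B\SL_n$ is an $\AA^1$-fiber sequence by invoking Theorem~\ref{prop:fib-crit}. Its hypotheses require affine Nisnevich excision for $\B\SL_n$ (which holds because $\B\SL_n$ is a homotopy sheaf by construction, via Proposition~\ref{prop:BG-tors}) together with affine $\AA^1$-invariance of $\pi_0(\B\SL_n)\iso\H^1_{\Nis}(-,\SL_n)$. The pointed set $\H^1_{\Nis}(U,\SL_n)$ classifies rank $n$ vector bundles on $U$ equipped with a trivialization of the determinant, so the Bass-Quillen hypothesis delivers exactly the required $\AA^1$-invariance on smooth affine $S$-schemes of finite presentation.

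Given the $\AA^1$-fiber sequence, the associated long exact sequence of $\AA^1$-homotopy sheaves and the contractibility of $\E\SL_n$ yield an isomorphism $\pi_1^{\AA^1}(\B\SL_n)\iso\pi_0^{\AA^1}(\SL_n)$, which vanishes by the first step; together with Proposition~\ref{pi0BG} this gives $\AA^1$-1-connectedness. The main obstacle is the verification of the hypotheses of Theorem~\ref{prop:fib-crit}: affine Nisnevich excision is routine, but affine $\AA^1$-invariance of $\H^1_{\Nis}(-,\SL_n)$ is deep and is exactly where the assumed Bass-Quillen statement is essential. The local-ring elementary-matrix argument in the first step, while classical, is also the heart of why $\SL_n$ behaves so differently from $\GL_n$, which carries the nontrivial $\Pic$ component in $\pi_0^{\AA^1}$.
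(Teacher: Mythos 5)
Your proof is correct and follows the same overall architecture as the paper's: reduce via Corollary~\ref{thm:naiveconn} to a stalkwise computation, show that every element of $\SL_n(R)$ over a local ring $R$ is naively $\AA^1$-homotopic to the identity, and then feed the $\AA^1$-fiber sequence $\SL_n\to\ESL_n\to\BSL_n$ (obtained from Theorem~\ref{prop:fib-crit} using the Bass-Quillen hypothesis) into the long exact sequence of $\AA^1$-homotopy sheaves. The one substantive difference is in the local-ring step. You invoke the classical theorem that $\SL_n(R)=\E_n(R)$ for any local ring $R$ and then contract each elementary matrix by $t\mapsto E_{ij}(ta)$; this is correct but outsources a nontrivial (if entirely standard) algebraic fact. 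The paper instead gives a more self-contained argument: it reduces $M$ modulo the maximal ideal $\mathfrak m$, uses $\SL_n(k)=\E_n(k)$ only over the residue field $k$ (the easier, textbook case), lifts to write $M = EN$ with $E$ a product of elementary matrices over $R$ and $N = I_n+P$ with $P$ having entries in $\mathfrak m$, and then builds an explicit $\AA^1$-homotopy contracting $N$ to $I_n$ by scaling the entries of $P$ by $t$ and adjusting the $(1,1)$-entry to keep the determinant equal to one. Your version is shorter and cleaner; the paper's avoids quoting the full strength of $\SL_n(R)=\E_n(R)$ for local rings and instead produces the needed homotopy by hand. Both routes are valid, and the rest of your argument --- affine Nisnevich excision for $\BSL_n$, affine $\AA^1$-invariance of $\H^1_{\Nis}(-,\SL_n)$ from Bass-Quillen, and the LES argument combined with Proposition~\ref{pi0BG} --- matches the paper.
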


\begin{proof}
    We show that the sheaf $\pi_0^{\AA^1}(\SL_n)$ is trivial by showing that
    the stalks of $\pi_0^{\AA^1}(\SL_n)$ are trivial.
    To show this it suffices by Theorem~\ref{thm:naiveconn} to show that for any henselian local ring $R$,
    $$[\Spec\,R, \Sing^{\AA^1}(\SL_n)]_s = \star$$ (i.e. the set of naive
    $\AA^1$-homotopy classes is trivial), where
    we view $\Spec R$ as an object of $\Spc_S^{\AA^1}$ via the functor of
    points it represents.
    
   In fact we will prove the above claim for $R$, any local ring. We want to 
    connect any matrix $M\in\SL_n(R)$ to the identity via a chain of naive
    $\AA^1$-homotopies. Let $\mathfrak{m}$ be the maximal ideal of $R$, and let
    $k=R/\mathfrak{m}$ be the residue field. The subgroup
    $\E_n(k)\subseteq\SL_n(k)$ generated by the elementary matrices is actually all of $\SL_n(k)$, so we can write
    $\overline{M}$, the image of $M$ in $\SL_n(k)$, as a product of elementary
    matrices. Recall that an elementary matrix in $\SL_n(k)$ is the identity
    matrix except for a single off-diagonal entry. Since we can lift each of
    these to $\SL_n(R)$, we can write $M=EN$, where $E$ is a product of
    elementary matrices in $\SL_n(R)$ and $$N=\I_n+P,$$ where
    $P=(p_{ij})\in\M_n(\mathfrak{m})$ is a matrix with entries in $\mathfrak{m}$. Note
    that the condition that $N\in\SL_n(R)$ means that we can solve for
    $p_{11}$. Indeed,
    $$1=\det(N)=(1+p_{11})|C_{11}|-p_{12}|C_{12}|+\cdots+(-1)^np_{1n}|C_{1n}|,$$
    where $C_{ij}$ is the $ij$th minor of $N$. Each $p_{1r}$ is in $\mathfrak{m}$ for
    $2\leq r\leq n$. Hence, $1-n=(1+p_{11})|C_{11}|$, where $n\in\mathfrak{m}$.
    Since $1-n$ and $1+p_{11}$ are units, $|C_{11}|$ must be a unit in $R$ as
    well. Thus, we can solve $$p_{11}=\frac{1-n}{|C_{11}|}-1.$$ Now, define a
    new matrix $Q=(q_{ij})$ in $\M_n(\mathfrak{m}[t])$ by $q_{ij}=tp_{ij}$
    unless $(i,j)=(1,1)$, in which case set $q_{11}$ so that $\det(1+Q)=1$, using the
    formula above. Then, we see that $Q(0)=\I_n$, while $Q(1)=P$. It follows that
    $1+Q$ defines an explicit homotopy from $\I_n$ to $N=\I_n+P$. It follows that
    $M$ is $\AA^1$-homotopic to a product of elementary matrices. Since each
    elementary matrix is $\AA^1$-homotopic to $\I_n$, we have proved the claim.

    Now, by Theorem~\ref{prop:fib-crit}, $\SL_n \rightarrow \ESL_n \rightarrow \BSL_n$ is
    an $\AA^1$-fiber sequence due to the fact that $\SL_n$-torsors are
    $\AA^1$-invariant on smooth affine schemes (since $\GL_n$-torsors are
    $\AA^1$-invariant on smooth affine schemes). Therefore we have
    an exact sequence: $$\pi_1^{\AA^1}(\ESL_n) \rightarrow \pi_1^{\AA^1}(\BSL_n)
    \rightarrow \pi_0^{\AA^1}(\SL_n).$$ The left term is $\star$ since $\ESL_n$
    is simplicially (and hence $\AA^1$-)contractible and the right term is a
    singleton due to the first part of this proposition.
\end{proof}

\begin{exercise} Prove the following statements when $S$ satisfies the
    hypotheses of the previous theorem. For $i >2$, $\pi_i^{\AA^1}(\B\GG_m) = 0$. For
    $i = 1$, the sheaf of groups $\pi_i^{\AA^1}(\B\GG_m) \iso \GG_m$. Finally, $\pi_0^{\AA^1}(\B\GG_m) =
    \star$. Hint: use the $\AA^1$-rigidity of $\GG_m$ and
    Theorem~\ref{prop:fib-crit}.
\end{exercise}

\begin{proposition}\label{prop:slncover}
    Let $S$ be a regular noetherian affine scheme of finite Krull dimension,
    and suppose that the Bass-Quillen conjecture holds for smooth schemes of finite
    presentation over $S$.
    For $i >1$, the map $\SL_n \rightarrow \GL_n$ induces an isomorphism
    $\pi_i^{\AA^1}(\BSL_n) \rightarrow \pi_i^{\AA^1}(\BGL_n)$.
\end{proposition}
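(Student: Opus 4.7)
The strategy is to use the determinant map to build an $\AA^1$-fiber sequence $\BSL_n \to \BGL_n \to \B\GG_m$ and extract the isomorphisms from the induced long exact sequence of $\AA^1$-homotopy sheaves. First, the short exact sequence of Nisnevich sheaves of groups
\begin{equation*}
    1 \to \SL_n \to \GL_n \xrightarrow{\det} \GG_m \to 1
\end{equation*}
gives rise, via the bar construction of Section~\ref{sub:classifying}, to a sequence $\BSL_n \to \BGL_n \to \B\GG_m$ in $\sPre(\Sm_S)$. Sectionwise, $\det\colon \GL_n(U) \to \GG_m(U)$ is surjective (any unit is the determinant of a diagonal matrix), and a short exact sequence of discrete groups produces a Kan fibration on classifying spaces with the expected fiber. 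Consequently $\BSL_n \to \BGL_n \to \B\GG_m$ is a fiber sequence in $\sPre(\Sm_S)$ with the projective model structure.

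Next, I would apply Theorem~\ref{prop:fib-crit} to promote this to an $\AA^1$-fiber sequence. The two hypotheses to check are that $\B\GG_m$ satisfies affine Nisnevich excision---automatic after passing to $\B_{\Nis}\GG_m$---and that $\pi_0(\B\GG_m) \iso \Pic$ is $\AA^1$-invariant on smooth affine $S$-schemes, which follows from the Bass-Quillen hypothesis in rank one built into the standing assumptions. With the $\AA^1$-fiber sequence in hand, the associated long exact sequence of Nisnevich $\AA^1$-homotopy sheaves reads
\begin{equation*}
    \cdots \to \pi_{i+1}^{\AA^1}(\B\GG_m) \to \pi_i^{\AA^1}(\BSL_n) \to \pi_i^{\AA^1}(\BGL_n) \to \pi_i^{\AA^1}(\B\GG_m) \to \cdots.
\end{equation*}

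Finally, from the $\AA^1$-rigidity of $\GG_m$ (Proposition~\ref{prop:rigid}), we have $\pi_j^{\AA^1}(\GG_m) = 0$ for $j \geq 1$ and $\pi_0^{\AA^1}(\GG_m) \iso \GG_m$; combined with the $\AA^1$-fiber sequence $\GG_m \to \E\GG_m \to \B\GG_m$ (another application of Theorem~\ref{prop:fib-crit}, using $\AA^1$-invariance of $\Pic$), this yields $\pi_i^{\AA^1}(\B\GG_m) = 0$ for every $i \geq 2$ and $\pi_1^{\AA^1}(\B\GG_m) \iso \GG_m$. Substituting into the long exact sequence, for $i \geq 2$ both flanking terms vanish, so $\pi_i^{\AA^1}(\BSL_n) \to \pi_i^{\AA^1}(\BGL_n)$ is an isomorphism as claimed. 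The main technical point I expect to need care is verifying the hypotheses of Theorem~\ref{prop:fib-crit} cleanly---in particular that our model of $\B\GG_m$ satisfies affine Nisnevich excision and that the Bass-Quillen assumption delivers the required $\AA^1$-invariance of $\Pic$---while the extraction of the isomorphisms from the long exact sequence is then purely mechanical.
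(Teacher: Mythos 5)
Your proof is correct and follows the same route as the paper: use the determinant fiber sequence $\BSL_n \to \BGL_n \to \B\GG_m$, promote it to an $\AA^1$-fiber sequence via Theorem~\ref{prop:fib-crit}, compute $\pi_i^{\AA^1}(\B\GG_m)$ from $\AA^1$-rigidity of $\GG_m$ (which the paper delegates to the exercise immediately preceding the proposition), and read off the isomorphism from the long exact sequence. Your write-up is more self-contained than the paper's, but the mathematical content is identical.
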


\begin{proof} By Theorem~\ref{prop:fib-crit},
    the sequence $\BSL_n \rightarrow \BGL_n \rightarrow \B\GG_m$ induces a long
    exact sequence of $\AA^1$-homotopy sheaves and the result for $i > 1$
    follows from the above proposition above. However we note that the case of
    $\pi_1^{\AA^1}$ is different: we have an exact sequence
    $\pi_1^{\AA^1}(\BSL_n) \rightarrow \pi_1^{\AA^1}(\BGL_n) \rightarrow
    \pi_1^{\AA^1}(\B\GG_m) \rightarrow \pi_0^{\AA^1}(\BSL_n)$; The groups on
    the right are zero by Proposition~\ref{pi0BG}, and the group on the left is zero by
    Proposition~\ref{prop:naivea1sln}.
\end{proof}


Recall from Corollary~\ref{cor:sln/n-1} that we have an $\AA^1$-weak equivalence:
$\SL_{n+1}/\SL_{n}\rightarrow\AA^{n+1}-\{0\}$ for $n \geq 1$. Moreover,
$\AA^{n+1}-\{0\}$ is $\AA^1$-weak equivalent to $(S^1)^{\wedge n}\wedge\Gm^{\wedge n+1}$.
Our intuition from topology suggests therefore that
$\SL_{n+1}/\SL_n$ should be $(n-1)$-$\AA^1$-connected. This is indeed the case
but it relies on a difficult theorem of Morel, the unstable
$\AA^1$-connectivity theorem~\cite{morel}*{Theorem~6.38}. That theorem uses an
$\AA^1$-homotopy theoretic version of Hurewicz theorem and of $\AA^1$-homology
sheaves, which are defined not by pointwise sheafification but instead using
the so-called $\AA^1$-derived category.

We may apply Theorem~\ref{prop:fib-crit} to the fiber sequence of
simplicial presheaves: $\SL_{n+1}/\SL_{n} \rightarrow \BSL_n \rightarrow
\BSL_{n+1}$ to see that this is also an $\AA^1$-fiber sequence. We have thus
proved the following important stability result.

\begin{theorem}[Stability]\label{thm:stabilitysln}
    Let $S$ be a regular noetherian affine scheme of finite Krull dimension,
    and suppose that the Bass-Quillen conjecture holds for smooth schemes of finite
    presentation over $S$.
    Let $i>0$ and $n\geq 1$. The morphism $$\pi_{i}^{\AA^1}(\BSL_{n})
    \rightarrow \pi_{i}^{\AA^1}(\BSL_{n+1})$$ is an epimorphism if $i \leq n$
    and an isomorphism if $i \leq n-1$. 
\end{theorem}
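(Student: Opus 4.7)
The plan is to exploit the $\AA^1$-fiber sequence $\SL_{n+1}/\SL_n \to \BSL_n \to \BSL_{n+1}$ that was produced just before the statement, together with the identification $\SL_{n+1}/\SL_n \we \AA^{n+1}-\{0\}$ from Corollary~\ref{cor:sln/n-1}. From the associated long exact sequence of $\AA^1$-homotopy sheaves,
\begin{equation*}
\pi_i^{\AA^1}(\SL_{n+1}/\SL_n) \to \pi_i^{\AA^1}(\BSL_n) \to \pi_i^{\AA^1}(\BSL_{n+1}) \to \pi_{i-1}^{\AA^1}(\SL_{n+1}/\SL_n),
\end{equation*}
the desired surjectivity in the range $i\leq n$ will follow once we know $\pi_{i-1}^{\AA^1}(\SL_{n+1}/\SL_n)=0$ for $i-1\leq n-1$, and the isomorphism in the range $i\leq n-1$ will follow from also knowing $\pi_i^{\AA^1}(\SL_{n+1}/\SL_n)=0$ for $i\leq n-1$. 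Both reduce to the single claim that $\AA^{n+1}-\{0\}$ is $(n-1)$-$\AA^1$-connected. One must also handle the low-degree end of the exact sequence carefully: the fact that $\pi_0^{\AA^1}(\BSL_n)=\star$ from Proposition~\ref{pi0BG} and the vanishing of $\pi_1^{\AA^1}(\BSL_n)$ from Proposition~\ref{prop:naivea1sln} take care of the non-abelian issues near $i=1$.

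The hard part is the connectivity claim itself: the hypotheses make the fiber sequence usable, but to prove that $\AA^{n+1}-\{0\}$ is $(n-1)$-$\AA^1$-connected one cannot argue on the nose in $\sPre(\Sm_S)$, since passing to $\AA^1$-localization may create new connected components and generally destroys simple presheaf-level vanishing arguments. The expected route is to cite Morel's unstable $\AA^1$-connectivity theorem (\cite{morel}*{Chapter~6}), which rests on the $\AA^1$-Hurewicz theorem and the theory of $\AA^1$-homology sheaves defined through the $\AA^1$-derived category --- the authors already flag this as the nontrivial input. One could partially motivate the result by noting that $\AA^{n+1}-\{0\}\we (S^1)^{\wedge n}\wedge\Gm^{\wedge n+1}$ by Proposition~\ref{prop:S2n-1,n}, and that in any reasonable homotopy theory a smash product with $n$ simplicial circles kills the first $n-1$ homotopy sheaves; the content of Morel's theorem is that this naive expectation is indeed correct in $\Spc_S^{\AA^1}$.

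Granting the connectivity input, the rest of the argument is a direct assembly. First I would substitute $\SL_{n+1}/\SL_n \we \AA^{n+1}-\{0\}$ and write down the long exact sequence in the relevant range. For $i\leq n-1$, both flanking terms $\pi_i^{\AA^1}$ and $\pi_{i-1}^{\AA^1}$ of $\AA^{n+1}-\{0\}$ vanish, so the middle arrow is an isomorphism. For $i=n$, only the right flanking term $\pi_{n-1}^{\AA^1}(\AA^{n+1}-\{0\})$ vanishes, so one gets surjectivity, completing the theorem. A small sanity check at $n=1$ confirms that the surjectivity statement becomes vacuous there, since $\BSL_1 \we \star$ and $\pi_1^{\AA^1}(\BSL_2)=0$ by Proposition~\ref{prop:naivea1sln}.
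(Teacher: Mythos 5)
Your proof is correct and follows essentially the same route as the paper: identify the fiber sequence $\SL_{n+1}/\SL_n\to\BSL_n\to\BSL_{n+1}$ as an $\AA^1$-fiber sequence via Theorem~\ref{prop:fib-crit}, identify the fiber with $\AA^{n+1}-\{0\}$ via Corollary~\ref{cor:sln/n-1}, invoke Morel's unstable $\AA^1$-connectivity theorem to get $(n-1)$-connectivity of $\AA^{n+1}-\{0\}$, and read off the stability range from the long exact sequence of homotopy sheaves. Your bookkeeping of the connectivity ranges, the low-degree non-abelian issues, and the motivating remark that $\AA^{n+1}-\{0\}\we(S^1)^{\wedge n}\wedge\Gm^{\wedge(n+1)}$ all match what the paper does in the paragraph preceding the theorem statement.
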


Setting $\GL=\colim_{n \rightarrow \infty}\GL_n$ and similarly for
$\SL$, we obtain the following corollary.

\begin{corollary}\label{cor:stabilitygln}
    Let $S$ be a regular noetherian affine scheme of finite Krull dimension,
    and suppose that the Bass-Quillen conjecture holds for smooth schemes of finite
    presentation over $S$.
    For $i \geq 2$, we have $\pi_i^{\AA^1}(\BSL) \simeq
    \pi_i^{\AA^1}(\BGL)$.
\end{corollary}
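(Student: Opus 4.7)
The plan is to pass to the colimit in Proposition~\ref{prop:slncover} and then identify the resulting colimits with the homotopy sheaves of the stable classifying spaces. Proposition~\ref{prop:slncover} provides, for each $n\geq 1$ and each $i\geq 2$, an isomorphism $\pi_i^{\AA^1}(\BSL_n)\cong\pi_i^{\AA^1}(\BGL_n)$ induced by the inclusion $\SL_n\hookrightarrow\GL_n$. Since the block-diagonal stabilizations $\SL_n\hookrightarrow\SL_{n+1}$ and $\GL_n\hookrightarrow\GL_{n+1}$ are compatible with these inclusions, these isomorphisms are natural in $n$, so passing to the filtered colimit produces a canonical isomorphism
\begin{equation*}
\colim_n \pi_i^{\AA^1}(\BSL_n)\;\xrightarrow{\cong}\;\colim_n \pi_i^{\AA^1}(\BGL_n).
\end{equation*}

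Next I would identify each of these colimits with the homotopy sheaf of the corresponding stable object. Writing $\BSL\we\colim_n \BSL_n$ (and similarly for $\BGL$), which holds because the simplicial bar construction is built from finite products at each level and finite products commute with filtered colimits, it suffices to check that $\pi_i^{\AA^1}$ commutes with the filtered colimit. Nisnevich sheafification is a left adjoint, so it preserves filtered colimits; and at the presheaf level the functor $U\mapsto\pi_i\bigl(\Map(U_+,-)\bigr)$ commutes with filtered colimits of simplicial presheaves, since simplicial homotopy groups of a filtered colimit of simplicial sets are computed by the colimit of homotopy groups.

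To sidestep the general commutation question, one can alternatively appeal to Theorem~\ref{thm:stabilitysln}: for fixed $i\geq 2$, the tower $\{\pi_i^{\AA^1}(\BSL_n)\}_n$ is eventually constant, already at $n=i+1$, and via Proposition~\ref{prop:slncover} the same is true of $\{\pi_i^{\AA^1}(\BGL_n)\}_n$. One therefore only needs the single identification $\pi_i^{\AA^1}(\BSL)\cong\pi_i^{\AA^1}(\BSL_n)$ for $n\geq i+1$, together with its $\BGL$ analog, in order to conclude.

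The main technical point — and essentially the only potential obstacle — is justifying that the canonical map $\BSL_n\to\BSL$ induces an isomorphism on $\pi_i^{\AA^1}$ once $n\geq i+1$. This is not a general feature of arbitrary filtered colimits in $\Spc_S^{\AA^1}$, but in our situation it follows from the compactness of $S^i\wedge U_+$ in the $\AA^1$-homotopy category together with the stability theorem, which ensures that the tower of Nisnevich presheaves $U\mapsto[S^i\wedge U_+,\BSL_n]_{\AA^1,\star}$ stabilizes stage-wise, and hence so does its sheafification.
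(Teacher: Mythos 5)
Your proposal is correct and uses the same ingredients the paper implicitly relies on: the paper offers no proof for this corollary, treating it as an immediate consequence of Theorem~\ref{thm:stabilitysln} and Proposition~\ref{prop:slncover} once $\SL$, $\GL$ (and hence $\BSL$, $\BGL$) are defined as filtered colimits. What you add beyond the paper's silent treatment is an honest accounting of the one subtle point, namely why $\pi_i^{\AA^1}$ of the colimit is the colimit of the $\pi_i^{\AA^1}$'s. Your second route --- eventual constancy of the tower via the stability theorem, so that $\BSL_n\to\BSL$ is a $\pi_i^{\AA^1}$-isomorphism once $n\geq i+1$ --- is the right thing, and your justification sketch is essentially what one would write in full: $\L_{\AA^1}\L_{\Nis}$ is a left Quillen composite so it commutes with colimits (and the proof of Theorem~\ref{thm:explicit} already shows filtered homotopy colimits of $\AA^1$-local objects are $\AA^1$-local), while $\pi_i$ of simplicial sets and Nisnevich sheafification both commute with filtered colimits. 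The only cosmetic remark is that one should arrange the maps $\BSL_n\to\BSL_{n+1}$ to be objectwise cofibrations (which block-sum inclusion does) so that the naive colimit is in fact a homotopy colimit; you gesture at this via "compactness of $S^i\wedge U_+$," which is morally the same fact. No gaps.
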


\section{Representing algebraic $K$-theory}\label{sec:K}

One reason to contemplate the $\AA^1$-homotopy category is the fact that many invariants of schemes are $\AA^1$-invariant; one important example is algebraic $K$-theory, at least for regular schemes. The goal of this section is to prove the representability of algebraic
$K$-theory in $\AA^1$-homotopy theory and identify its representing space when
the base scheme $S$ is regular and noetherian. 
One important consequence is an
identification of the $\AA^1$-homotopy sheaves of the classifying
spaces $\BGL_n$'s in the stable range, which plays a crucial role in the classification of
algebraic vector bundles via $\AA^1$-homotopy theory. Indeed, it turns out that the
relationship between algebraic $K$-theory and these classifying spaces is just like what
happens in topology --- the latter assembles into a representing space for the former. This
is perhaps a little surprising as one way to define the algebraic $K$-theory of rings is
via the complicated $+$-construction which alters the homotopy type of $\BGL_n(R)$ rather
drastically. The key insight is that the $\Sing^{\AA^1}$ construction is an alternative to
the $+$-construction in nice cases, which leads to the identification of the representing
space in $\Spc^{\AA^1}(S)_{\star}$.

Throughout this section, we let $S$ be a fixed \emph{regular} noetherian scheme of finite Krull dimension. An argument
using Weibel's homotopy invariant $K$-theory will yield a similar result over an arbitrary
noetherian base, but for homotopy $K$-theory; for details
see \cite{cisinski}.

\subsection{Representability of algebraic $K$-theory}\label{sub:rerpresentability}

The first thing we note is that representability of algebraic $K$-theory in the
$\AA^1$-homotopy category itself is a formal consequence of basic properties of algebraic
$K$-theory.

\begin{proposition} \label{prop:formal}
    Let $S$ be a regular noetherian scheme of finite Krull dimension. Then, the $K$-theory space functor
    $\Kscr$ is a fibrant object of $\Spc^{\AA^1}_{S,\star}$. In particular,
    there are natural isomorphisms
    \begin{equation*}
        \K_i(X)\iso[\Sigma^i_+X,\Kscr]_{\AA^1}
    \end{equation*}
    for all finitely presented smooth $S$-schemes $X$ and all $i\geq 0$.
\end{proposition}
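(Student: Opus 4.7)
The plan is to unpack the characterization of fibrancy given in the remark following the definition of $\Spc_S^{\AA^1}$. Recall that a simplicial presheaf $X$ on $\Sm_S$ is fibrant in $\Spc_S^{\AA^1}$ if and only if (i) $X(U)$ is a Kan complex for each $U$, (ii) $X$ satisfies Nisnevich hyperdescent, and (iii) the projection $U \times_S \AA^1 \to U$ induces a weak equivalence $X(U) \we X(U \times_S \AA^1)$ for all $U \in \Sm_S$. So the first task is to verify each of these three conditions for $\Kscr$, and the second is to deduce the representability isomorphism formally from fibrancy and the recipe for computing hom-sets in the homotopy category.

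For condition (i), whichever standard simplicial model of Quillen/Waldhausen/Thomason--Trobaugh $K$-theory one uses, $\Kscr(U)$ is manifestly a Kan complex (typically an infinite loop space), so this is immediate. Condition (iii) is Quillen's fundamental homotopy invariance theorem for algebraic $K$-theory of regular noetherian schemes: for regular $U$, the pullback $\K(U) \to \K(U \times \AA^1)$ is a weak equivalence. This is where the regularity of $S$ enters essentially, since $\Sm_S$ consists of regular schemes when $S$ is regular. Condition (ii), Nisnevich (hyper)descent, is the main substantive input. Here I would appeal to the theorem of Thomason--Trobaugh giving a Mayer--Vietoris property for $K$-theory with respect to elementary distinguished Nisnevich squares: for such a square, the square
\begin{equation*}
    \xymatrix{
        \Kscr(X) \ar[r]\ar[d] & \Kscr(V)\ar[d]\\
        \Kscr(U)\ar[r] & \Kscr(U\times_X V)
    }
\end{equation*}
is homotopy cartesian. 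By the Brown--Gersten-type criterion recorded in Proposition~\ref{prop:niscrit}, this verifies Nisnevich descent. Hyperdescent (rather than mere descent) then follows since $S$ is noetherian of finite Krull dimension, where a standard argument (using that the Nisnevich cohomological dimension is bounded) promotes descent to hyperdescent; alternatively one can cite the formulation in \cite{asok-hoyois-wendt} that the two notions coincide in this setting.

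For the representability isomorphism, once $\Kscr$ is fibrant in the simplicial model category $\Spc_S^{\AA^1}$, Recipe~\ref{recipe1} together with the fact that representable presheaves $X$ (and hence $X_+$) are cofibrant in the projective model structure on $\sPre(\Sm_S)$ lets us compute
\begin{equation*}
    [\Sigma^i_+ X,\Kscr]_{\AA^1} \;\iso\; \pi_0\,\map_{\Spc_{S,\star}^{\AA^1}}(\Sigma^i X_+,\Kscr) \;\iso\; \pi_i\,\map_{\Spc_{S,\star}^{\AA^1}}(X_+,\Kscr) \;\iso\; \pi_i\,\Kscr(X),
\end{equation*}
using the loops--suspension adjunction for the middle isomorphism and the tensor--hom adjunction evaluated at the terminal object together with fibrancy of $\Kscr$ for the last. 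Since $\pi_i\Kscr(X) = \K_i(X)$ by definition, the claim follows.

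The main obstacle is clearly the Nisnevich hyperdescent condition: it is the only non-formal ingredient and it relies on the deep Mayer--Vietoris/localization package of Thomason--Trobaugh and the regularity hypothesis on $S$ (so that all smooth $S$-schemes are regular and Quillen's homotopy invariance applies). Conditions (i) and (iii), as well as the second part of the proposition, are essentially formal given the framework established in Sections~\ref{sec:construction} and~\ref{sec:basic}.
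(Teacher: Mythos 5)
Your proof is correct and follows essentially the same route as the paper's: verify $\AA^1$-locality of $\Kscr$ by establishing Nisnevich (hyper)descent via the Thomason--Trobaugh excision/Mayer--Vietoris property together with the Brown--Gersten criterion of Proposition~\ref{prop:niscrit}, and $\AA^1$-invariance from homotopy invariance of $K$-theory on regular schemes, then deduce the representability isomorphism formally from fibrancy and the mapping-space adjunctions. The paper's version is terser, citing \cite{thomason-trobaugh}*{Proposition~6.8} and \cite{thomason-trobaugh}*{Theorem~10.8} directly and additionally noting that $\Rbf\Omega^\infty$ preserves homotopy limits to pass from the spectrum-level statements to the space-level ones, but the underlying architecture is the same.
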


\begin{proof}
    It is enough to show that $\mathcal{K}$ is an $\AA^1$-local object of 
    $\sPre(\Sm_S)_{\star}$. For this, we must show that $\mathcal{K}$ is both a Nisnevich-local
    object and satisfies $\AA^1$-homotopy invariance. The first property
    follows from~\cite{thomason-trobaugh}*{Proposition 6.8}. The second
    property is proved in~\cite{thomason-trobaugh}*{Theorem 10.8} for the
    $K$-theory spectra. Since $\Rbf\Omega^\infty$ is a Quillen right adjoint, it preserves
    homotopy limits, and hence $\mathcal{K}$ also satisfies descent.
\end{proof}

Therefore algebraic $K$-theory is indeed representable in $\Spc^{\AA^1}_{S,
\star}$ by an object which we denote by $\mathcal{K}$. This argument is purely
formal. Next, we need to get a better grasp of the representing object
$\mathcal{K}$. To do so, we need some review on $H$-spaces.






\begin{definition}
    Let $X$ be a simplicial set. We say that $X$ is an
    $H$-space if it has a map $m: X \times X \rightarrow X$ and a point $e \in
    X$ which is a homotopy identity, that is, the maps $m(e, -), m(-, e): X
    \rightarrow X$ are homotopic to the identity map.
\end{definition}

\begin{exercise}
    Prove that the fundamental group of any $H$-space is always abelian.
\end{exercise}

\begin{definition}
    Let $X$ be a homotopy commutative and associative $H$-space. A group
    completion of $X$ is an $H$-space $Y$ together with an $H$-map $X
    \rightarrow Y$ such that
    \begin{enumerate}
        \item $\pi_0(X) \rightarrow \pi_0(Y)$ is a group completion of the
            abelian monoid $\pi_0(X)$, and
        \item for any commutative ring $R$, the homomorphism $\H_*(X; R)
            \rightarrow\H_*(Y;R)$ is a localization of the graded commutative
            ring $\H_*(X;R)$ at the multiplicative subset $\pi_0(X) \subset
            \H_0(X, R)$.
    \end{enumerate}
    We denote by $X^{\gp}$ the group completion of $X$. 
\end{definition}

There is a simple criterion for checking if a commutative and
associative $H$-space is indeed its own group completion.

\begin{definition}
    Let $X$ be an $H$-space. We say that $X$ is group-like if the monoid $\pi_0(X)$ is a group.
\end{definition}

The following proposition is standard. See~\cite{mcduff-segal} for example. A
specific model of the group completion of $X$ is $\Omega\B X$ when $X$ is
homotopy commutative and associative~\cite{segal}.

\begin{proposition}
    Let $X$ be a homotopy commutative and associative $H$-space, then the group
    completion of $X$ is unique up to homotopy and further, if $X$ is
    group-like, then $X$ is weakly equivalent to its own group-completion.
\end{proposition}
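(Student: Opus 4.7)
The plan is to establish the two assertions in order, using the universality of a specific model of the group completion as the bridge.

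First, I would dispose of the second claim quickly, taking the uniqueness statement as already established. If $X$ is group-like, I claim the identity map $\id_X:X\to X$ is itself a group completion. The $\pi_0$-condition is immediate, since the group completion of a group is the identity. For the homology condition, note that every element $[x]\in\pi_0(X)\subset\H_0(X;R)$ is already a unit: the homotopy inverse $[x]^{-1}\in\pi_0(X)$, combined with the H-space multiplication, satisfies $[x]\cdot[x]^{-1}=[e]=1$ in $\H_0(X;R)$. Hence the localization map $\H_*(X;R)\to\H_*(X;R)[\pi_0(X)^{-1}]$ is an isomorphism, realized by $\id$. Then uniqueness forces $X\we X^\gp$.

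Second, for the uniqueness claim, I would use the model $X\to\Omega\B X$ highlighted in the preceding remark as a fixed reference point: the classical Quillen--Segal--McDuff theorem guarantees that this map is a group completion. Given any other group completion $f:X\to Y$, the target $Y$ is itself group-like (because $\pi_0(Y)$ is the group completion of the abelian monoid $\pi_0(X)$, hence a group), so the unit $Y\to\Omega\B Y$ is a weak equivalence. Applying the bar construction to the H-map $f$ produces $\B f:\B X\to\B Y$, and looping yields a comparison map
$$\phi:\Omega\B X\longrightarrow\Omega\B Y\we Y$$
compatible with the group completion maps from $X$.

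Third, to verify $\phi$ is a weak equivalence I would invoke the homological Whitehead theorem. Both $\Omega\B X$ and $Y$ are group-like homotopy commutative H-spaces; in particular, each is simple (its connected components are all weakly equivalent via H-space translation, $\pi_1$ is abelian, and it acts trivially on higher homotopy). By the definition of group completion, the maps $\H_*(X;\ZZ)\to\H_*(\Omega\B X;\ZZ)$ and $\H_*(X;\ZZ)\to\H_*(Y;\ZZ)$ are both localizations at $\pi_0(X)\subset\H_0(X;\ZZ)$. A diagram chase, using the compatibility built into $\phi$, shows $\phi_*$ identifies these two localizations, so it is an isomorphism. The homology Whitehead theorem for simple spaces then upgrades this to a weak equivalence.

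The main obstacle is the coherent construction of the comparison map $\phi$: to define $\B f$ one needs $f$ to respect multiplication up to coherent homotopy, and to identify $\phi_*$ with the canonical localization isomorphism one must carefully track compatibilities through the bar construction. Modern treatments finesse this by rigidifying $X$ to a topological monoid or $E_\infty$-space; accepting this technical input, the rest of the argument is formal.
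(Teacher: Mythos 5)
The paper does not actually prove this proposition---it simply cites McDuff--Segal and Segal and points to $\Omega\B X$ as a model for the group completion---so your write-up is a genuine elaboration rather than a competing argument. Your overall strategy (use $\Omega\B X$ as the reference model, compare any other group completion to it via the bar construction, and close with the homological Whitehead theorem for simple spaces) is exactly the standard argument behind those citations, and the homology computations in your first step are correct.

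There is, however, a circularity in your organization that you should iron out. You prove the group-like case \emph{assuming} uniqueness, and then in the uniqueness argument you invoke the group-like case to assert that $Y\to\Omega\B Y$ is a weak equivalence. Taken literally this is circular. The fix is cheap: establish \emph{directly} that $Y\to\Omega\B Y$ is a weak equivalence whenever $Y$ is group-like, before discussing uniqueness at all. The ingredients are already in your Part~1 and Part~3: the map is a group completion, so it induces a bijection on $\pi_0$ (group completion of a group is the group) and realizes $\H_*(Y;R)\to\H_*(Y;R)[\pi_0(Y)^{-1}]$, which is already an isomorphism because $\pi_0(Y)\subset\H_0(Y;R)$ consists of units; since both spaces are group-like H-spaces, each component is simple, and the homology Whitehead theorem finishes it. With that lemma in hand, your Part~2 and Part~3 go through without appealing to Part~1, and Part~1 then follows from uniqueness. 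You also correctly flag that passing $f$ through the bar construction requires more than a bare H-map; this is a real issue, and the usual resolution (rectification to a strict monoid or $A_\infty$/$\Gamma$-space input, as in McDuff--Segal's actual setting) is the right thing to gesture at.
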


\begin{example}\label{ex:product}
    Let $R$ be an associative ring. We have maps $m: \GL_n(R) \times
    \GL_m(R) \rightarrow \GL_{m+n}(R)$ defined by block sum.
    This map is a group homomorphism and thus induces a
    map $$m: \left(\coprod_{n \geq 0} \BGL_n(R)\right)^{\times 2} \rightarrow \coprod_{n
    \geq 0} \BGL_n(R).$$ One easily checks that this is indeed a homotopy
    associative and homotopy commutative $H$-space.
\end{example}

\begin{remark}
    On the other hand we have the group $\GL(R) = \colim\,\GL_d(R)$ where the
    transition maps are induced by adding a single entry ``1" at the bottom right
    corner. We can take $\BGL(R)$, the classifying space of $R$. This space is
    \emph{not} an $H$-space: its fundamental group is $\GL(R)$ which is not an
    abelian group. For it to have any chance of being an $H$-space we need to
    perform the $+$-construction of Quillen which kills off a perfect normal
    subgroup of the fundamental group of a space and does not alter homology. For details see
    \cite{weibel-kbook}*{Section IV.1}. One key property of the $+$-construction
    that we will need is the following theorem of Quillen.
\end{remark}

\begin{theorem}[Quillen]\label{thm:h-space}
    Let $R$ be an associative ring with unit, the map $i: \BGL(R) \rightarrow
    \BGL(R)^+$ is universal for maps into $H$-spaces. In other words for each
    map $f: \BGL(R) \rightarrow H$ where $H$ is an $H$-space, there is a map
    $g: \BGL(R)^+ \rightarrow H$ such that $f \we g\circ i$ and the induced map
    on homotopy groups is independent of $g$.
\end{theorem}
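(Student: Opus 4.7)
The plan is to reduce the theorem to the standard universal property of the $+$-construction with respect to a perfect normal subgroup. First I would recall that $\BGL(R)^+$ is obtained from $\BGL(R)$ via the $+$-construction applied to the subgroup $E(R)\subseteq\GL(R)=\pi_1(\BGL(R))$ generated by the elementary matrices. Two facts about $E(R)$ are essential: it is normal in $\GL(R)$ (this uses the ``stable'' nature of $\GL(R)=\colim_n\GL_n(R)$), and it is perfect, i.e.\ $E(R)=[E(R),E(R)]$. The latter is a form of the Whitehead lemma: an elementary matrix $e_{ij}(a)$ can be written as a commutator of elementary matrices $[e_{ik}(a),e_{kj}(1)]$ once the stable direction is available.

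Next I would use the hypothesis that $H$ is an $H$-space. By the preceding exercise, $\pi_1(H,*)$ is abelian for any basepoint. Therefore, given $f:\BGL(R)\to H$, the composition $\pi_1(f):\GL(R)\to\pi_1(H)$ sends $E(R)=[E(R),E(R)]$ to $[\pi_1(f)E(R),\pi_1(f)E(R)]\subseteq[\pi_1(H),\pi_1(H)]=0$. Hence $f$ kills $E(R)$ on $\pi_1$, and we are reduced to showing that the canonical map $i:\BGL(R)\to\BGL(R)^+$ is \emph{universal} among maps from $\BGL(R)$ that annihilate $E(R)$ on $\pi_1$.

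The key step, and the main obstacle, is this universal property of the $+$-construction. The approach is via obstruction theory using the explicit cell model: $\BGL(R)^+$ is built from $\BGL(R)$ by first attaching $2$-cells along a set of generators of $E(R)\subseteq\pi_1(\BGL(R))$ to form an intermediate space $Y$ with $\pi_1(Y)=K_1(R)=\GL(R)/E(R)$, and then attaching $3$-cells to $Y$ to cancel the new classes introduced in $H_2$, exploiting precisely that $E(R)$ is perfect so these classes are spherical but homologically trivial. Given $f:\BGL(R)\to H$ with $\pi_1(f)$ killing $E(R)$, the attaching map of each $2$-cell is nullhomotopic after composing with $f$, so $f$ extends to $\tilde f:Y\to H$. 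The obstruction to extending further across each $3$-cell lies in $\pi_2(H)$; by choosing the $3$-cells carefully (or equivalently, by using that the fiber of $\BGL(R)\to\BGL(R)^+$ is acyclic) one shows these obstructions can be arranged to vanish, producing $g:\BGL(R)^+\to H$ with $g\circ i\simeq f$.

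Finally, for the assertion that the induced map on homotopy groups is independent of the choice of $g$: any two such extensions $g,g'$ agree on the image of $i_*:\pi_n(\BGL(R))\to\pi_n(\BGL(R)^+)$. Since $i$ is acyclic and induces a surjection on $\pi_1$ (with kernel $E(R)$), the Hurewicz-type comparison shows that $i_*$ is surjective on all $\pi_n$ for $n\geq 1$ after suitable identification---more precisely, every element of $\pi_n(\BGL(R)^+)$ becomes detectable through compositions traced back to $\BGL(R)$, so $\pi_n(g)=\pi_n(g')$. This pins down $\pi_n(g)$ uniquely even though $g$ itself may only be well-defined up to an ambiguity governed by $H^*(\BGL(R)^+,\BGL(R);\pi_*H)$, which vanishes in the acyclic range.
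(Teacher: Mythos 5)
The paper does not give a proof of this theorem; it cites Weibel's $K$-book (Section IV.1, Theorem 1.8). Your proposal is therefore a genuine attempt where the paper has none, and the outline---reduce to the $\pi_1$-condition using perfectness of $E(R)$ and commutativity of $\pi_1$ of an $H$-space, then run obstruction theory against the explicit two- and three-cell model of $\BGL(R)^+$---is indeed the standard route.

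However there is a concrete error in the uniqueness paragraph. You assert that because $i$ is acyclic, ``$i_*$ is surjective on all $\pi_n$ for $n\geq 1$.'' This is false, and dramatically so: $\BGL(R)$ is a $K(\GL(R),1)$, so $\pi_n(\BGL(R))=0$ for all $n\geq 2$, while $\pi_n(\BGL(R)^+)=K_n(R)$ is typically very far from zero. The whole point of the $+$-construction is that it creates new higher homotopy groups out of homology. Acyclicity says nothing about homotopy groups directly. The correct justification for the statement about induced maps on homotopy groups goes through the vanishing of the relative cohomology $H^*(\BGL(R)^+,\BGL(R);\pi_*(H))$ (which does follow from acyclicity via the universal coefficient theorem, together with the fact that $H$-spaces are simple, so the coefficients are untwisted). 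You gesture at this in your final sentence but the preceding sentence about surjectivity of $i_*$ should be deleted as it is simply wrong, and the cohomological vanishing should carry the whole weight.

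The existence step is also thinner than it looks. Saying the obstructions in $\pi_2(H)$ to extending over $3$-cells ``can be arranged to vanish'' by ``choosing the $3$-cells carefully'' is precisely the point one has to prove, and it is not automatic from acyclicity of the fiber alone; the $H$-space structure on $H$ enters in a specific way. A cleaner route (close to what Weibel does) is to observe that the homotopy fiber $F$ of $i$ is acyclic, and that any map from an acyclic space to a (grouplike, or at least connected) $H$-space $H$ is nullhomotopic---because $\Sigma F$ is simply connected with trivial reduced homology, hence contractible, so $[F,\Omega B H]\cong[\Sigma F,BH]=\ast$. That one lemma, combined with the fibration sequence, gives both existence and uniqueness of the lift $g$ in one stroke, avoiding the cell-by-cell bookkeeping.
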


\begin{proof}
    See \cite{weibel-kbook}*{Section IV.1 Theorem 1.8} and the references therein.
\end{proof}

Having this construction, the two spaces we discussed are intimately related.

\begin{theorem}[Quillen]\label{thm:gp-compl}
    Let $R$ be an associative ring with unit, then the group completion of
    $\coprod \BGL_n(R)$ is weakly equivalent to $\ZZ \times \BGL(R)^{+}$.
\end{theorem}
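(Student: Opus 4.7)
The plan is to show that the natural map
\[
\phi:\coprod_{n\geq 0}\BGL_n(R)\longrightarrow \ZZ\times\BGL(R)^{+}
\]
is a group completion. Since group completions are unique up to weak equivalence, this will suffice. The map $\phi$ is built by sending the $n$-th component via the stabilization $\BGL_n(R)\hookrightarrow\BGL(R)$ composed with the plus-construction map to $\BGL(R)^{+}$, and recording the index $n$ in the $\ZZ$-factor.

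First I would verify that the target is itself group-like, so that it coincides with its own group completion. The $H$-space structure on $\BGL(R)^{+}$ is produced by applying the universal property of Theorem~\ref{thm:h-space} to the block-sum map $\BGL(R)\times\BGL(R)\to\BGL(R)\to\BGL(R)^{+}$ (using that $(\BGL(R)\times\BGL(R))^{+}\we\BGL(R)^{+}\times\BGL(R)^{+}$). Since $\pi_1(\BGL(R)^{+})=\K_1(R)$ is abelian and $\pi_0(\BGL(R)^{+})=0$, the $H$-space $\ZZ\times\BGL(R)^{+}$ is group-like, homotopy commutative, and homotopy associative. Next, on $\pi_0$, $\phi$ realizes the inclusion $\NN\hookrightarrow\ZZ$, which is the abelian group completion of $\NN$. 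The remaining condition to check is the homological localization condition in the definition of a group completion.

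For the homology condition, I would argue as follows. The Pontryagin product on $H_*(\coprod_n\BGL_n(R))$ coming from block sum makes this into a graded ring over $H_0=\ZZ[\NN]$, and the element $[1]\in H_0(\BGL_1(R))$ acts on $H_*(\BGL_n(R))$ as the map induced by the stabilization $\BGL_n(R)\to\BGL_{n+1}(R)$ (the two differ by conjugation, which acts trivially on homology). Inverting $\NN$ therefore identifies
\[
H_*\Bigl(\coprod_{n\geq 0}\BGL_n(R)\Bigr)\bigl[\NN^{-1}\bigr]\;\iso\;\ZZ[\NN^{-1}]\otimes_{\ZZ[\NN]}\bigoplus_{n\geq 0}H_*(\BGL_n(R))\;\iso\;\bigoplus_{n\in\ZZ}\colim_n H_*(\BGL_n(R)).
\]
On the other hand, $H_*(\ZZ\times\BGL(R)^{+})=\bigoplus_{n\in\ZZ}H_*(\BGL(R)^{+})$, and since the plus construction preserves homology and filtered colimits commute with homology, $H_*(\BGL(R)^{+})=H_*(\BGL(R))=\colim_n H_*(\BGL_n(R))$. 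A diagram chase shows $\phi_*$ realizes the isomorphism. Combined with Steps 1 and 2, this verifies both conditions in the definition of a group completion, completing the proof.

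The main obstacle I foresee is the verification that $\phi$ is genuinely an $H$-map (not merely a map), since block sum on $\coprod_n\BGL_n(R)$ and the $H$-structure on $\BGL(R)^{+}$ produced by Theorem~\ref{thm:h-space} are only defined up to homotopy and up to conjugation on stabilizations; carrying out this check rigorously requires invoking that the universal property of the plus construction is compatible enough to render the relevant square commutative up to homotopy, or alternatively appealing directly to the McDuff--Segal group completion theorem to identify $\Omega B(\coprod\BGL_n(R))$ with a group completion, then constructing the comparison map to $\ZZ\times\BGL(R)^{+}$ by adjunction and checking it is a weak equivalence using that both sides are simple and have matching $\pi_0$ and homology.
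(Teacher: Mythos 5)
The paper does not actually prove this theorem; it simply cites \cite{weibel-kbook}. Your proposal supplies a genuine argument, and it is the standard one (essentially Quillen's, as presented in Weibel's K-book and in Loday's thesis), so it is a reasonable thing to insert where the paper defers to a reference.

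Your outline is correct. The verification that $\ZZ\times\BGL(R)^+$ is group-like, the identification of the $\pi_0$-map with $\NN\hookrightarrow\ZZ$, and the homology computation via Pontryagin products are all on track. In particular, you correctly note the key point that multiplication by $[1]\in H_0(\BGL_1(R))$ agrees with the stabilization map on homology because the two differ by a conjugation, which induces the identity on the homology of a classifying space; this is what makes the localization at $\NN\subset H_0$ compute $\bigoplus_{\ZZ}\colim_n H_*(\BGL_n(R))$. The identification $H_*(\BGL(R)^+;R')\iso H_*(\BGL(R);R')\iso\colim_n H_*(\BGL_n(R);R')$ is right, since the plus construction is a homology isomorphism and homology commutes with the filtered colimit defining $\BGL(R)$.

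Two points deserve tightening. First, you assert but do not argue that the $H$-structure on $\BGL(R)^+$ produced by Theorem~\ref{thm:h-space} is homotopy commutative and associative; abelian-ness of $\pi_1$ does not by itself give this. The standard argument is again via conjugation: any two block-sum maps $\GL_n\times\GL_m\to\GL_{n+m}$ (or the two orders of summands, or the two bracketings) differ by inner automorphisms of $\GL_N$, and these become homotopic to the identity on $\BGL(R)^+$ because they act trivially on homology and $\BGL(R)^+$ is a simple space, so one may apply the Whitehead theorem after plus-ing. Second, the gap you flag — that $\phi$ be an $H$-map — is real but dispatched by exactly the same conjugation trick; alternatively, routing through the McDuff--Segal theorem (which the paper also cites) lets you take $Y=\Omega\B\bigl(\coprod_n\BGL_n(R)\bigr)$ as the group completion and then compare to $\ZZ\times\BGL(R)^+$ by a homology argument plus simplicity, sidestepping the explicit $H$-map verification. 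With either patch the argument is complete.
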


See \cite{weibel-kbook} for a proof. The plus construction alters the homotopy
type of a space rather drastically. There are other models for the plus
construction like Segal's $\Omega\B$ construction mentioned above. The $\Sing^{\AA^1}$-construction turns out to
provide another model, as we explain in the next section.

\subsection{Applications to representability}

The following theorem was established by Morel and
Voevodsky~\cite{morel-voevodsky}, although a gap was pointed out by Schlichting
and Tripathi~\cite{schlichting-tripathi}, who also provided a fix.

\begin{theorem}\label{thm:krep}
    Let $S$ be a regular noetherian scheme of finite Krull dimension.
    The natural map $\ZZ\times\BGL\rightarrow\Kscr$ in $\Spc_S^{\AA^1}$ is an $\AA^1$-local weak
   equivalence.
\end{theorem}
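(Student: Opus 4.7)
The plan is to reduce the question to a computation on smooth affine sections, using the explicit model for $\AA^1$-localization from Theorem~\ref{thm:explicit}. Since Proposition~\ref{prop:formal} already shows that $\Kscr$ is fibrant in $\Spc_S^{\AA^1}$, the map $\ZZ\times\BGL\to\Kscr$ factors up to homotopy through the $\AA^1$-Nisnevich localization of the source, and it suffices to prove that the induced map
$$\L_{\AA^1}\L_{\Nis}(\ZZ\times\BGL)\longrightarrow\Kscr$$
is a pointwise weak equivalence of simplicial presheaves. Both sides satisfy Nisnevich hyperdescent --- the right-hand side by Thomason-Trobaugh (invoked in the proof of Proposition~\ref{prop:formal}), the left by construction of the localization --- so by the affine Nisnevich-excision principle recalled at the end of Section~\ref{sub:nis} it suffices to check the map on sections over smooth affine $S$-schemes $U=\Spec R$.

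On such an $R$, Theorem~\ref{thm:explicit} replaces the $\AA^1$-localization by the iterated construction $(\L_{\Nis}\Sing^{\AA^1})^{\circ\NN}$, so the first concrete object to analyze is
$$\Sing^{\AA^1}(\BGL)(R)\;=\;|\BGL(R[\Delta^\bullet])|,\qquad R[\Delta^n]=R[x_0,\ldots,x_n]/(x_0+\cdots+x_n-1).$$
This realization is classical: for $n\geq 1$ its homotopy groups compute the Karoubi-Villamayor $K$-groups $KV_n(R)$, and for $R$ regular noetherian the natural map $KV_n(R)\to K_n(R)$ is an isomorphism (this is the key $K$-theoretic input, reflecting the fact that Quillen $K$-theory is already $\AA^1$-invariant on regular schemes). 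Combined with the discrete $\ZZ$ factor capturing $K_0$ after group completion, the block-sum $H$-space structure (Example~\ref{ex:product}) and Theorem~\ref{thm:h-space} ensure that the comparison map into $\Kscr(R)\we\ZZ\times\BGL(R)^+$ (Theorem~\ref{thm:gp-compl}) is a weak equivalence, because $|\BGL(R[\Delta^\bullet])|$ already has abelian fundamental group and coincides with its own plus construction.

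The remaining and most delicate step is to verify that $\Sing^{\AA^1}(\BGL)$ already satisfies Nisnevich excision on smooth affines, so that the transfinite iteration in Theorem~\ref{thm:explicit} does not further disturb the sections. This is precisely where the original argument of Morel and Voevodsky~\cite{morel-voevodsky} has a gap identified and repaired by Schlichting-Tripathi~\cite{schlichting-tripathi}; the fix works at the level of nonconnective $K$-theory spectra, carefully comparing the Mayer-Vietoris square of a Nisnevich elementary distinguished square with the one produced by $\Sing^{\AA^1}$, exploiting that Thomason-Trobaugh $K$-theory already satisfies Nisnevich descent and $\AA^1$-invariance for regular noetherian schemes. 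This descent-compatibility check is the main obstacle; once it is in place, the above identifications on sections assemble into the required pointwise weak equivalence.
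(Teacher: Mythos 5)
Your proposal is correct and follows the same overall architecture as the paper's proof: both reduce to a local verification using the explicit model $(\L_{\Nis}\Sing^{\AA^1})^{\circ\NN}$ for $\AA^1$-localization, both rely on the block-sum $H$-space structure to replace the plus construction with the naive singular construction, and both lean on the regularity of $S$ (via $\AA^1$-invariance of Thomason--Trobaugh $K$-theory) and flag the Schlichting--Tripathi fix for the descent-compatibility of $\Sing^{\AA^1}\BGL$.

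The one genuine divergence is in the $K$-theoretic identification. The paper works with two small, self-contained lemmas: first that $\Sing^{\AA^1}\BGL(R)$ is a group-like $H$-space, and second that the induced map $\Sing^{\AA^1}\BGL(R)\to\Sing^{\AA^1}(\BGL(R)^{+})$ is a weak equivalence (a homology equivalence between nilpotent spaces). Then the fact that $\mathcal{K}$ is $\AA^1$-invariant on smooth affines over regular $S$ collapses $|K_0(R)\times\BGL(R[\Delta^\bullet])^{+}|$ to $K_0(R)\times\BGL(R)^{+}\we\mathcal{K}(R)$. You instead cite the identification $\pi_n|\BGL(R[\Delta^\bullet])|\iso KV_n(R)$ together with the classical theorem $KV_n(R)\iso K_n(R)$ for $R$ regular noetherian. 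This is a named-theorem shortcut that encapsulates exactly the same content, so nothing is wrong with it, but be aware that the ``coincides with its own plus construction'' claim is doing real work: it needs the $H$-space argument (an abelian $\pi_1$ kills the possibility of a nontrivial perfect subgroup, so the plus construction is trivial), and the identification with $\BGL(R)^{+}$ still requires the homotopy-invariance of $K$-theory in the regular case. In other words, your Karoubi--Villamayor step is not an independent shortcut; it is the same ingredients re-packaged under a classical name.

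One further cosmetic note: the paper checks on Nisnevich stalks (henselian local rings), while you check on sections over all smooth affines, appealing to the affine Nisnevich excision principle. Both reductions are valid for maps between Nisnevich-local objects, but the justification you give is slightly misplaced --- what you actually need is that both source and target satisfy Nisnevich descent and that every smooth $S$-scheme admits a (hyper)cover by affines, so that the pointwise check on affines determines the map up to local weak equivalence. The affine Nisnevich excision of Asok--Hoyois--Wendt is adjacent but not literally what is being invoked.
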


There is an $\AA^1$-local weak equivalence $$\left(\coprod_n\BGL_n\right)^{\gp}\we\Kscr$$ from
Theorem~\ref{thm:gp-compl} since the $+$-construction is one way to obtain
$\K(R)$ by Quillen. (Note that sheafification takes care of the fact
that there might be non-trivial finitely generated projective $R$-modules.)
Hence, to prove the theorem, we must
construct an $\AA^1$-weak equivalence
$$\ZZ\times\BGL\we\left(\coprod_n\BGL_n\right)^{\gp}.$$
We have already mentioned that $\BGL$ is not an $H$-space, so that group
completion will not formally lead to a weak equivalence. It is rather the
$\Sing^{\AA^1}$-construction which leads to an $H$-space structure on the
$\AA^1$-localization of $\ZZ\times\BGL$.

\begin{lemma}
    If $R$ is a commutative ring, then $\Sing^{\AA^1}\BGL(R)$ is an $H$-space.
\end{lemma}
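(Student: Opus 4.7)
The plan is to build the $H$-space multiplication on $\Sing^{\AA^1}\BGL(R)$ from block sum of matrices, repairing the failure of strict compatibility with stabilization by exactly the kind of $\AA^1$-homotopies used in the proof of Proposition~\ref{prop:naivea1sln}. Writing $\BGL = \colim_n \BGL_n$ via the stabilization $A \mapsto A \oplus 1$, the block sum homomorphism $\oplus: \GL_n \times \GL_m \to \GL_{n+m}$ induces maps $\mu_{n,m}: \BGL_n \times \BGL_m \to \BGL_{n+m}$, and the candidate multiplication is their colimit $\mu: \BGL \times \BGL \to \BGL$. The first observation is that the compatibility squares
\[
\xymatrix@R=1.1em@C=3em{
\BGL_n \times \BGL_m \ar[r]^-{\mu_{n,m}}\ar[d] & \BGL_{n+m}\ar[d]\\
\BGL_{n+1}\times \BGL_{m+1}\ar[r]_-{\mu_{n+1,m+1}} & \BGL_{n+m+2}
}
\]
do not commute on the nose: the two composites yield $A \oplus B \oplus 1 \oplus 1$ and $A \oplus 1 \oplus B \oplus 1$, which are conjugate by a block-permutation matrix $P$. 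So $\mu$ is only well-defined in the colimit modulo the conjugation self-maps $Bc_P$.

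The main obstacle, and the heart of the argument, is to show that after applying $\Sing^{\AA^1}$ the self-map $Bc_P$ becomes naively $\AA^1$-homotopic to the identity. Stabilizing further if needed, we may arrange $P \in \SL_N(R)$; then $P$ is a product of elementary matrices $E_{ij}(\pm 1)$, and each $E_{ij}(a)$ sits in the explicit $\AA^1$-path $\gamma_{ij}(t) = E_{ij}(ta)$ in $\SL_N(R[t])$ from $I$ to $E_{ij}(a)$, exactly as in the proof of Proposition~\ref{prop:naivea1sln}. Such a path of group elements defines a map $\GL_N \times \AA^1 \to \GL_N$ that is a group homomorphism in the first coordinate for each fixed $t$; applying the bar construction produces an $\AA^1$-homotopy $\id_{\BGL_N} \simeq Bc_{E_{ij}(a)}$. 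Concatenating across a factorization of $P$ gives the required homotopy from $\id$ to $Bc_P$. The unit axiom is a special case of the same argument: $\mu(-, I)$ is literally the stabilization map, hence the identity on $\colim_n \BGL_n$, while $\mu(I, -)$ sends $A \mapsto I \oplus A$ and so differs from the identity by exactly such a block-swap conjugation.

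The remaining task is bookkeeping: one must package the $\AA^1$-homotopies above into honest simplices of the mapping presheaves, so as to obtain a genuine morphism of simplicial presheaves rather than a zig-zag of maps. I would do this by realizing each $\AA^1$-path as a vertex of $\GL_N(R[\Delta^1])$ contributing to the bisimplicial structure of $\Sing^{\AA^1}\BGL(R)$, and then exploiting that $\Sing^{\AA^1}$ preserves finite products and filtered colimits so that the levelwise block sum maps $\mu_{n,m}$ assemble coherently after $\Sing^{\AA^1}$. The essential content lies in the elementary-matrix calculation above; the rest is formal.
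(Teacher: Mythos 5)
Your central observation is correct and is surely the key input: the failure of block sum to commute with stabilization is measured by conjugation by a permutation matrix, and such conjugations become naively $\AA^1$-homotopic to the identity via elementary-matrix paths exactly as in the proof of Proposition~\ref{prop:naivea1sln}. The paper offers no proof of the lemma, only a citation to \cite{weibel-kbook}*{Exercise~IV.11.9}, so there is no canonical argument to compare against, but the elementary-matrix trick you use is surely the essential ingredient.

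The gap is precisely in what you label ``bookkeeping'' and defer, and it is the genuine difficulty. Your candidate $\mu := \colim_{n,m}\mu_{n,m}$ is not actually defined: a colimit map exists only if the stabilization squares commute on the nose, and as you yourself observe they do not; they commute only up to an $\AA^1$-homotopy. Extracting a single morphism out of a homotopy-commutative diagram of this sort requires choosing all the homotopies (and higher homotopies between their composites) compatibly for every $n,m$, and that coherence does not come for free: the permutations $P_{n,m}$ and the chosen elementary-matrix factorizations vary with $n,m$, so the resulting naive homotopies $H_{n,m}$ need not restrict compatibly along stabilization. Appealing to the fact that $\Sing^{\AA^1}$ preserves filtered colimits does not resolve this --- it gives $\Sing^{\AA^1}\BGL \simeq \colim_n\Sing^{\AA^1}\BGL_n$, but a map \emph{out of} the colimit still requires a strictly compatible family of maps, which is exactly what is missing. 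A cleaner route is to make $\mu$ an honest homomorphism from the start via commuting ``odd'' and ``even'' embeddings $i,j:\GL(R)\to\GL(R)$, setting $\mu(A,B)=i(A)j(B)$; the burden then shifts entirely to proving $Bi\simeq\id$ on the colimit, where the same coherence problem reappears (the levelwise conjugators $P_n$ differ on $\GL_n$ by an element of the centralizer, so they are not nested) and must be handled either by a careful compatible choice of conjugators and $\AA^1$-paths, or by arguing that $Bi$ and $\id$ agree on homology and invoking a nilpotence/simplicity argument as is done for the $+$-construction.
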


\begin{proof}
    See~\cite{weibel-kbook}*{Exercise IV.11.9}.
\end{proof}

\begin{proposition}
    If $R$ is a commutative ring,
    the natural map $$\Sing^{\AA^1}\BGL(R) \rightarrow
    \Sing^{\AA^1}\BGL(R)^+$$ is a weak equivalence.
\end{proposition}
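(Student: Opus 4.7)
The plan is to verify two properties of the map $\phi := \Sing^{\AA^1}(\iota)$, where $\iota \colon \BGL(R) \to \BGL(R)^+$ is Quillen's plus construction: (i) that $\phi$ is an integral homology equivalence, and (ii) that both its source and its target are connected $H$-spaces. A classical Whitehead-style theorem (see, e.g.,~\cite{bousfield-kan}) then asserts that a homology equivalence between connected $H$-spaces --- which are simple, hence nilpotent --- is a weak equivalence.

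For (i): by the defining property of the $+$-construction, $\BGL(S) \to \BGL(S)^+$ is an integral homology isomorphism for every associative ring $S$. Applying this with $S = R[\Delta^n] = R[x_1, \ldots, x_n]$ for each $n \geq 0$ yields a morphism of bisimplicial sets $\BGL(R[\Delta^\bullet]) \to \BGL(R[\Delta^\bullet])^+$ that is level-wise a homology equivalence. The diagonal realization preserves such equivalences (via the first spectral sequence for the homology of a bisimplicial set, whose $E^1$-page records the pointwise homology), so passing to realizations yields $\phi$ as a homology equivalence.

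For (ii): the source $\Sing^{\AA^1}\BGL(R)$ carries an $H$-space structure by the preceding lemma. For the target, $\BGL(R)^+$ is itself a connected $H$-space (for instance because $\ZZ \times \BGL(R)^+$ is the infinite loop space representing algebraic $K$-theory, by Theorem~\ref{thm:gp-compl} and Proposition~\ref{prop:formal}); applying $\Sing^{\AA^1}$ then transfers this structure to $\Sing^{\AA^1}\BGL(R)^+$, using that $\Sing^{\AA^1}$ commutes with finite products up to weak equivalence. I expect the main subtlety to be this last transfer of the $H$-space structure through $\Sing^{\AA^1}$, and in particular confirming that the resulting target satisfies the hypotheses needed for the nilpotent Whitehead theorem; once granted, the argument is formal.
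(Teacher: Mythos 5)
Your argument is correct and takes exactly the same route as the paper's proof: level-wise homology equivalence of $\BGL(\Delta^n_R)\to\BGL(\Delta^n_R)^+$ passes to a homology equivalence on diagonals, and nilpotence of both sides (from the connected $H$-space structure) upgrades this to a weak equivalence. You simply spell out the two steps the paper leaves implicit --- the bisimplicial homology spectral sequence for step (i), and the transfer of the $H$-space structure through $\Sing^{\AA^1}$ via preservation of finite products for step (ii) --- both of which are sound.
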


\begin{proof}
    The map is a homology equivalence since each
    $\BGL(\Delta^n_R)\rightarrow\BGL(\Delta^n_R)^+$ is a homology equivalence.
    Since both sides are group-like $H$-spaces they are nilpotent, so the fact
    that the map is a homology equivalence implies that it is a weak homotopy
    equivalence.
\end{proof}





\begin{proof}[Proof of Theorem~\ref{thm:krep}]
    More precisely, we claim that we have a weak equivalence of
    simplicial presheaves $\L_{\AA^1}\L_{\Nis} \left(\ZZ\times\BGL\right) \rightarrow
    \mathcal{K}$. Since both simplicial presheaves are, in particular, Nisnevich local we need
    only check on stalks. Therefore we need only check that
    $\Sing^{\AA^1}\left(\ZZ\times\BGL(R)\right) \simeq \mathcal{K}(R)$ for $R$
    a regular noetherian local ring because after this, further application of
    $\Sing^{\AA^1}$ does not change the stalk by $\AA^1$-homotopy invariance. but
    this follows from our work
    above since $\mathcal{K}(R) \simeq\K_0(R)\times\BGL(R)^+$. Since
    $\mathcal{K}$ is $\AA^1$-invariant on smooth affine schemes, the natural
    map $$|\K_0(R) \times\BGL(\Delta^{\bullet}_R)^+| \rightarrow
    \K_0(R) \times\BGL(R)^+$$ is a weak equivalence, and the result follows
    from Quillen.
\end{proof}

\begin{remark} One can further prove that $\BGL$ is represented in
    $\Spc^{\AA^1}_{S, \star}$ by the Grassmanian schemes. In order to do
    this, Morel and Voevodsky used an elegant model for classifying spaces
    in~\cite{morel-voevodsky}, also considered by
    Totaro~\cite{totaro}. 
\end{remark}

As a corollary, we get a calculation of the stable range of the $\AA^1$-homotopy sheaves of $\BGL_n$ and $\BSL_n$. 

\begin{corollary}
    Let $i>1$ and $n\geq 1$. Then if $i \leq n-1$, we have isomorphisms
    $$\pi_i^{\AA^1}\BSL_n\iso\pi_{i}^{\AA^1}(\BGL_{n})
    \iso \K_i.$$
\end{corollary}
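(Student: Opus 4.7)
The plan is to chain together the results already established in this section and the preceding ones. First, I would use Proposition~\ref{prop:slncover} to identify $\pi_i^{\AA^1}(\BSL_n)$ with $\pi_i^{\AA^1}(\BGL_n)$ for $i>1$; this handles the first isomorphism uniformly in $n$, so the remaining work is to identify this common group with $\K_i$ under the hypothesis $i \leq n-1$.

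Next, I would invoke the stability theorem (Theorem~\ref{thm:stabilitysln}) to show that for $i \leq n-1$ the map $\pi_i^{\AA^1}(\BSL_n) \to \pi_i^{\AA^1}(\BSL_{n+1})$ is an isomorphism. Iterating, $\pi_i^{\AA^1}(\BSL_n)$ agrees with the stable value obtained by taking the colimit over $n$, i.e.\ with $\pi_i^{\AA^1}(\BSL)$ where $\BSL = \colim_n \BSL_n$. Then Corollary~\ref{cor:stabilitygln} gives $\pi_i^{\AA^1}(\BSL) \iso \pi_i^{\AA^1}(\BGL)$ for $i \geq 2$ (the hypothesis $i \leq n-1$ with $i>1$ forces $i \geq 2$, so this applies).

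Finally, Theorem~\ref{thm:krep} identifies $\ZZ \times \BGL$ with the $K$-theory space $\Kscr$ in $\Spc_S^{\AA^1}$, and so $\pi_i^{\AA^1}(\BGL) \iso \pi_i^{\AA^1}(\Kscr)$ for $i \geq 1$ (the disjoint $\ZZ$ only affects $\pi_0$). Combined with the representability statement in Proposition~\ref{prop:formal} applied to $X = \Spec S$, which gives $\pi_i^{\AA^1}(\Kscr) \iso \K_i$, this completes the identification.

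The main point requiring care is the passage to the colimit in the Nisnevich sheaf of homotopy groups: one must know that $\pi_i^{\AA^1}$ of a filtered homotopy colimit of presheaves (here, $\BSL_n \to \BSL$) is the colimit of the $\pi_i^{\AA^1}$'s. Thanks to the stability isomorphisms for $i \leq n-1$, this colimit is already attained at the finite stage, so no genuinely new input is needed --- one can simply work with the stable value $\pi_i^{\AA^1}(\BSL_n)$ at large enough $n$. Everything else is assembling previously established results, and I would expect no obstacle beyond keeping the range of indices bookkeeping straight.
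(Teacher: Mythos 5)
Your argument is correct and follows the same path the paper intends: Proposition~\ref{prop:slncover} for $\pi_i^{\AA^1}\BSL_n \iso \pi_i^{\AA^1}\BGL_n$ when $i>1$, Theorem~\ref{thm:stabilitysln} to pass to the stable value, Corollary~\ref{cor:stabilitygln} to move between $\BSL$ and $\BGL$, and Theorem~\ref{thm:krep} to identify the result with $\K_i$. One small wording slip: Proposition~\ref{prop:formal} should be applied to \emph{all} smooth affine $U$ (so that $\pi_i^{\AA^1}(\Kscr)$ is the sheafification of $U \mapsto \K_i(U)$), not to the single object $X=\Spec S$, since the assertion is an isomorphism of Nisnevich sheaves rather than of groups at a point.
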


\begin{proof}
    This follows from the stable range results in
    Theorem~\ref{thm:stabilitysln} and Corollary~\ref{cor:stabilitygln}.
\end{proof}

\section{Purity}\label{sec:purity}

In this section, we prove the purity theorem. The
theorem has its roots in the following theorem from \'{e}tale cohomology: suppose
that $k$ is an algebraically closed field with characteristic prime to an integer $n$ and $Z
\hookrightarrow X$ is a regular closed immersion of $k$-varieties. Suppose further that $Z$
is of pure codimension $c$ in $X$. Then, for any locally constant sheaf of
$\ZZ/n$-modules $\Fscr$
there is a canonical isomorphism
$$g: \H^{r-2c}_{\et}(Z, \Fscr(-n)) \rightarrow \H^r_Z(X, \Fscr),$$
the \textbf{purity isomorphism}.
Here $\H^r_Z(X, -)$ is the \'etale cohomology of $X$ with supports on $Z$, which is characterized as the group fitting into the  long exact sequence
$$\cdots\rightarrow\H^r_Z(X, \Fscr) \rightarrow \H^r_{\et}(X, \Fscr) \rightarrow
\H^r_{\et}(X - Z, \Fscr) \rightarrow \H^{r+1}_Z(X, \Fscr) \rightarrow \cdots.$$
Substituting the isomorphism above into the long exact sequence we obtain the \textbf{Gysin sequence}
$$\cdots\rightarrow\H^{r-2c}_{\et}(Z; \Fscr(-c)) \rightarrow \H^r_{\et}(X; \Fscr) \rightarrow
\H^r_{\et}(X - Z; \Fscr) \rightarrow \H^{r+1-2c}_{\et}(Z; \Fscr(-c)) \rightarrow\cdots.$$
The Gysin sequence is extremely useful for calculation: the naturality of the long exact sequence and 
purity isomorphism leads to calculations of Frobenius weights of smooth varieties $U$ by
embedding them into a smooth projective variety $U \hookrightarrow X$ whose complement is
often a normal crossing divisor~\cite{deligne-icm}.

In topology, the Gysin sequence is also available and is deduced in the following way.
Suppose that $Z \hookrightarrow X$ is a closed immersion of smooth manifolds of (real)
codimension $c$ and $\nu_Z$ is the normal bundle of $Z$ in $X$. The tubular neighborhood
theorem identifies the Thom space of $\nu_Z$ with the cofiber of $X - Z
\rightarrow X$, i.e. there is a weak homotopy equivalence $$\Th(\nu_Z) \simeq \frac{X}{X-Z}.$$
One then proves that there is an isomorphism
$$\tilde{\H}^{i-c}(Z; k) \rightarrow \tilde{\H}^i(\Th(\nu_Z); k)$$
in reduced singular
cohomology with coefficients in a field $k$. In fact, this last isomorphism is true if we replace
ordinary singular cohomology with any complex-oriented cohomology theory
~\cite{may-concise}. Therefore, the crucial step is identifying the cofiber
$\frac{X}{X - Z}$ with the Thom space of the normal bundle. In this light,
the purity theorem in $\AA^1$-homotopy theory may be interpreted as a kind of
tubular neighborhood theorem.

We will now prove the crucial purity theorem of Morel and
Voevodsky~\cite{morel-voevodsky}*{Theorem~2.23}. We benefited from unpublished
notes of Asok and from the exposition of Hoyois in~\cite{hoyois}*{Section~3.5} in the equivariant case. We follow
the latter closely below. The discussion in this section is valid for $S$ a quasi-compact quasi-separated base scheme $S$.

\begin{definition}
    A \df{smooth pair} over a scheme $S$ is a closed embedding $i:Z\hookrightarrow
    X$ of finitely presented smooth $S$-schemes. We will often write such a pair as $(X,Z)$,
    omitting reference to the map $i$. The smooth pairs over $S$ form a
    category $\Sm_S^{\mathrm{pairs}}$ in which the morphisms $(X,Z)\rightarrow
    (X',Z')$ are pullback squares
    \begin{equation*}
        \xymatrix{
            Z\ar[r]\ar[d]   &   X\ar[d]\\
            Z'\ar[r]        &   X'.
        }
    \end{equation*}
    A morphism $f:(X,Z)\rightarrow(X',Z')$ of smooth pairs is \df{Nisnevich} if
    $f:X\rightarrow X'$ is \'etale and if $f^{-1}(Z')\rightarrow Z'$ is an
    isomorphism.
\end{definition}

We will need following local characterization of smooth pairs.

\begin{proposition}\label{prop:nislocalpair}
    Let $i:Z\rightarrow X$ be a smooth pair over a quasi-compact and
    quasi-separated scheme $S$. Assume that the
    codimension of $i$ is $c$ along $Z$. Then, there is a
    Zariski cover $\{U_i\rightarrow X\}_{i\in I}$ and a set of \'etale morphisms
    $\{U_i\rightarrow\AA^{n_i}_S\}_{i\in I}$ such that the smooth pair $U_i\times_X
    Z\rightarrow U_i$ is isomorphic to the pullback of the inclusion of a
    linear subspace $\AA^{n_i-c}_S\rightarrow\AA^{n_i}_S$ for all $i\in I$.
\end{proposition}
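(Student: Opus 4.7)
The plan is to verify the statement pointwise in the Zariski topology. For any point $x \in X$ not lying on $Z$, we may shrink $X$ to remove $Z$ and then use that $X \to S$ is smooth (hence étale-locally of the form $\AA^n_S$) to produce the required neighborhood with $Z \cap U = \emptyset$, realized as the pullback of the empty linear subspace (or more honestly, we just use that on $X \setminus Z$ any étale chart $U \to \AA^n_S$ trivially satisfies the condition since $U \times_X Z$ is empty). So the real content is for points $z \in Z$.

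Fix $z \in Z$ and let $n$ be the relative dimension of $X$ over $S$ at $z$. Because $Z \hookrightarrow X$ is a smooth pair of codimension $c$, the closed immersion is regular, so after passing to a Zariski open neighborhood $U$ of $z$ in $X$, the ideal $I_Z \subseteq \Oscr_U$ is generated by a regular sequence $f_1, \ldots, f_c \in \Gamma(U, \Oscr_U)$. Since $Z$ is smooth over $S$ of relative dimension $n - c$ at $z$, after further shrinking $U$ we can find functions $\bar g_1, \ldots, \bar g_{n-c} \in \Gamma(U \cap Z, \Oscr_{U \cap Z})$ whose differentials give an étale morphism $U \cap Z \to \AA^{n-c}_S$. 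Lift each $\bar g_j$ to a function $g_j \in \Gamma(U, \Oscr_U)$ (shrinking $U$ if necessary so that the $\bar g_j$ extend, which is possible since $U \cap Z \hookrightarrow U$ is a closed immersion with $\Oscr_U \twoheadrightarrow \Oscr_{U \cap Z}$).

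Consider the morphism
\begin{equation*}
    \varphi = (g_1, \ldots, g_{n-c}, f_1, \ldots, f_c) : U \longrightarrow \AA^n_S.
\end{equation*}
I would then check that $\varphi$ is étale at $z$ using the Jacobian criterion: the differentials $df_1, \ldots, df_c$ span the conormal space $I_{Z,z}/I_{Z,z}^2$ of $Z$ in $X$ at $z$ (by regularity of the sequence), while the differentials $dg_1, \ldots, dg_{n-c}$ restrict modulo $I_{Z,z}$ to a basis of $\Omega^1_{Z/S,z} \otimes k(z)$ by construction. Thus the $d\varphi_i$ form a basis of $\Omega^1_{X/S,z} \otimes k(z)$, which is exactly the Jacobian criterion for $\varphi$ being étale at $z$. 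Since being étale is an open condition, after shrinking $U$ we may assume $\varphi$ is étale on all of $U$. By construction, the preimage $\varphi^{-1}(\AA^{n-c}_S) = V(f_1, \ldots, f_c) \cap U = Z \cap U = U \times_X Z$, giving the required diagram.

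The main obstacles are bookkeeping rather than conceptual: one must be careful that the requisite shrinkings are compatible and that the lift of the $\bar g_j$ exists globally on some neighborhood (a point where quasi-compactness of the pair plays a mild role); and one must separately dispatch the points $x \notin Z$, probably by covering $X$ with $X \setminus Z$ together with the Zariski neighborhoods constructed above as $z$ ranges over $Z$. A clean way to organize the proof is to say explicitly that the cover $\{U_i\}$ is obtained as the union of one étale chart covering $X \setminus Z$ plus one chart $U_z$ for each $z \in Z$ built as above.
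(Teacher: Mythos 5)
The paper's own proof is simply a citation to SGA~1, Th\'eor\`eme~II.4.10, so your explicit construction is a genuinely different, self-contained route. The core of it --- the treatment of a point $z \in Z$ --- is sound and is in essence the standard argument behind the cited result: pick a regular sequence $f_1,\dots,f_c$ generating $I_Z$ near $z$, lift \'etale coordinates $\bar g_1,\dots,\bar g_{n-c}$ from $Z$, and use the conormal exact sequence
\begin{equation*}
    0 \to I_Z/I_Z^2 \to i^*\Omega^1_{X/S} \to \Omega^1_{Z/S} \to 0
\end{equation*}
(whose left exactness for a smooth pair, rather than mere regularity of the sequence, is what makes the $df_i|_Z$ linearly independent in $\Omega^1_{X/S}\otimes k(z)$) to verify the Jacobian criterion for $\varphi = (g_1,\dots,g_{n-c},f_1,\dots,f_c)$. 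Since $\varphi^{-1}(\AA^{n-c}_S) = V(f_1,\dots,f_c)\cap U = Z\cap U$, the pullback condition holds there.

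The gap is in your treatment of points $x\notin Z$. The claim that any \'etale chart $\varphi:U\to\AA^n_S$ on $X\setminus Z$ \emph{trivially} satisfies the condition ``since $U\times_X Z$ is empty'' is false: the pulled-back pair is $(U,\varphi^{-1}(\AA^{n-c}_S))$, and $\varphi^{-1}(\AA^{n-c}_S)$ has no reason to be empty just because $U\cap Z$ is. An isomorphism of pairs $(U,\emptyset)\iso(U,\varphi^{-1}(\AA^{n-c}_S))$ forces the latter closed subscheme to be empty, and there is of course no ``empty linear subspace'' of $\AA^n_S$ to appeal to. The fix is easy: if $\varphi(x)\in\AA^{n-c}_S$, post-compose $\varphi$ with the translation automorphism of $\AA^n_S$ adding $1$ to the last coordinate, so that $\varphi(x)\notin\AA^{n-c}_S$; then $\varphi^{-1}(\AA^{n-c}_S)$ is a closed subset not containing $x$, and replacing $U$ by $U\setminus\varphi^{-1}(\AA^{n-c}_S)$ gives a chart for which the pullback is genuinely empty. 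With this repair inserted the proof is complete.
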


\begin{proof}
    See~\cite{sga1}*{Th\'eor\`eme~II.4.10}.
\end{proof}

Certain moves generate all smooth pairs, which lets one prove statements for
all smooth pairs by checking them locally, checking that they transport along
Nisnevich morphisms of smooth pairs, and checking that they hold for
zero sections of vector bundles.

\begin{lemma}\label{lem:pairgeneration}
    Suppose that $\Pbf$ is a property of smooth pairs over a quasi-compact and
    quasi-separated scheme $S$ satisfying the
    following conditions:
    \begin{enumerate}
        \item if $(X,Z)$ is a smooth pair and if $\{U_i\rightarrow X\}$ is a
            Zariski cover such that $\Pbf$ holds for
            $(U_{i_1}\times_X\cdots\times_X
            U_{i_n},Z\times_XU_{i_1}\times_X\cdots\times_X U_{i_n})$ for all
            tuples $i_1,\ldots,i_n\in I$, then $\Pbf$ holds for $(X,Z)$;
        \item if $(V,Z)\rightarrow(X,Z)$ is a Nisnevich morphism of smooth
            pairs, then $\Pbf$ holds for $(V,Z)$ if and only if $\Pbf$
            holds for $(X,Z)$;
        \item $\Pbf$ holds for all smooth pairs of the form $(\AA^n_Z,Z)$.
    \end{enumerate}
    Then, $\Pbf$ holds for all smooth pairs over $S$.
\end{lemma}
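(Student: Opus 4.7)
The plan is to reduce successively by (1) and (2) to the situation handled by (3). The key construction is a ``double fibre product'' that produces a single smooth pair admitting Nisnevich morphisms both to a given $(X,Z)$ and to a trivial normal bundle pair $(\AA^c_Z,Z)$.

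First I would reduce to pure codimension. Since $X$ and $Z$ are both smooth, the codimension function is locally constant on $Z$, so $Z = \bigsqcup_c Z_c$ with each $Z_c$ clopen in $Z$. Setting $X_c = X \setminus \bigsqcup_{c' \neq c} Z_{c'}$ gives a Zariski open cover of $X$ by opens in which the intersection with $Z$ is $Z_c$, pure of codimension $c$; (1) then reduces the claim to pure codimension pairs (the non-trivial higher intersections being pairs with empty closed subset, which may be absorbed into (3) by formally allowing the zero section of $\AA^0_\emptyset$, or treated as vacuous depending on conventions for $\Pbf$).

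Next, assume $(X,Z)$ has pure codimension $c$. By Proposition~\ref{prop:nislocalpair} combined with (1), I may assume there is an \'etale map $f\colon X \to \AA^n_S$ such that $Z = X \times_{\AA^n_S} \AA^{n-c}_S$ where the embedding is the standard linear one $\AA^{n-c}_S \hookrightarrow \AA^{n-c}_S \times \AA^c_S = \AA^n_S$. Write $p\colon Z \to \AA^{n-c}_S$ for the base change of $f$ along this embedding; it is \'etale. The central object is
\[
W = X \times_{\AA^{n-c}_S} Z,
\]
where $X \to \AA^{n-c}_S$ is the first projection of $f$. The closed embedding $i\colon Z \hookrightarrow X$ gives a section $\Delta\colon Z \to W$, $z\mapsto (i(z),z)$. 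Furthermore, the factorization $X \to \AA^{n-c}_S \times \AA^c_S \to \AA^{n-c}_S$ yields an identification $W \cong X \times_{\AA^n_S}(\AA^c_S \times_S Z)$ with $\AA^c_S \times_S Z \to \AA^n_S$ given by $(t,z)\mapsto(p(z),t)$. Hence $W$ has two \'etale projections: $\pi_1\colon W \to X$ (base change of the \'etale $p$) and $\pi_2\colon W \to \AA^c_S \times_S Z$ (base change of the \'etale $f$).

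A direct check shows that both $\pi_1^{-1}(Z)$ and $\pi_2^{-1}(Z)$ (the latter via the zero section $Z \hookrightarrow \AA^c_S \times_S Z$) coincide, as subschemes of $W$, with $Z \times_{\AA^{n-c}_S} Z$. Since $p$ is \'etale, the diagonal $\Delta(Z) \subseteq Z \times_{\AA^{n-c}_S} Z$ is clopen; let $R$ be its closed complement and put $W^\circ = W \setminus R$. Then the restrictions $\pi_1,\pi_2$ to $W^\circ$ produce morphisms of smooth pairs
\[
(X,Z) \;\longleftarrow\; (W^\circ,\Delta(Z)) \;\longrightarrow\; (\AA^c_S \times_S Z,\,Z)
\]
that are Nisnevich by construction. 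Applying (2) twice shows $\Pbf(X,Z) \Longleftrightarrow \Pbf(W^\circ,\Delta(Z)) \Longleftrightarrow \Pbf(\AA^c_Z,Z)$, and the right-hand pair satisfies $\Pbf$ by (3).

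The main obstacle is Step~3: producing the single smooth pair $(W^\circ,\Delta(Z))$ that sits Nisnevich-over both $(X,Z)$ and $(\AA^c_Z,Z)$. The idea that works is to take the fibre product $W$ over the base $\AA^{n-c}_S$ of the \emph{ambient} coordinate projection, and to exploit the fact that $p\colon Z \to \AA^{n-c}_S$ is \'etale (so its diagonal is clopen) in order to excise the extra components that prevent the Nisnevich condition from holding on the nose. Everything else is bookkeeping and appeals to (1), (2), and (3).
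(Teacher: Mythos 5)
Your proof is correct and uses essentially the same strategy as the paper: reduce via (1) and Proposition~\ref{prop:nislocalpair} to a smooth pair with an \'etale map to $(\AA^n,\AA^{n-c})$, form a fibre product over the ambient coordinate projection, identify both preimages of $Z$ with $Z\times_{\AA^{n-c}}Z$, excise the closed complement of the clopen diagonal, and conclude by applying (2) twice and (3). Your $W^\circ$ and $R$ are the paper's $U$ and $W$ (up to a harmless permutation of factors in the fibre product), and the extra initial step of decomposing by codimension is a clarification the paper leaves implicit.
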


\begin{proof}
    By (1), it suffices to check that $\Pbf$ is true Zariski-locally on $X$.
    Pick a Zariski cover $\{U_i\rightarrow X\}$ satisfying the conclusion of
    Proposition~\ref{prop:nislocalpair}. Thus, the problem is reduced to
    showing that if $(X,Z)\rightarrow(\AA^n,\AA^m)$ is a map of smooth pairs
    with $X\rightarrow\AA^n$ \'etale, then $\Pbf$ holds for $(X,Z)$. Indeed,
    all pairs $(U_{i_1}\times_X\cdots\times_X
    U_{i_n},Z\times_XU_{i_1}\times_X\cdots\times_X U_{i_n})$
    have this form by our choice of cover.
    The rest of the argument follows~\cite{morel-voevodsky}*{Lemma~2.28}. Form
    the fiber product $X\times_{\AA^n}(Z\times_S\AA^c)$, where $c=n-m$, and
    $Z\times_S\AA^c\rightarrow\AA^n$ is the product of the maps $Z\rightarrow\AA^m$ and
    $\AA^c\xrightarrow{\id}\AA^c$. Since $Z\rightarrow\AA^m$ is \'etale, we see
    that $Z\times_{\AA^m}Z\subseteq X\times_{\AA^n}(Z\times\AA^c)$ is the
    disjoint union of $Z$ and some closed subscheme $W$. Let
    $U=X\times_{\AA^n}(Z\times_S\AA^c)-W$. The projection maps induce
    Nisnevich maps of pairs $(U,Z)\rightarrow(X,Z)$ and
    $(U,Z)\rightarrow(Z\times_S\AA^c,Z)$. By (3), $\Pbf$ holds for
    $(Z\times_S\AA^c,Z)$ and hence for $(U,Z)$ by (2), and hence for $(X,Z)$ by
    (2) again.
\end{proof}

\begin{definition}
    A morphism $(X,Z)\rightarrow(X',Z')$ of smooth pairs over $S$ is \df{weakly excisive} if 
    the induced square
    $$\xymatrix{
        Z\ar[d]\ar[r]  &   X/(X-Z)\ar[d]\\
        Z'\ar[r]         &   X'/(X'-Z')
    }$$
    is homotopy cocartesian in $\Spc_S^{\AA^1}$.
\end{definition}

The following exercise is used in the proof of the purity theorem.

\begin{exercise}[\cite{hoyois}*{Lemma~3.19}]\label{ex:weakly}
    Let $(X,Z)\xrightarrow{f}(X',Z')\xrightarrow{g}(X'',Z'')$ be composable
    morphisms of smooth pairs over $S$. Prove the following statements.
    \begin{enumerate}
        \item   If $f$ is weakly excisive, then $g$ is weakly excisive if and
            only if $g\circ f$ is weakly excisive.
        \item   If $g$ and $g\circ f$ are weakly excisive, and if
            $g:Z'\rightarrow Z''$ is an $\AA^1$-local weak equivalence, then $f$ is weakly
            excisive.
    \end{enumerate}
\end{exercise}

Finally, we come to the purity theorem itself.

\begin{theorem}[Purity theorem~\cite{morel-voevodsky}*{Theorem~2.23}]
    Let $Z \hookrightarrow X$ be a closed embedding in $\Sm_S$ where $S$ is
    quasi-compact and quasi-separated. If $\nu_Z:
    \N_XZ \rightarrow Z$ is the normal bundle to $Z$ in $X$, then there
    is an $\AA^1$-local weak equivalence
    $$\frac{X}{X - Z} \rightarrow\Th(\nu_Z)$$
    which is natural in $\Ho(\Spc_S^{\AA^1})$ for smooth pairs $(X,Z)$ over $S$.
\end{theorem}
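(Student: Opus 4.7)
The plan is to construct the purity map via the deformation to the normal cone and verify the claimed equivalence by applying Lemma~\ref{lem:pairgeneration}. Set $D_Z X := \Bl_{Z\times\{0\}}(X\times\AA^1)\setminus\tilde{X}_0$, where $\tilde{X}_0$ is the strict transform of $X\times\{0\}$. This is a smooth $S$-scheme flat over $\AA^1$ whose fiber at $t=1$ is $X$ and whose fiber at $t=0$ is the normal bundle $\nu_Z:\N_XZ\to Z$, and it contains $Z\times\AA^1$ as a closed subscheme meeting the special fiber along the zero section of $\nu_Z$. Restriction to $t=1$ and $t=0$ then produces a zig-zag of smooth pairs
$$(X,Z)\xrightarrow{i_1}(D_ZX,Z\times\AA^1)\xleftarrow{i_0}(\N_XZ,Z),$$
and this construction is functorial in the input pair, so naturality in $\Ho(\Spc_S^{\AA^1})$ will come for free once we pass to cofibers.

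The zig-zag will give the purity equivalence provided $i_0$ and $i_1$ are both weakly excisive in the sense defined above. Indeed, the projection $Z\times\AA^1\to Z$ is an $\AA^1$-homotopy equivalence by Exercise~\ref{exer:vb}, so Exercise~\ref{ex:weakly}(2) (combined with a direct homotopy-pushout comparison) implies that weak excision of $i_1$ yields $X/(X-Z)\we(D_ZX)/(D_ZX-Z\times\AA^1)$, and weak excision of $i_0$ yields $\Th(\nu_Z)\we(D_ZX)/(D_ZX-Z\times\AA^1)$. Concatenating these equivalences gives the purity map of the statement.

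To prove weak excision for $i_0$ and $i_1$, I would apply Lemma~\ref{lem:pairgeneration} with $\Pbf$ taken to be the property that both $i_0$ and $i_1$ are weakly excisive. Condition (1) (Zariski-local on $X$) follows from the fact that $D_Z(-)$ is compatible with open immersions and that cofibers commute with homotopy colimits: both quotients appearing in the definition of weak excision may be assembled from a \v{C}ech nerve over a trivializing Zariski cover using Proposition~\ref{prop:edspushout}. Condition (2) (invariance under Nisnevich morphisms of pairs) uses the Nisnevich excision criterion of Proposition~\ref{prop:niscrit}, together with the observation that a Nisnevich morphism of pairs induces a Nisnevich morphism of the associated deformation pairs. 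Condition (3) is the base case: for $(X,Z)=(\AA^n_Z,Z)$ with $Z$ included as a linear subspace, the deformation space $D_ZX$ and its subscheme $Z\times\AA^1$ can be described explicitly (using the $\Gm$-action by scaling along the fibers of $\nu_Z$), and both $i_0$ and $i_1$ admit explicit $\AA^1$-homotopies exhibiting weak excision directly.

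The main obstacle will be condition (2): one must verify that for a Nisnevich morphism of pairs $f:(V,Z')\to(X,Z)$ (that is, $f$ is \'etale and an isomorphism over $Z$), the induced map $D_{Z'}V\to D_ZX$ is again \'etale and an isomorphism over $Z\times\AA^1$. This compatibility is a local computation with the blow-up construction but requires care to keep the strict transform under control. Once established, Nisnevich excision can be applied uniformly along the zig-zag, and the rest of the verification is formal.
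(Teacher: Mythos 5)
Your proposal follows the paper's proof in all essential respects: the deformation space $\D_ZX$, the zig-zag $(X,Z)\xrightarrow{i_1}(\D_ZX,Z\times\AA^1)\xleftarrow{i_0}(\N_XZ,Z)$, the notion of weak excision, and the reduction via Lemma~\ref{lem:pairgeneration} to the three verification steps. Naturality does indeed come for free since the construction is functorial in smooth pairs. For condition~(2), the paper first shows that any Nisnevich morphism of pairs is itself weakly excisive (directly from Proposition~\ref{prop:edspushout}, equivalently Proposition~\ref{prop:niscrit}), and then transports via Exercise~\ref{ex:weakly} using the observation you make, that $\D_Z(-)$ takes Nisnevich morphisms of pairs to Nisnevich morphisms of pairs; so your plan is on target there.

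The one place your plan is vaguer than the paper is the base case~(3). ``Weak excision'' is the assertion that a certain square of cofibers is homotopy cocartesian; it is not a statement about two maps being homotopic, so it is not directly ``exhibited'' by an $\AA^1$-homotopy coming from the $\Gm$-scaling action. You would still need to route through the 2-out-of-3 property of Exercise~\ref{ex:weakly}. The paper's cleaner version of this is: reduce to $Z=S$, observe that $\Bl_{\{0\}}(\AA^n\times\AA^1)$ is the total space of an $\AA^1$-bundle over $\PP^n$, which produces a morphism of pairs $f:(\D_{\{0\}}\AA^n,\{0\}\times\AA^1)\rightarrow(\AA^n,\{0\})$ that is weakly excisive by Proposition~\ref{prop:affinebundle}; then $f\circ i_1=\id$ and $f\circ i_0=\id$ are trivially weakly excisive, so $i_0$ and $i_1$ are weakly excisive by Exercise~\ref{ex:weakly}(2). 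I would recommend you replace the phrase ``explicit $\AA^1$-homotopies exhibiting weak excision directly'' with this retraction-plus-cancellation argument, which is what actually makes the base case go through.

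Finally, the ``main obstacle'' you flag in condition~(2), namely that $\D_{Z}V\rightarrow\D_ZX$ is again a Nisnevich morphism of pairs, is a genuine check but a short one; in the paper it is implicit in the assertion that the vertical arrows of the $3\times 2$ diagram of pairs are Nisnevich. It follows from flatness of the blow-up over $\AA^1$ and the fact that \'etale base change commutes with blowing up along a subscheme that pulls back isomorphically.
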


\begin{proof}
    First, we construct the map. Consider the construction
    $$\D_ZX=\Bl_{Z\times_S\{0\}}(X\times_S\AA^1)-\Bl_{Z\times_S\{0\}}(X\times_S\{0\}),$$
    which is natural in smooth pairs $(X,Z)$. The fiber of
    $\D_ZX\rightarrow\AA^1$ at $\{0\}$ is the complement
    $\PP(\N_ZX\oplus\Oscr_Z)-\PP(\N_ZX)$, which is naturally isomorphic to the
    vector bundle $\N_ZX$. Hence, by taking the zero section at $\{0\}$, we get
    a closed embedding $Z\times_S\AA^1\rightarrow\D_ZX$. The fiber at $\{0\}$ of
    $(\D_ZX,Z\times_S\AA^1)$ is $(\N_ZX,Z)$, while the fiber at $\{1\}$ is
    $(X,Z)$. Thus,
    there are morphisms of smooth
    pairs $$(X,Z)\xrightarrow{i_1}(\D_ZX,Z\times_S\AA^1)\xleftarrow{i_0}(\N_ZX,Z),$$ and it
    is enough to prove that $i_1$ and $i_0$ are weakly excisive for all smooth
    pairs $(X,Z)$. Indeed, in that case there are natural $\AA^1$-weak equivalences $X/(X-Z)\we
    \D_ZX/(\D_ZX-Z\times_S\AA^1)\we\N_ZX/(\N_ZX-Z)=\Th(\nu_Z)$ because the cofiber of
    $Z\rightarrow Z\times_S\AA^1$ is contractible.

    Let $\Pbf$ hold for the smooth pair $(X,Z)$ if and only if $i_0$ and $i_1$
    are excisive. We show that $\Pbf$ satisfies conditions (1)-(3) of
    Lemma~\ref{lem:pairgeneration}.

    Let $\{U_i\rightarrow X\}_{i\in I}$ be a Zariski cover of $X$, and let
    $(U_{i_1,\ldots,i_n},Z_{i_1,\ldots,i_n})\rightarrow(X,Z)$ be the induced morphisms of smooth pairs. For
    Suppose that $\Pbf$ holds for each
    $(U_{i_1,\ldots,i_n},Z_{i_1,\ldots,i_n})$. Then, there is a diagram
    \begin{equation*}
        \xymatrix{
            |Z_\bullet|\ar[r]^{i_1}\ar[d]   & |U_\bullet/(U_\bullet-Z_\bullet)|\ar[d]\\
            |Z_\bullet\times_S\AA^1|\ar[r]^>>>>{i_1} & |\D_{Z_i}U_i/(D_{Z_i}U_i-Z_i\times_S\AA^1)|
        }
    \end{equation*}
    of geometric realizations. However, this is the geometric realization of a
    simplicial cocartesian square by hypothesis, so it is itself cocartesian.
    The same argument works for $i_0$, so we see that $\Pbf$ satisfies (1).

    Consider a Nisnevich morphism $(V,Z)\rightarrow(X,Z)$ of smooth pairs, and
    consider the diagram
    \begin{equation*}
        \xymatrix{
            (V,Z)\ar[r]^{i_1}\ar[d] &   (\D_ZV,Z\times_S\AA^1)\ar[d] &
            (\N_ZV,Z)\ar[d]\ar[l]^{i_0}\\
            (X,Z)\ar[r]^{i_1} &   (\D_ZX,Z\times_S\AA^1)       &
            (\N_ZX,Z)\ar[l]^{i_0}.
        }
    \end{equation*}
    We leave it as an easy exercise to the reader to show using
    Exercise~\ref{ex:weakly} that (2) will follow if
    the vertical arrows are all weakly excisive. But,
    the vertical maps are all Nisnevich morphisms. So, it is
    enough to check that Nisnevich morphisms $(V,Z)\rightarrow(X,Z)$ of smooth pairs are weakly excisive.
    Let $U$ be the complement of $Z$ in $X$. By hypothesis, the diagram
    $$\xymatrix{
        U\times_X V\ar[r]\ar[d] &   V\ar[d]\\
        U\ar[r]                       &   X
    }$$
    is an elementary distinguished square, and hence a homotopy cocartesian square in
    $\Spc_S^{\AA^1}$ by Proposition~\ref{prop:edspushout}. In particular, the
    cofiber of $V/(U\times_X V)\rightarrow X/U$ is contractible. Since the
    cofiber of $Z\rightarrow Z$ is obviously contractible, this proves
    $(V,Z)\rightarrow(X,Z)$ is weakly excisive.

    To complete the proof, we just have to show that (3) holds. Of course, in
    the situation $(\AA^n_Z,Z)$ of (3) we can prove the main result of the theorem quite
    easily. However, the structure of the proof requires us to check weak
    excision for $i_0$ and $i_1$. 
    For this we can immediately reduce to the case where $Z=S$, which
    we omit from the notation for the rest of the proof. 
    The blowup
    $\Bl_{\{0\}}(\AA^n\times\AA^1)$ is the total space of an $\AA^1$-bundle
    over $\PP^n$, and the image of $\Bl_{\{0\}}(\AA^n)$ in $\PP^n$ is a hyperplane
    $\PP^{n-1}\subseteq\PP^n$. Hence, there is a morphism of pairs
    $$(\D_{\{0\}}(\AA^n),\{0\}\times\AA^1)\xrightarrow{f}(\AA^n,\{0\}).$$
    Since $\D_{\{0\}}(\AA^n)\rightarrow\AA^n$ is the total space of an
    $\AA^1$-bundle, this morphism is weakly excisive. The composition of $f$
    with $i_1$ is the identity on $(\AA^n,\{0\})$ and hence is
    weakly excisive as well. By Exercise~\ref{ex:weakly}(2), it follows that
    $i_1$ is weakly excisive. Similarly, $f\circ i_0$ is
    the identity on $(\N_{\{0\}}\AA^n,\{0\})\iso(\AA^n,\{0\})$, so $i_0$ is weakly excisive,
    again by Exercise~\ref{ex:weakly}(2).
\end{proof}
\section{Vista: classification of vector bundles}\label{sec:asok-fasel}

In this section we give a brief summary of how to use the theory developed
above to give a straightforward proof of the classification of vector bundles
on smooth affine curves and surfaces. We must understand how to compute
$\AA^1$-homotopy classes of maps to $\BGL_n$ so we can apply the Postnikov
obstruction approach to the classification problem.

Recall from Theorem~\ref{thm:explicit} that
the $\AA^1$-localization functor above may be calculated as a transfinite
composite of $\L_{\Nis}$ and $\Sing^{\AA^1}$.
This process is rather unwieldy. However, things get better if this process
stops at a finite stage, in particular suppose that $\Fscr$ was already a
Nisnevich-local presheaf and suppose that one can somehow deduce that
$\Sing^{\AA^1}X$ was already Nisnevich local, then one can conclude that
$\L_{\AA^1}X \simeq \Sing^{\AA^1}X$ and therefore, using our
formulae for mapping spaces in the $\AA^1$-homotopy category, one concludes
that $$[U,X]_{\AA^1}\iso[U,\L_{\AA^1}X]_s \simeq \pi_0\Sing^{\AA^1}X(U).$$
In general, this does not work.

\begin{remark}
    Work of Balwe-Hogadi-Sawant~\cite{balwe-hogadi-sawant} constructs explicit
    smooth projective varieties $X$ over $\mathbb{C}$ for which
    $\Sing^{\AA^1}X$ is \textit{not} Nisnevich-local, so extra conditions must be
    imposed to calculate the $\AA^1$-homotopy classes of maps naively. However,
    there is often an intimate relation between naive $\AA^1$-homotopies and
    genuine ones: Cazanave constructs in~\cite{cazanave} a monoid structure on
    $\pi_0\Sing^{\AA^1}(\PP^1_k)$ and proves that the map
    $\pi_0\Sing^{\AA^1}(\PP^1_k) \rightarrow [\PP^1_k, \PP^1_k]_{\AA^1}$ is
    group completion, with the group structure on the target induced by the
    $\AA^1$-weak equivalence $\PP^1 \simeq S^1 \wedge \GG_m$.
\end{remark}

\begin{exercise}
    Even for fields, one can show that the sets of isomorphism classes of vector bundles
    over the simplest non-affine scheme $\PP^1_k$ are not $\AA^1$-invariant.
    Construct (e.g. write down explicit transition functions) a vector bundle
    over $\mathbb{P}^1 \times_k \mathbb{A}^1$ that restricts to $\mathcal{O}(0)
    \oplus \mathcal{O}(0)$ on $\PP^1 \times \{1\}$ and $\mathcal{O}(1) \oplus
    \mathcal{O}(-1)$ on $\mathbb{P}^1 \times \{0\}$ for a counter-example.
\end{exercise}

\begin{remark}
    In~\cite{asok-doran}, examples are given of smooth $\AA^1$-contractible varieties
    with families of non-trivial vector bundles of any given dimension. Were
    vector bundles to be representable in $\Spc^{\AA^1}$, such pathologies
    could not occur. These varieties are non-affine.
\end{remark}

As the exercise and remark show, the only hope for computing vector bundles as
$\AA^1$-homotopy classes of maps to $\BGL_n$ is to restrict to affine schemes,
but even there it is not at all obvious that this is possible, as the map
$\BGL_n\rightarrow\L_{\AA^1}\BGL_n$ is not a simplicial weak equivalence.
Remarkably, despite this gulf, Morel and later Asok-Hoyois-Wendt showed that
for smooth affine schemes one can compute vector bundles in this way. In fact,
this follows from a much more general and formal result, which we now explain.






We say that a presheaf $F$ of sets on $\Sm_S$ satisfies \df{affine
$\AA^1$-invariance} if
the pullback maps $F(U)\rightarrow F(U\times_S\AA^1)$ are isomorphisms for all
finitely presented smooth affine $S$-schemes $U$. Note that we say that an
$S$-scheme is affine if $U\rightarrow\Spec\ZZ$ is affine, so that $U=\Spec R$
for some commutative ring $R$.

\begin{theorem}[\cite{asok-hoyois-wendt}]
    Let $S$ be a quasi-compact and quasi-separated scheme.
    Suppose that $X$ is a simplicial presheaf on $\Sm_S$. Assume that
    $\pi_0(X)$ is affine $\AA^1$-invariant and that $X$ satisfies affine
    Nisnevich excision. For all affine schemes $U$ in $\Sm_S$, the canonical map
    $$\pi_0(X)(U) \rightarrow [U, X]_{\AA^1}$$ is an isomorphism.
\end{theorem}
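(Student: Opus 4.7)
The plan is to convert the right-hand side into a concrete $\pi_0$ by using the explicit construction of the $\AA^1$-fibrant replacement from Theorem~\ref{thm:explicit}, then track what happens to the hypotheses ``$\pi_0(X)$ is affine $\AA^1$-invariant'' and ``$X$ satisfies affine Nisnevich excision'' under a single application of $\L_{\Nis}\Sing^{\AA^1}$ so that the argument may be iterated. Since $U$ is cofibrant and discrete, we have $[U,X]_{\AA^1}\iso[U,\L_{\AA^1}\L_{\Nis}X]_s\iso\pi_0((\L_{\AA^1}\L_{\Nis}X)(U))$. By Theorem~\ref{thm:explicit}, $\L_{\AA^1}\L_{\Nis}X$ is the filtered homotopy colimit of $(\L_{\Nis}\Sing^{\AA^1})^{\circ n}X$, and filtered homotopy colimits of simplicial presheaves commute with both evaluation and $\pi_0$; hence it suffices to show that each step in the iteration preserves the value of $\pi_0$ on affine $U$, with the hypotheses of the theorem propagating through the iteration.

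The singular step is the easy half. I would compute $\pi_0(\Sing^{\AA^1}X)(U)$ exactly as in the proof of Corollary~\ref{thm:unstabconn}: it is the coequalizer of the pair $\partial_0^*,\partial_1^*:\pi_0 X(U\times_S\AA^1)\rightrightarrows \pi_0 X(U)$ induced by the $0$- and $1$-sections. For affine $U$, the scheme $U\times_S\AA^1$ is again affine, so by the hypothesis that $\pi_0(X)$ is affine $\AA^1$-invariant the pullback $p^*:\pi_0 X(U)\to \pi_0 X(U\times_S\AA^1)$ is a bijection; since $\partial_0^*$ and $\partial_1^*$ are both sections of $p^*$, they agree, and the coequalizer is $\pi_0 X(U)$. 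The Nisnevich-localization step is the harder half: I would argue that if $F$ satisfies affine Nisnevich excision, then the canonical map $F\to\L_{\Nis}F$ is an objectwise weak equivalence on affine smooth schemes, so in particular induces an isomorphism on $\pi_0$ there. This is the key place where I would invoke the result of Asok-Hoyois-Wendt cited at the end of the previous section, namely that the affine Nisnevich cd-structure generates the Nisnevich topology restricted to $\Sm_S^{\mathrm{Aff}}$: consequently a presheaf already satisfying affine Nisnevich excision is Nisnevich-local when restricted to affines, so the localization cannot modify its sections on affine schemes.

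For the induction to close, I need both hypotheses to be preserved by $\L_{\Nis}\Sing^{\AA^1}$. Preservation of affine Nisnevich excision by $\L_{\Nis}$ is automatic since its output satisfies full Nisnevich excision by construction. Affine $\AA^1$-invariance of $\pi_0$ on the output of $\L_{\Nis}$ follows from the previous paragraph combined with the $\Sing^{\AA^1}$ calculation, so I have reduced this to showing that $\Sing^{\AA^1}$ preserves affine Nisnevich excision and produces a presheaf whose $\pi_0$ is affine $\AA^1$-invariant. The latter is essentially automatic from the coequalizer description of $\pi_0\Sing^{\AA^1}X$ together with the simplicial homotopy constructed in Theorem~\ref{thm:Sing}. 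The former, that $\Sing^{\AA^1}F$ satisfies affine Nisnevich excision whenever $F$ does, will be the main technical obstacle: $\Sing^{\AA^1}F(V)$ is a geometric realization $|F(V\times_S\Delta^{\bullet})|$, and for the homotopy pullback square
\begin{equation*}
    \xymatrix{
        F(X)\ar[r]\ar[d] & F(V)\ar[d]\\
        F(U)\ar[r]       & F(U\times_X V)
    }
\end{equation*}
coming from an affine elementary distinguished square to be preserved by realization, one needs a Reedy-style argument ensuring that geometric realization commutes with this particular finite homotopy pullback. I expect this to work because finite homotopy limits commute with geometric realizations of simplicial spaces levelwise satisfying the descent property (the square is pulled back to an elementary distinguished square after crossing with each $\Delta^n$, which remains smooth and affine over the base), but verifying it cleanly is the step where I anticipate having to be careful. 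Granting this, the induction closes and a diagram chase through the filtered colimit yields $\pi_0(X)(U)\iso\pi_0(\L_{\AA^1}\L_{\Nis}X)(U)=[U,X]_{\AA^1}$ for all affine $U\in\Sm_S$.
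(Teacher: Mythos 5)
Your overall strategy is in the same spirit as the paper's: run the explicit model $(\L_{\Nis}\Sing^{\AA^1})^{\circ\NN}$ of $\L_{\AA^1}\L_{\Nis}$ and track the two hypotheses. Your two "easy" computations are correct and do appear in the paper's argument: the coequalizer description of $\pi_0\Sing^{\AA^1}X(U)$ reduces to $\pi_0 X(U)$ for affine $U$ by affine $\AA^1$-invariance (note $U\times_S\AA^1$ is again affine), and the observation that $F\to\L_{\Nis}F$ is an objectwise equivalence on affines whenever $F$ satisfies affine Nisnevich excision, via~\cite{asok-hoyois-wendt}*{Proposition~2.3.2}.

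However, there is a genuine gap at precisely the step you flag: showing that $\Sing^{\AA^1}$ preserves affine Nisnevich excision. A "Reedy-style argument" will not do it — geometric realization of a levelwise homotopy pullback square of simplicial spaces is \emph{not} in general again a homotopy pullback, and this is a real obstruction, not a bookkeeping point. The tool the paper uses is the $\pi_*$-Kan condition of Bousfield-Friedlander~\cite{bousfield-friedlander}, first brought into this circle of ideas by Schlichting~\cite{schlichting-euler}. It is a condition on (a map of) simplicial spaces which guarantees that $\hocolim_{\Delta^{\op}}$ carries a levelwise homotopy pullback square to a homotopy pullback. The role of your hypothesis that $\pi_0(X)$ is affine $\AA^1$-invariant is exactly to verify the $\pi_*$-Kan condition for the relevant simplicial objects $n\mapsto X(-\times_S\Delta^n)$; without it the realization step breaks, and this is the point of the theorem.

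A further remark: the paper's proof avoids the inductive bookkeeping you set up. Once $\Sing^{\AA^1}X$ is shown to satisfy affine Nisnevich excision, and since it is always $\AA^1$-invariant, the paper notes that both Nisnevich-locality and $\AA^1$-invariance may be checked on affine schemes (again~\cite{asok-hoyois-wendt}*{Proposition~2.3.2}). Hence the single application $\L_{\Nis}\Sing^{\AA^1}(X)$ is already an $\AA^1$-local fibrant object, so the iteration terminates after one step and one obtains the stronger fact that $\Sing^{\AA^1}X(U)\rightarrow(\L_{\AA^1}\L_{\Nis}X)(U)$ is a weak equivalence of simplicial sets for affine $U$, not merely the $\pi_0$-isomorphism you were aiming for. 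With the $\pi_*$-Kan input supplied, either route closes, but the one-step termination is cleaner and gives more.
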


\begin{proof}[Sketch Proof]
    The key homotopical input to this theorem is the $\pi_*$-Kan condition,
    which ensures that homotopy colimits of simplicial diagram commutes over
    pullbacks~\cite{bousfield-friedlander}. This condition was first used in
    this area by Schlichting~\cite{schlichting-euler}. It provides a concrete criterion to check if the
    functor $\Sing^{\AA^1}(F)$ restricted to smooth affine schemes is indeed Nisnevich
    local (to make this argument precise, the key algebro-geometric input is the equivalence between the Nisnevich cd-structure and the affine Nisnevich cd-structure defined above \cite{asok-hoyois-wendt}*{Proposition 2.3.2} when restricted to affine schemes).
    
     More precisely, for any elementary distinguished square
    \begin{equation*}
        \xymatrix{
            U\times_X V\ar[r]\ar[d] &   V\ar[d]^p\\
            U\ar[r]^i & Y
        }
           \end{equation*}
    we have a homotopy pullback square            \begin{equation*}
        \xymatrix{
           X(Y \times \AA^n) \ar[r]\ar[d] &   X(V \times \AA^n)\ar[d]\\
            X(U \times \AA^n)\ar[r] & X(U\times_X V \times \AA^n)
        }
    \end{equation*}
    of simplicial sets for all $n \geq 0$.

    The $\pi_*$-Kan condition applies with the hypothesis that $\pi_0(X)$
    satisfies affine $\AA^1$-invariance and we may conclude that taking
    $\hocolim_{\Delta^{\op}}$ of the above squares preserve pullbacks and
    therefore we conclude that $\Sing^{\AA^1}(X)$ satisfies affine Nisnevich
    excision.

    Applying the above proposition, we have that for any affine $U$,
    $$\Sing^{\AA^1}(X)(U) \rightarrow \L_{\Nis}\Sing^{\AA^1}(X)(U)$$ is
    a weak equivalence. Since the left hand side is $\AA^1$-invariant, we
    conclude that the right hand side is $\AA^1$-invariant; since being
    $\AA^1$-invariant and Nisnevich local may be tested on affine schemes
    (by~\cite{asok-hoyois-wendt}*{Proposition~2.3.2}), we
    conclude that $\L_{\Nis}\Sing^{\AA^1}(X) \simeq \L_{\AA^1}(X)$.
    Taking $\pi_0$ of the weak equivalence above gets us the desired claim.
\end{proof}

\begin{corollary}[Affine representability of vector bundles]
    Let $S$ be a regular noetherian affine scheme of finite Krull dimension,
    and suppose that the Bass-Quillen conjecture holds for smooth schemes of finite
    presentation over $S$. In this case, the natural map $\Vect_r(U) \rightarrow [U,\BGL_r]_{\AA^1}$
    is an isomorphism for all $U\in\Sm_S^{\mathrm{Aff}}$ and all $r\geq 0$.
\end{corollary}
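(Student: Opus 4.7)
The plan is to apply the Asok--Hoyois--Wendt representability theorem stated immediately above to the simplicial presheaf $X = \BGL_r$. This requires verifying two hypotheses for this specific $X$: first, that $\pi_0(\BGL_r)$ is affine $\AA^1$-invariant, and second, that $\BGL_r$ satisfies affine Nisnevich excision.

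For affine Nisnevich excision, the point is that $\BGL_r$ was defined as a fibrant replacement of $\B\GL_r$ in the Nisnevich-local model category $\L_{\Nis}\sPre(\Sm_S)$, so by Proposition~\ref{prop:niscrit} it satisfies Brown--Gersten excision for every elementary distinguished square; restricting to affine $U$ then gives affine Nisnevich excision for free. For affine $\AA^1$-invariance of $\pi_0$, I would identify $\pi_0(\BGL_r)(U)$ with $\Vect_r(U)$. Indeed, Proposition~\ref{prop:BG-tors} gives a natural identification $\pi_0(\BGL_r)(U) \iso \H^1_{\Nis}(U,\GL_r)$, and since $\GL_r$ is special (any \'etale $\GL_r$-torsor is already Zariski-, hence Nisnevich-, locally trivial), the set $\H^1_{\Nis}(U,\GL_r)$ is canonically identified with $\Vect_r(U)$. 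Affine $\AA^1$-invariance of this presheaf on $\Sm_S^{\mathrm{Aff}}$ is precisely the content of the Bass--Quillen conjecture, which we have assumed. This is where all the real geometric content enters, and it is the only nontrivial input — the rest is formal.

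Having verified both hypotheses, the theorem gives, for every $U \in \Sm_S^{\mathrm{Aff}}$, a natural bijection
\begin{equation*}
    \Vect_r(U) \iso \pi_0(\BGL_r)(U) \xrightarrow{\we} [U, \BGL_r]_{\AA^1}.
\end{equation*}
To conclude, I would check that this composite is the natural map of the statement: a vector bundle on $U$ is classified up to isomorphism by an element of $\pi_0$ of the groupoid of $\GL_r$-torsors, and this class maps into $[U,\BGL_r]_{\AA^1}$ by the unit of the $\AA^1$-localization composed with the equivalence $\B G \we \B\Tors_\tau(G)$ of Proposition~\ref{prop:BG-tors}; unraveling the identifications shows this is the natural pullback map $f \mapsto f^*\gamma_{\GL_r}$.

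The main obstacle, and indeed the only deep point, is the verification of affine $\AA^1$-invariance of $\pi_0(\BGL_r)$. Everything else — the identification of $\pi_0$ with the vector bundle functor, Nisnevich descent for the fibrant model $\BGL_r$, and naturality of the representability map — is essentially definitional or formal once one has the Asok--Hoyois--Wendt representability theorem and the computation of $\pi_0 \B G$ from Section~\ref{sub:classifying}. The Bass--Quillen hypothesis is not just a technical convenience here: without it, $\Vect_r$ simply is not an $\AA^1$-invariant functor (even on affines), and there is no hope for representability by an $\AA^1$-local object.
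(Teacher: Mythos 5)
Your proof is correct and takes the same route the paper intends: apply the Asok--Hoyois--Wendt representability theorem stated immediately above to the Nisnevich-fibrant model of $\BGL_r$, identify $\pi_0(\BGL_r)$ with $\Vect_r$ via Proposition~\ref{prop:BG-tors} and the fact that $\GL_r$ is special, and invoke the Bass--Quillen hypothesis for the required affine $\AA^1$-invariance. You have also correctly isolated the one genuinely geometric input (Bass--Quillen) from the formal verifications (Nisnevich excision, identification of $\pi_0$, naturality of the map).
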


The Jouanoulou-Thomason homotopy lemma states that, up to $\AA^1$-homotopy, we
may replace a smooth scheme with an affine one.

\begin{theorem}[\cite{jouanolou} and~\cite{weibel-homotopy}]
    Given a smooth separated scheme $U$ over a regular noetherian affine scheme $S$, there exists
    an affine vector bundle torsor $\widetilde{U}\rightarrow U$ such that
    $\widetilde{U}$ is affine.
\end{theorem}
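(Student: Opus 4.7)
The plan is to construct $\widetilde{U}\to U$ explicitly as the total space of a torsor under a vector bundle, proceeding first through the model case of projective space and then bootstrapping to the general setting.

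For the key special case of $\PP^n_S$, define Jouanolou's device
\[
J_n = \Spec_S\left(\mathcal{O}_S[x_{ij}]_{0\leq i,j\leq n}\big/(P^2-P,\; \mathrm{tr}(P)-1)\right),
\]
where $P=(x_{ij})$ is the generic $(n+1)\times(n+1)$ matrix. This $S$-scheme is visibly affine and represents the functor assigning to a test scheme $T$ the set of rank-one projectors $P\in\End(\mathcal{O}_T^{n+1})$. Sending such a $P$ to its image defines a morphism $\pi:J_n\to\PP^n_S$, and a direct computation on the standard affine charts $U_i\subseteq\PP^n_S$ shows that $\pi$ is Zariski-locally trivial with fibers isomorphic to $\AA^n_S$. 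A short verification of the transition data identifies $\pi$ as a torsor under a vector bundle.

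For a smooth quasi-projective $U/S$, one chooses a locally closed immersion $U\hookrightarrow\PP^n_S$. When this immersion is closed, the pullback $U\times_{\PP^n_S}J_n$ is a closed subscheme of the affine scheme $J_n$, hence affine, and the projection to $U$ is the required torsor. For a general locally closed immersion, one refines the construction functorially on $U$: using the very ample line bundle $\mathcal{L}=\mathcal{O}_{\PP^n}(1)|_U$, build the $U$-scheme parameterizing idempotent endomorphisms of $\mathcal{L}\oplus\mathcal{O}_U^n$ whose image is $\mathcal{L}$. The resulting $U$-scheme is a vector bundle torsor, and affineness of the total space follows because the loci in $U$ where sections of $\mathcal{L}$ do not vanish form an affine cover, so Serre's criterion for affineness applies.

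The main obstacle, and the substance of Weibel's extension~\cite{weibel-homotopy} of Jouanolou's original result, is the general case where $U$ need not be quasi-projective. Here one uses the hypothesis that $S$ is regular noetherian affine to produce an ample family of line bundles $L_1,\ldots,L_r$ on $U$ whose non-vanishing loci for global sections yield an affine open cover of $U$. Assembling these into a morphism $U\to\prod_i\PP^{n_i}_S$ that is an immersion onto a locally closed subscheme of the product, one pulls back the iterated Jouanolou torsor to obtain the candidate $\widetilde{U}$. The delicate point is verifying that the total space is genuinely affine rather than merely quasi-affine; this is precisely the content of Weibel's theorem and ultimately rests on applying Serre's affineness criterion to the ample family on $U$.
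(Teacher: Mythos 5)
Your proof takes a much more explicit route than the paper's, which is a two-sentence citation: the hypotheses force $U$ to be regular, separated, and noetherian, hence divisorial (i.e.\ to have an ample family of line bundles) by~\cite{sga6}*{Proposition~II.2.2.7}, and then Weibel's Proposition~4.4 provides the affine torsor. Your first two stages --- the construction of $J_n$ as the scheme of rank-one idempotents over $\PP^n_S$, and the pullback along a closed immersion --- are sound, and your instinct to isolate affineness of the total space as the delicate point is also correct.

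The third step, however, contains a genuine gap. You claim that the ample family $L_1,\ldots,L_r$ on $U$ assembles into a \emph{locally closed immersion} $U\hookrightarrow\prod_i\PP^{n_i}_S$. This cannot be right: a locally closed subscheme of a finite product of projective $S$-schemes is quasi-projective over $S$ (compose with a Segre embedding), so if such an immersion existed the ``general'' case would be vacuous, since it would already be subsumed by your quasi-projective case. But the theorem genuinely applies to non-quasi-projective $U$ --- Hironaka's smooth complete non-projective threefolds over $\CC$, for example, are smooth and separated over the regular noetherian affine base $\Spec\CC$ and are divisorial, yet admit no locally closed immersion into any projective space. Concretely, an ample family only guarantees that each point of $U$ lies in an affine locus $U_s$ for a section $s$ of some power of \emph{some} $L_i$; it does not make the evaluation maps to the individual $\PP^{n_i}$'s everywhere defined, so there is no morphism to the product to speak of. The mechanism Weibel actually uses is a direct construction that bypasses any projective embedding: one chooses finitely many sections $s_{ij}\in\Gamma(U,L_i^{\otimes n_{ij}})$ so that the affines $U_{s_{ij}}$ cover $U$, observes that these sections define a surjection $\bigoplus_{i,j}L_i^{\otimes -n_{ij}}\twoheadrightarrow\mathcal{O}_U$, and takes $\widetilde{U}$ to be the scheme of splittings of this surjection; the affineness of $\widetilde{U}$ is then checked directly on this model. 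Since you ultimately fall back on citing Weibel anyway, the net effect of your third paragraph is to attribute to him an argument he does not make and which could not succeed.
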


\begin{proof}
    The point is that $U$ is quasi-compact and quasi-separated and hence admits
    an ample family of line bundles (so $U$ is divisorial)
    by~\cite{sga6}*{Proposition~II.2.2.7}.
    The theorem now follows from~\cite{weibel-homotopy}*{Proposition~4.4}.
\end{proof}

This theorem lets us compute in some
sense $[U,\BGL_n]_{\AA^1}$ for any $U\in\Sm_S$, but it is not known at the
moment what kind of objects these are on $U$.

One of the main features of $\AA^1$-localization is the ability to employ
topological thinking in algebraic geometry, if one is willing to work
$\AA^1$-locally. The homotopy sheaves $\pi_i^{\AA^1}(X)$ are
sometimes computable using input from both homotopy theory and algebraic geometry.
At the same time, many algebro-geometric problems are
inherently not $\AA^1$-local in nature so one only gets an actual
algebro-geometric theorem under certain certain conditions, as in
Theorem~\ref{thm:vectsurfaces} below. Let us first start with a review of
Postnikov towers in $\AA^1$-homotopy theory. Our main reference is
\cite{asok-fasel-threefolds}, which in turn uses \cite{morel}, \cite{morel-voevodsky} and
\cite{goerss-jardine}.

Let $G$ be a Nisnevich sheaf of groups and $M$ a Nisnevich sheaf of abelian
groups on which $G$ acts (a $G$-module). In this case, $G$ acts on the Eilenberg-Maclane
sheaf $K(A,n)$, from which we may construct $K^{G}(A, n) := EG
\times^{G} K(A, n)$. The first projection gives us a map
$K^{G}(A, n) \rightarrow G$.

Of primary interest is the Nisnevich sheaf of groups $\pi_1^{\AA^1}(Y)$ for
some pointed $\AA^1$-connected space $Y$. In this case, $\pi_1^{\AA^1}(Y)$ acts
on the higher homotopy sheaves $\pi_n^{\AA^1}(Y)$ where $n \geq 2$.

\begin{theorem}\label{thm:a1postnikov}
    Let $Y$ be a pointed $\AA^1$-connected space. There exists a commutative
    diagram of pointed $\AA^1$-connected spaces
    \begin{equation*}
        \xymatrix{
            &\vdots\ar[d]\\
            &Y[i]\ar[d]^{p_i}\\
            &Y[i-1]\ar[d]\\
            &\vdots\ar[d]\\
            &Y[2]\ar[d]^{p_2}\\
            &Y[1]\ar[d]\\
            Y\ar[r]\ar[ur]\ar[uur]\ar@/^/[uuuur]\ar@/^2pc/[uuuuur]&\star
        }
    \end{equation*}
    such that
    \begin{enumerate}
        \item $Y[1]\we B\pi_1^{\AA^1}(Y)$,
        \item $\pi_j^{\AA^1}Y[i]=0$ for $j>i$,
        \item the map $\pi_j^{\AA^1}Y\rightarrow\pi_j^{\AA^1}Y[i]$ is an
            isomorphism of $\pi_1^{\AA^1}Y$-modules for $1\leq j\leq i$,
        \item the $K(\pi_i^{\AA^1}Y,i)$-bundle $Y[i]\rightarrow Y[i-1]$ is a
            \df{twisted principal fibration} in the sense that there is a map
            $$k_{i}:Y[i-1]\rightarrow K^{\pi_1^{\AA^1}Y}(\pi_i^{\AA^1}Y,i+1)$$
            such that $Y[i]$ is obtained as the pullback
            \begin{equation*}
                \xymatrix{
                    Y[i]\ar[r]\ar[d]    &   \B_{\Nis}\pi_1^{\AA^1}Y\ar[d]\\
                    Y[i-1]\ar[r]        &
                    K^{\pi_1^{\AA^1}Y}(\pi_i^{\AA^1}Y,i+1),
                }
            \end{equation*}
        \item and $Y\rightarrow\lim_iY[i]$ is an $\AA^1$-weak equivalence.
    \end{enumerate}
    The tower, which can be made functorial in $Y$, is called the \df{$\AA^1$-Postnikov
    tower} of $Y$.
\end{theorem}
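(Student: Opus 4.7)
The plan is to adapt the classical construction of the Postnikov tower for simplicial sets (or simplicial sheaves, as in Goerss-Jardine) to the $\AA^1$-local setting. First, I would replace $Y$ by a fibrant model $RY$ in $\Spc_S^{\AA^1}$, i.e., a presheaf of Kan complexes satisfying Nisnevich hyperdescent and $\AA^1$-invariance, using Theorem~\ref{thm:explicit}. For each $U \in \Sm_S$, the Kan complex $RY(U)$ admits a functorial classical Postnikov tower $\{RY(U)[i]\}_i$, and these assemble into a tower of simplicial presheaves $\tau_{\leq i} Y$. Since sectionwise Postnikov truncations need not preserve Nisnevich descent, I would then define $Y[i] := \L_{\AA^1}\L_{\Nis}\tau_{\leq i}(RY)$ and verify, using the stalkwise computation of $\pi_j^{\AA^1}$ together with the unstable $\AA^1$-connectivity Theorem~\ref{thm:unstabconn} applied inductively, that $\pi_j^{\AA^1}(Y[i])$ agrees with $\pi_j^{\AA^1}(Y)$ for $j \leq i$ and vanishes for $j > i$.

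Once the tower is in place, I would exhibit the twisted principal fibration structure. The map $Y \to Y[1]$ is a $1$-equivalence of pointed $\AA^1$-connected spaces whose target has a single non-trivial homotopy sheaf, so $Y[1]\simeq\B_{\Nis}\pi_1^{\AA^1}(Y)$. Each map $Y[i] \to Y[i-1]$ is a fibration in $\L_{\Nis}\sPre(\Sm_S)$ with fiber the sheafified Eilenberg-MacLane object $K(\pi_i^{\AA^1}Y, i)$, equipped with the canonical $\pi_1^{\AA^1}Y$-action inherited from the Whitehead action of $\pi_1$ on higher homotopy. To produce the $k$-invariant, I would use the fibration $K(A, i+1) \to K^{G}(A, i+1) \to \B G$ with $G = \pi_1^{\AA^1}(Y)$ and $A=\pi_i^{\AA^1}(Y)$: the obstruction to extending the identity on $Y[i-1]$ gives a canonical element in $\pi_0\map_{\B G}(Y[i-1], K^G(A,i+1))$ (cohomology with local coefficients), which one then checks classifies $Y[i] \to Y[i-1]$ as a homotopy pullback square against $\B G \to K^G(A, i+1)$.

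The hard part will be the compatibility between the section-wise Postnikov truncation and the motivic localization $\L_{\AA^1}\L_{\Nis}$. The functors $\Sing^{\AA^1}$ and $\L_{\Nis}$ are not known to commute with classical Postnikov truncation of presheaves of Kan complexes in general; one must argue that on an already $\AA^1$-local, Nisnevich-local object the truncation functors either preserve these properties or can be re-localized without disturbing the lower homotopy sheaves. This is controlled by a Morel-style connectivity argument: $\L_{\AA^1}\L_{\Nis}$ preserves $n$-connectivity (which is exactly the content of the unstable connectivity theorem, generalized to higher connectivities by induction), so the truncation of a Nisnevich-fibrant $\AA^1$-local space is again $\AA^1$-local on the nose.

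Finally, convergence $Y\simeq\holim_i Y[i]$ in $\Spc_S^{\AA^1}$ follows by comparing $\AA^1$-homotopy sheaves: by construction $\pi_j^{\AA^1}(\holim_iY[i]) \cong \lim_i \pi_j^{\AA^1}(Y[i])$ since the tower is eventually constant in each homotopy sheaf degree (as a Mittag-Leffler style argument applies), and this limit is exactly $\pi_j^{\AA^1}(Y)$ by construction of the truncations. Functoriality in $Y$ is inherited from the functorial classical Postnikov tower, together with the functoriality of $\L_{\AA^1}\L_{\Nis}$.
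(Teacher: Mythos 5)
The paper itself only gives a one-line sketch (``Nisnevich sheafify and $\AA^1$-localize the usual Postnikov tower'') and a reference to \cite{asok-fasel-threefolds}*{Section~6}, so your overall outline matches the intended route. You also correctly flag where the difficulty lies: showing that the section-wise (or Nisnevich-local) Postnikov truncation of a Nisnevich- and $\AA^1$-local object is again $\AA^1$-local, so that $\L_{\AA^1}$ doesn't introduce new higher homotopy sheaves or destroy the lower ones.

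Your justification of that step, however, has a genuine gap. You assert that ``$\L_{\AA^1}\L_{\Nis}$ preserves $n$-connectivity, which is exactly the content of the unstable connectivity theorem, generalized to higher connectivities by induction.'' The unstable $\AA^1$-connectivity theorem in this paper (Corollary~\ref{thm:unstabconn}) is only a $\pi_0$-epimorphism statement: it says $\pi_0^{\Nis}X\twoheadrightarrow\pi_0^{\AA^1}X$. It does \emph{not} bootstrap to higher connectivities by a formal induction --- that inductive step is blocked precisely because, once one passes to loop spaces or fibers, one must control how $\L_{\AA^1}$ interacts with the Nisnevich-local homotopy sheaves, which is the whole problem. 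What is actually needed is Morel's unstable $\AA^1$-connectivity theorem (\cite{morel}*{Theorem~6.38}, already flagged in the introduction of this paper as a deep result) together with his theorem that $\pi_1^{\AA^1}$ is strongly $\AA^1$-invariant and $\pi_n^{\AA^1}$ ($n\geq 2$) is strictly $\AA^1$-invariant, plus the converse that Eilenberg--MacLane objects on such sheaves are $\AA^1$-local. These are hard theorems of \cite{morel}, and they require the base to be a perfect field, a hypothesis you never import into your argument. Without them, the claim that ``the truncation of a Nisnevich-fibrant $\AA^1$-local space is again $\AA^1$-local on the nose'' is unsupported, and with your definition $Y[i]=\L_{\AA^1}\L_{\Nis}\tau_{\leq i}(RY)$ there is no control over $\pi_j^{\AA^1}(Y[i])$ for $j\le i$.

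Two lesser points: the identification $Y[1]\we\B_{\Nis}\pi_1^{\AA^1}(Y)$ and the twisted $k$-invariant construction via $K^{G}(A,i+1)\to\B G$ are fine modulo the above, since they are formal once the truncation is known to be $\AA^1$-local. For the convergence statement (5), $\pi_j$ of a homotopy limit over a tower has a potential $\lim^1$ term; you are implicitly using that the tower of homotopy sheaves is eventually constant to kill it, which is correct but should be said explicitly rather than waved at with ``Mittag-Leffler style.''
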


\begin{proof}
    This is left as an exercise, which basically amounts to Nisnevich
    sheafifying and $\AA^1$-localizing the usual Postnikov tower. For an extensive discussion, see \cite{asok-fasel-threefolds}*{Section 6} and the references therein.
\end{proof}

The point of the Postnikov tower is to make it possible to classify maps from
$X$ to $Y$ by constructing maps inductively, i.e., starting with a map
$X\rightarrow Y[1]$, lifting it to $X\rightarrow Y[2]$ while controlling the
choices of lifts, and so on.

\begin{theorem}\label{thm:postnikov-analysis}
    Let $S$ be a quasi-compact quasi-separated base scheme, and let $X$ be a
    smooth noetherian $S$-scheme of Krull dimension at most $d$.
    Suppose that $(Y, y)$ is a pointed $\AA^1$-connected space. The natural map
    $$[X,Y]_{\AA^1}\rightarrow[X,Y[i]]_{\AA^1}$$ is an isomorphism for $i\geq
    d$ and a surjection for $i=d-1$.
\end{theorem}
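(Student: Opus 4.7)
The plan is to climb the $\AA^1$-Postnikov tower of $Y$ from Theorem~\ref{thm:a1postnikov} stage by stage, applying the obstruction theory for twisted principal fibrations together with the Nisnevich cohomological dimension bound for smooth schemes. The key geometric input is that for any Nisnevich sheaf of abelian groups $A$ on $\Sm_S$, the cohomology $\H^n_\Nis(X,A)$ vanishes whenever $n>d$, for $X$ a smooth noetherian $S$-scheme of Krull dimension at most $d$; the same vanishing persists for the $\pi_1^{\AA^1}(Y)$-twisted cohomology groups classified by the equivariant Eilenberg-MacLane spaces $K^{\pi_1^{\AA^1}Y}(A,n)$, since the bound only depends on the underlying small Nisnevich site of $X$.

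Given this, I would translate the twisted principal fibration $Y[j]\rightarrow Y[j-1]$, with classifying map $k_j:Y[j-1]\rightarrow K^{\pi_1^{\AA^1}Y}(\pi_j^{\AA^1}Y,j+1)$, into a Puppe-style exact sequence of pointed sets after applying $[X,-]_{\AA^1}$. A map $f:X\rightarrow Y[j-1]$ lifts to $Y[j]$ if and only if the obstruction class $f^{\ast}k_j\in\H^{j+1}_\Nis(X;\pi_j^{\AA^1}Y)$ (twisted by the action of $\pi_1^{\AA^1}Y$) vanishes, and in that case the set of $\AA^1$-homotopy classes of lifts is a torsor under a quotient of $\H^j_\Nis(X;\pi_j^{\AA^1}Y)$. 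This is formally identical to the topological obstruction theory of Section~\ref{sec:topvect}, now internal to $\Spc_S^{\AA^1}$.

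The induction then runs as follows. Suppose $i\geq d$ and $f:X\rightarrow Y[i]$ is given; at each stage $j\geq i+1$ both the obstruction group $\H^{j+1}_\Nis(X;\pi_j^{\AA^1}Y)$ and the indeterminacy group $\H^j_\Nis(X;\pi_j^{\AA^1}Y)$ vanish since $j\geq i+1>d-1$ and $j+1>d$, so $f$ lifts uniquely up to $\AA^1$-homotopy through every stage. Consequently the tower $\{[X,Y[j]]_{\AA^1}\}_{j\geq i}$ has bijective transition maps, and combined with $Y\simeq\lim_j Y[j]$ from Theorem~\ref{thm:a1postnikov} one gets the bijection $[X,Y]_{\AA^1}\iso[X,Y[i]]_{\AA^1}$. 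For the boundary case $i=d-1$, the obstruction to a first lift $Y[d-1]\rightarrow Y[d]$ lies in $\H^{d+1}_\Nis(X;\pi_d^{\AA^1}Y)=0$, so the lift exists, and the $i\geq d$ analysis then propagates it uniquely to a map $X\rightarrow Y$, yielding the asserted surjectivity. Uniqueness is not claimed at $i=d-1$, which is consistent with the a priori non-vanishing of $\H^d_\Nis(X;\pi_d^{\AA^1}Y)$.

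The main technical obstacle I anticipate is the passage from the finite Postnikov sections $Y[j]$ to the homotopy limit $Y\simeq\lim_j Y[j]$: in general this requires a Milnor-type short exact sequence involving $\lim^1$ of the tower $[X,\Omega Y[j]]_{\AA^1}$, and extra care is needed because the low stages only give pointed sets or non-abelian groups. In our situation, however, the tower of sets stabilizes once $j\geq d$ (all relevant obstructions and indeterminacies being zero), so both $\lim^1$ and convergence issues become trivial. A secondary subtlety is giving a clean meaning to the $\pi_1^{\AA^1}Y$-equivariant cohomology groups; the cleanest route is to define them via the mapping space into $K^{\pi_1^{\AA^1}Y}(A,n)$ directly, at which point the Nisnevich cohomological dimension bound applies verbatim because it is a property of the site.
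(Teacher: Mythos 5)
Your proof is correct and follows essentially the same route as the paper: climb the $\AA^1$-Postnikov tower of Theorem~\ref{thm:a1postnikov}, interpret the lifting obstructions and indeterminacies via the twisted $k$-invariants in $\pi_1^{\AA^1}(Y)$-equivariant Nisnevich cohomology, and invoke the Nisnevich cohomological dimension bound $\H^n_\Nis(X,-)=0$ for $n>d$ to force unique lifts once $i\geq d$ and mere existence of lifts at $i=d-1$. You helpfully make explicit the $\lim^1$/Milnor-sequence step for passing from the tower $\{Y[j]\}$ to $Y$ itself, which the paper leaves implicit and which indeed becomes vacuous because the relevant towers of sets stabilize; the only slip is in your stated justification for the vanishing of the indeterminacy group $\H^j_\Nis$ — you cite $j\geq i+1>d-1$, but vanishing needs $j>d$, and the correct (stronger) bound $j\geq i+1\geq d+1>d$ does hold from $i\geq d$, so the conclusion is unaffected.
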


\begin{proof}
    The first obstruction to lifting a map $X\rightarrow Y[i]$ to $X\rightarrow
    Y$ is the obstruction to lifting it to $X\rightarrow Y[i+1]$. This is
    classified by the $k$-invariant, and hence a class in
    $\H^{i+2}_{\Nis,\pi_1^{\AA^1}Y}(X,\pi_{i+1}^{\AA^1}Y)=[X,K^{\pi_1^{\AA^1}Y}(\pi_{i+1}^{\AA^1}Y,i+2)]_{\AA^1}$.
    One important feature of the theory intervenes at this point: the equivariant
    cohomology group $\H^{i+2}_{\Nis,\pi_1^{\AA^1}Y}(X,\pi_{i+1}^{\AA^1}Y)$ can
    be identified with an ordinary Nisnevich cohomology group of a twisted form
    $(\pi_{i+1}^{\AA^1}Y)_{\lambda}$
    of $\pi_{i+1}^{\AA^1}Y$ in Nisnevich sheaves on $X$:
    $$\H^{i+2}_{\Nis,\pi_1^{\AA^1}Y}(X,\pi_{i+1}^{\AA^1}Y)\iso\H^{i+2}_{\Nis}(X,(\pi_{i+1}^{\AA^1}Y)_\lambda).$$
    See~\cite{morel}*{Appendix~B}.
    This group vanishes if $i+2>d$, or $i+1\geq d$, since the Nisnevich
    cohomological dimension of $X$ is at most $d$ by hypothesis. Thus, the map in the
    theorem is a surjection for $i\geq d-1$. The set of lifts, by the
    $\AA^1$-fiber sequence induced from Theorem~\ref{thm:a1postnikov}(4), is a
    quotient of $\H^{i+1}_{\Nis,\pi_1^{\AA^1}Y}(X,\pi_{i+1}^{\AA^1}Y)$, which
    vanishes for the same reason as above if $i+1>d$, or $i\geq d$. This completes the proof.
\end{proof}

We have an immediate consequence of the existence of the Postnikov towers as follows.

\begin{proposition}
    If $E$ is a rank $n>d$ vector bundle on a smooth affine $d$-dimensional
    variety $X$, then $E$ splits off a trivial direct summand.
\end{proposition}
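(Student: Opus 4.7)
The plan is to combine the affine representability of vector bundles with an obstruction-theoretic argument using the relative $\AA^1$-Postnikov tower. By the affine representability corollary above, the bundle $E$ corresponds to an $\AA^1$-homotopy class $f \in [X, \BGL_n]_{\AA^1}$, and $E \iso E' \oplus \Oscr$ for some rank $(n-1)$ bundle $E'$ if and only if $f$ lifts through the stabilization map $\BGL_{n-1} \to \BGL_n$ (which classifies $E' \mapsto E' \oplus \Oscr$). Hence it suffices to exhibit such a lift.

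Applying Theorem~\ref{prop:fib-crit} (whose affine $\AA^1$-invariance hypothesis holds for $\H^1_{\Nis}(-,\GL_{n-1})$ by the standing Bass-Quillen assumption), the sequence
$$F := \GL_n/\GL_{n-1} \longrightarrow \BGL_{n-1} \longrightarrow \BGL_n$$
is an $\AA^1$-fiber sequence. Exactly as in Corollary~\ref{cor:sln/n-1}, the action of $\GL_n$ on $\AA^n - \{0\}$ (projecting onto the last column) is transitive and the stabilizer is $\AA^1$-weakly equivalent to $\GL_{n-1}$, so $F \we \AA^n - \{0\}$; by Proposition~\ref{prop:S2n-1,n} this is $\AA^1$-weakly equivalent to $S^{2n-1,n}$. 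Morel's theorem quoted in the introduction then gives $\pi_i^{\AA^1}(F) = 0$ for $i \leq n-2$ and $\pi_{n-1}^{\AA^1}(F) \iso \mathbf{K}_n^{\mathrm{MW}}$.

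Now climb the relative $\AA^1$-Postnikov tower of $\BGL_{n-1} \to \BGL_n$, whose successive fibers are twisted Eilenberg-MacLane spaces $K^{\pi_1^{\AA^1}\BGL_n}(\pi_i^{\AA^1}F, i)$ (as in Theorem~\ref{thm:a1postnikov}). At stage $i$, the obstruction to lifting $f$ one step higher is a class in the twisted Nisnevich cohomology group $\H^{i+1}_{\Nis}\bigl(X, (\pi_i^{\AA^1}F)_\lambda\bigr)$, and the indeterminacy of such a lift is a quotient of $\H^i_{\Nis}\bigl(X, (\pi_i^{\AA^1}F)_\lambda\bigr)$. For $i \leq n-2$ the coefficient sheaf vanishes; for $i \geq n-1$ we have $i+1 \geq n > d$, and since $X$ is a smooth affine $d$-dimensional scheme, its Nisnevich cohomological dimension is at most $d$, so these groups vanish. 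A compatible family of lifts exists at every stage and assembles (using that $\BGL_{n-1}$ is the $\AA^1$-inverse limit of its Postnikov stages over $\BGL_n$, together with Theorem~\ref{thm:postnikov-analysis}) into the desired lift $\tilde f: X \to \BGL_{n-1}$.

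The main technical obstacles in this plan are: (i) identifying the $\AA^1$-homotopy fiber of $\BGL_{n-1} \to \BGL_n$ with $\AA^n - \{0\}$, which depends on Theorem~\ref{prop:fib-crit} and hence on the Bass-Quillen hypothesis; and (ii) invoking the non-trivial connectivity and first non-vanishing homotopy sheaf of $\AA^n-\{0\}$ from Morel's book, which are not proved in this paper. The $\Gm$-twisting coming from the action of $\pi_1^{\AA^1}\BGL_n \iso \Gm$ on the higher $\pi_i^{\AA^1}F$ is a notational nuisance but does not affect the vanishing, which is purely a cohomological-dimension statement on $X$.
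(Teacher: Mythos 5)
Your proof is correct, but it takes a genuinely different path from the paper's. You climb the \emph{relative} $\AA^1$-Postnikov tower of $\BGL_{n-1}\to\BGL_n$, feeding in directly the $(n-2)$-$\AA^1$-connectivity of the fiber $\GL_n/\GL_{n-1}\we\AA^n-\{0\}$ and then killing every obstruction and indeterminacy group by cohomological dimension. The paper instead works with the \emph{absolute} Postnikov truncations $\BGL_n[d]$ and $\BGL_d[d]$: by the stability results (Proposition~\ref{prop:slncover} and Theorem~\ref{thm:stabilitysln}) the map $\pi_i^{\AA^1}\BGL_d\to\pi_i^{\AA^1}\BGL_n$ is an isomorphism for $i\le d-1$ and surjective at $i=d$, so the $\AA^1$-homotopy fiber of $\BGL_d[d]\to\BGL_n[d]$ is a single twisted Eilenberg--MacLane sheaf $K^{\Gm}(A,d)$; the one obstruction lies in $\H^{d+1}_{\Nis,\Gm}(X,A)=0$, and Theorem~\ref{thm:postnikov-analysis} is invoked to pass back from a lift at the truncated level to an actual lift $X\to\BGL_d$ whose stabilization recovers $E$. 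Both proofs feed off the same deep input (Morel's theorem on the first non-vanishing $\AA^1$-homotopy sheaf of $\AA^n-\{0\}$, either directly as you do or packaged in the stability theorem), and both tacitly assume the standing hypotheses of the section (Bass--Quillen holds over the base, e.g.\ $S=\Spec k$). The paper's version buys a stronger conclusion for the same price -- it produces $E\iso E'\oplus\Oscr^{n-d}$ with $E'$ of rank $d$, not merely one trivial summand -- and avoids invoking a relative Postnikov tower, which the paper never actually constructs (Theorem~\ref{thm:a1postnikov} is only stated absolutely, so your ``as in Theorem~\ref{thm:a1postnikov}'' is a small amount of hand-waving, though standard). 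Your version, on the other hand, bypasses the stability theorems entirely and makes the role of the connectivity of $\AA^n-\{0\}$ more transparent.
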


\begin{proof}
    Using Proposition~\ref{prop:slncover} and Theorem~\ref{thm:stabilitysln},
    we see that $\pi_i^{\AA^1}\BGL_d\rightarrow\pi_i^{\AA^1}\BGL_n$ is an
    isomorphism for $i\leq d-1$, and a surjection for $i=d$. By the
    representability theorem, $E$ is represented by a map $X\rightarrow\BGL_n$
    in the $\AA^1$-homotopy category. Compose this map with
    $\BGL_n\rightarrow\BGL_n[d]$ to obtain $g:X\rightarrow\BGL_n[d]$. Note that $E$ is uniquely determined by
    $g$ by Theorem~\ref{thm:postnikov-analysis}. It suffices to lift
    $g$ to a map $h:X\rightarrow\BGL_d[d]$. The fiber of
    $\BGL_d[d]\rightarrow\BGL_n[d]$ is a $K(A,d)$-space for some Nisnevich
    sheaf $A$ with an action of $\Gm=\pi_1^{\AA^1}\BGL_d$. It follows that the obstruction to lifting $g$ through
    $\BGL_d[d]\rightarrow\BGL_n[d]$ is a class of $\H^{d+1}_{\Nis,\Gm}(X,A)=0$.
\end{proof}

\begin{remark}
    As a Nisnevich sheaf of spaces, $\BGL_n$ is a $K(\pi,1)$-sheaf in the sense
    that it has only one non-zero homotopy group. For the purposes of
    obstruction theory and classification theory this is not terribly useful as
    choosing a lift to the first stage of the Nisnevich-local Postnikov tower
    of $\BGL_n$ is equivalent to specifying a vector bundle. The process of
    $\AA^1$-localization mysteriously acts as a prism that separates the single
    homotopy sheaf into an entire sequence (spectrum) of homotopy sheaves,
    allowing a finer step-by-step analysis.
\end{remark}

\begin{proposition}
    The first few $\AA^1$-homotopy sheaves of $\BGL_2$ are
    \begin{align*}
        \pi_0^{\AA^1}\BGL_2&=\star,\\
        \pi_1^{\AA^1}\BGL_2&\iso\Gm,\\
        \pi_2^{\AA^1}\BGL_2&\iso\K_2^{\mathrm{MW}},
    \end{align*}
    where $\K_2^{\mathrm{MW}}$ denotes the second unramified Milnor-Witt sheaf.
\end{proposition}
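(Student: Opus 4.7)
The plan is to handle each of the three homotopy sheaves in turn, leveraging the determinant fibration, the stability results for $\BSL_n$ established above, and (crucially for the top degree) Morel's identification of $\pi_n^{\AA^1}(\AA^{n+1}-\{0\})$ with $\K_{n+1}^{\mathrm{MW}}$ that was cited in the introduction. Throughout I will need $S$ to satisfy the hypotheses of Proposition~\ref{prop:naivea1sln} (so that the relevant torsor functors are $\AA^1$-invariant on smooth affines and Theorem~\ref{prop:fib-crit} applies).

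For $\pi_0^{\AA^1}\BGL_2$: this is immediate from Proposition~\ref{pi0BG}, since $\GL_2$ is a Nisnevich sheaf of groups.

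For $\pi_1^{\AA^1}\BGL_2$: I would apply Theorem~\ref{prop:fib-crit} to the determinant map to produce an $\AA^1$-fiber sequence $\BSL_2 \to \BGL_2 \to \B\Gm$. By Proposition~\ref{prop:naivea1sln}, $\BSL_2$ is $\AA^1$-$1$-connected, and by Proposition~\ref{pi0BG} the sheaves $\pi_0^{\AA^1}\BSL_2$ and $\pi_0^{\AA^1}\BGL_2$ are trivial. The long exact sequence of $\AA^1$-homotopy sheaves then collapses to an isomorphism $\pi_1^{\AA^1}\BGL_2 \iso \pi_1^{\AA^1}\B\Gm \iso \Gm$, the last isomorphism coming from $\AA^1$-rigidity of $\Gm$ (Proposition~\ref{prop:rigid}) together with the fiber sequence $\Gm \to \E\Gm \to \B\Gm$.

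For $\pi_2^{\AA^1}\BGL_2$: by Proposition~\ref{prop:slncover} the determinant induces an isomorphism $\pi_2^{\AA^1}\BSL_2 \iso \pi_2^{\AA^1}\BGL_2$, so it suffices to compute $\pi_2^{\AA^1}\BSL_2$. The $\AA^1$-fiber sequence $\SL_2 \to \ESL_2 \to \BSL_2$ (which exists by the same argument used in Proposition~\ref{prop:naivea1sln}) together with the contractibility of $\ESL_2$ gives $\pi_2^{\AA^1}\BSL_2 \iso \pi_1^{\AA^1}\SL_2$. Now I would invoke Corollary~\ref{cor:sln/n-1} in the case $n = 2$: the last-column map produces an $\AA^1$-weak equivalence $\SL_2/\SL_1 \we \AA^2-\{0\}$. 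Since $\SL_1 = \star$, the cofiber $\SL_2/\SL_1$ is just $\SL_2$ (pointed at the identity), so $\SL_2 \we \AA^2-\{0\}$ in $\Spc_{S,\star}^{\AA^1}$ and therefore $\pi_1^{\AA^1}\SL_2 \iso \pi_1^{\AA^1}(\AA^2-\{0\})$.

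The main obstacle is the final identification $\pi_1^{\AA^1}(\AA^2-\{0\}) \iso \K_2^{\mathrm{MW}}$. This is not something that can be extracted from the tools developed in this paper; it is a special case ($n=1$) of Morel's theorem $\pi_n^{\AA^1}(\AA^{n+1}-\{0\}) \iso \K_{n+1}^{\mathrm{MW}}$ alluded to in the introduction. The proof in \cite{morel}*{Chapter 6} requires the $\AA^1$-Hurewicz theorem, $\AA^1$-homology sheaves constructed via the $\AA^1$-derived category (rather than by naive sheafification), and a direct computation identifying the resulting $\AA^1$-homology sheaf with unramified Milnor-Witt $K$-theory. For the purposes of this proposition I would simply cite that result.
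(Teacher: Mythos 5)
Your proof is correct and reaches the same conclusions as the paper, but via a somewhat different route. For $\pi_1^{\AA^1}\BGL_2$ the paper invokes the stable range identifications $\pi_1^{\AA^1}\BGL_2\iso\pi_1^{\AA^1}\BGL_\infty\iso\K_1\iso\Gm$, drawing on Sections~\ref{sec:kstablerange} and~\ref{sec:K}; you instead run the long exact sequence of the determinant fibration $\BSL_2\rightarrow\BGL_2\rightarrow\B\Gm$ directly, using the $\AA^1$-$1$-connectivity of $\BSL_2$ (Proposition~\ref{prop:naivea1sln}) and the $\AA^1$-rigidity of $\Gm$. This is more self-contained in that it bypasses the $K$-theory representability results entirely. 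For $\pi_2^{\AA^1}\BGL_2$ the paper works with the $\AA^1$-fiber sequence $\AA^2-\{0\}\rightarrow\BGL_1\rightarrow\BGL_2$ together with the fact that $\BGL_1$ is a $K(\Gm,1)$, so that the connecting homomorphism lands directly on $\pi_1^{\AA^1}(\AA^2-\{0\})$; you instead reduce via Proposition~\ref{prop:slncover} to $\pi_2^{\AA^1}\BSL_2\iso\pi_1^{\AA^1}\SL_2$ and then apply Corollary~\ref{cor:sln/n-1} at $n=2$ (where the cofiber degenerates since $\SL_1=\star$) to identify $\SL_2\we\AA^2-\{0\}$. Both approaches funnel through Morel's theorem $\pi_1^{\AA^1}(\AA^2-\{0\})\iso\K_2^{\mathrm{MW}}$, which the paper cites as \cite{morel}*{Theorem~6.40}; you are right to emphasize that this input lies outside the reach of the present paper's tools, and your explicit flagging of the hypotheses on $S$ inherited from Proposition~\ref{prop:naivea1sln} is careful and welcome, as the paper's statement leaves these implicit.
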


\begin{proof}
    The $\AA^1$-connectivity statement $\pi_0^{\AA^1}\BGL_2=\star$ follows from the fact that vector bundles are
    Zariski and hence Nisnevich locally trivial. The fact that
    $\pi_1\BGL_2\iso\Gm$ follows from the stable range result that gives
    $\pi_1\BGL_2\iso\pi_1\BGL_{\infty}\iso\K_1\iso\Gm$, where the last two
    isomorphisms are explained in Section~\ref{sec:kstablerange}. The last
    follows from the $\AA^1$-fiber sequence
    $$\AA^2-\{0\}\rightarrow\BGL_1\rightarrow\BGL_2,$$ the fact that $\BGL_1$
    is a $K(\Gm,1)$-space, and Morel's result~\cite{morel}*{Theorem~6.40},
    which says that $\pi_1^{\AA^1}\AA^2-\{0\}\iso\K_2^{\mathrm{MW}}$.
\end{proof}

Now, for any smooth scheme $X$ and any line bundle $\Lscr$ on $X$,
there is an exact sequence of Nisnevich sheaves
\begin{equation}\label{eq:134}
    0\rightarrow \mathrm{I}^3(\Lscr)\rightarrow\K_2^{\mathrm{MW}}(\Lscr)\rightarrow\K_2\rightarrow 0,
\end{equation}
where the first and second terms are the $\Lscr$-twisted forms (see~\cite{morel}). The sheaf
$\K_2^{\mathrm{MW}}(\Lscr)$ controls the rank $2$ vector bundles on $X$ with
determinant $\Lscr$.

If $X$ is a smooth affine surface, there is a bijection
$[X,\BGL_2]_{\AA^1}\rightarrow[X,\BGL_2[2]]_{\AA^1}$, from which it follows
that the rank $2$ vector bundles on $X$ with determinant $\Lscr$ are classified
by a quotient of
$$\H^2_{\Nis,\Gm}(X,\K_2^{\mathrm{MW}})\iso\H^2_{\Nis}(X,\K_2^{\mathrm{MW}}(\Lscr)).$$
In fact, we will see that the quotient is all of
$\H^2_{\Nis}(X,\K_2^{\mathrm{MW}}(\Lscr))$.

\begin{lemma}
    If $X$ is a smooth affine surface over a quadratically closed field $k$,
    then $\H^n_{\Nis}(X,\I^3(\Lscr))=0$ for $n\geq 2$.
\end{lemma}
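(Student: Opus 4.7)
The plan is to compute $\H^n_{\Nis}(X, \I^3(\Lscr))$ using the Gersten--Rost--Schmid resolution of the sheaf $\I^3(\Lscr)$ on the smooth surface $X$. This resolution, due to Morel (see \cite{morel}*{Chapter~5}) and building on work of Rost, realizes $\I^3(\Lscr)$ as the kernel of a length-$2$ complex of flasque Nisnevich sheaves
\begin{equation*}
    \bigoplus_{x \in X^{(0)}} (i_x)_*\, \I^3(k(x), \Lscr_x) \to \bigoplus_{x \in X^{(1)}} (i_x)_*\, \I^2(k(x), \omega_x) \to \bigoplus_{x \in X^{(2)}} (i_x)_*\, \I(k(x), \omega_x),
\end{equation*}
with appropriate one-dimensional twists $\omega_x$ built from the conormal bundle and $\Lscr$. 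Since each term is flasque, this complex computes $\H^*_{\Nis}(X, \I^3(\Lscr))$. For $n \geq 3$, vanishing is immediate either from the length of the complex or from the general bound of Nisnevich cohomological dimension by Krull dimension. To finish the proof I must therefore show that the codimension-$2$ term is zero, i.e.\ that $\I(k(x), \omega_x) = 0$ for every closed point $x$ of $X$; since the twist only affects the abelian group up to (non-canonical) isomorphism, this reduces to $\I(k(x)) = 0$.

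The main input is a chain of elementary facts about quadratically closed fields. First, if $k$ is quadratically closed of characteristic $\neq 2$, then $k$ admits no non-trivial quadratic extension, so its mod-$2$ absolute Galois cohomological dimension $\mathrm{cd}_2(k) = 0$. Second, any finite algebraic extension $K/k$ corresponds to a closed subgroup of the absolute Galois group, hence also has $\mathrm{cd}_2(K) = 0$ and is therefore itself quadratically closed. Third, for any quadratically closed field $K$ (char $\neq 2$), every nondegenerate quadratic form is diagonalizable with square coefficients and the form $\langle 1,1\rangle = \langle 1,-1\rangle$ is hyperbolic, so $W(K) \iso \ZZ/2$ is generated by $\langle 1 \rangle$ and $\I(K) = 0$.

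Combining these with the observation that $X$ is a smooth affine surface of finite type over $k$ so that every closed point $x \in X^{(2)}$ has residue field $k(x)$ a finite algebraic extension of $k$, steps one through three give $\I(k(x)) = 0$ for all $x \in X^{(2)}$. The codimension-$2$ term of the Gersten complex therefore vanishes identically, and $\H^2_{\Nis}(X, \I^3(\Lscr)) = 0$, completing the proof together with the dimension argument for $n \geq 3$.

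The main obstacle is purely bibliographic: one must cite the Rost--Schmid/Gersten resolution in the precise form with line-bundle twist $\Lscr$, which is treated in \cite{morel}*{Chapter~5} and in Fasel's work on the Chow--Witt groups. No genuine computation beyond the standard structure theorem $W(K) = \ZZ/2$ for quadratically closed $K$ is required.
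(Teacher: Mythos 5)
Your strategy---compute via the Rost--Schmid (Gersten) resolution of $\I^3(\Lscr)$, dispose of $n\geq 3$ by dimension, and for $n=2$ try to show the codimension-$2$ term vanishes---is a reasonable line of attack, and the first two steps are fine. The gap lies in the chain of claims used to kill the codimension-$2$ term. You assert that a quadratically closed field $k$ of characteristic $\neq 2$ has $\mathrm{cd}_2(k)=0$, and hence that every finite extension of $k$ is again quadratically closed, so that $\I(k(y))=0$ for every closed point $y$. The first link in this chain is false: $k^*=(k^*)^2$ says $\H^1(G_k,\ZZ/2)=0$, i.e.\ the maximal \emph{pro-$2$ quotient} of $G_k$ is trivial, whereas $\mathrm{cd}_2(k)=0$ says the \emph{$2$-Sylow subgroup} of $G_k$ is trivial. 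These are genuinely different conditions for a profinite group.

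For a concrete counterexample take $k=\QQ^{\mathrm{qcl}}$, the quadratic closure of $\QQ$, which is quadratically closed by construction. Let $L_0$ be the splitting field over $\QQ$ of $x^5-x-1$, so $\Gal(L_0/\QQ)\cong S_5$. Since the only normal subgroups of $S_5$ are $1$, $A_5$, $S_5$, and $[L_0:\QQ]=120$ is not a power of $2$ while $\sqrt{\mathrm{disc}}\in k$, one finds $L_0\cap k=\QQ(\sqrt{\mathrm{disc}})$ and $\Gal(L_0k/k)\cong A_5$. Thus $G_k$ has a finite quotient of order $60$, so its $2$-Sylow subgroup is nontrivial and $\mathrm{cd}_2(k)>0$. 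More to the point, the fixed field $E\subset L_0 k$ of a $2$-Sylow subgroup of $A_5$ (a Klein four group) is a degree-$15$ extension of $k$ that admits a quadratic subextension of $L_0 k$, so $E$ is not quadratically closed and $\I(E)\neq 0$. Such an $E$ is the residue field of a closed point of $\AA^2_k$, so the target $\bigoplus_{y\in X^{(2)}}\I\bigl(k(y),\omega_y\bigr)$ of the last differential in your complex does \emph{not} vanish in general. (Your argument does work when $k$ is algebraically closed, where every closed point has residue field $k$; but quadratically closed is strictly weaker and does not transfer to residue fields of closed points.) To repair the $n=2$ case one must show that the final differential of the Rost--Schmid complex is \emph{surjective} rather than that its target is zero, which is what the cited source \cite{asok-fasel-threefolds}*{Proposition~5.2} actually establishes; the paper's own proof is simply a pointer to that result.
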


\begin{proof}
    This is~\cite{asok-fasel-threefolds}*{Proposition~5.2}.
\end{proof}

It follows from the lemma and the exact sequence~\eqref{eq:134} that the space of lifts is a quotient of
$\H^2_{\Nis}(X,\K_2)\iso\CH^2(X)$, where the isomorphism is due to
Quillen~\cite{quillen} in
the Zariski topology and Thomason-Trobaugh~\cite{thomason-trobaugh} in the Nisnevich topology.
Now, looking at
$$\Gm(X)\iso[X,K(\Gm,0)]_{\AA^1}\rightarrow[X,K^{\Gm}(\K_2^{\mathrm{MW}},2)]_{\AA^1}\rightarrow[X,\BGL_2[2]]_{\AA^1}\rightarrow[X,\BGL_2[1]]_{\AA^1}\iso\Pic(X),$$
we see that the map
$[X,K^{\Gm}(\K_2^{\mathrm{MW}},2)]_{\AA^1}\rightarrow[X,\BGL_2[2]]_{\AA^1}$ is
injective because every element of $\Gm(X)$ extends to an automorphism
of the vector bundle classified by $X\rightarrow\BGL_2$.

It follows that the map $$\Vect_2(X)\rightarrow\CH^1(X)\times\H^2_{\Nis}(X,\K_2)$$ is a
bijection. Thus, we have sketched a proof of the following theorem.

\begin{theorem}\label{thm:vectsurfaces}
    Let $X$ be a smooth affine surface over a quadratically closed field. Then,
    the map $$(c_1,c_2):\Vect_2(X)\rightarrow\CH^1(X)\times\CH^2(X),$$ induced by
    taking the first and second Chern classes, is a bijection.
\end{theorem}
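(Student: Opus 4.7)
The plan is to apply the $\AA^1$-Postnikov tower analysis of Theorems \ref{thm:a1postnikov} and \ref{thm:postnikov-analysis} to $\BGL_2$, using affine representability of vector bundles to reduce the classification problem to a cohomology computation.

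First I would invoke affine representability to identify $\Vect_2(X) \iso [X, \BGL_2]_{\AA^1}$. Since $X$ is a smooth affine scheme of Krull dimension $2$, Theorem \ref{thm:postnikov-analysis} yields a bijection $[X, \BGL_2]_{\AA^1} \iso [X, \BGL_2[2]]_{\AA^1}$. Using the computation $\pi_1^{\AA^1}\BGL_2 \iso \Gm$ and $\pi_2^{\AA^1}\BGL_2 \iso \K_2^{\mathrm{MW}}$ established above, the second stage of the tower fits into a twisted principal fibration
\[
K^{\Gm}(\K_2^{\mathrm{MW}}, 2) \rightarrow \BGL_2[2] \rightarrow \BGL_2[1] \we \B\Gm,
\]
and the map $[X, \BGL_2[2]]_{\AA^1} \rightarrow [X, \B\Gm]_{\AA^1} \iso \Pic(X) = \CH^1(X)$ should be identified with the first Chern class.

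Next, I would fix $\Lscr \in \Pic(X)$ and analyze the fiber over $\Lscr$ via the induced long exact sequence. The obstruction to lifting $X \rightarrow \B\Gm$ to $X \rightarrow \BGL_2[2]$ lies in $\H^3_{\Nis,\Gm}(X, \K_2^{\mathrm{MW}}) \iso \H^3_{\Nis}(X, \K_2^{\mathrm{MW}}(\Lscr))$, which vanishes because $X$ has Nisnevich cohomological dimension $2$; the set of lifts is then a quotient of $\H^2_{\Nis}(X, \K_2^{\mathrm{MW}}(\Lscr))$. The quadratic closure of $k$ enters through the lemma that $\H^n_{\Nis}(X, \I^3(\Lscr)) = 0$ for $n \geq 2$, which combined with the short exact sequence (\ref{eq:134}) gives $\H^2_{\Nis}(X, \K_2^{\mathrm{MW}}(\Lscr)) \iso \H^2_{\Nis}(X, \K_2) \iso \CH^2(X)$, the last isomorphism being Quillen's identification. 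The class extracted here should be the second Chern class.

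Finally, to turn surjectivity onto a quotient into an honest bijection, I would show that the action on lifts by $\Gm(X) \iso [X, K(\Gm,0)]_{\AA^1}$ is trivial, since every global unit extends to a global automorphism of the rank two bundle classified by $X \rightarrow \BGL_2$, so that the map $[X, K^{\Gm}(\K_2^{\mathrm{MW}}, 2)]_{\AA^1} \rightarrow [X, \BGL_2[2]]_{\AA^1}$ is injective over each $\Lscr$. The main obstacle will be the careful bookkeeping: identifying equivariant cohomology $\H^*_{\Nis,\Gm}(X,-)$ with $\Lscr$-twisted Nisnevich cohomology as in Morel's appendix, and checking that the two cohomological invariants one thereby extracts from a map $X \rightarrow \BGL_2[2]$ really do coincide with the geometric first and second Chern classes of the corresponding vector bundle.
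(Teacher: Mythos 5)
Your proposal follows the paper's proof essentially step for step: affine representability of $\Vect_2$, truncation to $\BGL_2[2]$ via the $\AA^1$-Postnikov tower over a two-dimensional base, the twisted obstruction theory over $\B\Gm$ feeding $\H^{2}_{\Nis,\Gm}(X,\K_2^{\mathrm{MW}})\iso\H^2_{\Nis}(X,\K_2^{\mathrm{MW}}(\Lscr))$, the vanishing of $\H^{\geq 2}_{\Nis}(X,\I^3(\Lscr))$ over a quadratically closed field to reduce to $\H^2_{\Nis}(X,\K_2)\iso\CH^2(X)$, and the observation that the $\Gm(X)$-action on lifts is trivial because units extend to bundle automorphisms. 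One small imprecision worth noting: the fiber of $\BGL_2[2]\rightarrow\BGL_2[1]$ in the twisted principal fibration of Theorem \ref{thm:a1postnikov} is $K(\K_2^{\mathrm{MW}},2)$, not $K^{\Gm}(\K_2^{\mathrm{MW}},2)$ (the latter is the target of the $k$-invariant), but since you are computing lifts over a fixed classifying map $X\rightarrow\B\Gm$, the twisted group $\H^2_{\Nis}(X,\K_2^{\mathrm{MW}}(\Lscr))$ you write down is nevertheless the right one.
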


\begin{remark}
    To see that the natural maps involved are the Chern classes, as claimed,
    refer to~\cite{asok-fasel-threefolds}*{Section 6}.
\end{remark}

The fact that the theorem holds over quadratically closed fields is stronger
than the previous results in this direction, which had been obtained without
$\AA^1$-homotopy theory.

Asok and Fasel have carried this program much farther in several papers, for
instance showing in~\cite{asok-fasel-threefolds} that
$\Vect_2(X)\iso\CH^1(X)\times\CH^2(X)$ when $X$ is a smooth affine
\emph{three-fold} over a quadratically closed field. This theorem, which is
outside the stable range, is much more difficult.

\section{Further directions}\label{sec:exercises}

In most of the exercises below, none of which are supposed to be
easy, it will be useful to bear in mind the universal
properties of $\L_{\Nis}$ and $\L_{\AA^1}$.

\begin{exercise}
    Use the formalism of model categories to construct topological and \'etale
    realization functors out of the $\AA^1$-homotopy category $\Spc_S^{\AA^1}$.
    Dugger's paper~\cite{dugger-universal} on universal homotopy theories may
    come in handy. This problem is studied specifically
    in~\cite{dugger-isaksen-realizations} and~\cite{dugger-isaksen-etale}.
\end{exercise}

\begin{exercise}
    Show that topological realization takes the motivic sphere $S^{a,b}$ where $a\leq
    b$ to the topological sphere $S^a$.
\end{exercise}


\begin{exercise}
    Prove that complex topological $K$-theory is representable in
    $\Spc_S^{\AA^1}$.
\end{exercise}

\begin{exercise}
    Let $\RR$ denote the field of real numbers. Construct a realization functor
    from $\Spc_{\RR}^{\AA^1}$ to the homotopy theory of $\ZZ/2$-equivariant
    topological spaces. Again, see~\cite{dugger-isaksen-realizations}.
\end{exercise}

\begin{exercise}
    Construct a realization functor from $\Spc_S^{\AA^1}$ to Voevodsky's
    category $\mathrm{DM}(S)$ of (big) motives over $S$. It will probably be necessary to search the literature
    for a model category structure for $\mathrm{DM}(S)$.
\end{exercise}

\begin{exercise}
    Show that the realization functor from $\Spc_S^{\AA^1}$ to Voevodsky's
    category factors through the \emph{stable} motivic homotopy category
    obtained from $\Spc_S^{\AA^1}$ by stabilizing with respect to $S^{2,1}\we\PP^1$.
\end{exercise}

\begin{exercise}
    Ayoub~\cite{ayoub} has constructed a $6$-functors formalism for stable
    motivic homotopy theory. Construct some
    functors between $\Spc_S^{\AA^1}$ and $\Spc_U^{\AA^1}$ when $U$ is open in
    $S$ and between $\Spc_S^{\AA^1}$ and $\Spc_Z^{\AA^1}$ when $Z$ is closed in $S$.
\end{exercise}

\begin{bibdiv}
\begin{biblist}

\bib{adamek-rosicky}{book}{
    author={Ad{\'a}mek, Ji{\v{r}}{\'{\i}}},
    author={Rosick{\'y}, Ji{\v{r}}{\'{\i}}},
    title={Locally presentable and accessible categories},
    series={London Mathematical Society Lecture Note Series},
    volume={189},
    publisher={Cambridge University Press, Cambridge},
    date={1994},
    pages={xiv+316},
    isbn={0-521-42261-2},
}

\bib{adamek-rosicky-vitale}{article}{
    author={Ad{\'a}mek, J.},
    author={Rosick{\'y}, J.},
    author={Vitale, E. M.},
    title={What are sifted colimits?},
    journal={Theory Appl. Categ.},
    volume={23},
    date={2010},
    pages={No. 13, 251--260},
    issn={1201-561X},
}

\bib{artin-mazur}{book}{
    author={Artin, M.},
    author={Mazur, B.},
    title={Etale homotopy},
    series={Lecture Notes in Mathematics, No. 100},
    publisher={Springer-Verlag, Berlin-New York},
    date={1969},
    pages={iii+169},
}

\bib{asok-toric}{article}{    AUTHOR = {Asok, Aravind},
    TITLE = {Remarks on {$\mathbb{A}^1$}-homotopy groups of smooth toric
              models},
   JOURNAL = {Math. Res. Lett.},
    VOLUME = {18},
      YEAR = {2011},
    NUMBER = {2},
     PAGES = {353--356},
      ISSN = {1073-2780},
       DOI = {10.4310/MRL.2011.v18.n2.a12},
       URL = {http://dx.doi.org/10.4310/MRL.2011.v18.n2.a12},
}

\bib{aravind-fundamentalgrp}{article}{AUTHOR = {Asok, Aravind},
    TITLE = {Splitting vector bundles and {$\mathbb{A}^1$}-fundamental
              groups of higher-dimensional varieties},
   JOURNAL = {J. Topol.},
    VOLUME = {6},
      YEAR = {2013},
    NUMBER = {2},
     PAGES = {311--348},
      ISSN = {1753-8416},
}
\bib{asok-doran}{article}{
    author={Asok, Aravind},
    author={Doran, Brent},
    title={Vector bundles on contractible smooth schemes},
    journal={Duke Math. J.},
    volume={143},
    date={2008},
    number={3},
    pages={513--530},
    issn={0012-7094},
}

\bib{asok-doran-fasel}{article}{
    author={Asok, Aravind},
    author={Doran, Brent},
    author={Fasel, Jean},
    title={Smooth models of motivic spheres and the clutching construction},
    journal={to appear in Int. Math. Res. Notices},
    eprint={https://arxiv.org/abs/1408.0413},
}

\bib{asok-fasel-threefolds}{article}{
    author={Asok, Aravind},
    author={Fasel, Jean},
    title={A cohomological classification of vector bundles on smooth affine threefolds},
    journal={Duke Math. J.},
    volume={163},
    date={2014},
    number={14},
    pages={2561--2601},
    issn={0012-7094},
}


\bib{asok-hoyois-wendt}{article}{
    author = {Asok, Aravind},
    author = {Hoyois, Marc},
    author = {Wendt, Matthias},
    title = {Affine representability results in $\mathds{A}^1$-homotopy theory I: vector bundles},
    journal = {ArXiv e-prints},
    eprint =  {http://arxiv.org/abs/1506.07093},
    year = {2015},
}

\bib{asok-hoyois-wendt-II}{article}{
    author = {Asok, Aravind},
    author = {Hoyois, Marc},
    author = {Wendt, Matthias},
    title = {Affine representability results in $\mathds{A}^1$-homotopy theory II: principal bundles and homogeneous spaces},
    journal = {ArXiv e-prints},
    eprint =  {http://arxiv.org/abs/1507.08020},
    year = {2015},
}

\bib{ayoub}{article}{
    author={Ayoub, Joseph},
    title={Les six op\'erations de Grothendieck et le formalisme des
    cycles
    \'evanescents dans le monde motivique. I},
    journal={Ast\'erisque},
    number={314},
    date={2007},
    pages={x+466 pp. (2008)},
}
%

\bib{balwe-hogadi-sawant}{article}{
    author={Balwe, Chetan},
    author={Hogadi, Amit},
    author={Sawant, Anand},
    title={$\mathbb{A}^1$-connected components of schemes},
    journal={Adv. Math.},
    volume={282},
    date={2015},
    pages={335--361},
    issn={0001-8708},
}

%

\bib{sga6}{book}{
title={Th\'eorie des intersections et th\'eor\`eme de Riemann-Roch},
series={Lecture Notes in Mathematics, Vol. 225},
note={S\'eminaire de G\'eom\'etrie Alg\'ebrique du Bois-Marie
1966--1967
(SGA 6);
Dirig\'e par P. Berthelot, A. Grothendieck et L.
Illusie. Avec la
collaboration de D. Ferrand, J. P. Jouanolou, O.
Jussila, S. Kleiman, M.
Raynaud et J. P. Serre},
publisher={Springer-Verlag, Berlin-New
York},
date={1971},
pages={xii+700},
}

%
%
\bib{bott}{article}{
    author={Bott, Raoul},
    title={The space of loops on a Lie group},
    journal={Michigan Math. J.},
    volume={5},
    date={1958},
    pages={35--61},
    issn={0026-2285},
}

\bib{bousfield-friedlander}{article}{
author={Bousfield, A. K.},
author={Friedlander, E. M.},
title={Homotopy theory of $\Gamma $-spaces, spectra, and
bisimplicial
sets},
conference={
title={Geometric applications of homotopy
theory (Proc. Conf.,
Evanston, Ill., 1977), II},
},
book={
series={Lecture
Notes in Math.},
volume={658},
publisher={Springer,
Berlin},
},
date={1978},
pages={80--130},
}

\bib{bousfield-kan}{book}{
    author={Bousfield, A. K.},
    author={Kan, D. M.},
    title={Homotopy limits, completions and localizations},
    series={Lecture Notes in Mathematics, Vol. 304},
    publisher={Springer-Verlag},
    place={Berlin},
    date={1972},
    pages={v+348},
}
%
\bib{brown-gersten}{article}{
author={Brown, Kenneth S.},
author={Gersten, Stephen M.},
title={Algebraic $K$-theory as generalized sheaf cohomology},
conference={
title={Algebraic K-theory, I: Higher K-theories},
address={Proc. Conf., Battelle Memorial Inst.,
Seattle, Wash.},
date={1972},
},
book={
publisher={Springer,
Berlin},
},
date={1973},
pages={266--292.
Lecture
Notes in
Math., Vol.
341},
}
%

\bib{cazanave}{article}{
    author={Cazanave, Christophe},
    title={Algebraic homotopy classes of rational functions},
    journal={Ann. Sci. \'Ec. Norm. Sup\'er. (4)},
    volume={45},
    date={2012},
    number={4},
    pages={511--534 (2013)},
    issn={0012-9593},
}

%
\bib{cisinski}{article}{
    author={Cisinski, Denis-Charles},
    title={Descente par \'eclatements en $K$-th\'eorie invariante par
    homotopie},
    journal={Ann. of Math. (2)},
    volume={177},
    date={2013},
    number={2},
    pages={425--448},
    issn={0003-486X},
}

\bib{deligne-icm}{article}{
    author={Deligne, Pierre},
    title={Poids dans la cohomologie des vari\'et\'es alg\'ebriques},
    conference={
    title={Proceedings of the International Congress of
    Mathematicians},
    address={Vancouver, B. C.},
    date={1974},
    },
    book={
    publisher={Canad. Math. Congress, Montreal, Que.},
    },
    date={1975},
    pages={79--85},
}

%

\bib{dugger-universal}{article}{
    author={Dugger, Daniel},
    title={Universal homotopy theories},
    journal={Adv. Math.},
    volume={164},
    date={2001},
    number={1},
    pages={144--176},
    issn={0001-8708},
}

\bib{dugger-bg}{article}{
    author={Dugger, Daniel},
    title={The Zariski and Nisnevich descent theorems},
    eprint={http://pages.uoregon.edu/ddugger/desc.html},
    date={2001},
}

\bib{dugger-hollander-isaksen}{article}{
author={Dugger, Daniel},
author={Hollander, Sharon},
author={Isaksen, Daniel C.},
title={Hypercovers and simplicial presheaves},
journal={Math. Proc. Cambridge Philos. Soc.},
volume={136},
date={2004},
number={1},
pages={9--51},
issn={0305-0041},
}

\bib{dugger-isaksen-realizations}{article}{
    author={Dugger, Daniel},
    author={Isaksen, Daniel C.},
    title={Topological hypercovers and $\mathbb{A}^1$-realizations},
    journal={Math. Z.},
    volume={246},
    date={2004},
    number={4},
    pages={667--689},
    issn={0025-5874},
}

\bib{dugger-isaksen-etale}{article}{
    author={Dugger, Daniel},
    author={Isaksen, Daniel C.},
    title={Etale homotopy and sums-of-squares formulas},
    journal={Math. Proc. Cambridge Philos. Soc.},
    volume={145},
    date={2008},
    number={1},
    pages={1--25},
    issn={0305-0041},
}

\bib{dugger-isaksen}{article}{
    author={Dugger, Daniel},
    author={Isaksen, Daniel C.},
    title={The motivic Adams spectral sequence},
    journal={Geom. Topol.},
    volume={14},
    date={2010},
    number={2},
    pages={967--1014},
    issn={1465-3060},
}

\bib{dundas-rondigs-ostvaer}{article}{
    author={Dundas, Bj{\o}rn Ian},
    author={R{\"o}ndigs, Oliver},
    author={{\O}stv{\ae}r, Paul Arne},
    title={Motivic functors},
    journal={Doc. Math.},
    volume={8},
    date={2003},
    pages={489--525},
    issn={1431-0635},
}

\bib{dro-book}{book}{
    AUTHOR = {Dundas, B. I. and Levine, M. and {\O}stv{\ae}r, P. A. and
              R{\"o}ndigs, O. and Voevodsky, V.},
     TITLE = {Motivic homotopy theory},
    SERIES = {Universitext},
      NOTE = {Lectures from the Summer School held in Nordfjordeid, August
              2002},
 PUBLISHER = {Springer-Verlag, Berlin},
      YEAR = {2007},
     PAGES = {x+221},
      ISBN = {978-3-540-45895-1; 3-540-45895-6},
       DOI = {10.1007/978-3-540-45897-5},
       URL = {http://dx.doi.org/10.1007/978-3-540-45897-5},
}

\bib{dwyer-spalinski}{article}{
   author={Dwyer, W. G.},
   author={Spali{\'n}ski, J.},
   title={Homotopy theories and model categories},
   conference={
      title={Handbook of algebraic topology},
   },
   book={
      publisher={North-Holland, Amsterdam},
   },
   date={1995},
   pages={73--126},
}

\bib{goerss-jardine}{book}{
   author={Goerss, Paul G.},
   author={Jardine, John F.},
   title={Simplicial homotopy theory},
   series={Progress in Mathematics},
   volume={174},
   publisher={Birkh\"auser Verlag, Basel},
   date={1999},
   pages={xvi+510},
   isbn={3-7643-6064-X},
}

\bib{goerss-schemmerhorn}{article}{
   author={Goerss, Paul},
   author={Schemmerhorn, Kristen},
   title={Model categories and simplicial methods},
   conference={
      title={Interactions between homotopy theory and algebra},
   },
   book={
      series={Contemp. Math.},
      volume={436},
      publisher={Amer. Math. Soc., Providence, RI},
   },
   date={2007},
   pages={3--49},
}

%

\bib{sga1}{book}{
    author={Grothendieck, Alexander},
    title={Rev\^etements \'etales et groupe fondamental. Fasc. I: Expos\'es 1
    \`a 5},
    series={S\'eminaire de G\'eom\'etrie Alg\'ebrique},
    volume={1960/61},
    publisher={Institut des Hautes \'Etudes Scientifiques, Paris},
    date={1963},
    pages={iv+143 pp. (not consecutively paged) (loose errata)},
}

%

\bib{hatcher}{book}{
    author={Hatcher, Allen},
    title={Algebraic topology},
    publisher={Cambridge University Press},
    place={Cambridge},
    date={2002},
    pages={xii+544},
}

\bib{hirschhorn}{book}{
    author={Hirschhorn, Philip S.},
    title={Model categories and their localizations},
    series={Mathematical Surveys and Monographs},
    volume={99},
    publisher={American Mathematical Society, Providence, RI},
    date={2003},
    pages={xvi+457},
    isbn={0-8218-3279-4},
}

\bib{hollander}{article}{
  AUTHOR = {Hollander, Sharon},
     TITLE = {A homotopy theory for stacks},
   JOURNAL = {Israel J. Math.},
    VOLUME = {163},
      YEAR = {2008},
     PAGES = {93--124},
      ISSN = {0021-2172},
}
%
 
\bib{hovey}{book}{
    author={Hovey, Mark},
    title={Model categories},
    series={Mathematical Surveys and Monographs},
    volume={63},
    publisher={American Mathematical Society, Providence, RI},
    date={1999},
    pages={xii+209},
    isbn={0-8218-1359-5},
}
 
\bib{hoyois}{article}{
    author = {Hoyois, Marc},
    title = {The six operations in equivariant motivic homotopy theory},
    journal = {ArXiv e-prints},
    eprint =  {http://arxiv.org/abs/1509.02145},
    year = {2015},
}
\bib{hoyois-allagree}{article}{
    author = {Hoyois, Marc},
    title = {A trivial remark on the Nisnevich topology},
    eprint =  {http://math.mit.edu/~hoyois/papers/allagree.pdf},
    year = {2016},
}

\bib{husemoller}{book}{
    author={Husemoller, Dale},
    title={Fibre bundles},
    edition={2},
    note={Graduate Texts in Mathematics, No. 20},
    publisher={Springer-Verlag},
    place={New York},
    date={1975},
    pages={xv+327},
}
%
%
\bib{jardine}{article}{
   author={Jardine, J. F.},
   title={Simplicial presheaves},
   journal={J. Pure Appl. Algebra},
   volume={47},
   date={1987},
   number={1},
   pages={35--87},
   issn={0022-4049},
}
\bib{jouanolou}{article}{
    author={Jouanolou, J. P.},
    title={Une suite exacte de Mayer-Vietoris en $K$-th\'eorie alg\'ebrique},
    conference={
    title={Algebraic $K$-theory, I: Higher $K$-theories (Proc. Conf.,
    Battelle Memorial Inst., Seattle, Wash., 1972)},
    },
    book={
    publisher={Springer},
    place={Berlin},
    },
    date={1973},
    pages={293--316. Lecture
    Notes in Math., Vol.
    341},
}
%
%
\bib{lam-serre2}{book}{
    author={Lam, T. Y.},
    title={Serre's problem on projective modules},
    series={Springer Monographs in Mathematics},
    publisher={Springer-Verlag, Berlin},
    date={2006},
    pages={xxii+401},
    isbn={978-3-540-23317-6},
    isbn={3-540-23317-2},
}

\bib{levine-milan}{article}{
    AUTHOR = {Levine, Marc},
     TITLE = {Motivic homotopy theory},
   JOURNAL = {Milan J. Math.},
    VOLUME = {76},
      YEAR = {2008},
     PAGES = {165--199},
      ISSN = {1424-9286},
}
\bib{levine-vietnam}{article}{
    AUTHOR = {Levine, Marc},
     TITLE = {An overview of motivic homotopy theory},
   JOURNAL = {Acta Math. Vietnam.},
    VOLUME = {41},
      YEAR = {2016},
    NUMBER = {3},
     PAGES = {379--407},
      ISSN = {0251-4184},
}
\bib{levine-morel}{book}{
    author={Levine, M.},
    author={Morel, F.},
    title={Algebraic cobordism},
    series={Springer Monographs in Mathematics},
    publisher={Springer, Berlin},
    date={2007},
    pages={xii+244},
    isbn={978-3-540-36822-9},
    isbn={3-540-36822-1},
}

%

\bib{lindel}{article}{
    author={Lindel, Hartmut},
    title={On the Bass-Quillen conjecture concerning projective modules
    over
    polynomial rings},
    journal={Invent. Math.},
    volume={65},
    date={1981/82},
    number={2},
    pages={319--323},
    issn={0020-9910},
}

\bib{htt}{book}{
   author={Lurie, Jacob},
   title={Higher topos theory},
   series={Annals of Mathematics Studies},
   volume={170},
   publisher={Princeton University Press, Princeton, NJ},
   date={2009},
   pages={xviii+925},
   isbn={978-0-691-14049-0},
   isbn={0-691-14049-9},
}

\bib{lurie-sag}{unpublished}{
      author={Lurie, Jacob},
      title={Spectral algebraic geometry},
        date={2016},
  note={\texttt{\href{http://www.math.harvard.edu/~lurie/}{http://www.math.harvard.edu/\textasciitilde lurie/}}},
}


\bib{may-concise}{book}{
    author={May, J. P.},
    title={A concise course in algebraic topology},
    series={Chicago Lectures in Mathematics},
    publisher={University of Chicago Press, Chicago, IL},
    date={1999},
    pages={x+243},
    isbn={0-226-51182-0},
    isbn={0-226-51183-9},
}

\bib{mazza-weibel-voevodsky}{book}{
AUTHOR = {Mazza, Carlo},
 AUTHOR = {Voevodsky, Vladimir},
 AUTHOR = {Weibel, Charles},
     TITLE = {Lecture notes on motivic cohomology},
    SERIES = {Clay Mathematics Monographs},
    VOLUME = {2},
 PUBLISHER = {American Mathematical Society, Providence, RI; Clay
              Mathematics Institute, Cambridge, MA},
      YEAR = {2006},
     PAGES = {xiv+216},
}

%

\bib{mcduff-segal}{article}{
    author={McDuff, D.},
    author={Segal, G.},
    title={Homology fibrations and the ``group-completion'' theorem},
    journal={Invent. Math.},
    volume={31},
    date={1975/76},
    number={3},
    pages={279--284},
    issn={0020-9910},
}


\bib{morel-icm}{article}{
AUTHOR = {Morel, Fabien},
TITLE = {{$\mathbb{A}^1$}-algebraic topology},
 BOOKTITLE = {International {C}ongress of {M}athematicians. {V}ol. {II}},
     PAGES = {1035--1059},
 PUBLISHER = {Eur. Math. Soc., Z\"urich},
      YEAR = {2006},
}

\bib{morel}{book}{
    author={Morel, Fabien},
    title={${\bf A}^1$ algebraic topology over a field},
    series={Lecture Notes in Mathematics, Vol. 2052},
    publisher={Springer Verlag},
    place={Belin},
    date={2012},
}

\bib{morel-voevodsky}{article}{
    author={Morel, Fabien},
    author={Voevodsky, Vladimir},
    title={${\bf A}^1$-homotopy theory of schemes},
    journal={Inst. Hautes \'Etudes Sci. Publ. Math.},
    number={90},
    date={1999},
    pages={45--143},
    issn={0073-8301},
}
   

\bib{nisnevich}{article}{
author={Nisnevich, Ye. A.},
title={The completely decomposed topology on schemes and associated
descent spectral sequences in algebraic $K$-theory},
conference={
title={Algebraic $K$-theory: connections with
geometry and topology},
address={Lake Louise, AB},
date={1987},
},
book={
series={NATO Adv.
Sci. Inst. Ser. C
Math. Phys. Sci.},
volume={279},
publisher={Kluwer
Acad.
Publ.,
Dordrecht},
},
date={1989},
pages={241--342},
}

\bib{quillen-homotopical}{book}{
   author={Quillen, Daniel G.},
   title={Homotopical algebra},
   series={Lecture Notes in Mathematics, No. 43},
   publisher={Springer-Verlag, Berlin-New York},
   date={1967},
   pages={iv+156 pp. (not consecutively paged)},
}%

\bib{quillen}{article}{
    author={Quillen, Daniel},
    title={Higher algebraic $K$-theory. I},
    conference={
    title={Algebraic $K$-theory, I: Higher $K$-theories},
    address={Proc. Conf., Battelle Memorial Inst.,
    Seattle, Wash.},
    date={1972},
    },
    book={
    publisher={Springer,
    Berlin},
    },
    date={1973},
    pages={85--147.
    Lecture Notes
    in Math., Vol.
    341},
}

\bib{quillen-serre}{article}{
    author={Quillen, Daniel},
    title={Projective modules over polynomial rings},
    journal={Invent. Math.},
    volume={36},
    date={1976},
    pages={167--171},
    issn={0020-9910},
}


\bib{robalo}{article}{
    author={Robalo, Marco},
    title={$K$-theory and the bridge from motives to noncommutative
    motives},
    journal={Adv. Math.},
    volume={269},
    date={2015},
    pages={399--550},
    issn={0001-8708},
}
%

\bib{schlichting-euler}{article}{
    author={Schlichting, Marco},
    title={Euler class groups, and the homology of elementary and special linear groups},
    journal={ArXiv e-prints},
    date={2015},
    eprint={https://arxiv.org/abs/1502.05424},
}

\bib{schlichting-tripathi}{article}{
    author={Schlichting, Marco},
    author={Tripathi, Girja S.},
    title={Geometric models for higher Grothendieck-Witt groups in
    $\mathbb{A}^1$-homotopy theory},
    journal={Math. Ann.},
    volume={362},
    date={2015},
    number={3-4},
    pages={1143--1167},
    issn={0025-5831},
}

%

\bib{schwede-shipley}{article}{
    author={Schwede, Stefan},
    author={Shipley, Brooke E.},
    title={Algebras and modules in monoidal model categories},
    journal={Proc. London Math. Soc. (3)},
    volume={80},
    date={2000},
    number={2},
    pages={491--511},
    issn={0024-6115},
}

\bib{segal}{article}{
    author={Segal, Graeme},
    title={Categories and cohomology theories},
    journal={Topology},
    volume={13},
    date={1974},
    pages={293--312},
    issn={0040-9383},
}

\bib{serre-fac}{article}{
    author={Serre, Jean-Pierre},
    title={Faisceaux alg\'ebriques coh\'erents},
    journal={Ann. of Math. (2)},
    volume={61},
    date={1955},
    pages={197--278},
    issn={0003-486X},
}

\bib{stacks-project}{article}{
    author       = {The {Stacks Project Authors}},
    title        = {\itshape Stacks Project},
    eprint = {http://stacks.math.columbia.edu},
    year         = {2014},
}

\bib{suslin-serre}{article}{
    author={Suslin, A. A.},
    title={Projective modules over polynomial rings are free},
    journal={Dokl. Akad. Nauk SSSR},
    volume={229},
    date={1976},
    number={5},
    pages={1063--1066},
    issn={0002-3264},
}

\bib{swan-popescu}{article}{
    author={Swan, Richard G.},
    title={N\'eron-Popescu desingularization},
    conference={
    title={Algebra and geometry},
    address={Taipei},
    date={1995},
    },
    book={
    series={Lect. Algebra
    Geom.},
    volume={2},
    publisher={Int.
    Press,
    Cambridge,
    MA},
    },
    date={1998},
    pages={135--192},
}

\bib{thom}{article}{
    author={Thom, Ren{\'e}},
    title={Quelques propri\'et\'es globales des vari\'et\'es
    diff\'erentiables},
    journal={Comment. Math. Helv.},
    volume={28},
    date={1954},
    pages={17--86},
}

\bib{thomas}{book}{
    author={Thomas, Emery},
    title={Seminar on fiber spaces},
    series={Lectures delivered in 1964 in Berkeley and 1965 in Z\"urich.
    Berkeley notes by J. F. McClendon. Lecture Notes in
    Mathematics},
    volume={13},
    publisher={Springer-Verlag, Berlin-New York},
    date={1966},
    pages={iv+45},
}

\bib{thomason-trobaugh}{article}{
    author={Thomason, R. W.},
    author={Trobaugh, Thomas},
    title={Higher algebraic $K$-theory of schemes and of derived
    categories},
    conference={
    title={The Grothendieck Festschrift, Vol.\ III},
    },
    book={
    series={Progr. Math.},
    volume={88},
    publisher={Birkh\"auser
    Boston, Boston, MA},
    },
    date={1990},
    pages={247--435},
}

\bib{totaro}{article}{
    author={Totaro, Burt},
    title={The Chow ring of a classifying space},
    conference={
    title={Algebraic $K$-theory},
    address={Seattle, WA},
    date={1997},
    },
    book={
    series={Proc. Sympos. Pure Math.},
    volume={67},
    publisher={Amer. Math. Soc.},
    place={Providence, RI},
    },
    date={1999},
    pages={249--281},
}
%
%
%
%

\bib{vistoli}{article}{
    AUTHOR = {Vistoli, Angelo},
     TITLE = {Grothendieck topologies, fibered categories and descent
              theory},
 BOOKTITLE = {Fundamental algebraic geometry},
    SERIES = {Math. Surveys Monogr.},
    VOLUME = {123},
     PAGES = {1--104},
 PUBLISHER = {Amer. Math. Soc., Providence, RI},
      YEAR = {2005},
}

\bib{voevodsky-icm}{article}{
    AUTHOR = {Voevodsky, Vladimir},
    TITLE = {{$\mathbb{A}^1$}-homotopy theory},
 BOOKTITLE = {Proceedings of the {I}nternational {C}ongress of
              {M}athematicians, {V}ol. {I} ({B}erlin, 1998)},
   JOURNAL = {Doc. Math.},
      YEAR = {1998},
    NUMBER = {Extra Vol. I},
     PAGES = {579--604},
      ISSN = {1431-0635},
}

\bib{voevodsky-jpaa1}{article}{
    author={Voevodsky, Vladimir},
    title={Homotopy theory of simplicial sheaves in completely
    decomposable
    topologies},
    journal={J. Pure Appl. Algebra},
    volume={214},
    date={2010},
    number={8},
    pages={1384--1398},
    issn={0022-4049},
}

\bib{voevodsky-jpaa2}{article}{
    author={Voevodsky, Vladimir},
    title={Unstable motivic homotopy categories in Nisnevich and
    cdh-topologies},
    journal={J. Pure Appl. Algebra},
    volume={214},
    date={2010},
    number={8},
    pages={1399--1406},
    issn={0022-4049},
}

\bib{weibel-homotopy}{article}{
author={Weibel, Charles A.},
title={Homotopy algebraic $K$-theory},
conference={
title={Algebraic $K$-theory and algebraic number theory
(Honolulu, HI,
1987)},
},
book={
series={Contemp. Math.},
volume={83},
publisher={Amer.
Math. Soc.,
Providence,
RI},
},
date={1989},
pages={461--488},
}

\bib{weibel}{book}{
    author={Weibel, Charles A.},
    title={An introduction to homological algebra},
    series={Cambridge Studies in Advanced Mathematics},
    volume={38},
    publisher={Cambridge University Press, Cambridge},
    date={1994},
    pages={xiv+450},
    isbn={0-521-43500-5},
    isbn={0-521-55987-1},
}

\bib{weibel-kbook}{book}{
    author={Weibel, Charles A.},
    title={The $K$-book},
    series={Graduate Studies in Mathematics},
    volume={145},
    note={An introduction to algebraic $K$-theory},
    publisher={American Mathematical Society, Providence,
    RI},
    date={2013},
    pages={xii+618},
    isbn={978-0-8218-9132-2},
}


%
%
%

\end{biblist}
\end{bibdiv}

\end{document}